\documentclass[12pt,leqno,twoside]{amsproc}
\thispagestyle{empty}
\pagestyle{myheadings}
\usepackage{amsfonts}
\usepackage{amsmath}
\usepackage{amssymb}
\usepackage{epsf}
\usepackage{graphics}
\usepackage{graphicx}
\usepackage{latexsym}
\usepackage{psfrag}
\usepackage{tikz}
\usepackage{verbatim}
\usepackage{hyperref}
\usepackage{pgfplots}
\usetikzlibrary{positioning}
\usetikzlibrary{arrows}
\usetikzlibrary{decorations.pathreplacing}

\newcommand{\loza}
{--++ (1/2,{sqrt(3)/2})
--++ (-1/2,{sqrt(3)/2})
--++ (-1/2,{-sqrt(3)/2})
--++ (1/2,{-sqrt(3)/2})}

\newcommand{\lozb}
{--++ (1,0)
--++ (-1/2,{sqrt(3)/2})
--++ (-1,0)
--++ (1/2,{-sqrt(3)/2})
[fill=gray!50!white]}

\newcommand{\lozc}
{--++ (1,0)
--++ (1/2,{sqrt(3)/2})
--++ (-1,0)
--++ (-1/2,{-sqrt(3)/2})
[fill=gray!100!white]}

\setlength{\textwidth}{16cm}
\setlength{\textheight}{21cm}
\setlength{\oddsidemargin}{0.75cm}
\setlength{\evensidemargin}{0.75cm}
\setlength{\unitlength}{1cm}

\newtheorem{thm}{Theorem}[section]
\newtheorem{lem}{Lemma}[section]
\newtheorem{cor}[thm]{Corollary}
\newtheorem{definition}{Definition}[section]

\newtheorem{rem}{Remark}[section]
\newtheorem{hyp}{Hypothesis}[section]

\def\g{\gamma}
\def\G{\Gamma}
\def\d{\delta}
\def\e{\epsilon}
\def\l{\lambda}

\def\N{{\mathbb N}}
\def\Z{{\mathbb Z}}
\def\R{{\mathbb R}}
\def\C{{\mathbb C}}
\def\E{{\mathbb E}}
\def\EE{{\mathcal E}}
\def\LL{{\mathcal L}}

\def\P{{\mathbb P}}
\def\supp{{\text{Supp}}}

\title{Asymptotic geometry of discrete interlaced patterns: Part I}
\author{Erik Duse and Anthony Metcalfe}
\date{\today}

\begin{document}

\begin{abstract}
A discrete Gelfand-Tsetlin pattern is a configuration of particles in
$\Z^2$. The particles are arranged in a finite number of consecutive
rows, numbered from the bottom. There is one particle on the first row,
two particles on the second row, three particles on the third row, etc,
and particles on adjacent rows satisfy an interlacing constraint.

We consider the uniform probability measure on the set of all discrete
Gelfand-Tsetlin patterns of a fixed size where the particles on the top
row are in deterministic positions. This measure arises naturally as an
equivalent description of the uniform probability measure on the set of
all tilings of certain polygons with lozenges. We prove a determinantal
structure, and calculate the correlation kernel.

We consider the asymptotic behaviour of the system as the size increases
under the assumption that the empirical distribution of the deterministic
particles on the top row converges weakly. We consider the asymptotic
`shape' of such systems. We provide parameterisations of the
asymptotic boundaries and investigate the local geometric properties of
the resulting curves. We show that the boundary can be partitioned into
natural sections which are determined by the behaviour of the roots of
a function related to the correlation kernel. This paper should be
regarded as a companion piece to the paper, \cite{Duse15b},
in which we resolve some of the remaining issues. Both of these papers
serve as background material for the papers, \cite{Duse15c} and
\cite{Duse15d}, in which we examine the edge asymptotic behaviour.
\end{abstract}

\maketitle

\section{Introduction}

\subsection{Random lozenge tilings of the regular hexagon}
\label{secTilings}

In this paper we consider the asymptotic shape of random tilings of
`half-hexagons'. We define the systems in the next section, and note
that they have a determinantal structure in section \ref{sectdsodgtp}.
Our motivation for studying such systems is to consider the local
asymptotic boundary behaviour. Of course, in order to do this, one
must first define and characterise a natural boundary. In section
\ref{secmasomr}, under some natural asymptotic assumptions, we note
that the asymptotic behaviour of the correlation kernel of the
systems is amenable to steepest descent techniques. Essentially,
the correlation kernel is expressed as a double contour integral
(see equation (\ref{eqKnrnunsnvn1})), and steepest descent techniques
suggest that the asymptotic behaviour of the expression is determined
by the behaviour of the roots of a certain analytic function (see
equation (\ref{eqf'0})). In this paper, we define the edge, solely,
by considering the behaviour of the roots of this function. The edge
is a natural boundary on which universal asymptotic behaviour is
expected. This expectation is confirmed, using steepest
descent techniques, in the paper \cite{Duse15c}. In this paper, we also
characterise other natural parts of the boundary. We continue this analysis
in the paper \cite{Duse15b}, where we find previously unknown parts of the
boundary. Finally, we find novel edge asymptotic behaviour in the paper
\cite{Duse15d}.

In this section we introduce the systems of `half-hexagons'
by considering the simpler case of random tilings of a regular hexagon.
The left hand side of figure \ref{figRegHexLoz} depicts a regular hexagon with
sides of length $m \ge 1$. The middle depicts three different types of {\em lozenges}
(polygons with angles $\frac{\pi}3$ and $\frac{2\pi}3$) with sides of length
$1$. We label these as types A, B, and C as shown. A complete covering of the
interior of the hexagon with these lozenges is called a {\em tiling}. An example
tiling, when $m=4$, is given on the right of figure \ref{figRegHexLoz}.

Note, in the example, lozenges of type A are adjacent to the left-most and
right-most corners, lozenges of type B are adjacent to the bottom-left and
top-right corners, and lozenges of type C are adjacent to the top-left and
bottom-right corners. Given such a corner behaviour, consider a particular
lozenge adjacent to a corner. The {\em connected component of this
lozenge} is defined as the area covered by the set of all adjacent lozenges
of the same type. Also, the {\em frozen region} is defined as the union
of these six connected components. The remaining tiles form the so-called
{\em disordered region}, and the boundary between the frozen and disordered
regions is called the {\em frozen boundary}. The frozen boundary of the
example tiling is shown on the left of figure \ref{figArctic}.

Impose the uniform probability measure on the set of all possible tilings
of a regular hexagon with sides of length $m$. Then, it is natural to consider
the asymptotic behaviour of the system as $m \to \infty$. Some interesting
results were obtained in \cite{Coh98}: The probability of observing the above
corner behaviour converges to $1$ as $m \to \infty$. For this reason we define:
\begin{definition}
\label{defTypTil}
A tiling of the regular hexagon is referred to as typical if and only if
lozenges of type A are adjacent to the left-most and right-most corners,
lozenges of type B are adjacent to the bottom-left and top-right corners,
and lozenges of type C are adjacent to the top-left and bottom-right corners.
\end{definition}
Also, rescaling so that the sides of the hexagon are of length $1$ for all
$m$, the frozen boundary of typical random tilings converges to the inscribed
circle of the rescaled hexagon as $m \to \infty$. This asymptotic shape is
called the {\em Arctic circle}, and is shown on the right of figure
\ref{figArctic}. See \cite{Coh98} for more precise statements.

\begin{figure}[t]
\centering
\begin{tikzpicture}[xscale=1/2,yscale=1/2]

\draw (0,0)
--++ (4,0)
--++ (2,{2*sqrt(3)})
--++ (-2,{2*sqrt(3)})
--++ (-4,0)
--++ (-2,{-2*sqrt(3)})
--++ (2,{-2*sqrt(3)});

\draw (2,-.3) node {$m$};
\draw (5.4,{1*sqrt(3)-.2}) node {$m$};
\draw (5.4,{3*sqrt(3)+.2}) node {$m$};
\draw (2,{4*sqrt(3)+.3}) node {$m$};
\draw (-1.4,{3*sqrt(3)+.2}) node {$m$};
\draw (-1.4,{1*sqrt(3)-.2}) node {$m$};

\draw (8.5,{(1.5)*sqrt(3)}) \loza;
\draw (8.5,4.8) node {A};
\draw (10.5,{(1.75)*sqrt(3)}) \lozb;
\draw (10.5,4.8) node {B};
\draw (12.5,{(1.75)*sqrt(3)}) \lozc;
\draw (13.5,4.8) node {C};

\draw (18,0) \lozb;
\draw (19,0) \lozb;
\draw (20,0) \loza;
\draw (20,0) \lozc;
\draw (21,0) \lozc;
\draw (17.5,{sqrt(3)/2}) \lozb;
\draw (18.5,{sqrt(3)/2}) \lozb;
\draw (19.5,{sqrt(3)/2}) \loza;
\draw (20.5,{sqrt(3)/2}) \lozb;
\draw (21.5,{sqrt(3)/2}) \loza;
\draw (21.5,{sqrt(3)/2}) \lozc;
\draw (17,{sqrt(3)}) \lozb;
\draw (18,{sqrt(3)}) \loza;
\draw (18,{sqrt(3)}) \lozc;
\draw (20,{sqrt(3)}) \lozb;
\draw (21,{sqrt(3)}) \loza;
\draw (22,{sqrt(3)}) \lozb;
\draw (23,{sqrt(3)}) \loza;
\draw (16.5,{(1.5)*sqrt(3)}) \loza;
\draw (16.5,{(1.5)*sqrt(3)}) \lozc;
\draw (18.5,{(1.5)*sqrt(3)}) \loza;
\draw (18.5,{(1.5)*sqrt(3)}) \lozc;
\draw (19.5,{(1.5)*sqrt(3)}) \lozc;
\draw (21.5,{(1.5)*sqrt(3)}) \loza;
\draw (21.5,{(1.5)*sqrt(3)}) \lozc;
\draw (23.5,{(1.5)*sqrt(3)}) \loza;
\draw (17,{2*sqrt(3)}) \lozb;
\draw (18,{2*sqrt(3)}) \loza;
\draw (19,{2*sqrt(3)}) \lozb;
\draw (20,{2*sqrt(3)}) \loza;
\draw (20,{2*sqrt(3)}) \lozc;
\draw (22,{2*sqrt(3)}) \loza;
\draw (22,{2*sqrt(3)}) \lozc;
\draw (16.5,{(2.5)*sqrt(3)}) \lozc;
\draw (18.5,{(2.5)*sqrt(3)}) \loza;
\draw (18.5,{(2.5)*sqrt(3)}) \lozc;
\draw (20.5,{(2.5)*sqrt(3)}) \loza;
\draw (20.5,{(2.5)*sqrt(3)}) \lozc;
\draw (22.5,{(2.5)*sqrt(3)}) \lozb;
\draw (17,{3*sqrt(3)}) \lozc;
\draw (19,{3*sqrt(3)}) \lozb;
\draw (20,{3*sqrt(3)}) \loza;
\draw (21,{3*sqrt(3)}) \lozb;
\draw (22,{3*sqrt(3)}) \lozb;
\draw (17.5,{(3.5)*sqrt(3)}) \lozc;
\draw (18.5,{(3.5)*sqrt(3)}) \lozc;
\draw (20.5,{(3.5)*sqrt(3)}) \lozb;
\draw (21.5,{(3.5)*sqrt(3)}) \lozb;

\draw (20,-.4) node {$4$};
\draw (23.3,{1*sqrt(3)-.2}) node {$4$};
\draw (23.3,{3*sqrt(3)+.2}) node {$4$};
\draw (20,{4*sqrt(3)+.4}) node {$4$};
\draw (16.7,{3*sqrt(3)+.2}) node {$4$};
\draw (16.7,{1*sqrt(3)-.2}) node {$4$};

\end{tikzpicture}
\caption{Left: A regular hexagon with sides of length $m \ge 1$.
\newline
Middle: Three types of lozenges with sides of length $1$.
\newline
Right: An example tiling when $m=4$.}
\label{figRegHexLoz}
\end{figure}
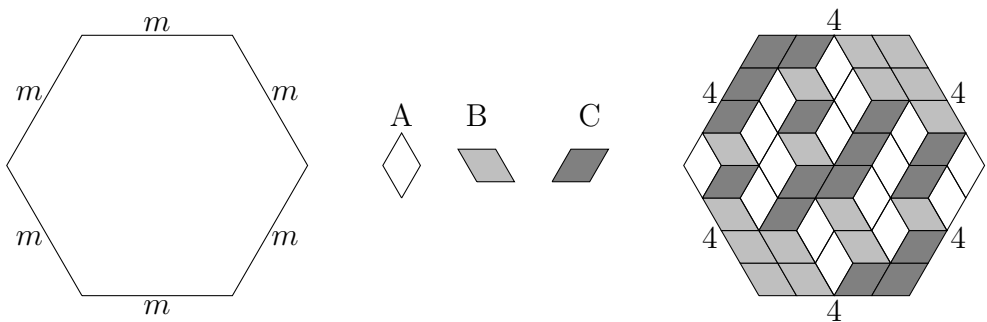

\begin{figure}[t]
\centering
\begin{tikzpicture}[xscale=1/2,yscale=1/2]

\draw (0,0) \lozb;
\draw (1,0) \lozb;
\draw (2,0) \loza;
\draw (2,0) \lozc;
\draw (3,0) \lozc;
\draw (-.5,{sqrt(3)/2}) \lozb;
\draw (.5,{sqrt(3)/2}) \lozb;
\draw (1.5,{sqrt(3)/2}) \loza;
\draw (2.5,{sqrt(3)/2}) \lozb;
\draw (3.5,{sqrt(3)/2}) \loza;
\draw (3.5,{sqrt(3)/2}) \lozc;
\draw (-1,{sqrt(3)}) \lozb;
\draw (0,{sqrt(3)}) \loza;
\draw (0,{sqrt(3)}) \lozc;
\draw (2,{sqrt(3)}) \lozb;
\draw (3,{sqrt(3)}) \loza;
\draw (4,{sqrt(3)}) \lozb;
\draw (5,{sqrt(3)}) \loza;
\draw (-1.5,{(1.5)*sqrt(3)}) \loza;
\draw (-1.5,{(1.5)*sqrt(3)}) \lozc;
\draw (.5,{(1.5)*sqrt(3)}) \loza;
\draw (.5,{(1.5)*sqrt(3)}) \lozc;
\draw (1.5,{(1.5)*sqrt(3)}) \lozc;
\draw (3.5,{(1.5)*sqrt(3)}) \loza;
\draw (3.5,{(1.5)*sqrt(3)}) \lozc;
\draw (5.5,{(1.5)*sqrt(3)}) \loza;
\draw (-1,{2*sqrt(3)}) \lozb;
\draw (0,{2*sqrt(3)}) \loza;
\draw (1,{2*sqrt(3)}) \lozb;
\draw (2,{2*sqrt(3)}) \loza;
\draw (2,{2*sqrt(3)}) \lozc;
\draw (4,{2*sqrt(3)}) \loza;
\draw (4,{2*sqrt(3)}) \lozc;
\draw (-1.5,{(2.5)*sqrt(3)}) \lozc;
\draw (.5,{(2.5)*sqrt(3)}) \loza;
\draw (.5,{(2.5)*sqrt(3)}) \lozc;
\draw (2.5,{(2.5)*sqrt(3)}) \loza;
\draw (2.5,{(2.5)*sqrt(3)}) \lozc;
\draw (4.5,{(2.5)*sqrt(3)}) \lozb;
\draw (-1,{3*sqrt(3)}) \lozc;
\draw (1,{3*sqrt(3)}) \lozb;
\draw (2,{3*sqrt(3)}) \loza;
\draw (3,{3*sqrt(3)}) \lozb;
\draw (4,{3*sqrt(3)}) \lozb;
\draw (-.5,{(3.5)*sqrt(3)}) \lozc;
\draw (.5,{(3.5)*sqrt(3)}) \lozc;
\draw (2.5,{(3.5)*sqrt(3)}) \lozb;
\draw (3.5,{(3.5)*sqrt(3)}) \lozb;

\draw [ultra thick] (2,0)
--++ (1/2,{sqrt(3)/2})
--++ (1,0)
--++ (1/2,{sqrt(3)/2})
--++ (1,0)
--++ (-1/2,{sqrt(3)/2})
--++ (1,{sqrt(3)})
--++ (-1,0)
--++ (-1/2,{sqrt(3)/2})
--++ (-1,0)
--++ (-1,{sqrt(3)})
--++ (-1/2,{-sqrt(3)/2})
--++ (-1,0)
--++ (-1,{-sqrt(3)})
--++ (-1,0)
--++ (1/2,{-sqrt(3)/2})
--++ (-1/2,{-sqrt(3)/2})
--++ (1,0)
--++ (1/2,{-sqrt(3)/2})
--++ (1,0)
--++ (1,{-sqrt(3)});

\draw (.25,.4) node {B};
\draw (3.75,.4) node {C};
\draw (5.55,{2*sqrt(3)+.1}) node {A};
\draw (3.75,{4*sqrt(3)-.4}) node {B};
\draw (.25,{4*sqrt(3)-.4}) node {C};
\draw (-1.55,{2*sqrt(3)+.1}) node {A};

\draw (18,0)
--++ (4,0)
--++ (2,{2*sqrt(3)})
--++ (-2,{2*sqrt(3)})
--++ (-4,0)
--++ (-2,{-2*sqrt(3)})
--++ (2,{-2*sqrt(3)});
\draw (20,{2*sqrt(3)}) circle ({2*sqrt(3)});

\draw (20,-.4) node {$1$};
\draw (23.3,{1*sqrt(3)-.2}) node {$1$};
\draw (23.3,{3*sqrt(3)+.2}) node {$1$};
\draw (20,{4*sqrt(3)+.4}) node {$1$};
\draw (16.7,{3*sqrt(3)+.2}) node {$1$};
\draw (16.7,{1*sqrt(3)-.2}) node {$1$};

\end{tikzpicture}
\caption{Left: The frozen boundary of the example tiling of figure \ref{figRegHexLoz}.
\newline
Right: The Arctic circle, i.e., the asymptotic shape of the frozen boundary
of a `typical random tiling' as $m \to \infty$.}
\label{figArctic}
\end{figure}
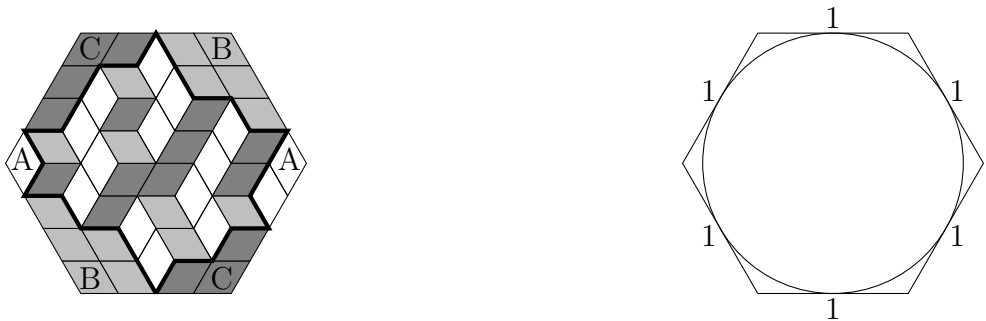

The goal of our work was to study the fluctuations of the frozen boundary
around the asymptotic limit. We wanted to show that universal edge asymptotic
behaviour holds. More exactly, we wanted to show that, when appropriately
rescaled, the fluctuations of the frozen boundary converge to the {\em Airy
process}. More generally, we wished to study the analogous question when
tiling certain `half-hexagons', which we define in the next section.
Convergence to the $1$-dimensional Airy process, in the case of the
regular hexagon, was established by Baik {\em et al}, \cite{Baik07}.
The analogous question for the frozen boundary of the Aztec diamond was
settled by Johansson, \cite{Joh05a}. Convergence to the $2$-dimensional
Airy process, in the case of the regular hexagon, was recently and
independently established by Petrov, \cite{Pet14}. The Airy process has
also been observed asymptotically at the edge of the spectra of various
ensembles of random matrices (see, for example, Mehta, \cite{Mehta04}).
It was first observed in a study by Pr{\"a}hofer and Spohn, \cite{Pra02},
of certain random growth models. See also Johansson, \cite{Joh03}, which
considers a different random growth model. It has also appeared
asymptotically in other, seeming unrelated, systems.

The key to the asymptotic analysis in this and the accompanying papers is
the investigation of a saddle-point function involving a known probability
measure. This type of saddle point problem occurs in several contexts. A
recent work which is closely related to ours is that of Hachem {\em et al.},
\cite{Hachem15}, which studies the asymptotic behaviour of the edge of the
spectrum of large complex correlated Wishart matrices.

\subsection{Random lozenge tilings of the `half-hexagon' and equivalent interlaced
particle configurations}
\label{secTilingsHalf}

We again begin by considering tilings of the regular hexagon. For
simplicity we now restrict to the set of all typical tilings,
defined in definition \ref{defTypTil}. It is not hard to see that
any tiling is uniquely determined by the locations of the
lozenges of type A, i.e., the {\em vertical lozenges}. Then, placing
particles in the center of each vertical lozenge, we see that the uniform
measure on the set of all typical tilings is equivalent to the uniform
measure on the set of all configurations of particles which satisfy:
\begin{itemize}
\item
The particles are in the interior of the hexagon.
\item
The particles lie on $2m-1$ rows, which we label from the bottom to the top.
\item
Adjacent rows are a distance of $\frac{\sqrt{3}}2$ apart.
\item
There are $r$ distinct particles on row $r$ when $r \in \{1,\ldots,m\}$.
\item
There are $2m-r$ distinct particles on row $r$ when $r \in \{m,\ldots,2m-1\}$.
\item
Particles are in integer positions on the even rows.
\item
Particles are in half-integer positions (i.e., $\Z + \frac12$) on the odd rows.
\item
Particles on adjacent rows {\em interlace}: When $r \in \{1,\ldots,m-1\}$, there
is exactly one particle on row $r$ `between' each neighbouring pair of particles
on row $r+1$. Also, when $r \in \{m+1,\ldots,2m-1\}$, there is exactly one
particle on row $r$ `between' each neighbouring pair of particles on row $r-1$.
\end{itemize}
The particle configuration which is equivalent to the given example tiling is
shown on the left of figure \ref{figEquivIntPart}.

\begin{figure}[t]
\centering
\begin{tikzpicture}[xscale=1/2,yscale=1/2]

\draw (0,0) \lozb;
\draw (1,0) \lozb;
\draw (2,0) \loza;
\draw (2,0) \lozc;
\draw (3,0) \lozc;
\draw (-.5,{sqrt(3)/2}) \lozb;
\draw (.5,{sqrt(3)/2}) \lozb;
\draw (1.5,{sqrt(3)/2}) \loza;
\draw (2.5,{sqrt(3)/2}) \lozb;
\draw (3.5,{sqrt(3)/2}) \loza;
\draw (3.5,{sqrt(3)/2}) \lozc;
\draw (-1,{sqrt(3)}) \lozb;
\draw (0,{sqrt(3)}) \loza;
\draw (0,{sqrt(3)}) \lozc;
\draw (2,{sqrt(3)}) \lozb;
\draw (3,{sqrt(3)}) \loza;
\draw (4,{sqrt(3)}) \lozb;
\draw (5,{sqrt(3)}) \loza;
\draw (-1.5,{(1.5)*sqrt(3)}) \loza;
\draw (-1.5,{(1.5)*sqrt(3)}) \lozc;
\draw (.5,{(1.5)*sqrt(3)}) \loza;
\draw (.5,{(1.5)*sqrt(3)}) \lozc;
\draw (1.5,{(1.5)*sqrt(3)}) \lozc;
\draw (3.5,{(1.5)*sqrt(3)}) \loza;
\draw (3.5,{(1.5)*sqrt(3)}) \lozc;
\draw (5.5,{(1.5)*sqrt(3)}) \loza;
\draw (-1,{2*sqrt(3)}) \lozb;
\draw (0,{2*sqrt(3)}) \loza;
\draw (1,{2*sqrt(3)}) \lozb;
\draw (2,{2*sqrt(3)}) \loza;
\draw (2,{2*sqrt(3)}) \lozc;
\draw (4,{2*sqrt(3)}) \loza;
\draw (4,{2*sqrt(3)}) \lozc;
\draw (-1.5,{(2.5)*sqrt(3)}) \lozc;
\draw (.5,{(2.5)*sqrt(3)}) \loza;
\draw (.5,{(2.5)*sqrt(3)}) \lozc;
\draw (2.5,{(2.5)*sqrt(3)}) \loza;
\draw (2.5,{(2.5)*sqrt(3)}) \lozc;
\draw (4.5,{(2.5)*sqrt(3)}) \lozb;
\draw (-1,{3*sqrt(3)}) \lozc;
\draw (1,{3*sqrt(3)}) \lozb;
\draw (2,{3*sqrt(3)}) \loza;
\draw (3,{3*sqrt(3)}) \lozb;
\draw (4,{3*sqrt(3)}) \lozb;
\draw (-.5,{(3.5)*sqrt(3)}) \lozc;
\draw (.5,{(3.5)*sqrt(3)}) \lozc;
\draw (2.5,{(3.5)*sqrt(3)}) \lozb;
\draw (3.5,{(3.5)*sqrt(3)}) \lozb;

\filldraw (2,{(.5)*sqrt(3)}) circle (3pt);
\filldraw (1.5,{sqrt(3)}) circle (3pt);
\filldraw (3.5,{sqrt(3)}) circle (3pt);
\filldraw (0,{(1.5)*sqrt(3)}) circle (3pt);
\filldraw (3,{(1.5)*sqrt(3)}) circle (3pt);
\filldraw (5,{(1.5)*sqrt(3)}) circle (3pt);
\filldraw (-1.5,{2*sqrt(3)}) circle (3pt);
\filldraw (.5,{2*sqrt(3)}) circle (3pt);
\filldraw (3.5,{2*sqrt(3)}) circle (3pt);
\filldraw (5.5,{2*sqrt(3)}) circle (3pt);
\filldraw (0,{(2.5)*sqrt(3)}) circle (3pt);
\filldraw (2,{(2.5)*sqrt(3)}) circle (3pt);
\filldraw (4,{(2.5)*sqrt(3)}) circle (3pt);
\filldraw (0.5,{3*sqrt(3)}) circle (3pt);
\filldraw (2.5,{3*sqrt(3)}) circle (3pt);
\filldraw (2,{(3.5)*sqrt(3)}) circle (3pt);

\draw [dotted] (5,{(0.5)*sqrt(3)}) --++ (3.8,0);
\draw (10,{(0.55)*sqrt(3)}) node {row $1$};
\draw [dotted] (11.2,{(0.5)*sqrt(3)}) --++ (3.8,0);
\draw [dotted] (5.5,{sqrt(3)}) --++ (3.3,0);
\draw (10,{(1.05)*sqrt(3)}) node {row $2$};
\draw [dotted] (11.2,{sqrt(3)}) --++ (3.3,0);
\draw [dotted] (6,{(1.5)*sqrt(3)}) --++ (2.8,0);
\draw (10,{(1.55)*sqrt(3)}) node {row $3$};
\draw [dotted] (11.2,{(1.5)*sqrt(3)}) --++ (2.8,0);
\draw [dotted] (6.5,{2*sqrt(3)}) --++ (2.3,0);
\draw (10,{(2.05)*sqrt(3)}) node {row $4$};
\draw [dotted] (11.2,{2*sqrt(3)}) --++ (2.3,0);
\draw [dotted] (6,{(2.5)*sqrt(3)}) --++ (2.3,0);
\draw (9.5,{(2.55)*sqrt(3)}) node {row $5$};
\draw [dotted] (10.7,{(2.5)*sqrt(3)}) --++ (2.3,0);
\draw [dotted] (5.5,{3*sqrt(3)}) --++ (2.3,0);
\draw (9,{(3.05)*sqrt(3)}) node {row $6$};
\draw [dotted] (10.2,{3*sqrt(3)}) --++ (2.3,0);
\draw [dotted] (5,{(3.5)*sqrt(3)}) --++ (2.3,0);
\draw (8.5,{(3.55)*sqrt(3)}) node {row $7$};
\draw [dotted] (9.7,{(3.5)*sqrt(3)}) --++ (2.3,0);
\draw [dotted] (4.5,{4*sqrt(3)}) --++ (2.3,0);
\draw (8,{(4.05)*sqrt(3)}) node {row $8$};
\draw [dotted] (9.2,{4*sqrt(3)}) --++ (2.3,0);

\draw (16,0) \lozb;
\draw (17,0) \lozb;
\draw (18,0) \loza;
\draw (18,0) \lozc;
\draw (19,0) \lozc;
\draw (15.5,{sqrt(3)/2}) \lozb;
\draw (16.5,{sqrt(3)/2}) \lozb;
\draw (17.5,{sqrt(3)/2}) \loza;
\draw (18.5,{sqrt(3)/2}) \lozb;
\draw (19.5,{sqrt(3)/2}) \loza;
\draw (19.5,{sqrt(3)/2}) \lozc;
\draw (15,{sqrt(3)}) \lozb;
\draw (16,{sqrt(3)}) \loza;
\draw (16,{sqrt(3)}) \lozc;
\draw (18,{sqrt(3)}) \lozb;
\draw (19,{sqrt(3)}) \loza;
\draw (20,{sqrt(3)}) \lozb;
\draw (21,{sqrt(3)}) \loza;
\draw (14.5,{(1.5)*sqrt(3)}) \loza;
\draw (14.5,{(1.5)*sqrt(3)}) \lozc;
\draw (16.5,{(1.5)*sqrt(3)}) \loza;
\draw (16.5,{(1.5)*sqrt(3)}) \lozc;
\draw (17.5,{(1.5)*sqrt(3)}) \lozc;
\draw (17.5,{(1.5)*sqrt(3)}) \lozc;
\draw (19.5,{(1.5)*sqrt(3)}) \loza;
\draw (19.5,{(1.5)*sqrt(3)}) \lozc;
\draw (21.5,{(1.5)*sqrt(3)}) \loza;
\draw (14,{2*sqrt(3)}) \loza;
\draw (15,{2*sqrt(3)}) \lozb;
\draw (16,{2*sqrt(3)}) \loza;
\draw (17,{2*sqrt(3)}) \lozb;
\draw (18,{2*sqrt(3)}) \loza;
\draw (18,{2*sqrt(3)}) \lozc;
\draw (20,{2*sqrt(3)}) \loza;
\draw (20,{2*sqrt(3)}) \lozc;
\draw (22,{2*sqrt(3)}) \loza;
\draw (13.5,{(2.5)*sqrt(3)}) \loza;
\draw (14.5,{(2.5)*sqrt(3)}) \loza;
\draw (14.5,{(2.5)*sqrt(3)}) \lozc;
\draw (16.5,{(2.5)*sqrt(3)}) \loza;
\draw (16.5,{(2.5)*sqrt(3)}) \lozc;
\draw (18.5,{(2.5)*sqrt(3)}) \loza;
\draw (18.5,{(2.5)*sqrt(3)}) \lozc;
\draw (20.5,{(2.5)*sqrt(3)}) \lozb;
\draw (21.5,{(2.5)*sqrt(3)}) \loza;
\draw (22.5,{(2.5)*sqrt(3)}) \loza;
\draw (13,{3*sqrt(3)}) \loza;
\draw (14,{3*sqrt(3)}) \loza;
\draw (15,{3*sqrt(3)}) \loza;
\draw (15,{3*sqrt(3)}) \lozc;
\draw (17,{3*sqrt(3)}) \lozb;
\draw (18,{3*sqrt(3)}) \loza;
\draw (19,{3*sqrt(3)}) \lozb;
\draw (20,{3*sqrt(3)}) \lozb;
\draw (21,{3*sqrt(3)}) \loza;
\draw (22,{3*sqrt(3)}) \loza;
\draw (23,{3*sqrt(3)}) \loza;
\draw (12.5,{(3.5)*sqrt(3)}) \loza;
\draw (13.5,{(3.5)*sqrt(3)}) \loza;
\draw (14.5,{(3.5)*sqrt(3)}) \loza;
\draw (15.5,{(3.5)*sqrt(3)}) \loza;
\draw (15.5,{(3.5)*sqrt(3)}) \lozc;
\draw (16.5,{(3.5)*sqrt(3)}) \lozc;
\draw (18.5,{(3.5)*sqrt(3)}) \lozb;
\draw (19.5,{(3.5)*sqrt(3)}) \lozb;
\draw (20.5,{(3.5)*sqrt(3)}) \loza;
\draw (21.5,{(3.5)*sqrt(3)}) \loza;
\draw (22.5,{(3.5)*sqrt(3)}) \loza;
\draw (23.5,{(3.5)*sqrt(3)}) \loza;

\filldraw (18,{(.5)*sqrt(3)}) circle (3pt);
\filldraw (17.5,{sqrt(3)}) circle (3pt);
\filldraw (19.5,{sqrt(3)}) circle (3pt);
\filldraw (16,{(1.5)*sqrt(3)}) circle (3pt);
\filldraw (19,{(1.5)*sqrt(3)}) circle (3pt);
\filldraw (21,{(1.5)*sqrt(3)}) circle (3pt);
\filldraw (14.5,{2*sqrt(3)}) circle (3pt);
\filldraw (16.5,{2*sqrt(3)}) circle (3pt);
\filldraw (19.5,{2*sqrt(3)}) circle (3pt);
\filldraw (21.5,{2*sqrt(3)}) circle (3pt);
\draw (14,{(2.5)*sqrt(3)}) circle (3pt);
\filldraw (16,{(2.5)*sqrt(3)}) circle (3pt);
\filldraw (18,{(2.5)*sqrt(3)}) circle (3pt);
\filldraw (20,{(2.5)*sqrt(3)}) circle (3pt);
\draw (22,{(2.5)*sqrt(3)}) circle (3pt);
\draw (13.5,{3*sqrt(3)}) circle (3pt);
\draw (14.5,{3*sqrt(3)}) circle (3pt);
\filldraw (16.5,{3*sqrt(3)}) circle (3pt);
\filldraw (18.5,{3*sqrt(3)}) circle (3pt);
\draw (21.5,{3*sqrt(3)}) circle (3pt);
\draw (22.5,{3*sqrt(3)}) circle (3pt);
\draw (13,{(3.5)*sqrt(3)}) circle (3pt);
\draw (14,{(3.5)*sqrt(3)}) circle (3pt);
\draw (15,{(3.5)*sqrt(3)}) circle (3pt);
\filldraw (18,{(3.5)*sqrt(3)}) circle (3pt);
\draw (21,{(3.5)*sqrt(3)}) circle (3pt);
\draw (22,{(3.5)*sqrt(3)}) circle (3pt);
\draw (23,{(3.5)*sqrt(3)}) circle (3pt);
\draw (12.5,{4*sqrt(3)}) circle (3pt);
\draw (13.5,{4*sqrt(3)}) circle (3pt);
\draw (14.5,{4*sqrt(3)}) circle (3pt);
\draw (15.5,{4*sqrt(3)}) circle (3pt);
\draw (20.5,{4*sqrt(3)}) circle (3pt);
\draw (21.5,{4*sqrt(3)}) circle (3pt);
\draw (22.5,{4*sqrt(3)}) circle (3pt);
\draw (23.5,{4*sqrt(3)}) circle (3pt);

\end{tikzpicture}
\caption{Left: Equivalent interlaced particle configuration of the example tiling
of figure \ref{figRegHexLoz}.
\newline
Right: Equivalent interlaced particle configuration with added deterministic
lozenges/particles. The unfilled circles represent the deterministic
particles.}
\label{figEquivIntPart}
\end{figure}

A further equivalent measure is obtained by adding deterministic
lozenges/particles to each configuration in a particular way,
as was done in Nordenstam, \cite{Nord09}, and Petrov, \cite{Pet14}.
This is demonstrated on the right of figure \ref{figEquivIntPart}: We
trivially add two densely packed blocks of lozenges/particles
to the upper left and upper right sides of the hexagon. These
deterministic lozenges/particles are independent of the tiling. Also,
specifying the positions of the deterministic lozenges/particles on the
top row, the interlacing constraint induces the positions of the deterministic
lozenges/particles on the lower rows. Therefore the uniform
measure on the set of all typical tilings is equivalent to the uniform
measure on the set of all configurations of particles which satisfy:
\begin{itemize}
\item
The particles lie on $2m$ rows, which we label from the bottom to the top.
\item
Adjacent rows are a distance of $\frac{\sqrt{3}}2$ apart.
\item
There are $r$ distinct particles on row $r$ for all $r \in \{1,\ldots,2m\}$.
\item
Particles are in integer positions on the even rows.
\item
Particles are in half-integer positions (i.e., $\Z + \frac12$) on the odd rows.
\item
The particles on row $2m$ are in the positions
$\{1,\ldots,m\} \cup \{2m+1,\ldots,3m\}$.
\item
Particles on adjacent rows {\em interlace}: For all $r \in \{1,\ldots,2m-1\}$,
there is exactly one particle on row $r$ `between' each neighbouring pair of
particles on row $r+1$.
\end{itemize}

A natural generalisation is to allow the lozenges/particles on the
top row to be in arbitrary positions. More specifically, we consider the set
of all lozenge tilings of the `half-hexagon' with sides of length $n \ge 1$
and $m \ge 1$, as shown on the left of figure \ref{figTilHalfHex}. We fix
$n$ vertical lozenges/particles in arbitrary deterministic integer positions
on the upper boundary, and consider the uniform measure on the
set of all possible tilings with this top row. An example of such a tiling,
and its equivalent interlaced particle configuration, is shown on the
right of figure \ref{figTilHalfHex}.

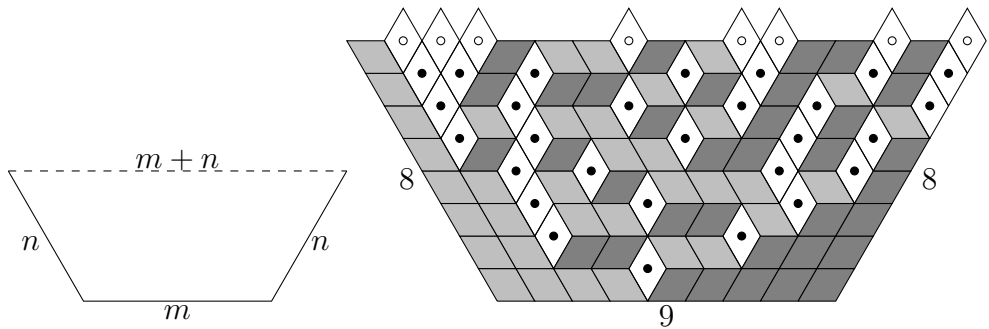
\begin{figure}[t]
\centering
\begin{tikzpicture}[xscale=1/2,yscale=1/2]

\draw (-13,{2*sqrt(3)})
--++ (2,{-2*sqrt(3)})
--++ (5,0)
--++ (2,{2*sqrt(3)});
\draw [dashed] (-13,{2*sqrt(3)}) --++ (9,0); 
\draw (-8.5,-.3) node {$m$};
\draw (-4.7,{sqrt(3)-.2}) node {$n$};
\draw (-8.5,{2*sqrt(3)+.3}) node {$m+n$};
\draw (-12.4,{sqrt(3)-.2}) node {$n$};

\draw (0,0) \lozb;
\draw (1,0) \lozb;
\draw (2,0) \lozb;
\draw (3,0) \lozb;
\draw (4,0) \loza;
\draw (4,0) \lozc;
\draw (5,0) \lozc;
\draw (6,0) \lozc;
\draw (7,0) \lozc;
\draw (8,0) \lozc;
\draw (-.5,{sqrt(3)/2}) \lozb;
\draw (.5,{sqrt(3)/2}) \lozb;
\draw (1.5,{sqrt(3)/2}) \loza;
\draw (1.5,{sqrt(3)/2}) \lozc;
\draw (2.5,{sqrt(3)/2}) \lozc;
\draw (4.5,{sqrt(3)/2}) \lozb;
\draw (5.5,{sqrt(3)/2}) \lozb;
\draw (6.5,{sqrt(3)/2}) \loza;
\draw (6.5,{sqrt(3)/2}) \lozc;
\draw (7.5,{sqrt(3)/2}) \lozc;
\draw (8.5,{sqrt(3)/2}) \lozc;
\draw (-1,{sqrt(3)}) \lozb;
\draw (0,{sqrt(3)}) \lozb;
\draw (1,{sqrt(3)}) \loza;
\draw (2,{sqrt(3)}) \lozb;
\draw (3,{sqrt(3)}) \lozb;
\draw (4,{sqrt(3)}) \loza;
\draw (4,{sqrt(3)}) \lozc;
\draw (5,{sqrt(3)}) \lozc;
\draw (7,{sqrt(3)}) \lozb;
\draw (8,{sqrt(3)}) \loza;
\draw (8,{sqrt(3)}) \lozc;
\draw (9,{sqrt(3)}) \lozc;
\draw (-1.5,{(1.5)*sqrt(3)}) \lozb;
\draw (-.5,{(1.5)*sqrt(3)}) \lozb;
\draw (.5,{(1.5)*sqrt(3)}) \loza;
\draw (1.5,{(1.5)*sqrt(3)}) \lozb;
\draw (2.5,{(1.5)*sqrt(3)}) \loza;
\draw (2.5,{(1.5)*sqrt(3)}) \lozc;
\draw (4.5,{(1.5)*sqrt(3)}) \lozb;
\draw (5.5,{(1.5)*sqrt(3)}) \lozb;
\draw (6.5,{(1.5)*sqrt(3)}) \lozb;
\draw (7.5,{(1.5)*sqrt(3)}) \loza;
\draw (8.5,{(1.5)*sqrt(3)}) \lozb;
\draw (9.5,{(1.5)*sqrt(3)}) \loza;
\draw (9.5,{(1.5)*sqrt(3)}) \lozc;
\draw (-2,{2*sqrt(3)}) \lozb;
\draw (-1,{2*sqrt(3)}) \loza;
\draw (-1,{2*sqrt(3)}) \lozc;
\draw (1,{2*sqrt(3)}) \loza;
\draw (1,{2*sqrt(3)}) \lozc;
\draw (3,{2*sqrt(3)}) \lozb;
\draw (4,{2*sqrt(3)}) \lozb;
\draw (5,{2*sqrt(3)}) \loza;
\draw (5,{2*sqrt(3)}) \lozc;
\draw (6,{2*sqrt(3)}) \lozc;
\draw (8,{2*sqrt(3)}) \loza;
\draw (8,{2*sqrt(3)}) \lozc;
\draw (10,{2*sqrt(3)}) \loza;
\draw (10,{2*sqrt(3)}) \lozc;
\draw (-2.5,{(2.5)*sqrt(3)}) \lozb;
\draw (-1.5,{(2.5)*sqrt(3)}) \loza;
\draw (-.5,{(2.5)*sqrt(3)}) \lozb;
\draw (.5,{(2.5)*sqrt(3)}) \loza;
\draw (1.5,{(2.5)*sqrt(3)}) \lozb;
\draw (2.5,{(2.5)*sqrt(3)}) \lozb;
\draw (3.5,{(2.5)*sqrt(3)}) \loza;
\draw (3.5,{(2.5)*sqrt(3)}) \lozc;
\draw (5.5,{(2.5)*sqrt(3)}) \lozb;
\draw (6.5,{(2.5)*sqrt(3)}) \loza;
\draw (6.5,{(2.5)*sqrt(3)}) \lozc;
\draw (8.5,{(2.5)*sqrt(3)}) \loza;
\draw (8.5,{(2.5)*sqrt(3)}) \lozc;
\draw (10.5,{(2.5)*sqrt(3)}) \lozb;
\draw (11.5,{(2.5)*sqrt(3)}) \loza;
\draw (-3,{3*sqrt(3)}) \lozb;
\draw (-2,{3*sqrt(3)}) \loza;
\draw (-1,{3*sqrt(3)}) \loza;
\draw (-1,{3*sqrt(3)}) \lozc;
\draw (1,{3*sqrt(3)}) \loza;
\draw (1,{3*sqrt(3)}) \lozc;
\draw (2,{3*sqrt(3)}) \lozc;
\draw (4,{3*sqrt(3)}) \lozb;
\draw (5,{3*sqrt(3)}) \loza;
\draw (5,{3*sqrt(3)}) \lozc;
\draw (7,{3*sqrt(3)}) \loza;
\draw (7,{3*sqrt(3)}) \lozc;
\draw (9,{3*sqrt(3)}) \lozb;
\draw (10,{3*sqrt(3)}) \loza;
\draw (10,{3*sqrt(3)}) \lozc;
\draw (12,{3*sqrt(3)}) \loza;
\draw (-3.5,{(3.5)*sqrt(3)}) \lozb;
\draw (-2.5,{(3.5)*sqrt(3)}) \loza;
\draw (-1.5,{(3.5)*sqrt(3)}) \loza;
\draw (-.5,{(3.5)*sqrt(3)}) \loza;
\draw (-.5,{(3.5)*sqrt(3)}) \lozc;
\draw (1.5,{(3.5)*sqrt(3)}) \lozb;
\draw (2.5,{(3.5)*sqrt(3)}) \lozb;
\draw (3.5,{(3.5)*sqrt(3)}) \loza;
\draw (3.5,{(3.5)*sqrt(3)}) \lozc;
\draw (5.5,{(3.5)*sqrt(3)}) \lozb;
\draw (6.5,{(3.5)*sqrt(3)}) \loza;
\draw (7.5,{(3.5)*sqrt(3)}) \loza;
\draw (7.5,{(3.5)*sqrt(3)}) \lozc;
\draw (8.5,{(3.5)*sqrt(3)}) \lozc;
\draw (10.5,{(3.5)*sqrt(3)}) \loza;
\draw (10.5,{(3.5)*sqrt(3)}) \lozc;
\draw (12.5,{(3.5)*sqrt(3)}) \loza;

\filldraw (4,{sqrt(3)/2}) circle (3pt);
\filldraw (1.5,{sqrt(3)}) circle (3pt);
\filldraw (6.5,{sqrt(3)}) circle (3pt);
\filldraw (1,{(1.5)*sqrt(3)}) circle (3pt);
\filldraw (4,{(1.5)*sqrt(3)}) circle (3pt);
\filldraw (8,{(1.5)*sqrt(3)}) circle (3pt);
\filldraw (.5,{2*sqrt(3)}) circle (3pt);
\filldraw (2.5,{2*sqrt(3)}) circle (3pt);
\filldraw (7.5,{2*sqrt(3)}) circle (3pt);
\filldraw (9.5,{2*sqrt(3)}) circle (3pt);
\filldraw (-1,{(2.5)*sqrt(3)}) circle (3pt);
\filldraw (1,{(2.5)*sqrt(3)}) circle (3pt);
\filldraw (5,{(2.5)*sqrt(3)}) circle (3pt);
\filldraw (8,{(2.5)*sqrt(3)}) circle (3pt);
\filldraw (10,{(2.5)*sqrt(3)}) circle (3pt);
\filldraw (-1.5,{3*sqrt(3)}) circle (3pt);
\filldraw (.5,{3*sqrt(3)}) circle (3pt);
\filldraw (3.5,{3*sqrt(3)}) circle (3pt);
\filldraw (6.5,{3*sqrt(3)}) circle (3pt);
\filldraw (8.5,{3*sqrt(3)}) circle (3pt);
\filldraw (11.5,{3*sqrt(3)}) circle (3pt);
\filldraw (-2,{(3.5)*sqrt(3)}) circle (3pt);
\filldraw (-1,{(3.5)*sqrt(3)}) circle (3pt);
\filldraw (1,{(3.5)*sqrt(3)}) circle (3pt);
\filldraw (5,{(3.5)*sqrt(3)}) circle (3pt);
\filldraw (7,{(3.5)*sqrt(3)}) circle (3pt);
\filldraw (10,{(3.5)*sqrt(3)}) circle (3pt);
\filldraw (12,{(3.5)*sqrt(3)}) circle (3pt);
\draw (-2.5,{4*sqrt(3)}) circle (3pt);
\draw (-1.5,{4*sqrt(3)}) circle (3pt);
\draw (-.5,{4*sqrt(3)}) circle (3pt);
\draw (3.5,{4*sqrt(3)}) circle (3pt);
\draw (6.5,{4*sqrt(3)}) circle (3pt);
\draw (7.5,{4*sqrt(3)}) circle (3pt);
\draw (10.5,{4*sqrt(3)}) circle (3pt);
\draw (12.5,{4*sqrt(3)}) circle (3pt);

\draw (4.5,-.4) node {$9$};
\draw (11.5,{2*sqrt(3)-.2}) node {$8$};
\draw (-2.4,{2*sqrt(3)-.2}) node {$8$};

\end{tikzpicture}
\caption{Left: A `half-hexagon' with sides of length $n \ge 1$ and $m \ge 1$.
The dotted line representing the upper boundary is considered to be `open'.
\newline
Right: An example tiling and its equivalent interlaced particle configuration
when $n=8$ and $m=9$. The unfilled circles represent the
deterministic lozenges/particles.}
\label{figTilHalfHex}
\end{figure}

Rescaling the sides of the `half-hexagon' by $\frac1n$, we consider
the asymptotic behaviour of the system as $n,m \to \infty$ under the
assumption that $\frac{m}n$ converges to a positive constant and the
empirical distribution of the deterministic lozenges/particles on the
top row converges weakly. We consider the asymptotic `shape' of such
systems. Petrov, \cite{Pet14}, studied the special case where the
particles on the top row are contained in a finite number of densely
packed blocks, and the empirical distribution converges to the
Lebesgue measure restricted to a finite number of closed disjoint
intervals. By adding deterministic lozenges/particles as for the
regular hexagon, this is equivalent to tiling those types of
polygons shown on the left of figure \ref{figPolygons}. The
results of this paper hold for any probability measure that can be
obtained as the weak limit of the empirical distribution.

We end this section by comparing and contrasting our results to those of
Kenyon {\em et al.}, \cite{Ken06} and \cite{Ken07}. The asymptotic frozen
boundaries of the polygons of figure \ref{figPolygons}, for example, can
be studied using the techniques of these papers. The boundaries are shown
to be {\em algebraic}. As stated above, this paper and \cite{Duse15b}
studies the frozen boundaries of a natural generalisation of those polygons
shown on the left of figure \ref{figPolygons}. The techniques of Kenyon
{\em et al.} do not cover such models, and our techniques do not cover the
polygons shown in the middle and on the right. In our case,
the asymptotic frozen boundaries are not necessarily algebraic.
We do, however, obtain parameterisations of the boundaries,
and we perform a detailed analysis of their local geometric properties.
Finally note, in \cite{Ken07}, the asymptotic frozen boundary
of the polygon in the middle is shown to be a cardioid. In \cite{Duse15d},
we consider a related situation, and study the asymptotic behaviour of
particles in a neighbourhood of a cusp in the frozen boundary. These
do not behave as a {\em Pearcey} point process, as previously expected.
We obtain a novel point process, which we call the {\em Cusp Airy}
process. This process can also appear for those polygons studied by
Petrov, i.e., those shown on the left of figure \ref{figPolygons}.
An example of such a polygon and cusp is given in section
\ref{secEx4}. The polygon and cusp in question can
be seen in figure \ref{figex4}.

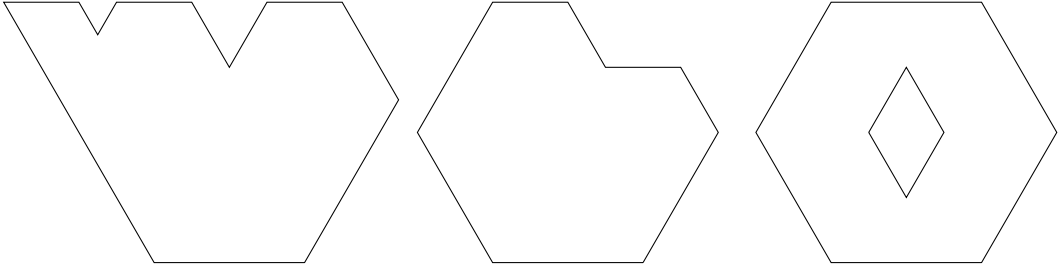
\begin{figure}[t]
\centering
\begin{tikzpicture}[xscale=0.25,yscale=0.25]

\draw(8,0)
--++(-8,0)
--++(-8,13.8564064605510)
--++(4,0)
--++(1,-1.73205080756888)
--++(1,1.73205080756888)
--++(4,0)
--++(2,-3.46410161513775)
--++(2,3.46410161513775)
--++(4,0)
--++(3,-5.19615242270663)
--++(-5,-8.66025403784439);

\draw(26,0)
--++(-8,0)
--++(-4,6.92820323027551)
--++(4,6.92820323027551)
--++(4,0)
--++(2,-3.46410161513775)
--++(4,0)
--++(2,-3.46410161513775)
--++(-4,-6.92820323027551);

\draw(44,0)
--++(-8,0)
--++(-4,6.92820323027551)
--++(4,6.92820323027551)
--++(8,0)
--++(4,-6.92820323027551)
--++(-4,-6.92820323027551);

\draw(38,6.92820323027551)
--++(2,3.46410161513775)
--++(2,-3.46410161513775)
--++(-2,-3.46410161513775)
--++(-2,3.46410161513775);

\end{tikzpicture}
\caption{Polygons of various shapes that can be tiled with lozenges.
\newline
Left: A `half-hexagon' with V-shaped cuts on the top boundary.
\newline
Center: A regular hexagon with a corner removed.
\newline
Right: A regular hexagon with a diamond shaped hole in the center.}
\label{figPolygons}
\end{figure}

\subsection{Determinantal random point processes}

In this section we give a brief introduction to determinantal random point
processes that suffices for our purposes. See Johansson, \cite{Joh06b},
for a more complete treatment.

Let $\Lambda$ be a Polish space. Fix $N \in \N \cup \{\infty\}$ and
$Y \subset \Lambda^N$, a space of configurations of $N$-particles of
$\Lambda$. Denote each $y \in Y$ as $y = (y_1,\ldots,y_N)$. Assume, for all
$y \in Y$ and compact Borel sets $B \subset \Lambda$, that the number of particles
from $y$ contained in $B$ is finite, i.e., $\# \{y_i \in B\} < \infty$. Let
$\mathcal{F}$ be the sigma-algebra generated by sets of the form
$\{y \in Y: \# \{y_i \in B\} = m \}$ for all $m \le N$ and Borel sets
$B \subset \Lambda$. A probability space of the form $(Y,\mathcal{F},\P)$ is
referred to as a {\em random point process}.

Given such a process, $m \leq N$, and $B \subset \Lambda^m$,
define $N_B^m : Y \to \N$ by,
\begin{equation*}
N_B^m (y) := \# \{(y_{i_1},\ldots,y_{i_m}) \in B : i_1 \ne \cdots \ne i_m \},
\end{equation*}
for all $y \in Y$. In words, $N_B^m (y)$ is the number of distinct
$m$-tuples of particles from $y$ that are contained in $B$. Then define a
measure on $\Lambda^m$ by $B \mapsto \E [ N_B^m ]$ for all Borel subsets
$B \subset \Lambda^m$. Assume that this is well-defined and finite whenever
$B$ is bounded. Then, given a reference measure $\lambda$ on $\Lambda$, the
density of the above measure with respect to $\lambda^m$, whenever it exists,
is referred to as the {\em $m^\text{th}$ correlation function}, $\rho_m$. That
is,
\begin{equation*}
\int_B \rho_m(x_1,\ldots,x_m) d\lambda[x_1] \ldots d\lambda[x_m] = \E [ N_B^m ],
\end{equation*}
for all Borel subsets $B \subset \Lambda^m$.

A random point process is called {\em determinantal} if all correlation functions
exist and there exists a function $K : \Lambda^2 \to \C$ for which
\begin{equation*}
\rho_m(x_1,\ldots,x_m) = \det[K(x_i,x_j)]_{i,j=1}^m,
\end{equation*}
for all $x_1,\ldots,x_m \in \Lambda$ and $m \leq N$. $K$ is called the {\em correlation
kernel} of the process.

\subsection{The determinantal structure of discrete Gelfand-Tsetlin patterns}
\label{sectdsodgtp}

For the remainder of the paper we restrict to the study of the interlaced
particle configurations that we introduced in section \ref{secTilingsHalf}.
Recall that adjacent rows are a distance of $\frac{\sqrt{3}}2$ apart, and
that particles on adjacent rows alternate between integer and half-integer
positions. Therefore, since it is more convenient to study configurations
of particles in $\Z^2$, we shift each row $r$ vertically by
$- (1 - \frac{\sqrt{3}}2) (n-r)$, and horizontally by $\frac12 (n-r)$. This
preserves the interlacing constraint, and the resulting configuration
is called a {\em Gelfand-Tsetlin pattern}, which we now define
rigorously:
\begin{definition}
\label{defGT}
A discrete Gelfand-Tsetlin pattern of depth $n$ is an $n$-tuple, denoted
$(y^{(1)},y^{(2)},\ldots,y^{(n)}) \in  \Z \times \Z^2 \times \cdots \times \Z^n$,
which satisfies the interlacing constraint
\begin{equation*}
y_1^{(r+1)} \; \ge \; y_1^{(r)} \; > \; y_2^{(r+1)} \; \ge \; y_2^{(r)}
\; > \cdots \ge \; y_r^{(r)} \; > \; y_{r+1}^{(r+1)},
\end{equation*}
for all $r \in \{1,\ldots,n-1\}$, denoted $y^{(r+1)} \succ y^{(r)}$. Equivalently
this can be considered as an interlaced configuration of $\frac12 n (n+1)$
particles in $\Z \times \{1,\ldots,n\}$ by placing a particle at position
$(u,r) \in \Z \times \{1,\ldots,n\}$ whenever $u$ is an element of $y^{(r)}$.
\end{definition}
A Gelfand-Tsetlin pattern of depth $4$ is shown on the
left of figure \ref{figGTAsyShape}.

For each $n\ge1$, fix $x^{(n)} \in \Z^n$ with $x_1^{(n)} > x_2^{(n)} > \cdots > x_n^{(n)}$.
Consider the uniform probability measure, $\nu_n$, on the set of discrete Gelfand-Tsetlin
patterns of depth $n$ with the particles on row $n$ in the deterministic positions defined
by $x^{(n)}$:
\begin{equation*}
\nu_n[(y^{(1)},\ldots,y^{(n)})]
:= \frac1{Z_n} \cdot \left\{
\begin{array}{rcl}
1 & ; &
\text{when} \; x^{(n)} = y^{(n)} \succ y^{(n-1)} \succ \cdots \succ y^{(1)}, \\
0 & ; & \text{otherwise},
\end{array}
\right.
\end{equation*}
where $Z_n > 0$ is a normalisation constant. This measure, and the equivalent
description of Gelfand-Tsetlin patterns given in definition \ref{defGT},
induces a random point process on interlaced configurations of particles in
$\Z \times \{1,\ldots,n\}$. This process is determinantal, a fact which
follows from the equivalent description of the system as perfect matchings
of hexagonal planar graphs (see, for example, Kenyon, \cite{Ken97}). This
observation does not, however, provide a convenient expression for the
correlation kernel of the process. In section \ref{secdofckfit}, we use the
Gelfand-Tsetlin description to find such an expression. We denote the
correlation kernel by $K_n : (\Z \times \{1,\ldots,n\})^2 \to \C$, a
function of pairs of particle positions. Ignoring the deterministic
particles on row $n$, interlacing implies that we need only consider
those particle positions, $(u,r), (v,s) \in \Z \times \{1,\ldots,n-1\}$,
which satisfy $u \ge x_n^{(n)}+n-r$ and $v \ge x_n^{(n)}+n-s$. For all
such $(u,r),(v,s)$, we show in section \ref{secdofckfit} that
\begin{equation}
\label{eqKnrusvFixTopLine}
K_n((u,r),(v,s)) = \widetilde{K}_n ((u,r),(v,s)) - \phi_{r,s}(u,v),
\end{equation}
where
\begin{align*}
\lefteqn{\widetilde{K}_n ((u,r),(v,s))} \\
& := \frac{(n-s)!}{(n-r-1)!} \sum_{k=1}^n 1_{(x_j^{(n)} \ge  u)} \sum_{l=v+s-n}^v
\frac{\prod_{j=u+r-n+1}^{u-1} (x_k^{(n)} - j)}{\prod_{j=v+s-n, \; j \neq l}^v (l - j)} \;
\prod_{i=1, \; i \neq k}^n \left( \frac{l - x_i^{(n)}}{x_k^{(n)} - x_i^{(n)}} \right),
\end{align*}
and
\begin{equation*}
\phi_{r,s} (u,v)
:= 1_{(v \ge u)} \cdot \left\{
\begin{array}{lll}
0 & ; & \text{when} \; s \le r, \\
1 & ; & \text{when} \; s = r+1, \\
\frac1{(s-r-1)!} \prod_{j=1}^{s-r-1} (v-u+s-r-j) & ; & \text{when} \; s > r+1.
\end{array}
\right.
\end{equation*}
This is a generalisation of Defosseux, \cite{Def08}, and Metcalfe,
\cite{Met13}, which consider a similar process on configurations in
$\R \times \{1,\ldots,n\}$. The kernel in \cite{Def08} and \cite{Met13}
is recovered from the above kernel using asymptotic arguments. The above
kernel was also independently obtained by Petrov, \cite{Pet14}. Our proof,
based on the methods used in \cite{Def08} and \cite{Met13}, is more
elementary than that of Petrov. We highlight the differences at the
beginning of section \ref{secdofckfit}.

\begin{figure}[t]
\centering
\begin{tikzpicture}[xscale=3/4,yscale=3/4]

\draw (0,4.5) node {$y_4^{(4)}$};
\draw (2,4.5) node {$y_3^{(4)}$};
\draw (4,4.5) node {$y_2^{(4)}$};
\draw (6,4.5) node {$y_1^{(4)}$};
\draw (1,3) node {$y_3^{(3)}$};
\draw (3,3) node {$y_2^{(3)}$};
\draw (5,3) node {$y_1^{(3)}$};
\draw (2,1.5) node {$y_2^{(2)}$};
\draw (4,1.5) node {$y_1^{(2)}$};
\draw (3,0) node {$y_1^{(1)}$};

\draw (.45,3.75) node [rotate=-55] {$<$};
\draw (1.45,3.75) node [rotate=55] {$\le$};
\draw (2.45,3.75) node [rotate=-55] {$<$};
\draw (3.45,3.75) node [rotate=55] {$\le$};
\draw (4.45,3.75) node [rotate=-55] {$<$};
\draw (5.45,3.75) node [rotate=55] {$\le$};
\draw (1.45,2.25) node [rotate=-55] {$<$};
\draw (2.45,2.25) node [rotate=55] {$\le$};
\draw (3.45,2.25) node [rotate=-55] {$<$};
\draw (4.45,2.25) node [rotate=55] {$\le$};
\draw (2.45,.75) node [rotate=-55] {$<$};
\draw (3.45,.75) node [rotate=55] {$\le$};


\draw (7.5,3.5)
--++ (5,0)
--++ (0,-2.5)
--++ (-2.5,0)
--++ (-2.5,2.5);

\draw (7.5,3.9) node {$(a,1)$};
\draw (12.5,3.9) node {$(b,1)$};
\draw (12.5,.6) node {$(b,0)$};
\draw (10,.6) node {$(a+1,0)$};

\draw (15+1/3,14/3)
--++ (7/3,0)
--++ (7/3,-7/3)
--++ (0,-7/3)
--++ (-7/3,0)
--++ (-7/3,7/3)
--++ (0,7/3);
\draw [dashed] (15+1/3,14/3)
--++ (-7/3,0)
--++ (7/3,-7/3);
\draw [dashed] (17+2/3,14/3)
--++ (7/3,0)
--++ (0,-7/3);

\draw (14+1/6,5.1) node {$\frac12$};
\draw (16+1/2,5.1) node {$\frac12$};
\draw (18+5/6,5.1) node {$\frac12$};
\draw (20.3,3+1/2) node {$\frac12$};
\draw (20.3,1+1/6) node {$\frac12$};
\draw (18+5/6,-.4) node {$\frac12$};
\draw (16+1/2,.7) node {$\frac1{\sqrt{2}}$};
\draw (14+1/6,3) node {$\frac1{\sqrt{2}}$};

\draw (15.1,3+1/2) node {$\frac12$};
\draw (19.1,3.8) node {$\frac1{\sqrt{2}}$};

\begin{axis}[hide axis, xmin=0,xmax=20, ymin=0,ymax=4.66666666667, samples=30, smooth, x=1cm, y=1cm]
\addplot [domain=0:2*pi, thin]
({13 + 14/3*(3/4 + (sqrt(3)/4)*(cos(deg(x))) + (1/4)*(1-sin(deg(x))))},
{14/3-(14/3)*(1-sin(deg(x)))/2});
\end{axis}

\end{tikzpicture}
\caption{Left: A visualisation of a Gelfand-Tsetlin pattern of depth $4$.
\newline
Middle: $\{(\chi,\eta) \in \R \times [0,1] : \chi \in [a + 1 - \eta,b] \}$,
where $a$ and $b$ satisfy $b-a>1$ (see hypothesis \ref{hypWeakConv}).
Equations (\ref{eqEsc1}) and (\ref{eqEsc2}) imply that the bulk of the
rescaled particles of the Gelfand-Tsetlin patterns lie asymptotically
in this region as $n \to \infty$.
\newline
Right: The shifted asymptotic shape of the rescaled regular hexagon. The areas
enclosed by the dashed lines represent the added regions of deterministic
lozenges/particles, as described in section \ref{secTilingsHalf}.}
\label{figGTAsyShape}
\end{figure}
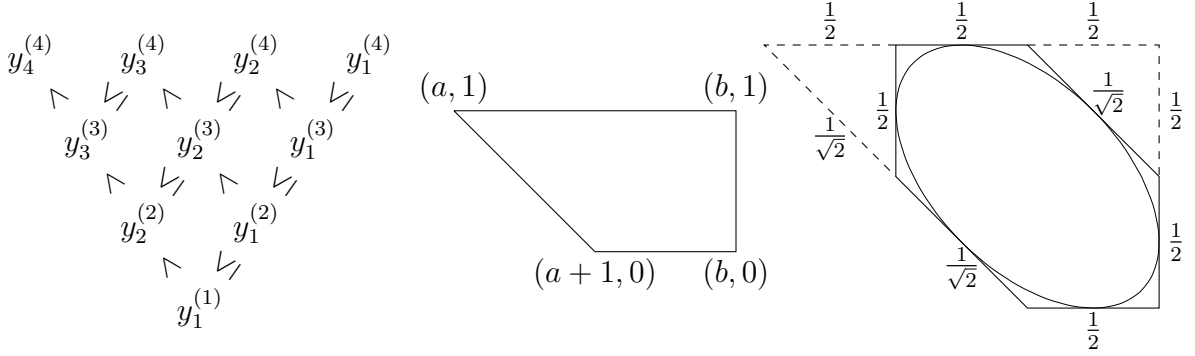

\subsection{Motivation and statement of main results}
\label{secmasomr}

In this paper we study the asymptotic behaviour of the determinantal system
introduced in the previous section as $n \to \infty$, under the assumption
that the (rescaled) empirical distribution of the deterministic particles
on row $n$ converges weakly to a measure with compact support:
\begin{hyp}
\label{hypWeakConv}
Let $\mu$ be a probability measure on $\R$ with compact support,
$\supp(\mu) \subset [a,b]$. Assume that $b-a>1$, $\{a,b\} \subset \supp(\mu)$,
and
\begin{equation*}
\frac1n \sum_{i=1}^n \delta_{x_i^{(n)}/n} \to \mu
\end{equation*}
as $n \to \infty$, in the sense of weak convergence of measures.
\end{hyp}

\begin{rem}
\label{remmu}
Recalling that $\{x_1^{(n)},x_2^{(n)},\ldots,x_n^{(n)}\} \subset \Z$, it trivially follows
that measures, $\mu$, which satisfy hypothesis \ref{hypWeakConv} are absolutely
continuous with respect to Lebesgue measure, $\l$. Moreover, the density of $\mu$
takes values in $[0,1]$, and so $\mu \le \l$. Finally, $\supp(\mu)$ and
$\supp(\l-\mu)$ contain no isolated singletons.
\end{rem}

For each $n$, we now rescale the vertical and horizontal positions of the particles
of the Gelfand-Tsetlin pattern of depth $n$ by $\frac1n$. Recall that
$\{x_1^{(n)},x_2^{(n)},\ldots,x_n^{(n)}\}$ are the (deterministic) particles
on the top row of the Gelfand-Tsetlin pattern of depth $n$. Note, hypothesis
\ref{hypWeakConv} easily gives,
\begin{equation}
\label{eqEsc1}
\# \bigg\{ x \in \bigg\{ \frac{x_1^{(n)}}n, \frac{x_2^{(n)}}n, \ldots, \frac{x_n^{(n)}}n \bigg\}
: x \in [a,b] \bigg\} = n + o(n),
\end{equation}
for all $n$ sufficiently large. Thus, at most $o(n)$ of the rescaled particles
on the top row are not contained in the interval $[a,b]$ for all $n$ sufficiently
large. Next, fix any $\eta \in [0,1]$, and any $\{m_n\}_{n\ge1} \subset \Z$
with $m_n \in \{1,2,\ldots,n-1\}$ and $\frac{m_n}n = \eta + o(1)$ for all $n$
sufficiently large. Also, let
$\{ y_1^{(m_n)}, y_2^{(m_n)}, \ldots, y_{m_n}^{(m_n)} \}$ denote the
(random) particles on row $m_n$ of the Gelfand-Tsetlin pattern of depth
$n$. Equation (\ref{eqEsc1}) and the interlacing constraint then give,
\begin{equation}
\label{eqEsc2}
\# \bigg\{ y \in \bigg\{ \frac{y_1^{(m_n)}}n, \frac{y_2^{(m_n)}}n,
\ldots, \frac{y_{m_n}^{(m_n)}}n \bigg\} : y \in [a+1-\eta,b] \bigg\} = m_n + o(n),
\end{equation}
for all $n$ sufficiently large. Thus, at most $o(n)$ of the rescaled particles
on row $m_n$ are not contained in the interval $[a+1-\eta,b]$ for all $n$ sufficiently
large. Thus, since $\eta$ is any value in $[0,1]$, the bulk of the rescaled
particles of the Gelfand-Tsetlin patterns lie asymptotically in
$\{(\chi,\eta) \in \R \times [0,1] : \chi \in [a + 1 - \eta,b] \}$ as $n \to \infty$.
This geometric subset of $\R^2$ is shown in the middle of figure \ref{figGTAsyShape}.
The example of the regular hexagon is shown on the right of figure
\ref{figGTAsyShape}. This figure is obtained from figure \ref{figArctic} by
performing the shift described at the beginning of section \ref{sectdsodgtp}.
We recover this figure in section \ref{secEx3}, using our techniques.
 
The local asymptotic behaviour of particles near a point, $(\chi,\eta)$,
in the shape in the middle of figure \ref{figGTAsyShape}, can be examined
by considering the asymptotic behaviour of $K_n((u_n,r_n),(v_n,s_n))$ as
$n \to \infty$, where $\{(u_n,r_n)\}_{n\ge1}$ and $\{(v_n,s_n)\}_{n\ge1}$
are sequences in $\Z^2$ which satisfy:
\begin{hyp}
\label{hypunrnsnvn}
Fix $(\chi,\eta)$ in the shape in middle of figure
\ref{figGTAsyShape}, i.e., $(\chi,\eta) \in [a,b] \times [0,1]$
with $b \ge \chi \ge \chi + \eta - 1 \ge a$. Assume that
$\frac1n (u_n,r_n) \to (\chi,\eta)$ and $\frac1n (v_n,s_n) \to (\chi,\eta)$
as $n \to \infty$.
\end{hyp}

The asymptotic behaviour of $K_n((u_n,r_n),(v_n,s_n))$ can be examined
using steepest descent techniques. To see this, first note that equation
(\ref{eqKnrusvFixTopLine}) and the Residue Theorem give,
\begin{equation}
\label{eqKnrnunsnvn1}
K_n((u_n,r_n),(v_n,s_n))
= \left( \frac{(n-s_n)!}{(n-r_n-1)!} \; \frac{n^{n-r_n-1}}{n^{n-s_n}} \right) J_n
- \phi_{r_n,s_n}(u_n,v_n),
\end{equation}
where, dropping the superscript from $x^{(n)}$,
\begin{equation}
\label{eqJnrnunsnvn1}
J_n := \frac1{(2\pi i)^2} \int_{\g_n} dw \int_{\G_n} dz \;
\frac{\prod_{j=u_n+r_n-n+1}^{u_n-1}
(z - \frac{j}n)}{\prod_{j=v_n+s_n-n}^{v_n} (w - \frac{j}n)} \;
\frac1{w-z} \; \prod_{i=1}^n \left( \frac{w - \frac{x_i}n}{z - \frac{x_i}n} \right),
\end{equation}
for all $n \in \N$, where $\g_n$ and $\G_n$ are any counter-clockwise closed contours
that satisfy the requirements of figure \ref{figcontours1}. Also note that the integrand
can be written as
\begin{equation*}
\frac{\exp(n f_n(w) - n \tilde{f}_n(z))}{w-z},
\end{equation*}
for all
$w,z \in \C \setminus \R$, where
\begin{align*}
f_n(w)
& := \frac1n \sum_{i=1}^n \log \left( w - \frac{x_i}n \right) -
\frac1n \sum_{j=v_n+s_n-n}^{v_n} \log \left( w - \frac{j}n \right), \\
\tilde{f}_n(z)
& := \frac1n \sum_{i=1}^n \log \left( z - \frac{x_i}n \right) -
\frac1n \sum_{j=u_n+r_n-n+1}^{u_n-1} \log \left( z - \frac{j}n \right),
\end{align*}
and $\log$ denotes the principal logarithm. Finally, inspired by
hypotheses \ref{hypWeakConv} and \ref{hypunrnsnvn} we define
\begin{equation*}
f_{(\chi,\eta)} (w)
:= \int_a^b \log (w-x) \mu[dx] - \int_{\chi+\eta-1}^\chi \log (w-x) dx,
\end{equation*}
for all $w \in \C \setminus \R$. Note that $f_n(w) \to f_{(\chi,\eta)} (w)$
and $\tilde{f}_n(z) \to f_{(\chi,\eta)} (z)$ as $n \to \infty$ for all
$w,z \in \C \setminus \R$.

\begin{figure}[t]
\centering
\begin{tikzpicture}

\draw (0,0) ellipse (5cm and 2cm);
\draw (0,0) ellipse (3cm and 1.5cm);
\draw(-6,0) --++(12,0);

\draw[arrows=->,line width=1pt](.01,2)--(0,2);
\draw[arrows=->,line width=1pt](-.01,-2)--(0,-2);
\draw[arrows=->,line width=1pt](.01,1.5)--(0,1.5);
\draw[arrows=->,line width=1pt](-.01,-1.5)--(0,-1.5);

\draw (0,2.3) node {$\g_n$};
\draw (0,1.2) node {$\G_n$};

\end{tikzpicture}
\caption{$\G_n$ contains $\{\frac1n x_j : x_j \ge u_n\}$ and none of
$\{\frac1n x_j : x_j \le u_n+r_n-n \}$. $\g_n$ contains $\G_n$ and
$\{\frac1n v_n, \frac1n (v_n-1), \ldots, \frac1n (v_n+s_n-n)\}$.
Both contours are oriented counter-clockwise.}
\label{figcontours1}
\end{figure}
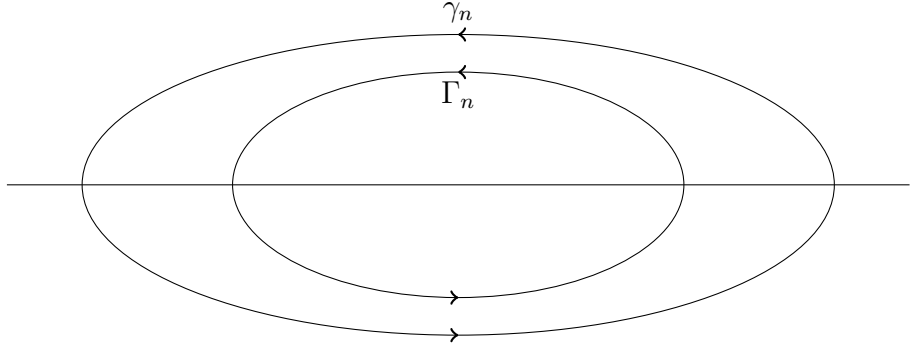

The above expression for the integrand, and the asymptotic behaviour of
the functions $f_n$ and $\tilde{f}_n$ in the exponent, suggest that the
contour integral representation of the kernel is amenable to steepest
descent techniques. More exactly, steepest descent analysis suggests that,
as $n \to \infty$, the asymptotic behaviour of $K_n((u_n,r_n),(v_n,s_n))$
depends on the behaviour of the roots of $f_{(\chi,\eta)}'$:
\begin{equation}
\label{eqf'0}
f_{(\chi,\eta)}' (w)
= \int_a^b \frac{\mu[dx]}{w-x} - \int_{\chi+\eta-1}^\chi \frac{dx}{w-x},
\end{equation}
for all $w \in \C \setminus \R$. Note also that hypotheses \ref{hypWeakConv} and
\ref{hypunrnsnvn} imply that $\mu[\chi,b] \ge 0$,
$(\l-\mu)[\chi+\eta-1,\chi] \ge 0$ and $\mu[a,\chi+\eta-1] \ge 0$,
where $\l$ is Lebesgue measure. It is therefore natural to write,
\begin{equation}
\label{eqf'}
f_{(\chi,\eta)}' (w)
= \int_\chi^b \frac{\mu[dx]}{w-x}
- \int_{\chi+\eta-1}^\chi \frac{(\l-\mu)[dx]}{w-x}
+ \int_a^{\chi+\eta-1} \frac{\mu[dx]}{w-x},
\end{equation}
for all $w \in \C \setminus \R$. Therefore $f_{(\chi,\eta)}'$ extends
analytically to
\begin{equation*}
\C \setminus \left (\supp(\mu |_{[\chi,b]})
\; \cup \; \supp((\l-\mu) |_{[\chi+\eta-1,\chi]})
\; \cup \; \supp(\mu |_{[a,\chi+\eta-1]}) \right).
\end{equation*}
The main result of section \ref{sectrofn'}, theorem
\ref{thmf'}, characterises all possible behaviours of the
roots of $f_{(\chi,\eta)}'$ in this domain.

\begin{rem}
\label{remCauTrans}
For simplicity of notation, we omit the subscript from $f_{(\chi,\eta)}$
whenever confusion is impossible. Moreover, note that the first term on
the right hand side of equation (\ref{eqf'0}) is the function,
\begin{equation*}
w \mapsto \int_a^b \frac{\mu[dx]}{w-x},
\end{equation*}
for all $w \in \C \setminus \R$. This function extends
analytically to $\C \setminus \supp(\mu)$, is conventionally called
the Cauchy transform of $\mu$, and is denoted by $C$.
\end{rem}

We now use the behaviours of the roots observed in theorem \ref{thmf'}
to divide the shape in the middle of figure \ref{figGTAsyShape} into
regions in which different asymptotic behaviours can be expected.
An important region is the following:
\begin{definition}
\label{defLiq}
The liquid region, $\LL$, is the set of all $(\chi,\eta)$ in the shape
in middle of figure \ref{figGTAsyShape}, i.e., $(\chi,\eta) \in [a,b] \times [0,1]$
and $b \ge \chi \ge \chi + \eta - 1 \ge a$, for which $f'$ has non-real roots.
\end{definition}
In section \ref{secGeo} we consider the geometric interpretation of this region.
First note that non-real roots of $f'$ occur in complex conjugate pairs. Theorem
\ref{thmf'} then implies that $(\chi,\eta) \in \LL$ if and only if $f'$ has exactly
$2$ roots in $\C \setminus \R$, counting multiplicities. More exactly, there exists a
unique $w_c \in \mathbb{H} := \{ w \in \C : \text{Im}(w) > 0 \}$ with
$f'(w_c) = f'(\overline{w_c}) = 0$. This defines a map from $\LL$ to $\mathbb{H}$.
In theorem \ref{thmwc} we show that this map is a homeomorphism (indeed, it is a
diffeomorphism, as we shall see in section \ref{secDiff}). Therefore $\LL$ is a
non-empty, open, connected set. Moreover, an explicit expression is obtained for
the inverse of the homeomorphism, denoted by,
\begin{equation*}
(\chi_\LL(\cdot),\eta_\LL(\cdot)) : \mathbb{H} \to \LL \subset [a,b] \times [0,1].
\end{equation*}

Steepest descent analysis and the above observations suggest, as
$n \to \infty$, that universal bulk asymptotic behaviour should be observed
whenever $(\chi,\eta) \in \LL$: Fixing $(\chi,\eta) \in \LL$, and choosing
the parameters $(u_n,r_n)$ and $(v_n,s_n)$ of hypothesis \ref{hypunrnsnvn}
appropriately, it should be possible to show that $K_n((u_n,r_n),(v_n,s_n))$
converges to the {\em Sine} kernel as $n \to \infty$. Note that an analogous
result was obtained in Metcalfe, \cite{Met13}, which considers similar
processes on configurations of particles in $\R \times \{1,\ldots,n\}$.
In the current situation, Petrov, \cite{Pet14}, confirmed universal bulk asymptotic
behaviour when the measure, $\mu$, of hypothesis \ref{hypWeakConv} is given by
the Lebesgue measure restricted to a finite number of closed disjoint intervals.

In section \ref{sectbotlr1} we use the above homeomorphism to
examine $\partial \LL$, the boundary of $\LL$. The main result of this
section, lemma \ref{lemBddEdge1}, defines a subset of $\partial \LL$
for any measure, $\mu$, of hypothesis \ref{hypWeakConv}. This is
defined by showing that $(\chi_\LL(\cdot),\eta_\LL(\cdot)) : \mathbb{H} \to \LL$
has a unique continuous extension to the open set $R \subset \R$ given by,
\begin{equation}
\label{eqR}
R := (\; \overline{(\R \setminus \supp(\mu)) \cup (\R \setminus \supp(\l-\mu))} \;)^\circ.
\end{equation}
Above we take the interior of the closure of the union. The extension is denoted by,
\begin{equation*}
(\chi_\EE(\cdot),\eta_\EE(\cdot)) : R \to [a,b] \times [0,1],
\end{equation*}
and an explicit expression is given. We conclude that
$(\chi_\EE(t),\eta_\EE(t)) \in \partial \LL$ for all $t \in R$,
and $(\chi_\EE(\cdot),\eta_\EE(\cdot)) : R \to \partial \LL$
is a smooth curve parameterised over $R$. Also, as we shall shortly
see, this map is injective. Finally we define:
\begin{definition}
\label{defEdge0}
The edge, $\EE \subset \partial \LL$, is the image of the smooth curve
$(\chi_\EE(\cdot),\eta_\EE(\cdot)) : R \to \partial \LL$.
\end{definition}

Lemma \ref{lemBddEdge1}, and the other results of section \ref{sectbotlr1},
give a complete description of $\partial \LL$ only when the measure $\mu$,
of hypothesis \ref{hypWeakConv}, is restricted to an interesting sub-class
of possible measures (see lemma \ref{lemBddEdge3}). When $\mu$ is not
restricted to the sub-class, these results only give a partial description.
Figure \ref{figexs}, below, depicts those sections of $\partial \LL$
obtained from these results for various examples of $\mu$. The examples are
examined in detail in section \ref{secEx}.

In section \ref{secedge} we construct an alternative description of the edge,
$\EE$, in terms of the behaviour of the roots of $f'$. This is analogous to
definition \ref{defLiq} for the liquid region, $\LL$:
\begin{definition}
\label{defEdge}
The edge, $\EE$, is the disjoint union
$\EE := \EE_\mu \cup \EE_{\l-\mu} \cup \EE_0 \cup \EE_1 \cup \EE_2$, where
\begin{itemize}
\item
$\EE_\mu$ is the set of all $(\chi,\eta)$ in the shape in the middle of
figure \ref{figGTAsyShape} for which $f'$ has a repeated root in
$\R \setminus [\chi+\eta-1,\chi]$.
\item
$\EE_{\l-\mu}$ is the set of all $(\chi,\eta)$ for which $f'$ has a
repeated root in $(\chi+\eta-1,\chi)$.
\item
$\EE_0$ is the set of all $(\chi,\eta)$ for which $\eta=1$ and $f'$ has a root
at $\chi \; (=\chi+\eta-1)$.
\item
$\EE_1$ is the set of all $(\chi,\eta)$ for which $\eta<1$ and $f'$ has a root at $\chi$.
\item
$\EE_2$ is the set of all $(\chi,\eta)$ for which $\eta<1$ and $f'$ has a root
at $\chi+\eta-1$.
\end{itemize}
\end{definition}
The fact that the above union is disjoint, and that $\LL$
and $\EE$ are disjoint, follows from corollary \ref{corf'}. To see
that the definitions are equivalent, note that theorem \ref{thmf'}
implies that $f'$ has at most one real-valued repeated root. Then,
starting with definition \ref{defEdge}, we can define a map from
$\EE$ to $\R$ by mapping $(\chi,\eta) \in \EE$ to the relevant root
of $f'$, i.e.,
\begin{itemize}
\item
to the unique real-valued repeated root whenever
$(\chi,\eta) \in \EE_\mu \cup \EE_{\l-\mu}$.
\item
to the root $\chi$ whenever $(\chi,\eta) \in \EE_0 \cup \EE_1$.
\item
to the root $\chi+\eta-1$ whenever $(\chi,\eta) \in \EE_0 \cup \EE_2$.
\end{itemize}
Theorem \ref{thmEdge} implies that this bijectively maps $\EE$ to $R$,
defined above. Moreover, the inverse of this map is the smooth curve of definition
\ref{defEdge0}, i.e., $(\chi_\EE(\cdot),\eta_\EE(\cdot)) : R \to \partial \LL$.
Thus the curve is a smooth and bijective map from $R$ to the edge, $\EE$, of
definition \ref{defEdge}. Therefore the definitions are trivially equivalent.

We end section \ref{secedge} with lemma \ref{lemEdge2}, which further
clarifies the equivalence of the above definitions. First recall
that $\mu \le \l$ (see remark \ref{remmu}), and note that
$\R \setminus \supp(\mu)$ and $\R \setminus \supp(\l-\mu)$ are disjoint
open sets. Equation (\ref{eqR}) thus gives
\begin{equation*}
R = (\R \setminus \supp(\mu)) \cup (\R \setminus \supp(\l-\mu)) \cup R_1 \cup R_2,
\end{equation*}
where
\begin{itemize}
\item
$R_1$ is the set of all $t \in \R$ for which there exists
an $\e>0$ with $(t,t+\e) \subset \R \setminus \supp(\mu)$
and $(t-\e,t) \subset \R \setminus \supp(\l-\mu)$.
\item
$R_2$ is the set of all $t \in \R$ for which there exists
an $\e>0$ with $(t,t+\e) \subset \R \setminus \supp(\l-\mu)$
and $(t-\e,t) \subset \R \setminus \supp(\mu)$.
\end{itemize}
Intuitively, one can think of $R_1$ as the set of all points
where the density of $\mu$ jumps from $1$ (on the left) to
$0$ (on the right), and $R_2$ as the set of all points where
it jumps from $0$ to $1$.
Next, fix $t \in R$, and define,
\begin{equation*}
(\chi,\eta) := (\chi_\EE(t),\eta_\EE(t))
\hspace{0.25cm} \text{and} \hspace{0.25cm}
f_t' := f_{(\chi, \eta)}',
\end{equation*}
where the last term is the function given in equation (\ref{eqf'}).
The equivalence of definitions \ref{defEdge0} and \ref{defEdge},
discussed above, implies that $t$ is a root of $f_t'$.
Then, letting $C : \C \setminus \supp(\mu) \to \C$ be
the Cauchy transform of $\mu$ (see remark \ref{remCauTrans}),
and letting $m_t\ge1$ denote the multiplicity of $t$ as
a root of $f_t'$, lemma \ref{lemEdge2} implies that the
following 9 cases exhaust all possibilities:
\begin{enumerate}
\item
$(\chi,\eta) \in \EE_\mu$,
$t \in \R \setminus \supp(\mu)$ with $C(t) \neq 0$, and $m_t=2$.
\item
$(\chi,\eta) \in \EE_\mu$,
$t \in \R \setminus \supp(\mu)$ with $C(t) \neq 0$, and $m_t=3$.
\item
$(\chi,\eta) \in \EE_{\l-\mu}$,
$t \in \R \setminus \supp(\l-\mu)$, and $m_t=2$.
\item
$(\chi,\eta) \in \EE_{\l-\mu}$,
$t \in \R \setminus \supp(\l-\mu)$, and $m_t=3$.
\item
$(\chi,\eta) \in \EE_0$,
$t \in \R \setminus \supp(\mu)$ with $C(t) = 0$, and $m_t=1$.
\item
$(\chi,\eta) \in \EE_1$, $t \in R_1$, and $m_t=1$.
\item
$(\chi,\eta) \in \EE_1$, $t \in R_1$, and $m_t=2$.
\item
$(\chi,\eta) \in \EE_2$, $t \in R_2$, and $m_t=1$.
\item
$(\chi,\eta) \in \EE_2$, $t \in R_2$, and $m_t=2$.
\end{enumerate}

In section \ref{seclgpotee} we investigate the local geometric properties
of the edge curve. First, fix $t \in R$, and define the (un-normalised)
orthogonal vectors, $\mathbf{x} = \mathbf{x}(t)$ and
$\mathbf{y} = \mathbf{y}(t)$, as in lemma \ref{lemLocGeo}. Also define
$(\chi,\eta) := (\chi_\EE(t),\eta_\EE(t))$. A Taylor
expansion of the edge curve then gives (see lemma \ref{lemLocGeo}),
\begin{equation*}
(\chi_\EE(s),\eta_\EE(s)) - (\chi,\eta)
= a(s) \mathbf{x} + b(s) \mathbf{y},
\end{equation*}
for all $s \in R$ sufficiently close to $t$, where
\begin{align*}
a(s) &= a_1 (s-t) + a_2 (s-t)^2 + O((s-t)^3), \\
b(s) &= b_1 (s-t)^2 + b_2 (s-t)^3 + O((s-t)^4),
\end{align*}
and $a_1 = a_1(t)$, $a_2 = a_2(t)$, $b_1 = b_1(t)$ and $b_2 = b_2(t)$
are known. Next, we investigate $a_1, a_2, b_1$
and $b_2$ for each of the exhaustive cases, (1-9) discussed above.
In lemma \ref{lemLineCusp} we show that:
\begin{itemize}
\item
$a_1 \neq 0$ and $b_1 \neq 0$ in cases (1, 3, 5, 6, 8).
\item
$a_1 = b_1 = 0$, $a_2 \neq 0$ and $b_2 \neq 0$ in cases (2, 4, 7, 9).
\end{itemize}
The above Taylor expansion then implies that the edge curve behaves like
a parabola in a neighbourhood of $(\chi,\eta)$ in cases
(1, 3, 5, 6, 8), with tangent vector $\mathbf{x}$ and normal vector
$\mathbf{y}$. Also the edge curve behaves like an algebraic cusp of
first order in a neighbourhood of $(\chi,\eta)$ in cases
(2, 4, 7, 9), and the vector $\mathbf{x}$ can be said to define the
`orientation' of the cusp. Also, since cases (1-9) are exhaustive, no
other behaviour is possible.

In the paper, \cite{Duse15c}, we use steepest descent techniques to
examine the edge asymptotic behaviour for cases (1-4). Recall that in
case (1), $t \in \R \setminus \supp(\mu)$ is a root
of $f_t'$ of multiplicity $2$, and the edge curve behaves locally like
a parabola in a neighbourhood of $(\chi_\EE(t),\eta_\EE(t))$. In case (2),
$t \in \R \setminus \supp(\mu)$ is a root of $f_t'$ of multiplicity $3$,
and the edge curve behaves like an algebraic cusp of first order in a
neighbourhood of $(\chi_\EE(t),\eta_\EE(t))$. In \cite{Duse15c}, we
confirm universal edge asymptotic behaviour for these cases: As $n \to \infty$,
choosing the parameters $(u_n,r_n)$ and $(v_n,s_n)$ of hypothesis
\ref{hypunrnsnvn} appropriately, $K_n((u_n,r_n),(v_n,s_n))$ converges to
the {\em Airy} kernel in case (1), and the {\em Pearcey} kernel in case
(2). We also show a similar result in cases (3)
and (4), except now the asymptotic behaviour of the correlation
kernel of the `holes' is examined, rather than that of the particles.

In the paper, \cite{Duse15d}, we use steepest descent techniques
to examine the edge asymptotic behaviour for cases (7) and (9).
In these cases, $t \in R_1 \cup R_2$ is a root of $f_t'$ of
multiplicity $2$, and the edge curve behaves like an algebraic
cusp of first order. Normally, edge universality implies the Pearcey
point process at cusps, but this does not occur in these cases.
As stated at the end of section \ref{secTilingsHalf},
we obtain a novel point process, which we call the
{\em Cusp-Airy} process.

Finally, we consider some examples of the measure, $\mu$,
of hypothesis \ref{hypWeakConv}. Letting $\varphi : \R \to [0,1]$ denote
the density of $\mu$, we consider:
\begin{enumerate}
\item[(a)]
$\varphi(x) = \frac12$ for all $x \in [-1,1]$.
\item[(b)]
$\varphi(x) = \frac12$ for all $x \in [0,1] \cup [2,3]$.
\item[(c)]
$\varphi(x) = 1$ for all $x \in [0,\frac12] \cup [1,\frac32]$.
\item[(d)]
$\varphi (x) = 1$ for all $x \in [0,\frac13] \cup [1,\frac43] \cup [c,c+\frac13]$,
where $c := \frac1{12} (23 + \sqrt{217}) > \frac43$.
\item[(e)]
$\varphi (x) = 1-x$ for all $x \in [0,1]$, and $\varphi (x) = 1+x$ for all
$x \in [-1,0]$.
\item[(f)]
$\varphi (x) = \frac{15}{16} (x-1)^2 (x+1)^2$ for all $x \in [-1,1]$.
\end{enumerate}
For all other values of $x \in \R$ in the above examples, we define
$\varphi(x) := 0$. These examples are examined in detail in section
\ref{secEx}. For each example, we give explicit expressions for the
edge curve (see definition \ref{defEdge0}). Moreover, we identify those
sections of $\partial \LL$ obtained from the results of section
\ref{sectbotlr1}. Finally, we identify which of the exhaustive
cases, (1-9) above, exist for each example. We summarise the results
of section \ref{secEx}, below.

Consider example (a). We state that the results of section \ref{sectbotlr1}
give a complete description of $\partial \LL$ in this case. This is depicted
on the top left of figure \ref{figexs}. Moreover, the edge is that part of
$\partial \LL$ excluding the (closed) straight line between $(-1,1)$ and
$(1,1)$, and the point of tangency with the lower boundary. Finally, all points of the edge
satisfy case (1) of (1-9). See section \ref{secEx1} for more details, and figure
\ref{figex1} for a more detailed depiction of $\partial \LL$. Example (a)
arises, for example, when we restrict $n$ in hypothesis \ref{hypWeakConv}
to be odd, and take
\begin{equation*}
x^{(n)} := \{n-1, n-3, \ldots, 2,0,-2, \ldots, -(n-3), -(n-1) \},
\end{equation*}
for all such $n$. In words, every second particle position in the top row
of the Gelfand-Tsetlin patterns is occupied.

Consider example (b). We state that the results of section \ref{sectbotlr1}
give a complete description of $\partial \LL$ in this case. This is depicted
on the top right of figure \ref{figexs}. Moreover, the edge is that part of
$\partial \LL$ excluding the (closed) straight line between $(0,1)$ and
$(1,1)$, the (closed) straight line between $(2,1)$ and $(3,1)$, and the
point of tangency with the lower boundary. Finally, the cusps in the edge satisfy case (2) of
(1-9), the point of tangency with the upper boundary satisfies case (5), and all other points of
the edge satisfy case (1). See section \ref{secEx2} for more details, and
figure \ref{figex2} for a more detailed depiction of $\partial \LL$.

\begin{figure}
\centering
\mbox{\includegraphics{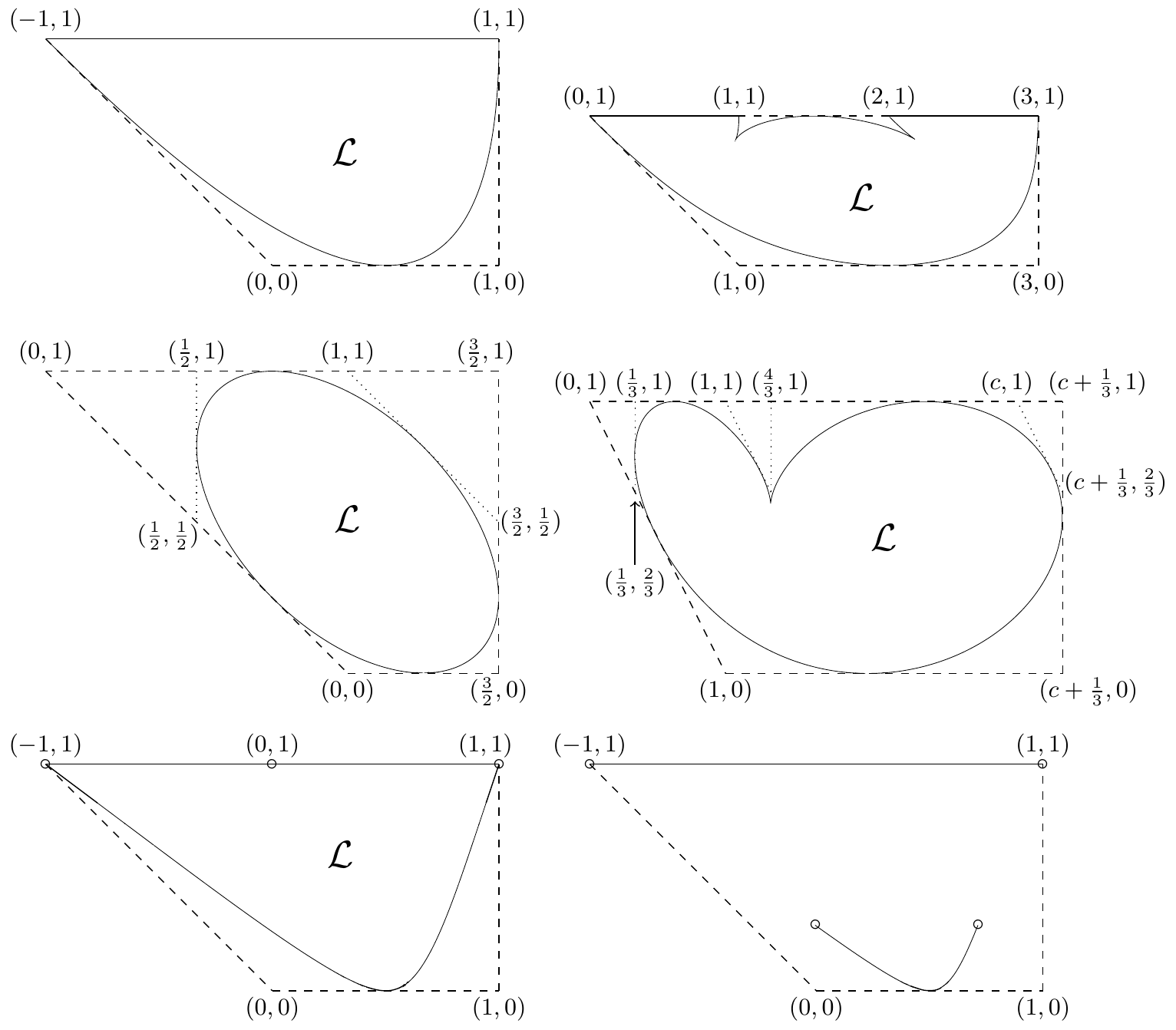}}
\caption{Those sections of $\partial \LL$ obtained from the results of
section \ref{sectbotlr1} for the examples, (a-f), defined above.
The dashed lines represent the shape in the middle of figure \ref{figGTAsyShape},
and the solid lines represent $\partial \LL$. Example (a) is on
the top left, (b) is on the top right, (c) is in the middle left,
(d) is in the middle right, (e) is on the bottom left, and (f) is on
the bottom right. In example (d), the vertical direction has been
scaled by 2 for clarity.}
\label{figexs}
\end{figure}

Consider example (c). We state that the results of section \ref{sectbotlr1}
give a complete description of $\partial \LL$ in this case. This is depicted
in the middle left of figure \ref{figexs}. Moreover, the edge is that part of
$\partial \LL$ excluding only the point of tangency with the lower boundary.
Finally, the edge contains examples from cases (1,3,5,6,8). See section
\ref{secEx3} for more details, and figure \ref{figex3} for a more detailed
depiction of $\partial \LL$. Example (c) arises, for example, when
we restrict $n$ in hypothesis \ref{hypWeakConv} to be even, and take
\begin{equation*}
x^{(n)} := \bigg\{ \frac{3n}2, \frac{3n}2-1, \frac{3n}2-2, \ldots, n+1 \bigg\}
\bigcup \bigg\{\frac{n}2, \frac{n}2-1, \frac{n}2-2, \ldots, 1\bigg\},
\end{equation*}
for all such $n$. In words, the particles on the top row of the
Gelfand-Tsetlin patterns exist in $2$ densely packed blocks.
This is the situation for the random systems of Gelfand-Tsetlin patterns
which equivalently describe random tilings of the regular
hexagon (see sections \ref{secTilingsHalf} and \ref{sectdsodgtp}).
Finally note that the shifted asymptotic shape of the frozen
boundary of the regular hexagon (see right of figure \ref{figGTAsyShape})
is identical to $\partial \LL$ as shown below.

Consider example (d). We state that the results of section \ref{sectbotlr1}
give a complete description of $\partial \LL$ in this case. This is depicted
in the middle right of figure \ref{figexs}. Moreover, the edge is that part of
$\partial \LL$ excluding only the point of tangency with the lower boundary. Finally, the edge
contains examples from cases (1,3,5,6,7,8). See section \ref{secEx4}
for more details, and figure \ref{figex4} for a more detailed depiction of
$\partial \LL$. In particular, we emphasise that the cusp in the edge
satisfies case (7) of (1-9). Thus, as stated above, the asymptotic behaviour
at this cusp is not governed by the Pearcey point process, but by the novel
{\em Cusp-Airy} process. We prove this result in the paper \cite{Duse15d}.

Consider example (e). We state that the results of section \ref{sectbotlr1}
do not give a complete description of $\partial \LL$ in this case. More
exactly, the points $\{-1,0,1\} \subset \supp(\mu)$ do not satisfy any of
the conditions of lemma \ref{lemBddEdge2}. Consequently, we show in section
\ref{secEx5} that the results of section \ref{sectbotlr1} give those parts
of $\partial \LL$ shown on the bottom left of figure \ref{figexs}, excluding
the circled points. We then show via direct calculation that we get a complete
description of $\partial \LL$ by adding these points. The edge is that part
of $\partial \LL$ excluding the (closed) straight line between $(-1,1)$ and
$(1,1)$, and the point of tangency with the lower boundary. Finally, all points of the edge
satisfy case (1). See section \ref{secEx5} for more details, and figure
\ref{figex5} for a more detailed depiction of $\partial \LL$.

Consider example (f). We state that the results of section \ref{sectbotlr1}
do not give a complete description of $\partial \LL$ in this case. More
exactly, the points $\{-1,1\} \subset \supp(\mu)$ do not satisfy any of
the conditions of lemma \ref{lemBddEdge2}. Consequently, we show in
section \ref{secEx6} that the results of section \ref{sectbotlr1} give
only those parts of $\partial \LL$ shown on the bottom right of figure
\ref{figexs}, excluding the circled points. This is clearly not
a complete description of $\partial \LL$, since $\LL$ is a connected
set. A complete description of $\partial \LL$, in this case, is beyond
the scope of this paper, since the technicalities involved in extending
lemma \ref{lemBddEdge2} to cover this situation are highly non-trivial.
In the paper \cite{Duse15b}, we make heavy use of the theory of
singular integrals to examine this and other, surprisingly subtle,
situations. The edge, in this case, is the lower part of $\partial \LL$
excluding the circled points, and the point of tangency with the lower boundary. Moreover, all
points of the edge satisfy case (1). See section \ref{secEx6} for more
details, and figure \ref{figex6} for a more detailed depiction of
$\partial \LL$.

We end this section by noting that none of the above examples have
points of the edge which satisfy either cases (4) or (9) of the
exhaustive cases, (1-9), listed above. However, we note that case
(4) is of a similar nature to case (2), and case (9) is of a similar
nature to case (7).

\section{Geometry}
\label{secGeo}

In this section we consider the geometric properties of the liquid region
given in definition \ref{defLiq}.

\subsection{The liquid region, $\LL$}

Recall (see definition \ref{defLiq}) that the liquid region, $\LL$,
is the set of all $(\chi,\eta) \in [a,b] \times [0,1]$ with
$b \ge \chi \ge \chi + \eta - 1 \ge a$, for which the following
function has non-real roots (see equation (\ref{eqf'0})):
\begin{equation}
\label{eqf'2}
f_{(\chi,\eta)}'(w) = C(w) + \log(w - \chi) - \log(w-\chi-\eta+1),
\end{equation}
for all $w \in \C \setminus \R$, where $\log$ is principal value and $C$ is
the Cauchy transform of $\mu$,
\begin{equation}
\label{eqCauTrans}
C(w) := \int_a^b \frac{\mu[dx]}{w-x}.
\end{equation}
We denote this simply by $f'$ where no confusion is possible.
First note that non-real roots of $f'$ occur in complex conjugate pairs. Theorem
\ref{thmf'} then implies that $(\chi,\eta) \in \LL$ if and only if $f'$ has
exactly $2$ roots in $\C \setminus \R$, counting multiplicities. More exactly
there are $2$ roots of multiplicity $1$, a unique root in
$\mathbb{H} := \{ w \in \C : \text{Im}(w) > 0 \}$, and its complex
conjugate.

\begin{thm}
\label{thmwc}
Let $W_\LL : \LL \to \mathbb{H}$ map $(\chi,\eta) \in \LL$
to the corresponding root of $f'$ in $\mathbb{H}$. This is a homeomorphism with
inverse $w \mapsto (\chi_\LL(w),\eta_\LL(w))$ for all $w \in \mathbb{H}$, given by,
\begin{equation*}
\chi_\LL(w) := w + \frac{(w - \bar{w}) (e^{C(\bar{w})}-1)}{e^{C(w)} - e^{C(\bar{w})}}
\hspace{0.25cm} \text{and} \hspace{0.25cm}
\eta_\LL(w) :=
1 + \frac{(w - \bar{w}) (e^{C(w)}-1) (e^{C(\bar{w})}-1) }{e^{C(w)} - e^{C(\bar{w})}},
\end{equation*}
where $\bar{w}$ is the complex conjugate of $w$.
\end{thm}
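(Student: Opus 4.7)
\textbf{Proof plan for Theorem \ref{thmwc}.} The strategy is to obtain the inverse formulas by directly solving $f'_{(\chi,\eta)}(w) = 0$ together with its complex conjugate, and then to verify that the resulting map is a two-sided inverse to $W_\LL$ and that both directions are continuous.

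First, I would observe that since $\mu$ is a real measure, the Cauchy transform $C$ satisfies $C(\bar w) = \overline{C(w)}$, and similarly $\log(\bar w - t) = \overline{\log(w-t)}$ for real $t$ (principal branch, $w \in \mathbb{H}$), so $f'_{(\chi,\eta)}(\bar w) = \overline{f'_{(\chi,\eta)}(w)}$. By Theorem \ref{thmf'}, if $(\chi,\eta) \in \LL$ then $f'_{(\chi,\eta)}$ has exactly two non-real roots, a simple root $w \in \mathbb{H}$ and its conjugate $\bar w$, so $W_\LL$ is well-defined. Starting from $f'_{(\chi,\eta)}(w) = 0$ and exponentiating (the branches match because $w-\chi,\, w-\chi-\eta+1 \in \mathbb{H}$) yields the pair of linear equations
\begin{equation*}
(w-\chi)\,e^{C(w)} = w - \chi - \eta + 1,
\qquad
(\bar w-\chi)\,e^{C(\bar w)} = \bar w - \chi - \eta + 1.
\end{equation*}
Subtracting these two equations eliminates $\eta$ and yields the formula for $\chi_\LL(w)$; substituting back recovers $\eta_\LL(w) = 1 + (w-\chi_\LL(w))(1 - e^{C(w)})$, which simplifies to the stated expression. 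This calculation also proves $(\chi_\LL,\eta_\LL)\circ W_\LL = \mathrm{id}_\LL$.

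Second, I would show that $(\chi_\LL(w),\eta_\LL(w))$ actually lands in $\LL$ for every $w \in \mathbb{H}$. Reality of the formulas is manifest from conjugation symmetry $w \leftrightarrow \bar w$ combined with $\overline{C(w)} = C(\bar w)$, noting that the denominator $e^{C(w)} - e^{C(\bar w)}$ is nonzero because $\mathrm{Im}\,C(w) = -\int \mathrm{Im}(w)|w-x|^{-2}\mu[dx] \neq 0$. By reversing the algebra of the previous paragraph, the equations above hold at $(\chi_\LL(w),\eta_\LL(w))$, and tracing principal branches (using that $w-\chi,\, w-\chi-\eta+1 \in \mathbb{H}$ so that $\log$ of the ratio equals the difference of logs) shows $f'_{(\chi_\LL(w),\eta_\LL(w))}(w) = 0$. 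The geometric inequalities $a \leq \chi+\eta-1 \leq \chi \leq b$ and $\eta \in [0,1]$ are then obtained by reading off the imaginary parts of the defining equation: $\arg(w-\chi-\eta+1) - \arg(w-\chi) = \mathrm{Im}\,C(w) \in (-\pi,0)$ forces $1-\eta \geq 0$ (since translation to the right strictly decreases the argument of a point in $\mathbb{H}$), and the constraint $\chi,\chi+\eta-1 \in [a,b]$ follows from bounds on $\mathrm{Im}\,C(w)$ together with hypothesis \ref{hypWeakConv} (namely that the mass of $\mu$ is concentrated on $[a,b]$), with membership in $\LL$ secured since $w$ itself is a non-real root. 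This also yields $W_\LL \circ (\chi_\LL,\eta_\LL) = \mathrm{id}_\mathbb{H}$.

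Third, I would verify continuity of both maps. The inverse $w \mapsto (\chi_\LL(w),\eta_\LL(w))$ is even smooth, being a rational combination of $w,\bar w$ and $e^{C(w)}, e^{C(\bar w)}$ with nonvanishing denominator. Continuity of $W_\LL$ itself follows because the simple root of the analytic function $f'_{(\chi,\eta)}$ in $\mathbb{H}$ depends continuously on $(\chi,\eta)$ by a standard Rouché/winding-number argument. The main obstacle in this proof is the verification in the second paragraph that $(\chi_\LL(w),\eta_\LL(w))$ genuinely satisfies the geometric inequalities defining $\LL$ — in particular controlling the argument of $e^{C(w)}$ to guarantee that the principal branch equality recovered after exponentiation actually gives $f'_{(\chi,\eta)}(w) = 0$ and that $\chi+\eta-1 \geq a$ and $\chi \leq b$ hold; everything else is fairly direct algebra and branch bookkeeping.
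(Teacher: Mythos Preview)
Your derivation of the inverse formula by exponentiating $f'_{(\chi,\eta)}(w)=0$ and solving the resulting linear system, your branch-tracking to recover $f'_{(\chi_\LL(w),\eta_\LL(w))}(w)=0$, and your Rouch\'e argument for continuity of $W_\LL$ all coincide with the paper's steps (5), (1a) and (3). The substantive divergence is in how you propose to show that $(\chi_\LL(w),\eta_\LL(w))$ actually lies in $\LL$ for \emph{every} $w\in\mathbb{H}$.

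The paper does \emph{not} verify the geometric inequalities $\eta\ge 0$, $\chi\le b$, $\chi+\eta-1\ge a$ directly for all $w$. Instead it checks them only for $|w|$ large (step (1b), via a second-moment expansion of $C(w)$), then shows separately that $\LL$ is open, $W_\LL$ is continuous and injective, and invokes invariance of domain to conclude that $W_\LL(\LL)$ is open in $\mathbb{H}$. Surjectivity onto $\mathbb{H}$ is then obtained by a short compactness/connectedness argument (step (6)): if $W_\LL(\LL)$ were a proper open subset, pick $w\in\partial W_\LL(\LL)\cap\mathbb{H}$, a sequence $w_n\to w$ in $W_\LL(\LL)$, pass $(\chi_\LL(w_n),\eta_\LL(w_n))$ to a convergent subsequence using compactness of the shape, and contradict $w\notin W_\LL(\LL)$.

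Your direct route has a genuine gap at exactly the point you flag as ``the main obstacle.'' The argument that $\mathrm{Im}\,C(w)\in(-\pi,0)$ forces $\eta<1$ is correct, but the assertion that $\chi\le b$ and $\chi+\eta-1\ge a$ ``follow from bounds on $\mathrm{Im}\,C(w)$ together with hypothesis \ref{hypWeakConv}'' is not justified: imaginary-part bounds alone constrain $\arg(w-\chi)-\arg(w-\chi-\eta+1)$, not the individual positions of $\chi$ and $\chi+\eta-1$ relative to $a,b$. You also do not address $\eta\ge 0$ at all. Writing $1-\eta = v(e^R+e^{-R}-2\cos I)/\sin I$ with $R=\mathrm{Re}\,C(w)$, $I=-\mathrm{Im}\,C(w)$, one sees that $1-\eta\le 1$ is not automatic from $I\in(0,\pi)$ --- it genuinely uses the structure of $\mu$ (e.g.\ $\mu\le\lambda$), and a direct proof would need real work. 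The paper's invariance-of-domain/compactness approach sidesteps all of this, which is the main thing it buys; your approach, if it could be completed, would be more self-contained but is substantially harder at precisely this step.
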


\begin{proof}
We first show:
\begin{enumerate}
\item
$\LL$ is non-empty.
\item
$\LL$ is open.
\item
$W_\LL : \LL \to \mathbb{H}$ is continuous.
\item
$W_\LL : \LL \to \mathbb{H}$ is injective.
\end{enumerate}
The {\em invariance of domain theorem} then implies that $W_\LL(\LL)$ is open
and $W_\LL : \LL \to W_\LL(\LL)$ is a homeomorphism. We complete
the result by showing:
\begin{enumerate}
\setcounter{enumi}{4}
\item
$W_\LL : \LL \to W_\LL(\LL)$ has inverse
$w \mapsto (\chi_\LL(w),\eta_\LL(w))$ for all $w \in W_\LL(\LL)$.
\item
$W_\LL(\LL) = \mathbb{H}$.
\end{enumerate}

Consider (1). Fixing $w \in \mathbb{H}$ and defining
$(\chi,\eta) := (\chi_\LL(w),\eta_\LL(w))$, where $\chi_\LL$ and $\eta_\LL$
are defined as in the statement of the lemma, we show that: 
\begin{enumerate}
\item[(1a)]
$f'(w) = 0$.
\item[(1b)]
$(\chi,\eta) \in [a,b] \times [0,1]$ with
$b \ge \chi \ge \chi + \eta - 1 \ge a$ whenever $|w|$ is chosen to be
sufficiently large.
\end{enumerate}
Thus $(\chi,\eta) \in \LL$ by definition whenever $|w|$ is chosen to be
sufficiently large, as required.

Consider (1a). First note, since $(\chi,\eta) = (\chi_\LL(w),\eta_\LL(w))$,
the definitions of $\chi_\LL$ and $\eta_\LL$ give $w - \chi - \eta + 1 =
(w - \chi) e^{C(w)}$. Equation (\ref{eqf'2}) then gives
$f'(w) = C(w) + \log(w - \chi) - \log((w - \chi) e^{C(w)})$,
where $\log$ is principal value. (1a) thus follows if we can show that
$\log((w - \chi) e^{C(w)}) = \log(w - \chi) + \log(e^{C(w)})$. We prove
this by showing that $\text{Arg}(w-\chi) \in (0,\pi)$
and $\text{Arg}(e^{C(w)}) \in (-\pi,0)$.

First note that it is trivial to see that $\text{Arg}(w-\chi) \in (0,\pi)$,
since $w \in \mathbb{H}$ and $\chi \in \R$. Next note that
equation (\ref{eqCauTrans}) gives,
\begin{equation*}
\text{Im}(C(w)) = - \int_a^b \frac{\text{Im}(w) \mu[dx]}{(\text{Re}(w)-x)^2 + \text{Im}(w)^2}.
\end{equation*}
Thus, since $\text{Im}(w) > 0$, and since $\mu \le \l$ (see remark \ref{remmu}),
\begin{equation*}
0 > \text{Im}(C(w)) > - \int_{-\infty}^\infty
\frac{\text{Im}(w) dx}{(\text{Re}(w)-x)^2 + \text{Im}(w)^2} = -\pi.
\end{equation*}
Therefore $\text{Arg}(e^{C(w)}) = \text{Im}(C(w)) \in (-\pi,0)$. This proves (1a).

Consider (1b). Recalling that $\chi = \chi_\LL(w)$ and $\eta = \eta_\LL(w)$, write
\begin{align*}
\chi & =
\frac{w(e^{\frac12(C(w) - C(\bar{w}))}-1) - \bar{w}(e^{-\frac12(C(w) - C(\bar{w}))}-1)
- (w-\bar{w}) (e^{-\frac12(C(w) + C(\bar{w}))}-1)}{2\sinh(\frac12(C(w) - C(\bar{w})))}, \\
\eta & =
1 + \frac{2 (w - \bar{w}) \sinh(\frac12 C(w)) \sinh(\frac12 C(\bar{w})) }
{\sinh(\frac12(C(w) - C(\bar{w})))}.
\end{align*}
Also Taylor expansions of the Cauchy transform in equation (\ref{eqCauTrans}) give
\begin{align*}
C(w)
&= \frac1{w} + \frac{\mu_1}{w^2} + \frac{\mu_2}{w^3} + O \left( |w|^{-4} \right), \\
C(w) - C(\bar{w})
&= \left( \frac1{w} - \frac1{\bar{w}} \right)
\left( 1 + \mu_1 \left( \frac1{w} + \frac1{\bar{w}} \right)  +
\mu_2 \left( \frac1{w^2} + \frac1{|w|^2} + \frac1{\bar{w}^2} \right)
+ O \left( |w|^{-3} \right) \right),
\end{align*}
where $\mu_1 := \int_a^b x \mu[dx]$ and $\mu_2 := \int_a^b x^2 \mu[dx]$. Combine the
above to get
\begin{equation*}
\chi = \mu_1 + \frac12 + O \left( |w|^{-1} \right)
\hspace{0.5cm} \text{and} \hspace{0.5cm}
\eta = \left( \mu_2 - \mu_1^2 - \frac1{12} \right) \frac1{|w|^2}
+ O \left( |w|^{-3} \right).
\end{equation*}
Finally recall that hypothesis \ref{hypWeakConv} and remark \ref{remmu}
imply that $\mu$ is a probability measure on $[a,b]$, that $b-a>1$, and
that $\mu \le \l$. Also $a \in \supp(\mu)$, and so $\mu[a,a+\e] > 0$
for all $\e > 0$. Therefore,
\begin{equation*}
\mu_1
= \int_a^b x \mu[dx]
< \int_{b-1}^b x dx
= b-\frac12.
\end{equation*}
Similarly $b \in \supp(\mu)$, and so
\begin{equation*}
\mu_1
= \int_a^b x \mu[dx]
> \int_a^{a+1} x dx
= a+\frac12.
\end{equation*}
Similarly,
\begin{equation*}
\mu_2 - \mu_1^2 = \frac12 \iint_a^b (x-y)^2 \mu[dx] \mu[dy] >
\frac12 \iint_0^1 (x-y)^2 dx dy = \frac1{12}.
\end{equation*}
Combine the above to get $(\chi,\eta) \in (a+1,b) \times (0,1)$
whenever $|w|$ is chosen be to sufficiently large. This proves (1b).

Consider (2). Fix $(\chi_1,\eta_1) \in \LL$ and
$(\chi_2,\eta_2) \in [a,b] \times [0,1]$ and define
\begin{itemize}
\item 
$f_1'(w) := C(w) + \log(w - \chi_1) - \log(w-\chi_1-\eta_1+1)$,
\item
$f_2'(w) := C(w) + \log(w - \chi_2) - \log(w-\chi_2-\eta_2+1)$,
\end{itemize}
for all $w \in \C \setminus \R$, where $\log$ is principal value. Let
$w_1 := W_\LL(\chi_1,\eta_1)$, where $W_\LL$ is defined in the statement of the lemma.
Then $w_1$ is the unique root of $f_1'$ in $\mathbb{H}$. We now show that
$(\chi_2,\eta_2) \in \LL$ whenever $|\chi_1-\chi_2|$ and $|\eta_1-\eta_2|$
are sufficiently small, i.e., that $f_2'$ has a root in $\mathbb{H}$ for all
such $(\chi_2,\eta_2)$. Fix $\e>0$ for which $B(w_1,\e) \subset \mathbb{H}$. Then,
since $w_1$ is the unique root of $f_1'$ in $\mathbb{H}$, the extreme value theorem
gives,
\begin{equation*}
\inf_{w \in \partial B(w_1,\e)} |f_1'(w)| > 0.
\end{equation*}
Also $|f_1'(w) - f_2'(w)| \le |\log(w - \chi_1) - \log(w - \chi_2)| + |\log(w-\chi_1-\eta_1+1)
- \log(w-\chi_2-\eta_2+1)|$. Thus, whenever $|\chi_1-\chi_2|$ and $|\eta_1-\eta_2|$ are
sufficiently small, $|f_1'(w)| > |f_1'(w) - f_2'(w)|$ for all
$w \in \partial B(w_1,\e)$. Rouch\'{e}s Theorem thus implies that $f_2'$ has a root
in $B(w_1,\e) \subset \mathbb{H}$.

Consider (3). Consider the same setup used in step (2). Recall that
$w_1 = W_\LL(\chi_1,\eta_1)$ and that $f_2'$ has a root in
$B(w_1,\e) \subset \mathbb{H}$ whenever $|\chi_1-\chi_2|$ and $|\eta_1-\eta_2|$
are sufficiently small. The above definition of $W_\LL$ then implies that
$W_\LL(\chi_2,\eta_2)$ is the observed root of $f_2'$, and so
$|W_\LL(\chi_1,\eta_1) - W_\LL(\chi_2,\eta_2)| < \e$ whenever $|\chi_1-\chi_2|$
and $|\eta_1-\eta_2|$ are sufficiently small. Finally note that we can repeat
the same analysis with $\e$ replaced by any $\d \in (0,\e)$. Therefore $W_\LL$
is continuous, as required.

Consider (4). Fix $(\chi_1,\eta_1), (\chi_2,\eta_2) \in \LL$ with
$W_\LL(\chi_1,\eta_1) = W_\LL(\chi_2,\eta_2) = w \in \mathbb{H}$.
Equation (\ref{eqf'2}) and the above definition of $W_\LL$ then gives
\begin{equation*}
C(w)
= - \log(w - \chi_1) + \log(w-\chi_1-\eta_1+1)
= - \log(w - \chi_2) + \log(w-\chi_2-\eta_2+1).
\end{equation*}
Exponentiating and simplifying gives
$(\eta_2-\eta_1) w = (1-\eta_1) \chi_2 - (1-\eta_2) \chi_1$.
Then $w \in \R$ whenever $\eta_1 \neq \eta_2$, which contradicts
$w \in \mathbb{H}$. Thus $\eta_1 = \eta_2$. Also, recalling that
$\LL$ is open, $\eta_1 = \eta_2 \in (0,1)$. This finally gives
$\chi_1 = \chi_2$, as required.

Consider (5). Fix $(\chi,\eta) \in \LL$ and let
$w := W_\LL(\chi,\eta) \in W_\LL(\LL)$. Equation (\ref{eqf'2}) and
the above definition of $W_\LL$ then give
$C(w) + \log(w - \chi) - \log(w-\chi-\eta+1) = 0$. Exponentiating and
simplifying gives $1-\eta = (w-\chi) (e^{C(w)}-1)$. Complex conjugation gives,
\begin{equation*}
1-\eta = (w-\chi) (e^{C(w)}-1) = (\bar{w}-\chi) (e^{C(\bar{w})}-1).
\end{equation*}
Solving gives $(\chi,\eta) = (\chi_\LL(w),\eta_\LL(w))$, as required.

Consider (6). Recall that $W_\LL(\LL)$ is open and that
$W_\LL : \LL \to W_\LL(\LL)$ is a homeomorphism with inverse
$w \mapsto (\chi_\LL(w),\eta_\LL(w))$. Assume that $W_\LL(\LL)$ is a
proper subset of $\mathbb{H}$, i.e., that there exists a point
$w \in \partial W_\LL(\LL)$ with $w \in \mathbb{H} \setminus W_\LL(\LL)$.
Choose a sequence $\{w_n\}_{n\ge1} \subset W_\LL(\LL)$ with $w_n \to w$ as
$n \to \infty$, and let $(\chi_n,\eta_n) := (\chi_\LL(w_n),\eta_\LL(w_n))$
for all $n\ge1$. Note that we can always choose so that
$\{(\chi_n,\eta_n)\}_{n\ge1}$ is convergent as $n \to \infty$,
$(\chi_n,\eta_n) \to (\chi,\eta)$ say. Also note equation (\ref{eqf'2})
and the above definition of $W_\LL$ gives
$C(w_n) + \log(w_n-\chi_n) - \log(w_n-\chi_n-\eta_n+1) = 0$
for all $n\ge1$. Letting $n \to \infty$ we get
$C(w) + \log(w-\chi) - \log(w-\chi-\eta+1) = 0$, and so
$(\chi,\eta) \in \LL$ and $w = W_\LL(\chi,\eta)$. This contradicts
the assumption that $w \in \mathbb{H} \setminus W_\LL(\LL)$, and so
$W_\LL(\LL) = \mathbb{H}$, as required.
\end{proof}

It can furthermore be shown that $W_\LL : \LL \to \mathbb{H}$ is a
diffeomorphism (see section \ref{secDiff}).

\subsection{The boundary of the liquid region, $\partial \LL$}
\label{sectbotlr1}

Note the following consequence of theorem \ref{thmwc}:
\begin{cor}
\label{corBndryLim}
$\LL$ is a non-empty, open, simply connected set in the shape in the middle
of figure \ref{figGTAsyShape}. Moreover, $\partial \LL$ is the set of all $(\chi,\eta)$
for which there exists a sequence, $\{w_n\}_{n\ge1} \subset \mathbb{H}$, with
$(\chi_\LL(w_n),\eta_\LL(w_n)) \to (\chi,\eta)$ as $n \to \infty$,
and either $|w_n| \to \infty$ or $w_n \to t \in \R$ as $n \to \infty$.
\end{cor}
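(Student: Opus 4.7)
The plan is to leverage Theorem \ref{thmwc} directly, since that theorem already provides the bijection $W_\LL : \LL \to \mathbb{H}$ as a homeomorphism with explicit inverse $(\chi_\LL,\eta_\LL)$. The first sentence of the corollary is then essentially immediate: $\mathbb{H}$ is non-empty, open, and simply connected, and all three of these topological properties transfer to $\LL$ through the homeomorphism. The containment of $\LL$ in the shape in the middle of figure \ref{figGTAsyShape} is built into Definition \ref{defLiq}, so nothing extra needs to be said there.

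For the boundary characterization I would prove the two inclusions separately. First, suppose $(\chi,\eta)$ is reached as a limit $(\chi_\LL(w_n),\eta_\LL(w_n)) \to (\chi,\eta)$ with either $|w_n| \to \infty$ or $w_n \to t \in \R$. Each term of the sequence lies in $\LL$, so $(\chi,\eta) \in \overline{\LL}$; to place it on $\partial \LL$ I would argue by contradiction. If $(\chi,\eta)$ were itself in $\LL$, then continuity of $W_\LL$ would force $w_n = W_\LL(\chi_\LL(w_n),\eta_\LL(w_n)) \to W_\LL(\chi,\eta) \in \mathbb{H}$, which is incompatible with the assumed asymptotic behaviour of $\{w_n\}$.

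For the reverse inclusion I would take $(\chi,\eta) \in \partial \LL$ and use openness of $\LL$ to choose $(\chi_n,\eta_n) \in \LL$ converging to $(\chi,\eta)$, then lift to $w_n := W_\LL(\chi_n,\eta_n) \in \mathbb{H}$. The step I expect to be the main obstacle is ruling out that some subsequence of $\{w_n\}$ accumulates at an interior point $w \in \mathbb{H}$: continuity of $(\chi_\LL,\eta_\LL)$ at such a $w$ would force $(\chi,\eta) = (\chi_\LL(w),\eta_\LL(w)) \in \LL$, contradicting $(\chi,\eta) \in \partial \LL$ since $\LL$ is open. Once this interior accumulation is excluded, a standard compactness argument in the one-point compactification of $\overline{\mathbb{H}}$ produces a subsequence with either $|w_{n_k}| \to \infty$ or $w_{n_k} \to t \in \R$, which is the required sequence.
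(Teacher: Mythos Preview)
Your proposal is correct and is precisely the natural argument the paper has in mind; the paper states the corollary as an immediate consequence of Theorem~\ref{thmwc} without writing out a proof, and your sketch fills in exactly the expected details. One small remark: openness of $\LL$ in $\R^2$ is not literally a property that ``transfers through a homeomorphism'' of abstract spaces, but it is already established directly as step~(2) in the proof of Theorem~\ref{thmwc} (or, equivalently, follows from invariance of domain applied to $(\chi_\LL,\eta_\LL):\mathbb{H}\to\R^2$), so your first paragraph is fine once you point to that.
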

In this section we use the above result to examine $\partial \LL$. First note:
\begin{lem}
\label{lemBddEdge0}
$(\frac12 + \int_a^b x \mu[dx],0) \in \partial \LL$. Moreover
$(\chi_\LL(w_n),\eta_\LL(w_n)) \to (\frac12 + \int_a^b x \mu[dx],0)$ as
$n \to \infty$ for all $\{w_n\}_{n\ge1} \subset \mathbb{H}$ with
$|w_n| \to \infty$.
\end{lem}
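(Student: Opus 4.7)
The plan is essentially to recycle the asymptotic expansions that were already computed in step (1b) of the proof of Theorem \ref{thmwc}, together with Corollary \ref{corBndryLim}. Specifically, those expansions show that for $|w|$ sufficiently large,
\[
\chi_\LL(w) = \mu_1 + \frac{1}{2} + O(|w|^{-1}), \qquad \eta_\LL(w) = \left(\mu_2 - \mu_1^2 - \frac{1}{12}\right)\frac{1}{|w|^2} + O(|w|^{-3}),
\]
where $\mu_1 = \int_a^b x\,\mu[dx]$ and $\mu_2 = \int_a^b x^2\,\mu[dx]$. These are direct consequences of the Taylor expansion $C(w) = \frac{1}{w} + \frac{\mu_1}{w^2} + \frac{\mu_2}{w^3} + O(|w|^{-4})$ of the Cauchy transform at infinity, substituted into the explicit formulas for $\chi_\LL$ and $\eta_\LL$ from Theorem \ref{thmwc}.

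First, I would quote (or briefly rederive) these expansions. They are valid uniformly in $\operatorname{Arg}(w)$ for $|w|$ large, which is the only point that needs checking since a single $|w_n|\to\infty$ sequence may approach infinity along arbitrary directions in $\mathbb{H}$. Inspection of the expansions confirms the error terms depend only on $|w|$, not the direction.

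Second, given any sequence $\{w_n\}_{n \ge 1} \subset \mathbb{H}$ with $|w_n| \to \infty$, the expansions immediately yield
\[
(\chi_\LL(w_n),\,\eta_\LL(w_n)) \;\longrightarrow\; \left(\mu_1 + \tfrac{1}{2},\; 0\right) = \left(\tfrac{1}{2} + \int_a^b x\,\mu[dx],\; 0\right)
\]
as $n \to \infty$. This establishes the second assertion of the lemma.

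Finally, to conclude that this limit point lies in $\partial \LL$, I would invoke Corollary \ref{corBndryLim}: the existence of \emph{any} sequence in $\mathbb{H}$ with $|w_n|\to\infty$ whose image under $(\chi_\LL,\eta_\LL)$ converges to the claimed point is, by that corollary, exactly the definition of membership in $\partial \LL$. There is no real obstacle here — the only technical ingredient (the two Taylor expansions) has already been assembled in the proof of Theorem \ref{thmwc}, so the proof reduces to citing those expansions, taking limits, and applying the corollary.
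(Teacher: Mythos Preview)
Your proposal is correct and matches the paper's proof essentially verbatim: the paper's argument consists entirely of citing the expansions from step (1b) of Theorem \ref{thmwc} to obtain the limit, and then invoking Corollary \ref{corBndryLim} to place the limit point in $\partial \LL$.
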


\begin{proof}
Fix $\{w_n\}_{n\ge1} \subset \mathbb{H}$ with
$|w_n| \to \infty$ as $n \to \infty$. The proof
of step (1b) of theorem \ref{thmwc} then gives
$(\chi_\LL(w_n),\eta_\LL(w_n)) \to (\frac12 + \int_a^b x \mu[dx],0)$,
and corollary \ref{corBndryLim} gives
$(\frac12 + \int_a^b x \mu[dx],0) \in \partial \LL$.
\end{proof}

Next recall that $\mu \le \l$ (see remark \ref{remmu}), and note that
$\R \setminus \supp(\mu)$ and $\R \setminus \supp(\l-\mu)$ are disjoint
open sets. Also recall (see equation (\ref{eqR})) that
\begin{equation*}
R := (\; \overline{(\R \setminus \supp(\mu)) \cup (\R \setminus \supp(\l-\mu))} \;)^\circ.
\end{equation*}
Hypothesis \ref{hypWeakConv} and remark \ref{remmu} then give,
\begin{equation}
\label{eqR2}
R = (\R \setminus \supp(\mu)) \cup (\R \setminus \supp(\l-\mu)) \cup R_1 \cup R_2,
\end{equation}
where,
\begin{itemize}
\item
$R_1$ is the set of all
$t \in \partial (\R \setminus \supp(\mu)) \cap \partial (\R \setminus \supp(\l-\mu))$
for which there exists an interval, $I := (t_2,t_1)$, with $t \in I$,
$(t,t_1) \subset \R \setminus \supp(\mu)$ and $(t_2,t) \subset \R \setminus \supp(\l-\mu)$.
\item
$R_2$ is the set of all
$t \in \partial (\R \setminus \supp(\mu)) \cap \partial (\R \setminus \supp(\l-\mu))$
for which there exists an interval, $I := (t_2,t_1)$, with $t \in I$,
$(t,t_1) \subset \R \setminus \supp(\l-\mu)$ and $(t_2,t) \subset \R \setminus \supp(\mu)$.
\end{itemize}
Indeed, $R_1 \cap R_2 = \emptyset$ and $R_1 \cup R_2 = \partial (\R \setminus \supp(\mu))
\cap \partial (\R \setminus \supp(\l-\mu))$. Finally, note the following technical result:
\begin{lem}
\label{lemAnalExt}
Let $C : \C \setminus \R \to \C$ denote the Cauchy transform of $\mu$ (see equation
(\ref{eqCauTrans})). Then,
\begin{enumerate}
\item[(a)]
$C : \C \setminus \R \to \C$ has a unique analytic extension to
$\C \setminus \supp(\mu)$. Moreover, also denoting the extension
by $C$,
\begin{equation*}
C(w) = \int_a^b \frac{\mu[dx]}{w-x},
\end{equation*}
for all $w \in \C \setminus \supp(\mu)$. Finally, $e^{C(t)} > 0$
and $C'(t) < 0$ for all $t \in \R \setminus \supp(\mu)$.
\item[(b)]
$e^{C(\cdot)} : \C \setminus \R \to \C$ and
$C' : \C \setminus \R \to \C$ have unique analytic extensions to
$\C \setminus \supp(\l-\mu)$. Moreover, also denoting the extensions
by $e^{C(\cdot)}$ and $C'$, and fixing any interval
$I = (t_2,t_1) \subset \R \setminus \supp(\l-\mu)$,
\begin{equation*}
e^{C(w)} = e^{C_I(w)} \left( \frac{w-t_2}{w-t_1} \right)
\hspace{0.5cm} \text{and} \hspace{0.5cm}
C'(w) = C_I'(w) - \frac1{w-t_1} + \frac1{w-t_2},
\end{equation*}
for all $w \in (\C \setminus \R) \cup I$, where
$C_I(w) := \int_{[a,b] \setminus I} \frac{\mu[dx]}{w-x}$. Finally,
$e^{C(t)} < 0$, $C'(t) > 0$ and $\frac{d}{dw} e^{C(w)} |_{w=t} = e^{C(t)} C'(t)$
for all $t \in \R \setminus \supp(\l-\mu)$.
\item[(c)]
Fix $t \in R_1$, and $I = (t_2,t_1)$ with $t \in I$,
$(t,t_1) \subset \R \setminus \supp(\mu)$ and
$(t_2,t) \subset \R \setminus \supp(\l-\mu)$ (see equation (\ref{eqR2})).
Then $e^{-C(\cdot)} : \C \setminus \R \to \C$ has a unique analytic
extension to $(\C \setminus \R) \cup I$. Moreover, also denoting the extension
by $e^{-C(\cdot)}$,
\begin{equation*}
e^{-C(w)} = e^{-C_I(w)} \left( \frac{w-t}{w-t_2} \right),
\end{equation*}
for all $w \in (\C \setminus \R) \cup I$, where
$C_I(w) := \int_{[a,b] \setminus I} \frac{\mu[dx]}{w-x}$. Finally, $e^{-C(t)} = 0$
and $\frac{d}{dw} e^{-C(w)} |_{w=t} = e^{-C_I(t)} (t-t_2)^{-1}$.
\item[(d)]
Fix $t \in R_2$, and $I = (t_2,t_1)$ with $t \in I$,
$(t,t_1) \subset \R \setminus \supp(\l-\mu)$ and
$(t_2,t) \subset \R \setminus \supp(\mu)$ (see equation (\ref{eqR2})).
Then $e^{C(\cdot)} : \C \setminus \R \to \C$ has a unique analytic
extension to $(\C \setminus \R) \cup I$. Moreover, also denoting the extension
by $e^{C(\cdot)}$,
\begin{equation*}
e^{C(w)} = e^{C_I(w)} \left( \frac{w-t}{w-t_1} \right),
\end{equation*}
for all $w \in (\C \setminus \R) \cup I$, where
$C_I(w) := \int_{[a,b] \setminus I} \frac{\mu[dx]}{w-x}$. Finally,
$e^{C(t)} = 0$ and $\frac{d}{dw} e^{C(w)} |_{w=t} = e^{C_I(t)} (t-t_1)^{-1}$.
\end{enumerate}
\end{lem}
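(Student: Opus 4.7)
The plan is to handle the four parts via a common template: decompose $C(w) = C_I(w) + \int_I \mu[dx]/(w-x)$ for a suitable open interval $I$ avoiding the relevant support, compute the $I$-integral in closed form (since $\mu|_I$ will be either $\l|_I$ or a one-sided restriction of $\l$), and observe that after passing to the correct exponential or reciprocal the integer ambiguity in the imaginary part of the principal logarithm disappears, producing a holomorphic extension. Uniqueness of each extension will follow from the identity theorem.

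Part (a) is the warm-up: Morera's theorem together with dominated convergence, applied to the compactly supported $\mu$, shows that $w\mapsto\int_a^b(w-x)^{-1}\mu[dx]$ is holomorphic on $\C\setminus\supp(\mu)$, and it agrees with $C$ on $\C\setminus\R$. For $t\in\R\setminus\supp(\mu)$ the integrand is real, so $C(t)\in\R$ and $e^{C(t)}>0$; differentiating under the integral sign gives $C'(t)=-\int_a^b(t-x)^{-2}\mu[dx]<0$, the strict inequality holding because $\mu$ is a nontrivial positive measure.

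For part (b), I fix a component $I=(t_2,t_1)$ of $\R\setminus\supp(\l-\mu)$, on which $\mu|_I=\l|_I$. A direct computation on $\C\setminus\R$ gives $\int_I\mu[dx]/(w-x)=\log(w-t_2)-\log(w-t_1)$, whose exponential
\[
e^{C(w)} \;=\; e^{C_I(w)}\,\frac{w-t_2}{w-t_1}
\]
is manifestly holomorphic on $(\C\setminus\R)\cup I$ since $t_1,t_2\notin I$ and $C_I$ is analytic off $[a,b]\setminus I$. Patching across components and invoking the identity theorem yields the unique extension of $e^{C(\cdot)}$ to $\C\setminus\supp(\l-\mu)$. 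Termwise differentiation of the decomposition produces the stated extension of $C'$, and the chain rule gives $\frac{d}{dw}e^{C(w)}=e^{C(w)}C'(w)$. On $I$ the quotient $(t-t_2)/(t-t_1)$ is strictly negative while $e^{C_I(t)}>0$, so $e^{C(t)}<0$. The main obstacle is the strict positivity $C'(t)>0$, which I plan to prove from
\[
C'(t) \;=\; \frac{1}{t-t_2}-\frac{1}{t-t_1} \;-\; \int_{[a,b]\setminus I}\frac{\mu[dx]}{(t-x)^2}
\]
by bounding the integral using $\mu\le\l$ and $[a,b]\setminus I\subset[a,t_2]\cup[t_1,b]$; explicit evaluation of $\int_a^{t_2}(t-x)^{-2}\,dx + \int_{t_1}^b(t-x)^{-2}\,dx$ followed by cancellation collapses the estimate to $C'(t)\ge(t-a)^{-1}-(t-b)^{-1}>0$, the last inequality using $t\in(a,b)$.

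Parts (c) and (d) will follow the same template, with the integral over $I$ now producing only one of the two logarithmic terms because $\mu$ vanishes on one half of $I$. In (c), on $(t,t_1)\subset\R\setminus\supp(\mu)$ the measure $\mu$ has no mass while on $(t_2,t)\subset\R\setminus\supp(\l-\mu)$ it agrees with $\l$, so $\int_I\mu[dx]/(w-x)=\log(w-t_2)-\log(w-t)$ and hence $e^{-C(w)}=e^{-C_I(w)}(w-t)/(w-t_2)$. This is holomorphic on $(\C\setminus\R)\cup I$ since $t_2\notin I$, vanishes at $w=t$, and has the asserted derivative $e^{-C_I(t)}/(t-t_2)$ there by the product rule applied to the factor $(w-t)/(w-t_2)$, whose value is $0$ and whose derivative is $1/(t-t_2)$ at $w=t$. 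Part (d) is the mirror image obtained by swapping the roles of $\mu$ and $\l-\mu$, yielding $e^{C(w)}=e^{C_I(w)}(w-t)/(w-t_1)$ with a parallel derivative computation.
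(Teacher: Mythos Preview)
Your proof is correct and follows the same overall template as the paper's: decompose $C = C_I + \int_I$, evaluate the $I$-integral explicitly using that $\mu|_I$ is either $0$, $\l|_I$, or a one-sided restriction thereof, and observe that the relevant exponentials extend holomorphically. The one substantive difference is your argument for $C'(t)>0$ in part~(b). The paper bounds $\int_{[a,b]\setminus I}(t-x)^{-2}\,\mu[dx]$ by packing the mass $p_1=\mu[t_1,b]$ and $p_2=\mu[a,t_2]$ into the intervals $[t_1,t_1+p_1]$ and $[t_2-p_2,t_2]$ adjacent to $I$, and then uses the total-mass constraint $p_1+p_2+(t_1-t_2)=1$ to collapse the resulting expression. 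Your bound is simpler: you replace $\mu$ by $\l$ on the full intervals $[a,t_2]$ and $[t_1,b]$, and the four boundary terms telescope directly to $(t-a)^{-1}-(t-b)^{-1}>0$, using only that $t\in(a,b)$. Your route avoids the mass-packing trick and the normalisation $\mu[a,b]=1$ entirely, at the cost of a weaker (but still sufficient) lower bound; the paper's estimate is sharper but needs more bookkeeping.
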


\begin{proof}
Consider (a). The required analytic extension easily follows from equation
(\ref{eqCauTrans}). Also,
\begin{equation*}
C(w) = \int_a^b \frac{\mu[dx]}{w-x}
\hspace{0.5cm} \text{and} \hspace{0.5cm}
C'(w) = - \int_a^b \frac{\mu[dx]}{(w-x)^2},
\end{equation*}
for all $w \in \C \setminus \supp(\mu)$. Thus $C(t) \in \R$, $e^{C(t)} > 0$
and $C'(t) < 0$ for all $t \in \R \setminus \supp(\mu)$.

Consider (b). Fixing $I = (t_2,t_1) \subset \R \setminus \supp(\l-\mu)$,
equation (\ref{eqCauTrans}) gives $C(w) = C_I(w) - \log(w-t_1) + \log(w-t_2)$
for all $w \in \C \setminus \R$, where $\log$ is principal value and 
$C_I(w) = \int_{[a,b] \setminus I} \frac{\mu[dx]}{w-x}$. The required
analytic extensions easily follow. Also, it easily follows from the expressions
of the extensions that $e^{C(t)} < 0$ and
$\frac{d}{dw} e^{C(w)} |_{w=t} = e^{C(t)} C'(t)$ for all $t \in I$. It remains
to show that $C'(t) < 0$ for all $t \in I$. Note, for all such $t$,
\begin{equation*}
C'(t) = - \left( \int_a^{t_2} + \int_{t_1}^b \right) \frac{\mu[dx]}{(t-x)^2}
- \frac1{t-t_1} + \frac1{t-t_2}.
\end{equation*}
Then, recalling that $\mu \le \l$ (see remark \ref{remmu}),
\begin{equation*}
C'(t)
\ge - \left( \int_{t_2-p_2}^{t_2} + \int_{t_1}^{t_1+p_1} \right) \frac{dx}{(t-x)^2}
- \frac1{t-t_1} + \frac1{t-t_2},
\end{equation*}
where $p_1 := \mu[t_1,b]$ and $p_2 := \mu[a,t_2]$. Integrating finally gives,
\begin{equation*}
C'(t) \ge \frac1{t-t_2+p_2} - \frac1{t-t_1-p_1}
= \frac{-1}{(t-t_2+p_2)(t-t_1-p_1)},
\end{equation*}
where the final part follows since $p_1 + p_2 + (t_1-t_2)
= \mu[t_1,b] + \mu[a,t_2] + \mu[t_2,t_1] = 1$. Therefore
$C'(t) > 0$ for all $t \in I = (t_2,t_1)$.

Consider (c). Fixing $t \in R_1$ and $I = (t_2,t_1)$ as in the statement,
equation (\ref{eqCauTrans}) gives $C(w) = C_I(w) - \log(w-t) + \log(w-t_2)$
for all $w \in \C \setminus \R$, where $\log$ is principal value. The required
analytic extension easily follows. Also, it easily follows from the expression
of the extension that $e^{-C(t)} = 0$ and
$\frac{d}{dw} e^{-C(w)} |_{w=t} = e^{-C_I(t)} (t-t_2)^{-1}$. Similarly for (d).
\end{proof}

The various analytic extensions of the previous result then give:
\begin{lem}
\label{lemBddEdge1}
$(\chi_\EE(t),\eta_\EE(t)) \in \partial \LL$ for all
$t \in (\R \setminus \supp(\mu)) \cup (\R \setminus \supp(\l-\mu))$, where
\begin{align*}
\chi_\EE(t) := t + \frac{e^{C(t)}-1}{e^{C(t)} C'(t)}
\hspace{0.5cm} \text{and} \hspace{0.5cm}
\eta_\EE(t) := 1 + \frac{(e^{C(t)}-1)^2}{e^{C(t)} C'(t)}.
\end{align*}
Also $(\chi_\EE(t),\eta_\EE(t)) \in \partial \LL$ for all $t \in R_1 \cup R_2$,
where
\begin{itemize}
\item
$\chi_\EE(t) := t$ and
$\eta_\EE(t) := 1 - e^{C_I(t)} (t-t_2)$ for all $t \in R_1$,
\item
$\chi_\EE(t) := t - e^{-C_I(t)} (t-t_1)$ and
$\eta_\EE(t) := 1 + e^{-C_I(t)} (t-t_1)$ for all $t \in R_2$.
\end{itemize}
Moreover, $(\chi_\LL(w_n),\eta_\LL(w_n)) \to (\chi_\EE(t),\eta_\EE(t))$ as
$n \to \infty$ for all $t \in R$ and $\{w_n\}_{n\ge1} \subset \mathbb{H}$ with
$w_n \to t$. (Above, whenever $ t \in R_1 \cup R_2$, $I := (t_2,t_1)$ is
chosen as in lemma \ref{lemAnalExt},
$C_I(w) := \int_{[a,b] \setminus I} \frac{\mu[dx]}{w-x}$,
and the result is independent of the choice of $I$.)
\end{lem}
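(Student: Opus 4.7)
My plan is to apply Corollary \ref{corBndryLim}, which characterises $\partial\LL$ as the set of limit points of $(\chi_\LL(w_n),\eta_\LL(w_n))$ along sequences $\{w_n\}\subset\mathbb{H}$ with either $|w_n|\to\infty$ or $w_n\to t\in\R$. The first case has already been handled by lemma \ref{lemBddEdge0}, so I only need to treat $w_n\to t\in R$. For each of the four disjoint sub-cases making up $R$ (see (\ref{eqR2})), I would show that the limit along any such sequence exists, is independent of the sequence, and equals the claimed $(\chi_\EE(t),\eta_\EE(t))$; independence from the auxiliary interval $I$ in the $R_1,R_2$ formulas then drops out automatically, since the limit itself clearly does not reference $I$. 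The main tool is lemma \ref{lemAnalExt}, which supplies the relevant analytic extension across $t$ in each case.

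When $t\in\R\setminus\supp(\mu)$ or $t\in\R\setminus\supp(\l-\mu)$, lemma \ref{lemAnalExt}(a) or (b) gives an analytic extension of $e^C$ near $t$ with $e^{C(t)}\neq 0$ and $\tfrac{d}{dw}e^{C(w)}|_{w=t}=e^{C(t)}C'(t)$. Writing $w_n=t+h_n$ with $\text{Im}(h_n)>0$, a Taylor expansion yields
\[e^{C(w_n)}-e^{C(\bar w_n)} = e^{C(t)}C'(t)(w_n-\bar w_n)+O(|h_n|^2),\]
while $e^{C(\bar w_n)}-1\to e^{C(t)}-1$ and $(e^{C(w_n)}-1)(e^{C(\bar w_n)}-1)\to(e^{C(t)}-1)^2$. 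Substituting into the expressions for $\chi_\LL,\eta_\LL$ in theorem \ref{thmwc}, cancelling the common factor $w_n-\bar w_n$, and letting $n\to\infty$ gives the claimed formulas for $\chi_\EE(t),\eta_\EE(t)$ immediately.

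For $t\in R_1$, lemma \ref{lemAnalExt}(c) provides the analytic extension of $e^{-C}$ across $t$, with $e^{-C(t)}=0$ and $\tfrac{d}{dw}e^{-C(w)}|_{w=t}=e^{-C_I(t)}/(t-t_2)$. To expose this extension, I would multiply numerator and denominator of each fraction in theorem \ref{thmwc} by $e^{-C(w)}e^{-C(\bar w)}$, obtaining
\[\chi_\LL(w)-w = \frac{(w-\bar w)\,e^{-C(w)}(1-e^{-C(\bar w)})}{e^{-C(\bar w)}-e^{-C(w)}},\qquad
\eta_\LL(w)-1 = \frac{(w-\bar w)(1-e^{-C(w)})(1-e^{-C(\bar w)})}{e^{-C(\bar w)}-e^{-C(w)}},\]
in which every factor now admits an analytic extension to $(\C\setminus\R)\cup I$. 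Taylor expanding around $t$, the $\chi_\LL$ numerator is $O(|h_n|^2)$ while the denominator is of order $|h_n|$, so after cancellation $\chi_\LL(w_n)\to t$; and in the expression for $\eta_\LL(w)-1$ both numerator and denominator are of order $|h_n|$, and their ratio converges to $-e^{C_I(t)}(t-t_2)$, giving $\eta_\EE(t)=1-e^{C_I(t)}(t-t_2)$. The case $t\in R_2$ is entirely symmetric, using lemma \ref{lemAnalExt}(d) instead and multiplying through by $e^{C(w)}e^{C(\bar w)}$.

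The main obstacle is purely algebraic, and sits in the $R_1,R_2$ cases: because $e^{\pm C}$ fails to be analytic across $t$ there, the $0/0$ limit of the formulas of theorem \ref{thmwc} cannot be read off directly, and one must repackage those formulas so that every factor appearing after the rearrangement corresponds to one of the honest analytic extensions of lemma \ref{lemAnalExt}. Choosing the correct multiplicative factor ($e^{-C(w)}e^{-C(\bar w)}$ in $R_1$, $e^{C(w)}e^{C(\bar w)}$ in $R_2$) is the only delicate point; once this bookkeeping is arranged each limit reduces to a single application of Taylor's theorem, and independence of $I$ is a free by-product of the existence of the limit.
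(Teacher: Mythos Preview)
Your approach is correct and essentially identical to the paper's: both invoke corollary \ref{corBndryLim}, treat the four sub-cases of $R$ separately via the analytic extensions of lemma \ref{lemAnalExt}, and in particular both rewrite the formulas of theorem \ref{thmwc} in terms of $e^{-C}$ for the $R_1$ case. One small inaccuracy: for $t\in R_2$ no multiplication is needed, since the original expressions for $\chi_\LL,\eta_\LL$ are already written in terms of $e^{C}$, which by lemma \ref{lemAnalExt}(d) extends analytically across $t$ with $e^{C(t)}=0$; the paper simply uses the original formulas there.
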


\begin{proof}
Fix $t \in R$ and $\{w_n\}_{n\ge1} \subset \mathbb{H}$ with $w_n \to t$ as
$n \to \infty$. We shall show that
$(\chi_\LL(w_n),\eta_\LL(w_n)) \to (\chi_\EE(t),\eta_\EE(t))$ as
$n \to \infty$. Corollary \ref{corBndryLim} then gives
$(\chi_\EE(t),\eta_\EE(t)) \in \partial \LL$, as required.

When $t \in \R \setminus \supp(\mu)$, write (see theorem \ref{thmwc}),
\begin{align*}
\chi_\LL(w_n) &= w_n + (e^{C(\overline{w_n})}-1)
\frac{w_n - \overline{w_n}}{e^{C(w_n)} - e^{C(\overline{w_n})}}, \\
\eta_\LL(w_n) &= 1 + (e^{C(w_n)}-1) (e^{C(\overline{w_n})}-1)
\frac{w_n - \overline{w_n}}{e^{C(w_n)} - e^{C(\overline{w_n})}},
\end{align*}
for all $n$. Note, as $n \to \infty$, the analytic extension of part
(a) of lemma \ref{lemAnalExt} gives, 
\begin{equation*}
e^{C(w_n)} \to e^{C(t)},
\hspace{0.5cm}
e^{C(\overline{w_n})} \to e^{C(t)}
\hspace{0.5cm} \text{and} \hspace{0.5cm}
\frac{e^{C(w_n)} - e^{C(\overline{w_n})}}{w_n - \overline{w_n}}
\to e^{C(t)} C'(t).
\end{equation*}
Therefore $(\chi_\LL(w_n),\eta_\LL(w_n)) \to (\chi_\EE(t),\eta_\EE(t))$
when $t \in \R \setminus \supp(\mu)$. Similarly when
$t \in \R \setminus \supp(\l-\mu)$, except now we use the analytic
extensions of part (b) of lemma \ref{lemAnalExt}.

Also write $\chi_\LL(w_n)$ and $\eta_\LL(w_n)$ as above when $t \in R_2$.
When $t \in R_1$, write,
\begin{align*}
\chi_\LL(w_n) &=
w_n - e^{-C(w_n)} (1-e^{-C(\overline{w_n})})
\frac{w_n - \overline{w_n}}{e^{-C(w_n)} - e^{-C(\overline{w_n})}}, \\
\eta_\LL(w_n) &= 1 - (1-e^{-C(w_n)}) (1-e^{-C(\overline{w_n})})
\frac{w_n - \overline{w_n}}{e^{-C(w_n)} - e^{-C(\overline{w_n})}},
\end{align*}
for all $n$. The analytic extensions of parts (c) and (d) of lemma
\ref{lemAnalExt} then easily give
$(\chi_\LL(w_n),\eta_\LL(w_n)) \to (\chi_\EE(t),\eta_\EE(t))$ as
$n \to \infty$ when $t \in R_1 \cup R_2$, as required.
\end{proof}

\begin{rem}
\label{remSmoothPara}
For emphasis we note that $R \subset \R$ is an open set, and that lemma
\ref{lemBddEdge1} implies that
$(\chi_\EE(\cdot),\eta_\EE(\cdot)) : R \to \partial \LL$ is the unique
continuous extensions to $R$ of
$(\chi_\LL(\cdot),\eta_\LL(\cdot)) : \mathbb{H} \to \LL$. Thus
$(\chi_\EE(\cdot),\eta_\EE(\cdot))$
is a smooth curve parameterised over $R$. In definition
\ref{defEdge0} we defined the edge, $\EE \subset \partial \LL$, as the
image of this curve. This is why we have chosen
the subscript $\EE$. Definition \ref{defEdge} equivalently defines the
edge in terms of the behaviour of the roots of the function $f'$ of
equation (\ref{eqf'}). This equivalence is shown in
corollary \ref{corEdgeDefEquiv} of theorem \ref{thmEdge}.
\end{rem}

It remains to consider the asymptotic behaviour of $(\chi_\LL(w_n),\eta_\LL(w_n))$
as $n \to \infty$, when $t \in \R \setminus R$ and
$\{w_n\}_{n\ge1} \subset \mathbb{H}$ with $w_n \to t$. This question
is surprisingly subtle however, and will be examined in greater detail
in \cite{Duse15b}. Lemma \ref{lemBddEdge2}, below, is a sub-result
of that paper. However, lemmas \ref{lemBddEdge0}, \ref{lemBddEdge1} and
\ref{lemBddEdge2} give a complete description of $\partial \LL$
when the measure, $\mu$, of hypothesis \ref{hypWeakConv} is restricted
to an interesting sub-class of all possible measures (see lemma
\ref{lemBddEdge3}). In section \ref{secEx} we depict $\LL$ and
$\partial \LL$ for a number of examples in this sub-class.

\begin{lem}
\label{lemBddEdge2}
$(t,1) \in \partial \LL$ for all $t \in \R \setminus R =
\supp(\mu) \cap \supp(\l-\mu) \cap (\R \setminus (R_1 \cup R_2))$
whenever there exists an $\e>0$ for which one of the following
cases is satisfied:
\begin{enumerate}
\item
$\sup_{x \in (t-\e,t+\e)} \varphi (x) < 1$ and $\inf_{x \in (t-\e,t+\e)} \varphi (x) > 0$.
\item
$\sup_{x \in (t-\e,t)} \varphi (x) < 1$, $\inf_{x \in (t-\e,t)} \varphi (x) > 0$
and $\varphi(x) = 0$ for all $x \in (t,t+\e)$.
\item
$\sup_{x \in (t-\e,t)} \varphi (x) < 1$, $\inf_{x \in (t-\e,t)} \varphi (x) > 0$
and $\varphi(x) = 1$ for all $x \in (t,t+\e)$.
\item
$\sup_{x \in (t,t+\e)} \varphi (x) < 1$, $\inf_{x \in (t,t+\e)} \varphi (x) > 0$
and $\varphi(x) = 0$ for all $x \in (t-\e,t)$.
\item
$\sup_{x \in (t,t+\e)} \varphi (x) < 1$, $\inf_{x \in (t,t+\e)} \varphi (x) > 0$
and $\varphi(x) = 1$ for all $x \in (t-\e,t)$.
\end{enumerate}
Above $\varphi : [a,b] \to [0,1]$ denotes the density of $\mu$
(remark \ref{remmu} shows that $\varphi$ is well-defined). Moreover
$(\chi_\LL(w_n),\eta_\LL(w_n)) \to (t,1)$ as $n \to \infty$ for all
$\{w_n\}_{n\ge1} \subset \mathbb{H}$ with $w_n \to t$.
\end{lem}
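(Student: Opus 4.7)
The plan is to reduce to corollary \ref{corBndryLim}: for each $t \in \R \setminus R$ satisfying one of the five cases, I must show $(\chi_\LL(w_n),\eta_\LL(w_n)) \to (t,1)$ for every sequence $\{w_n\} \subset \mathbb{H}$ with $w_n \to t$. Using $C(\bar w) = \overline{C(w)}$, the explicit formulas from theorem \ref{thmwc} rewrite as
\[
\chi_\LL(w) - w = \text{Im}(w) \cdot \frac{\overline{e^{C(w)}} - 1}{\text{Im}(e^{C(w)})}, \qquad
\eta_\LL(w) - 1 = \text{Im}(w) \cdot \frac{|e^{C(w)} - 1|^2}{\text{Im}(e^{C(w)})}.
\]
Writing $w = u + iv$, the problem reduces to controlling $v \cdot |e^{C(w)}|^k/|\text{Im}(e^{C(w)})|$ for $k \in \{0,1,2\}$ as $w \to t$ in $\mathbb{H}$.

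The main analytic tool will be the Poisson integral representation $\text{Im}(C(w)) = -\pi(P_v * \varphi)(u)$, where $P_v(y) = (v/\pi)/(y^2+v^2)$ and $\varphi \in L^\infty([a,b])$ with $0 \le \varphi \le 1$ (remark \ref{remmu}). Combined with $\text{Im}(e^{C(w)}) = |e^{C(w)}| \sin(\text{Im}(C(w)))$, this reduces all estimates to (a) quantitative bounds on $|e^{C(w)}| = e^{\text{Re}(C(w))}$ near $t$, and (b) quantitative bounds on $|\sin(\pi(P_v*\varphi)(u))|$.

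The proof then splits along the five hypotheses. In case (1), the two-sided estimate $\d \le \varphi \le 1-\d$ on $(t-\e,t+\e)$ shows that $(P_v*\varphi)(u)$ stays in an interval $[\d',1-\d']$ once $w$ is close enough to $t$ (the Poisson kernel concentrates mass in the neighbourhood), and the two-sided cancellation also keeps $\text{Re}(C(w))$ bounded. Hence $|e^{C(w)}|$ is bounded above and below while $|\sin(\pi(P_v*\varphi)(u))|$ is bounded below, and the corrections vanish simply because the prefactor $v \to 0$. In cases (2)--(5) I would split $C(w) = C_{\text{flat}}(w) + C_{\text{var}}(w)$ according to the side of $t$ on which $\varphi$ is constantly $0$ or $1$ versus the side on which $\d \le \varphi \le 1-\d$. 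By lemma \ref{lemAnalExt}, $e^{C_{\text{flat}}(\cdot)}$ extends analytically across the flat side and is bounded near $t$. The variable part can cause $\text{Re}(C(w))$ to diverge at most like $c \log(1/v)$, and hence $|e^{C(w)}|$ to grow at most like $v^{-\alpha}$ for an exponent $\alpha \in (0,1)$ determined by $\d$. For non-vertical approaches one must also track $(P_v*\varphi)(u)$: the explicit one-sided identity $\int_{t-\e}^t P_v(u-x)\,dx = \pi^{-1}[\arctan((u-t+\e)/v) - \arctan((u-t)/v)]$ lets me check that even along tangential approaches from the flat side, where $(P_v*\varphi)(u) \to 0$, the rate of decay of $\sin(\pi (P_v*\varphi)(u))$ is polynomial in $v$ with exponent matching or dominating the blow-up rate of $|e^{C(w)}|$, so that $v \cdot |e^{C(w)}|^2/|\sin(\cdots)| \to 0$.

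The main obstacle is this uniformity in the direction of approach in cases (2)--(5): one has simultaneous divergence of $|e^{C(w)}|$ and vanishing of $\sin(\pi(P_v*\varphi)(u))$ along tangential paths, and the required cancellation demands quantitative rate estimates rather than abstract boundary-value existence. Once these rate matchings are verified case by case, the convergence $(\chi_\LL(w_n),\eta_\LL(w_n)) \to (t,1)$ follows for every $\{w_n\} \subset \mathbb{H}$ with $w_n \to t$, and corollary \ref{corBndryLim} delivers $(t,1) \in \partial \LL$.
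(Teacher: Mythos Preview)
Your approach is essentially the paper's: rewrite $\chi_\LL,\eta_\LL$ so that everything reduces to controlling $v\,e^{\pm R}/\sin I$, where $R=\text{Re}\,C(w)$ and $I=-\text{Im}\,C(w)=\pi(P_v*\varphi)(u)$, and then estimate $R$ and $I$ from the hypotheses on $\varphi$. Your formulas for $\chi_\LL(w)-w$ and $\eta_\LL(w)-1$ are exactly the paper's equations (\ref{eqlemBddEdge31})--(\ref{eqlemBddEdge32}) in disguise.

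There is, however, a concrete error in your treatment of case (1). You assert that ``the two-sided cancellation also keeps $\text{Re}(C(w))$ bounded''. This is false in general: the hypothesis only says $0<\varphi^-\le\varphi(x)\le\varphi^+<1$ on $(t-\e,t+\e)$, not that $\varphi$ is continuous (or even symmetric) there. Since the boundary values of $\text{Re}\,C$ are given by a Hilbert transform of $\varphi$, a jump of $\varphi$ inside $(t-\e,t+\e)$ produces a logarithmic singularity. Concretely, if $\varphi$ takes the value $\varphi^-$ just to the left of $t$ and $\varphi^+$ just to the right, then for $u=t$ one computes $R \sim -\tfrac12(\varphi^+-\varphi^-)\log(1/v^2)\to-\infty$. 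What \emph{is} true, and what the paper proves, is the quantitative bound $|R|\le (\varphi^+-\varphi^-)\,|\log v|+O(1)$, giving $v\,e^{|R|}\le c\,v^{\,1-\varphi^++\varphi^-}$; the exponent is positive precisely because $\varphi^+<1$ and $\varphi^->0$. Combined with your (correct) lower bound on $\sin I$, this finishes case (1). So your outline is right but the mechanism you cite for controlling $e^{\pm R}$ is wrong; you need the logarithmic estimate, not boundedness.

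For cases (2)--(5) your sketch is in the right direction but leaves the crux vague. The paper does not use your $C_{\text{flat}}+C_{\text{var}}$ splitting; instead it estimates $R$ and $I$ directly and makes the dichotomy $|u-t|\le v$ versus $|u-t|\ge v$ explicit. In the tangential regime (e.g.\ case (2) with $u-t\ge v$) one gets $I\gtrsim \varphi^-\,v/(u-t)$ and $v\,e^{|R|}\lesssim v\,(u-t)^{-\varphi^+}$, so the ratio behaves like $(u-t)^{1-\varphi^+}\to 0$. Note that the relevant small parameter here is $u-t$, not $v$; your phrase ``polynomial in $v$ with exponent matching or dominating'' obscures this and would need to be replaced by the genuine two-parameter bookkeeping.
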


\begin{proof}
Fix $t \in \R \setminus R$ which satisfies one of the cases (1-5) in
the statement of the lemma. Also fix $\e>0$ as in the statement, and
$\{w_n\}_{n\ge1} \subset \mathbb{H}$ with $w_n \to t$ as $n \to \infty$.
Denote $u_n := \text{Re}(w_n)$, $v_n := \text{Im}(w_n)$,
$R_n := \text{Re}(C(w_n))$, and $I_n := - \text{Im}(C(w_n))$, where $C$ is
the Cauchy transform of $\mu$ (see equation (\ref{eqCauTrans})). Therefore,
\begin{equation}
\label{eqlemBddEdge30}
R_n = \int_a^b \frac{(u_n-x) \varphi(x) dx}{(u_n-x)^2 + v_n^2}
\hspace{0.5cm} \text{and} \hspace{0.5cm}
I_n = \int_a^b \frac{v_n \varphi(x) dx}{(u_n-x)^2 + v_n^2},
\end{equation}
for all $n$. Note that $u_n \to t$ and $v_n \searrow 0$ as $n \to \infty$,
and that $\pi > I_n > 0$ for all $n$. Also write (see theorem \ref{thmwc}),
\begin{align}
\label{eqlemBddEdge31}
\chi_\LL(w_n)
&= u_n - \frac{v_n \cos(I_n) - v_n e^{-R_n}}{\sin(I_n)}, \\
\label{eqlemBddEdge32}
\eta_\LL(w_n)
&= 1 - \frac{v_n e^{R_n} - 2 v_n \cos(I_n) + v_n e^{-R_n}}{\sin(I_n)},
\end{align}
for all $n$. We shall show that $(\chi_\LL(w_n),\eta_\LL(w_n)) \to (t,1)$ as
$n \to \infty$. Corollary \ref{corBndryLim} then gives $(t,1) \in \partial \LL$,
as required. We consider cases (1) and (2), and note (3-5) are similar.

Consider (1). Letting
$\varphi^+ := \sup_{x \in (t-\e,t+\e)} \varphi (x) < 1$ and
$\varphi^- := \inf_{x \in (t-\e,t+\e)} \varphi (x) > 0$, we show
that there exists a constant $c>0$ such that, for all $n$ sufficiently large,
\begin{enumerate}
\item[(1a)]
$\pi - \frac{\pi}2 (1-\varphi^+) > I_n > \frac{\pi}2 \varphi^-$.
\item[(1b)]
$v_n e^{|R_n|} < c v_n^{1 - \varphi^+ + \varphi^-}$.
\end{enumerate}
Equations (\ref{eqlemBddEdge31}) and (\ref{eqlemBddEdge32}) then easily
give $(\chi_\LL(w_n),\eta_\LL(w_n)) \to (t,1)$ as $n \to \infty$, as required.

Consider (1a). Recall that $\varphi(x) \in [0,1]$ for all $x \in [a,b]$,
$\varphi^+ = \sup_{x \in (t-\e,t+\e)} \varphi (x) < 1$
and $\varphi^- = \inf_{x \in (t-\e,t+\e)} \varphi (x) > 0$. Defining
$\varphi(x) := 0$ for all $x \in \R \setminus [a,b]$, equation
(\ref{eqlemBddEdge30}) then gives,
\begin{align*}
I_n
&\le \int_{a-\e}^{t-\e} \frac{v_n (1) dx}{(u_n-x)^2 + v_n^2}
+ \int_{t-\e}^{t+\e} \frac{v_n (\varphi^+) dx}{(u_n-x)^2 + v_n^2}
+ \int_{t+\e}^{b+\e} \frac{v_n (1) dx}{(u_n-x)^2 + v_n^2} \\
&= - \left. \arctan \left( \frac{u_n-x}{v_n} \right) \right|_{a-\e}^{t-\e}
- \varphi^+ \left. \arctan \left( \frac{u_n-x}{v_n} \right) \right|_{t-\e}^{t+\e}
- \left. \arctan \left( \frac{u_n-x}{v_n} \right) \right|_{t+\e}^{b+\e},
\end{align*}
for all $n$. Thus, since $u_n \to t \in [a,b]$ and $v_n \searrow 0$ as $n \to \infty$,
$I_n \le \pi \varphi^+ + O(v_n)$. Similarly,
\begin{equation*}
I_n
\ge \int_{a-\e}^{t-\e} (0) dx
+ \int_{t-\e}^{t+\e} \frac{v_n (\varphi^-) dx}{(u_n-x)^2 + v_n^2}
+ \int_{t+\e}^{b+\e} (0) dx,
\end{equation*}
for all $n$. Proceed as before to get $I_n \ge \pi \varphi^- + O(v_n)$.
(1a) follows since $\varphi^+ < 1$ and $\varphi^- > 0$.

Consider (1b). Recall that $\varphi : \R \to [0,1]$, $\varphi(x) = 0$ for all
$x \in \R \setminus [a,b]$, $\varphi^+ = \sup_{x \in (t-\e,t+\e)} \varphi (x) < 1$
and $\varphi^- = \inf_{x \in (t-\e,t+\e)} \varphi (x) > 0$. Then, choosing
$n$ sufficiently large that $u_n \in (t-\e,t+\e)$, equation
(\ref{eqlemBddEdge30}) gives,
\begin{align*}
\lefteqn{R_n} \\
&\le \int_{a-\e}^{t-\e} \frac{(u_n-x) (1) dx}{(u_n-x)^2 + v_n^2}
+ \int_{t-\e}^{u_n} \frac{(u_n-x) (\varphi^+) dx}{(u_n-x)^2 + v_n^2}
+ \int_{u_n}^{t+\e} \frac{(u_n-x) (\varphi^-) dx}{(u_n-x)^2 + v_n^2}
+ \int_{t+\e}^{b+\e} (0) dx \\
&= - \left. \frac12 \log ( (u_n-x)^2 + v_n^2 ) \right|_{a-\e}^{t-\e}
- \left. \frac{\varphi^+}2 \log ( (u_n-x)^2 + v_n^2 ) \right|_{t-\e}^{u_n}
- \left. \frac{\varphi^-}2 \log ( (u_n-x)^2 + v_n^2 ) \right|_{u_n}^{t+\e}.
\end{align*}
Thus, since $u_n \to t$ and $v_n \searrow 0$ as $n \to \infty$,
$R_n \le -(\varphi^+ - \varphi^-) \log(v_n) + O(1)$. Similarly,
\begin{equation*}
R_n
\ge \int_{a-\e}^{t-\e} (0) dx
+ \int_{t-\e}^{u_n} \frac{(u_n-x) (\varphi^-) dx}{(u_n-x)^2 + v_n^2}
+ \int_{u_n}^{t+\e} \frac{(u_n-x) (\varphi^+) dx}{(u_n-x)^2 + v_n^2}
+ \int_{t+\e}^{b+\e} \frac{(u_n-x) (1) dx}{(u_n-x)^2 + v_n^2},
\end{equation*}
for all $n$ sufficiently large. Proceed as before to get
$R_n \ge (\varphi^+ - \varphi^-) \log(v_n) + O(1)$. Combining both
inequalities gives $|R_n| \le -(\varphi^+ - \varphi^-) \log(v_n) + O(1)$
for all $n$ sufficiently large, and (1b) then easily follows.

Consider (2). Now, letting
$\varphi^+ := \sup_{x \in (t-\e,t)} \varphi (x) < 1$ and
$\varphi^- := \inf_{x \in (t-\e,t)} \varphi (x) > 0$, we show
that there exists a constant $c>0$ such that, for all $n$ sufficiently large,
\begin{enumerate}
\item[(2a)]
$\pi - \frac{\pi}2 (1-\varphi^+) > I_n > \frac14 \varphi^-$ when $u_n-t \le v_n$,
and $\pi - \frac{\pi}2 (1-\varphi^+) > I_n > \frac14 \varphi^- \frac{v_n}{u_n-t}$
when $u_n-t \ge v_n$.
\item[(2b)]
$v_n e^{|R_n|} < c v_n^{1 - \varphi^+}$ when $u_n-t \le v_n$,
and $v_n e^{|R_n|} < c v_n (u_n-t)^{- \varphi^+}$ when $u_n-t \ge v_n$.
\end{enumerate}
Also note that $\frac14 > \frac14 \varphi^- \frac{v_n}{u_n-t} > 0$
when $u_n-t \ge v_n$, and so $\sin (\frac14 \varphi^- \frac{v_n}{u_n-t})
> \frac18 \varphi^- \frac{v_n}{u_n-t}$. Thus, since $u_n \to t$ and
$v_n \searrow 0$ as $n \to \infty$, equations (\ref{eqlemBddEdge31}) and
(\ref{eqlemBddEdge32}) give $(\chi_\LL(w_n),\eta_\LL(w_n)) \to (t,1)$,
as required.

Consider (2a). Recall that $\varphi(x) \in [0,1]$ for all $x \in [a,b]$,
$\varphi^+ = \sup_{x \in (t-\e,t)} \varphi (x) < 1$,
$\varphi^- = \inf_{x \in (t-\e,t)} \varphi (x) > 0$,
$\varphi(x) = 0$ for all $x \in (t,t+\e)$, and $\varphi(x) = 0$ for all
$x \in \R \setminus [a,b]$. Equation (\ref{eqlemBddEdge30}) then gives,
for all $n$,
\begin{align*}
I_n
&\le \int_{a-\e}^{t-\e} \frac{v_n (1) dx}{(u_n-x)^2 + v_n^2}
+ \int_{t-\e}^t \frac{v_n (\varphi^+) dx}{(u_n-x)^2 + v_n^2}
+ \int_{t+\e}^{b+\e} \frac{v_n (1) dx}{(u_n-x)^2 + v_n^2}, \\
I_n
&\ge \int_{t-\e}^t \frac{v_n (\varphi^-) dx}{(u_n-x)^2 + v_n^2}.
\end{align*}
Then, since $u_n \to t \in [a,b]$ and $v_n \searrow 0$ as
$n \to \infty$, we can proceed as in (1a) to get,
\begin{equation*}
\frac{\pi}2 \varphi^+ - \varphi^+ \arctan \left( \frac{u_n-t}{v_n} \right) + O(v_n)
\;\; \ge \;\; I_n \;\; \ge \;\;
\frac{\pi}2 \varphi^- - \varphi^- \arctan \left( \frac{u_n-t}{v_n} \right) + O(v_n),
\end{equation*}
for all $n$. Recall that $\arctan : \R \to \R$ is strictly increasing with
$\arctan(x) \in (-\frac{\pi}2,\frac{\pi}2)$ for all $x \in \R$ and
$\arctan(x) < \frac{\pi}2 - \frac1{2x}$ for all $x \ge 1$.
Therefore,
\begin{align*}
\pi \varphi^+ + O(v_n)
&\;\; \ge \;\; I_n \;\; \ge \;\;
\varphi^- \left( \frac{\pi}2 - \arctan (1) \right) + O(v_n)
&\text{ when } u_n-t \le v_n, \\
\pi \varphi^+ + O(v_n)
&\;\; \ge \;\; I_n \;\; \ge \;\;
\frac12 \varphi^- \frac{v_n}{u_n-t} + O(v_n)
&\text{ when } u_n-t \ge v_n,
\end{align*}
(2a) follows since $\varphi^+ < 1$ and $\varphi^- > 0$, and
since $u_n \to t$ and $v_n \searrow 0$ as $n \to \infty$.

Consider (2b). Recall that $\varphi(x) \in [0,1]$ for all $x \in [a,b]$,
$\varphi^+ = \sup_{x \in (t-\e,t)} \varphi (x) < 1$,
$\varphi^- = \inf_{x \in (t-\e,t)} \varphi (x) > 0$,
$\varphi(x) = 0$ for all $x \in (t,t+\e)$, and $\varphi(x) = 0$ for all
$x \in \R \setminus [a,b]$. Choose $n$ sufficiently large that
$u_n \in (t-\e,t+\e)$. Then, when $u_n\le t$, equation (\ref{eqlemBddEdge30})
gives,
\begin{align*}
R_n
&\le \int_{a-\e}^{t-\e} \frac{(u_n-x) (1) dx}{(u_n-x)^2 + v_n^2}
+ \int_{t-\e}^{u_n} \frac{(u_n-x) (\varphi^+) dx}{(u_n-x)^2 + v_n^2}
+ \int_{u_n}^t \frac{(u_n-x) (\varphi^-) dx}{(u_n-x)^2 + v_n^2}, \\
R_n
&\ge \int_{t-\e}^{u_n} \frac{(u_n-x) (\varphi^-) dx}{(u_n-x)^2 + v_n^2}
+ \int_{u_n}^t \frac{(u_n-x) (\varphi^+) dx}{(u_n-x)^2 + v_n^2}
+ \int_{t+\e}^{b+\e} \frac{(u_n-x) (1) dx}{(u_n-x)^2 + v_n^2}.
\end{align*}
Also, when $u_n \ge t$,
\begin{align*}
R_n
&\le \int_{a-\e}^{t-\e} \frac{(u_n-x) (1) dx}{(u_n-x)^2 + v_n^2}
+ \int_{t-\e}^t \frac{(u_n-x) (\varphi^+) dx}{(u_n-x)^2 + v_n^2}, \\
R_n
&\ge \int_{t-\e}^t \frac{(u_n-x) (\varphi^-) dx}{(u_n-x)^2 + v_n^2}
+ \int_{t+\e}^{b+\e} \frac{(u_n-x) (1) dx}{(u_n-x)^2 + v_n^2}.
\end{align*}
Then, since $u_n \to t \in [a,b]$ and $v_n \searrow 0$ as
$n \to \infty$, we can proceed as in (1b) to get,
\begin{align*}
|R_n|
&\le -(\varphi^+ - \varphi^-) \log(v_n)
- \frac12 \varphi^- \log((u_n-t)^2 + v_n^2) + O(1)
&\text{ when } u_n \le t, \\
|R_n|
&\le - \frac12 \varphi^+ \log((u_n-t)^2 + v_n^2) + O(1)
&\text{ when } u_n \ge t.
\end{align*}
Finally note that $\frac12 \log((u_n-t)^2 + v_n^2) \ge \log(v_n)$
and $\frac12 \log((u_n-t)^2 + v_n^2) \ge \log|u_n-t|$. Therefore,
for all $n$ sufficiently large, $|R_n| \le -\varphi^+ \log(v_n) + O(1)$
when $u_n -t \le v_n$, and $|R_n| \le - \varphi^+ \log(u_n-t) + O(1)$
when $u_n -t \ge v_n$. (2b) then easily follows.
\end{proof}

We end this section with the following trivial result:
\begin{lem}\leavevmode
\label{lemBddEdge3}
\begin{enumerate}
\item
Assume that the measure, $\mu$, of hypothesis \ref{hypWeakConv} is of the form
$\mu = \sum_k \l_{A_i}$, where $\{A_1, A_2, ... \}$ are mutually disjoint
intervals. Then $R = \R$, and lemmas \ref{lemBddEdge0} and \ref{lemBddEdge1}
completely describe
$\partial \LL$.
\item
Assume that $\mu$ is such that $\R \setminus R$ is non-empty, and each
$t \in \R \setminus R$ satisfies one of the cases (1-5) of lemma
\ref{lemBddEdge2}. Then lemmas \ref{lemBddEdge0}, \ref{lemBddEdge1} and
\ref{lemBddEdge2} completely describe $\partial \LL$.
\end{enumerate}
\end{lem}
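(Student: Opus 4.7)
The approach is to chain the three preceding lemmas together using Corollary \ref{corBndryLim}, which asserts that every point of $\partial \LL$ arises as a limit $(\chi_\LL(w_n), \eta_\LL(w_n)) \to (\chi, \eta)$ for some $\{w_n\}_{n \ge 1} \subset \mathbb{H}$ with either $|w_n| \to \infty$ or $w_n \to t \in \R$. The plan is to partition the possibilities into three mutually exhaustive cases: (i) $|w_n| \to \infty$, handled completely by Lemma \ref{lemBddEdge0}; (ii) $w_n \to t \in R$, handled completely by Lemma \ref{lemBddEdge1}; and (iii) $w_n \to t \in \R \setminus R$, handled by Lemma \ref{lemBddEdge2} whenever $t$ satisfies one of cases (1-5) there. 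The two parts of Lemma \ref{lemBddEdge3} then reduce to verifying that cases (i-iii) collectively cover every sequence $\{w_n\}$ that Corollary \ref{corBndryLim} can produce.

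For part (1), the key step is to verify that $R = \R$ when $\mu = \sum_k \l_{A_k}$ with $\{A_k\}$ mutually disjoint intervals, which would make case (iii) vacuous and reduce the description to Lemmas \ref{lemBddEdge0} and \ref{lemBddEdge1} alone. I would unpack the density: since $\varphi = \mathbf{1}_{\bigcup_k A_k}$, we have $\supp(\mu) = \overline{\bigcup_k A_k}$ and $\supp(\l - \mu) = \overline{\R \setminus \bigcup_k A_k}$. Consequently $(\R \setminus \supp(\mu)) \cup (\R \setminus \supp(\l-\mu))$ equals $\R$ minus the endpoints of the intervals $A_k$; this is $\R$ with a discrete set removed, so its closure is all of $\R$, and the interior-of-closure definition (\ref{eqR}) yields $R = \R$.

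For part (2), by assumption every $t \in \R \setminus R$ satisfies one of cases (1-5) of Lemma \ref{lemBddEdge2}, so case (iii) above is entirely covered by that lemma, and Lemmas \ref{lemBddEdge0}, \ref{lemBddEdge1}, \ref{lemBddEdge2} together exhaust every possibility allowed by Corollary \ref{corBndryLim}. Conversely, each point furnished by these three lemmas is explicitly declared to lie in $\partial \LL$ within the lemmas themselves, so the description is both complete and correct.

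The main obstacle is essentially bookkeeping rather than analysis, in line with the author's label \textit{trivial result}. The only mildly substantive step is the topological computation that the endpoints of the intervals $A_k$ form a nowhere dense set, so that taking the interior of the closure recovers all of $\R$; everything else is a direct invocation of Corollary \ref{corBndryLim} together with the hypothesis that partitions $t \in \R$ into regions $R$ and $\R \setminus R$ on which the appropriate lemma applies.
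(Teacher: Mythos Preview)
Your proposal is correct and follows essentially the same approach as the paper: invoke Corollary~\ref{corBndryLim} to reduce $\partial\LL$ to limits along sequences $\{w_n\}\subset\mathbb{H}$ with either $|w_n|\to\infty$ or $w_n\to t\in\R$, then cover these cases by Lemmas~\ref{lemBddEdge0}, \ref{lemBddEdge1}, and (for part~(2)) \ref{lemBddEdge2}. The paper's proof is terser, simply citing equation~(\ref{eqR}) for the claim $R=\R$ in part~(1), whereas you spell out the density computation explicitly; otherwise the arguments are identical.
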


\begin{proof}
The fact that $\R = R$ in part (1) follows from equation (\ref{eqR}).
Next, recall that corollary \ref{corBndryLim} implies that
$\partial \LL$ is the set of all $(\chi,\eta)$ for which there exists a
$\{w_n\}_{n\ge1} \subset \mathbb{H}$ with
$(\chi_\LL(w_n),\eta_\LL(w_n)) \to (\chi,\eta)$ as $n \to \infty$,
and either $|w_n| \to \infty$ or $w_n \to t \in \R$ as $n \to \infty$.
Lemma \ref{lemBddEdge0} covers all sequences with $|w_n| \to \infty$,
and lemma \ref{lemBddEdge1} covers all sequences with $w_n \to t \in R$.
Finally, in part (2), lemma \ref{lemBddEdge2} covers all sequences with
$w_n \to t \in \R \setminus R$, since each such $t$ satisfies one of the
cases of lemma \ref{lemBddEdge2} by assumption. The result trivially
follows.
\end{proof}

\subsection{The edge, $\EE$}
\label{secedge}

In this section we prove an analogous result for the edge, $\EE$, to theorem
\ref{thmwc} for the liquid region, $\LL$. Recall in theorem \ref{thmwc}, we
map $\LL$ to $\mathbb{H}$ by mapping $(\chi,\eta) \in \LL$ to the
unique root of $f'_{(\chi,\eta)}$ in $\mathbb{H}$. This map is a
homeomorphism with inverse $w \mapsto (\chi_\LL(w),\eta_\LL(w))$. Recall also
(see remark \ref{remSmoothPara}), that
$(\chi_\EE(\cdot),\eta_\EE(\cdot)) : R \to \partial \LL$ is the smooth
curve, parameterised over the open set $R \subset \R$ (see equations (\ref{eqR})
and (\ref{eqR2})), which is the unique continuous extensions to $R$ of
$(\chi_\LL(\cdot),\eta_\LL(\cdot)) : \mathbb{H} \to \LL$. The main
result of this section, theorem \ref{thmEdge}, uses definition \ref{defEdge}
for $\EE$ to define a map from $\EE$ to $\R$ which is analogous to the map
from $\LL$ to $\mathbb{H}$ discussed above. This is shown to
bijectively map $\EE$ to $R$ with inverse $t \mapsto (\chi_\EE(t),\eta_\EE(t))$.
Then, in corollary \ref{corEdgeDefEquiv}, we use theorem \ref{thmEdge} to prove the
equivalence of definitions \ref{defEdge0} and \ref{defEdge} for $\EE$.
Lemma \ref{lemEdge2} further explores this equivalence.

Again we denote $f'_{(\chi,\eta)}$ simply by $f'$ where no confusion is possible.
Recall definition \ref{defEdge}: $\EE$ is the disjoint union,
$\EE := \EE_\mu \cup \EE_{\l-\mu} \cup \EE_0 \cup \EE_1 \cup \EE_2$, where
\begin{itemize}
\item
$\EE_\mu$ is the set of all $(\chi,\eta) \in [a,b] \times [0,1]$
with $b \ge \chi \ge \chi+\eta-1 \ge a$ for which $f'$ has a
repeated root in $\R \setminus [\chi+\eta-1,\chi]$.
\item
$\EE_{\l-\mu}$ is the set of all $(\chi,\eta)$ for which $f'$ has a
repeated root in $(\chi+\eta-1,\chi)$.
\item
$\EE_0$ is the set of all $(\chi,\eta)$ for which $\eta=1$ and $f'$ has a root
at $\chi \; (=\chi+\eta-1)$.
\item
$\EE_1$ is the set of all $(\chi,\eta)$ for which $\eta<1$ and $f'$ has a root at $\chi$.
\item
$\EE_2$ is the set of all $(\chi,\eta)$ for which $\eta<1$ and $f'$ has a root
at $\chi+\eta-1$.
\end{itemize}
The fact that the above union is disjoint follows from corollary \ref{corf'} of
theorem \ref{thmf'}. Also, theorem \ref{thmf'} implies that $f'$ has at most one
real-valued repeated root.

\begin{thm}
\label{thmEdge}
Define $W_\EE : \EE \to \R$ by mapping to the corresponding root of $f'$:
\begin{itemize}
\item
$W_\EE(\chi,\eta)$ is the unique real-valued repeated root whenever
$(\chi,\eta) \in \EE_\mu \cup \EE_{\l-\mu}$.
\item
$W_\EE(\chi,\eta)$ is the root $\chi$ whenever $(\chi,\eta) \in \EE_0 \cup \EE_1$.
\item
$W_\EE(\chi,\eta)$ is the root $\chi+\eta-1$ whenever $(\chi,\eta) \in \EE_0 \cup \EE_2$.
\end{itemize}
Then $W_\EE(\EE) = R$ and $W_\EE : \EE \to R$ is a bijection with inverse
$t \mapsto (\chi_\EE(t),\eta_\EE(t))$.
\end{thm}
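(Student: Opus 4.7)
The plan is to verify that $W_\EE$ and the map $t \mapsto (\chi_\EE(t),\eta_\EE(t))$ from Lemma \ref{lemBddEdge1} are mutual inverses. All computations are driven by the identity
$$e^{f'_{(\chi,\eta)}(w)} = e^{C(w)} \cdot \frac{w-\chi}{w-\chi-\eta+1},$$
valid wherever the relevant analytic extensions from Lemma \ref{lemAnalExt} exist. The argument proceeds by case-analysis matching the decomposition $R = (\R \setminus \supp(\mu)) \cup (\R \setminus \supp(\l-\mu)) \cup R_1 \cup R_2$ against $\EE = \EE_\mu \cup \EE_{\l-\mu} \cup \EE_0 \cup \EE_1 \cup \EE_2$.

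First I show $(\chi_\EE(t),\eta_\EE(t)) \in \EE$ and $W_\EE(\chi_\EE(t),\eta_\EE(t)) = t$ for each $t \in R$. For $t \in \R \setminus \supp(\mu)$, substituting the explicit formulas of Lemma \ref{lemBddEdge1} for $\chi_\EE(t)$ and $\eta_\EE(t)$ into the identity above and its derivative verifies both $f'(t) = 0$ and $f''(t) = 0$; sign analysis using $C'(t)<0$ from Lemma \ref{lemAnalExt}(a) places $t$ in $\R \setminus [\chi+\eta-1,\chi]$, so $(\chi_\EE(t),\eta_\EE(t)) \in \EE_\mu$, except in the degenerate case $C(t)=0$, where $\eta_\EE(t) = 1$ and we land in $\EE_0$. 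The case $t \in \R \setminus \supp(\l-\mu)$ is analogous via Lemma \ref{lemAnalExt}(b), yielding $\EE_{\l-\mu}$. For $t \in R_1$, Lemma \ref{lemAnalExt}(c) simplifies the identity to $e^{f'(w)} = e^{C_I(w)}(w-t_2)/(w-\chi-\eta+1)$ after cancellation of the pole of $e^{C(w)}$ at $t$ against the zero of $w - \chi = w-t$; evaluation at $w=t$ with the formulas for $\chi_\EE(t),\eta_\EE(t)$ yields value $1$, so $f'(t) = 0$ in the extended sense, giving $(\chi_\EE(t),\eta_\EE(t)) \in \EE_1$. The case $t \in R_2$ is symmetric via Lemma \ref{lemAnalExt}(d).

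Conversely, given $(\chi,\eta) \in \EE$ I set $t := W_\EE(\chi,\eta)$ and show $t \in R$ with $(\chi,\eta) = (\chi_\EE(t),\eta_\EE(t))$. For $(\chi,\eta) \in \EE_\mu$, the mere existence of the real-valued repeated root forces $t$ into the domain of analyticity of $f'$, so $t \in \R \setminus \supp(\mu)$; the equations $f'(t) = 0$ and $f''(t) = 0$ become $e^{C(t)} = (t-\chi-\eta+1)/(t-\chi)$ and $C'(t) = 1/(t-\chi-\eta+1) - 1/(t-\chi)$, which solve uniquely for $(\chi,\eta)$ in terms of $t, C(t), C'(t)$, recovering the formulas of Lemma \ref{lemBddEdge1}. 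The case $\EE_{\l-\mu}$ is analogous. For $(\chi,\eta) \in \EE_1$, the existence of a root of $f'$ at the branch point $\chi$ requires cancellation of the $\log(w-\chi)$ singularity by a corresponding singularity of $C(w)$, which by Lemma \ref{lemAnalExt}(c) occurs precisely when $\chi \in R_1$; the condition $f'(\chi) = 0$ then reduces to $\eta = 1 - e^{C_I(\chi)}(\chi-t_2) = \eta_\EE(\chi)$ with $\chi = \chi_\EE(\chi)$. The cases $\EE_2$ and $\EE_0$ are handled symmetrically, using Lemmas \ref{lemAnalExt}(d) and \ref{lemAnalExt}(a) respectively.

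The main obstacle is making rigorous the phrase ``$f'$ has a root at $\chi$'' from Definition \ref{defEdge} at branch points of the logarithms appearing in $f'$: this must be interpreted via the appropriate analytic extension from Lemma \ref{lemAnalExt}, and the extension exists only for $\chi$ in a specific subset of $R$. Establishing this correspondence is the crucial consistency check linking the two equivalent definitions \ref{defEdge0} and \ref{defEdge} of the edge, and simultaneously ensures both well-definedness of $W_\EE$ as a map into $R$ and surjectivity $W_\EE(\EE) = R$.
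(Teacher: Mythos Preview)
Your proposal is correct and follows essentially the same approach as the paper: a case-by-case verification that $W_\EE$ and $t \mapsto (\chi_\EE(t),\eta_\EE(t))$ are mutual inverses, matching the pieces of $\EE$ against the pieces of $R$ via the analytic extensions of Lemma~\ref{lemAnalExt}. The paper organizes this slightly more explicitly by naming the partition $R = R_\mu \cup R_{\l-\mu} \cup R_0 \cup R_1 \cup R_2$ (with $R_\mu = \{t \in \R \setminus \supp(\mu) : C(t) \neq 0\}$ and $R_0 = \{t : C(t) = 0\}$) and proving five separate bijections $\EE_\bullet \leftrightarrow R_\bullet$, whereas you treat the $C(t)=0$ case as a degeneration inside the $\R \setminus \supp(\mu)$ analysis; and the paper works directly with the extended $f'$ in the forms (\ref{eqf'Rmu}), (\ref{eqf'Rl-mu}), (\ref{eqf'R1}), (\ref{eqf'R2}) rather than through the exponentiated identity, which lets it avoid your ``cancellation of the $\log$ singularity'' language in the $\EE_1$, $\EE_2$ cases --- but these are presentational differences, not mathematical ones.
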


\begin{proof}
Let $C : \C \setminus \supp(\mu) \to \C$ be the analytic extension of
the Cauchy transform defined in part (a) of lemma \ref{lemAnalExt}.
Also define $R_1$ and $R_2$ as in equation (\ref{eqR2}). We show:
\begin{enumerate}
\item
$W_\EE(\EE_\mu) = R_\mu := \{t \in \R \setminus \supp(\mu) : C(t) \neq 0\}$
and $W_\EE : \EE_\mu \to R_\mu$ is a bijection with inverse
$t \mapsto (\chi_\EE(t),\eta_\EE(t))$.
\item
$W_\EE(\EE_{\l-\mu}) = R_{\l-\mu} := \R \setminus \supp(\l-\mu)$ and
$W_\EE : \EE_{\l-\mu} \to R_{\l-\mu}$ is a bijection with inverse
$t \mapsto (\chi_\EE(t),\eta_\EE(t))$.
\item
$W_\EE(\EE_0) = R_0 := \{t \in \R \setminus \supp(\mu) : C(t) = 0\}$ and
$W_\EE : \EE_0 \to R_0$ is a bijection with inverse
$t \mapsto (\chi_\EE(t),\eta_\EE(t))$.
\item
$W_\EE(\EE_1) = R_1$ and $W_\EE : \EE_1 \to R_1$ is a bijection with
inverse $t \mapsto (\chi_\EE(t),\eta_\EE(t))$.
\item
$W_\EE(\EE_2) = R_2$ and $W_\EE : \EE_2 \to R_2$ is a bijection with
inverse $t \mapsto (\chi_\EE(t),\eta_\EE(t))$.
\end{enumerate}
Note, equation (\ref{eqR2}) implies that $R$ is the disjoint union,
$R = R_\mu \cup R_{\l-\mu} \cup R_0 \cup R_1 \cup R_2$. Parts (1-5)
thus easily give the required result. In fact they are a stronger statement.

Consider (1). We prove this by showing:
\begin{enumerate}
\item[(1a)]
Fix $(\chi,\eta) \in \EE_\mu$ and let $t := W_\EE(\chi,\eta)$. Then
$t \in R_\mu$ and $(\chi,\eta) = (\chi_\EE(t),\eta_\EE(t))$.
\item[(1b)]
Fix $t \in R_\mu$ and let $(\chi,\eta) := (\chi_\EE(t),\eta_\EE(t))$. Then
$(\chi,\eta) \in \EE_\mu$ and $W_\EE(\chi,\eta) = t$.
\end{enumerate}

Consider (1a). Note, equation (\ref{eqf'}) and the definition of $\EE_\mu$ imply
that $t \in (\R \setminus [\chi+\eta-1,\chi]) \setminus \supp(\mu)$. Also note,
equations (\ref{eqf'}) and (\ref{eqCauTrans}) give,
\begin{equation}
\label{eqf'Rmu}
f'(w) = C(w) + \log(w - \chi) - \log(w-\chi-\eta+1),
\end{equation}
for all $w \in \C \setminus \R$, where $\log$ is principal value. This has a
trivial analytic extension to
$(\C \setminus [\chi+\eta-1,\chi]) \setminus \supp(\mu)$. Also, parts (a) and
(b) of corollary \ref{corf'} imply that $(\chi,\eta) \in (a,b) \times (0,1)$
and $b > \chi > \chi+\eta-1 > a$. Finally, since $t$ is a repeated
root of $f'$, $e^{f'(t)} = 1$ and $f''(t) = 0$. Therefore,
\begin{equation}
\label{eqlemEdge0}
e^{C(t)} \left( \frac{t-\chi}{t-\chi-\eta+1} \right) = 1
\hspace{0.5cm} \text{and} \hspace{0.5cm}
C'(t) + \frac1{t-\chi} - \frac1{t-\chi-\eta+1} = 0.
\end{equation}
The first part gives $C(t) \neq 0$, since $\eta<1$, and so
$t \in R_\mu$. Also, the first part gives,
\begin{equation*}
\frac1{t-\chi} = \frac{e^{C(t)} - 1}{1-\eta}
\hspace{0.5cm} \text{and} \hspace{0.5cm}
\frac1{t-\chi-\eta+1} = \frac{e^{C(t)} - 1}{(1-\eta) e^{C(t)}}.
\end{equation*}
Substitute into the second part and solve to get
$(\chi,\eta) = (\chi_\EE(t),\eta_\EE(t))$ (see lemma
\ref{lemBddEdge1}), as required.

Consider (1b). First note, lemma \ref{lemBddEdge1} implies that
$(\chi,\eta) \in \partial \LL$, and so
$(\chi,\eta) \in [a,b] \times [0,1]$ with
$b \ge \chi \ge \chi+\eta-1 \ge a$. Also, this lemma gives,
\begin{equation}
\label{eqlemEdge1}
\eta = 1 + \frac{(e^{C(t)}-1)^2}{e^{C(t)} C'(t)},
\hspace{0.5cm}
\chi =  t + \frac{e^{C(t)}-1}{e^{C(t)} C'(t)},
\hspace{0.5cm}
\chi+\eta-1 =  t + \frac{e^{C(t)}-1}{C'(t)}.
\end{equation}
Note that $e^{C(t)} \neq 1$ ($C(t) \neq 0$ since $t \in R_\mu$),
$e^{C(t)} >0$ and $C'(t) < 0$ (see part (a) of lemma (\ref{lemAnalExt})).
Thus the second term on the right hand side of the expression for
$\eta$ is negative, and so $\eta < 1$. Also, the second terms on the right
hand side of the expressions for $\chi$ and $\chi+\eta-1$ are non-zero and
have the same sign, and so $t \in \R \setminus [\chi+\eta-1,\chi]$.
Note, equation (\ref{eqf'Rmu}) holds as above,
for all $w \in (\C \setminus [\chi+\eta-1,\chi]) \setminus \supp(\mu)$.
Substitute the above expressions for $\chi$ and $\eta$ into $f'$ to get
$f'(t) = 0$. Similarly $f''(t) = 0$. Thus $(\chi,\eta) \in \EE_\mu$
by definition, and $W_\EE(\chi,\eta) = t$, as required.

Consider (2). We prove this by showing:
\begin{enumerate}
\item[(2a)]
Fix $(\chi,\eta) \in \EE_{\l-\mu}$ and let $t := W_\EE(\chi,\eta)$. Then
$t \in R_{\l-\mu}$ and $(\chi,\eta) = (\chi_\EE(t),\eta_\EE(t))$.
\item[(2b)]
Fix $t \in R_{\l-\mu}$ and let $(\chi,\eta) := (\chi_\EE(t),\eta_\EE(t))$. Then
$(\chi,\eta) \in \EE_{\l-\mu}$ and $W_\EE(\chi,\eta) = t$.
\end{enumerate}

Consider (2a). Note, equation (\ref{eqf'}) and the definition of $\EE_{\l-\mu}$
give $t \in (\chi+\eta-1,\chi) \setminus \supp(\l-\mu) \subset R_{\l-\mu}$,
and so it remains to show that $(\chi,\eta) = (\chi_\EE(t),\eta_\EE(t))$.
Fix $I = (t_2,t_1) \subset (\chi+\eta-1,\chi) \setminus \supp(\l-\mu)$
with $t \in I$. Equations (\ref{eqf'}) and (\ref{eqCauTrans}) then give,
\begin{equation}
\label{eqf'Rl-mu}
f'(w) = C_I(w) + \log(w-\chi) - \log(w-t_1) + \log(w-t_2) - \log(w-\chi-\eta+1),
\end{equation}
for all $w \in \C \setminus \R$, where $\log$ is principal value
and $C_I(w) := \int_{[a,b] \setminus I} \frac{\mu[dx]}{w-x}$. This has a
trivial analytic extension to $(\C \setminus \R) \cup I$.
Also, since $t$ is a repeated root of $f'$, $e^{f'(t)} = 1$ and $f''(t) = 0$.
Equation (\ref{eqlemEdge0}) again holds, where now $e^{C(t)}$ and $C'(t)$ are
defined by the analytic extensions of part (b) of lemma (\ref{lemAnalExt}).
We can then proceed as for $\EE_\mu$ to get
$(\chi,\eta) = (\chi_\EE(t),\eta_\EE(t))$, as required.

Consider (2b). First note, lemma \ref{lemBddEdge1} implies that
$(\chi,\eta) \in \partial \LL$, and so
$(\chi,\eta) \in [a,b] \times [0,1]$ with $b \ge \chi \ge \chi+\eta-1 \ge a$.
Equation (\ref{eqlemEdge1}) again holds, where now $e^{C(t)}$ and $C'(t)$
are defined by the analytic extensions of part (b) of lemma (\ref{lemAnalExt}).
Therefore $e^{C(t)} < 0$ and $C'(t) > 0$. Thus the second term on the right
hand side of the expression for $\eta$ is negative, and so $\eta < 1$.
Also the second term on the right hand side of the expression for $\chi$ is
positive, and the second term on the right hand side of the expression for
$\chi+\eta-1$ is negative, and so $t \in (\chi+\eta-1,\chi)$. Therefore,
fixing $I = (t_2,t_1) \subset (\chi+\eta-1,\chi) \setminus \supp(\l-\mu)$
with $t \in I$, equation (\ref{eqf'Rmu}) holds as above, for all
$w \in (\C \setminus \R) \cup I$. This easily gives $f'(t) = f''(t) = 0$.
Thus $(\chi,\eta) \in \EE_{\l-\mu}$ by definition, and $W_\EE(\chi,\eta) = t$, as
required.

Consider (3). We prove this by showing:
\begin{enumerate}
\item[(3a)]
Fix $(\chi,\eta) \in \EE_0$ and let $t := W_\EE(\chi,\eta)$. Then
$t \in R_0$ and $(\chi,\eta) = (\chi_\EE(t),\eta_\EE(t))$.
\item[(3b)]
Fix $t \in R_0$ and let $(\chi,\eta) := (\chi_\EE(t),\eta_\EE(t))$. Then
$(\chi,\eta) \in \EE_0$ and $W_\EE(\chi,\eta) = t$.
\end{enumerate}

Consider (3a). The definitions of $\EE_0$ and $W_\EE$ give
$(\chi,\eta) = (t,1)$. Moreover $f'$ has a root at $t=\chi$, and so equation
(\ref{eqf'}) gives $t \in \R \setminus \supp(\mu)$. Also equations (\ref{eqf'})
and (\ref{eqCauTrans}) give $f'(w) = C(w)$ for all $w \in \C \setminus \R$.
This has a trivial analytic extension to $\C \setminus \supp(\mu)$, and so
$f'(t) = C(t)$. Thus, since $f'$ has a root at $t=\chi$, $C(t) = 0$ and
lemma \ref{lemBddEdge1} gives $(\chi_\EE(t),\eta_\EE(t)) =(t,1)$. Thus
$t\in R_0$ and $(\chi,\eta) = (\chi_\EE(t),\eta_\EE(t))$, as required.
Consider (3b). The definition of $R_0$ gives $t \in \R \setminus \supp(\mu)$
and $C(t) = 0$. Lemma \ref{lemBddEdge1} thus gives $(\chi,\eta) = (t,1)$.
Then, as before, $f'(w) = C(w)$ for all $w \in \C \setminus \supp(\mu)$, and
so $f'(t) = C(t) = 0$. Therefore $(\chi,\eta) \in \EE_0$ by definition, and
$W_\EE(\chi,\eta) = t$, as required.

Consider (4). We prove this by showing:
\begin{enumerate}
\item[(4a)]
Fix $(\chi,\eta) \in \EE_1$ and let $t := W_\EE(\chi,\eta)$. Then
$t \in R_1$ and $(\chi,\eta) = (\chi_\EE(t),\eta_\EE(t))$.
\item[(4b)]
Fix $t \in R_1$ and let $(\chi,\eta) := (\chi_\EE(t),\eta_\EE(t))$. Then
$(\chi,\eta) \in \EE_1$ and $W_\EE(\chi,\eta) = t$.
\end{enumerate}

Consider (4a). Note, the definitions of $\EE_1$ and
$W_\EE$ give $\eta<1$ and $t = \chi$. Moreover, $f'$ has a root at $t=\chi$, and
so equation (\ref{eqf'}) shows that there exists an interval, $I = (t_2,t_1)$,
with $t = \chi \in I$, $(t,t_1) \subset \R \setminus \supp(\mu)$
and $(t_2,t) \subset (\chi+\eta-1,\chi) \setminus \supp(\l-\mu)$.
Equation (\ref{eqR2}) then implies that $t \in R_1$. Moreover, equations
(\ref{eqf'}) and (\ref{eqCauTrans}) give,
\begin{equation}
\label{eqf'R1}
f'(w) = C_I(w) + \log(w-t_2) - \log(w-\chi-\eta+1),
\end{equation}
for all $w \in \C \setminus \R$, where $\log$ is principal value. This has a
trivial analytic extension to $(\C \setminus \R) \cup I$. Thus, since $t = \chi$
and $f'(\chi) = 0$, $C_I(t) + \log(t-t_2) - \log(1-\eta) = 0$, where now $\log$
denotes natural logarithm. Solving gives $\eta = 1 - (t-t_2) e^{C_I(t)}$. Lemma
(\ref{lemBddEdge1}) then gives $(\chi,\eta) = (\chi_\EE(t),\eta_\EE(t))$, as
required.

Consider (4b). Note,
lemma \ref{lemBddEdge1} implies that $(\chi,\eta) \in \partial \LL$, and so
$(\chi,\eta) \in [a,b] \times [0,1]$ with $b \ge \chi \ge \chi+\eta-1 \ge a$.
Also, this lemma gives $\chi =  t$ and $\eta =  1 - e^{C_I(t)} (t-t_2) < 1$.
Also, since $t \in R_1$, equation (\ref{eqR2}) implies that we can fix
$I = (t_2,t_1)$ with $t = \chi \in I$, $(t,t_1) \subset \R \setminus \supp(\mu)$
and $(t_2,t) \subset (\chi+\eta-1,\chi) \setminus \supp(\l-\mu)$. Therefore
equation (\ref{eqf'R1}) holds as above, for all $w \in (\C \setminus \R) \cup I$.
This easily gives $f'(t) = f'(\chi) = 0$. Thus $(\chi,\eta) \in \EE_1$ by
definition, and $W_\EE(\chi,\eta) = t$, as required.

Consider (5). We prove this by showing:
\begin{enumerate}
\item[(5a)]
Fix $(\chi,\eta) \in \EE_2$ and let $t := W_\EE(\chi,\eta)$. Then
$t \in R_2$ and $(\chi,\eta) = (\chi_\EE(t),\eta_\EE(t))$.
\item[(5b)]
Fix $t \in R_2$ and let $(\chi,\eta) := (\chi_\EE(t),\eta_\EE(t))$. Then
$(\chi,\eta) \in \EE_2$ and $W_\EE(\chi,\eta) = t$.
\end{enumerate}

Consider (5a). Note, the definitions of $\EE_2$ and $W_\EE$ give $\eta<1$
and $t = \chi+\eta-1$. Moreover, $f'$ has a root at $t = \chi+\eta-1$, and so
equation (\ref{eqf'}) shows that there exists an interval, $I = (t_2,t_1)$,
with $t = \chi+\eta-1 \in I$,
$(t,t_1) \subset (\chi+\eta-1,\chi) \setminus \supp(\l-\mu)$ and
$(t_2,t) \subset \R \setminus \supp(\mu)$. Equation (\ref{eqR2}) then implies
that $t \in R_2$. Moreover, equations (\ref{eqf'}) and (\ref{eqCauTrans}) give,
\begin{equation}
\label{eqf'R2}
f'(w) = C_I(w) + \log(w-\chi) - \log(w-t_1),
\end{equation}
for all $w \in \C \setminus \R$, where $\log$ is principal value. This has a
trivial analytic extension to $(\C \setminus \R) \cup I$. Thus, since
$t = \chi+\eta-1$ and $f'(\chi+\eta-1) = 0$,
$C_I(t) + \log(1-\eta) - \log(t_1-t) = 0$, where now $\log$ denotes natural
logarithm. Solving gives $\eta = 1 - (t_1-t) e^{-C_I(t)}$, and so
$\chi = t+1-\eta = t + (t_1-t) e^{-C_I(t)}$.
Lemma (\ref{lemBddEdge1}) then gives $(\chi,\eta) = (\chi_\EE(t),\eta_\EE(t))$, as
required. Finally, (5b) follows in a similar way to (4b).
\end{proof}

Theorem \ref{thmEdge} only uses definition \ref{defEdge} for $\EE$, and shows
that $(\chi_\EE(\cdot),\eta_\EE(\cdot)) : R \to \partial \LL$ bijectively
maps $R$ to $\EE$. Definition \ref{defEdge0} defines $\EE$ as the image
space of this map. Therefore:
\begin{cor}
\label{corEdgeDefEquiv}
Definitions \ref{defEdge0} and \ref{defEdge} of the edge, $\EE$, are equivalent.
\end{cor}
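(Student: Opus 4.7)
The plan is to invoke Theorem \ref{thmEdge} directly; the corollary is really just a restatement of the content of that theorem in terms of the two definitions. I would first take $\EE$ to denote the set given by Definition \ref{defEdge}, i.e., the disjoint union $\EE_\mu \cup \EE_{\l-\mu} \cup \EE_0 \cup \EE_1 \cup \EE_2$, and write $\EE'$ for the set given by Definition \ref{defEdge0}, i.e., the image of the smooth curve $(\chi_\EE(\cdot),\eta_\EE(\cdot)) : R \to \partial \LL$.

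The bulk of the work is to show $\EE = \EE'$. Theorem \ref{thmEdge} provides a map $W_\EE : \EE \to \R$ which is a bijection onto $R$ whose inverse is precisely $t \mapsto (\chi_\EE(t),\eta_\EE(t))$. In particular, the image of $R$ under this inverse is exactly $\EE$. Since this image is, by definition, $\EE'$, the two sets coincide. First I would state the inclusion $\EE' \subset \EE$ by noting that for each $t \in R$, Theorem \ref{thmEdge} asserts $(\chi_\EE(t),\eta_\EE(t)) \in \EE$; then I would state the reverse inclusion $\EE \subset \EE'$ by noting that any $(\chi,\eta) \in \EE$ satisfies $(\chi,\eta) = (\chi_\EE(W_\EE(\chi,\eta)),\eta_\EE(W_\EE(\chi,\eta)))$ with $W_\EE(\chi,\eta) \in R$, placing it in the image of the parameterisation.

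Since Theorem \ref{thmEdge} already supplies both the bijectivity and the explicit inverse, there is no genuine obstacle: the corollary follows at once. I would close by remarking that this also shows the parameterisation $(\chi_\EE(\cdot),\eta_\EE(\cdot)) : R \to \EE$ is injective, which is the claim used in Remark \ref{remSmoothPara} and which, combined with the smoothness established in Lemma \ref{lemBddEdge1}, identifies the edge as a smooth curve parameterised bijectively over the open set $R \subset \R$.
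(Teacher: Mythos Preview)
Your proposal is correct and matches the paper's own argument: both simply observe that Theorem \ref{thmEdge} (which is stated entirely in terms of Definition \ref{defEdge}) shows $(\chi_\EE(\cdot),\eta_\EE(\cdot))$ maps $R$ bijectively onto $\EE$, which is precisely the image set used in Definition \ref{defEdge0}. Your explicit splitting into the two inclusions $\EE' \subset \EE$ and $\EE \subset \EE'$ is a slight elaboration, but the substance is identical.
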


We end this section with a result which further clarifies the above equivalence:
\begin{lem}
\label{lemEdge2}
Define $R_\mu, R_{\l-\mu}, R_0, R_1, R_2$ as in parts (1-5) in the proof
of theorem \ref{thmEdge}. Also, for all
$t \in R = R_\mu \cup R_{\l-\mu} \cup R_0 \cup R_1 \cup R_2$, define the function,
\begin{equation*}
f_t' := f_{(\chi_\EE(t), \eta_\EE(t))}',
\end{equation*}
where the right hand side is defined in equation (\ref{eqf'}). Then,
\begin{enumerate}
\item[(a)]
$(\chi_\EE(t), \eta_\EE(t)) \in \EE_\mu$ if and only if $t \in R_\mu$.
Moreover, in this case, $t$ is a root of $f_t'$ of multiplicity either
$2$ or $3$.
\item[(b)]
$(\chi_\EE(t), \eta_\EE(t)) \in \EE_{\l-\mu}$ if and only if $t \in R_{\l-\mu}$.
Moreover, in this case, $t$ is a root of $f_t'$ of multiplicity either
$2$ or $3$.
\item[(c)]
$(\chi_\EE(t), \eta_\EE(t)) \in \EE_0$ if and only if $t \in R_0$. Moreover,
in this case, the functions $f_t'$ and $C$ are equal, and $t$ is a root of
$f_t'$ of multiplicity $1$.
\item[(d)]
$(\chi_\EE(t), \eta_\EE(t)) \in \EE_1$ if and only if $t \in R_1$. Moreover,
in this case, $t$ is a root of $f_t'$ of multiplicity either $1$ or $2$.
\item[(e)]
$(\chi_\EE(t), \eta_\EE(t)) \in \EE_2$ if and only if $t \in R_2$. Moreover,
in this case, $t$ is a root of $f_t'$ of multiplicity either $1$ or $2$.
\end{enumerate}
\end{lem}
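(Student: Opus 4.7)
The plan is to extract the ``if and only if'' halves of (a)--(e) directly from theorem \ref{thmEdge}, then verify each multiplicity claim by combining the defining conditions of the relevant edge component with the real-root structure of $f'$ recorded in theorem \ref{thmf'} and the explicit formulas of lemma \ref{lemAnalExt}.

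For the equivalences, observe that parts (1)--(5) in the proof of theorem \ref{thmEdge} establish exactly that $W_\EE : \EE_X \to R_X$ is a bijection with inverse $t \mapsto (\chi_\EE(t), \eta_\EE(t))$ for each $X \in \{\mu, \l-\mu, 0, 1, 2\}$. Consequently $(\chi_\EE(t), \eta_\EE(t)) \in \EE_X$ if and only if $t \in R_X$, which gives the first half of each of (a)--(e) with no new argument. It remains to bound the multiplicities of $t$ as a root of $f_t'$. In (a) and (b), the definitions of $\EE_\mu$ and $\EE_{\l-\mu}$ force $t$ to be a repeated root, so the multiplicity is automatically at least $2$; the upper bound of $3$ is read off from theorem \ref{thmf'}, since any higher multiplicity at $t$ would exceed the total root count of $f'$ in the appropriate analytic extension once the conjugate pair of non-real roots allowed by that theorem is accounted for. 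In (c), $t \in R_0$ yields $\eta_\EE(t) = 1$ via lemma \ref{lemBddEdge1}, so the two logarithmic terms on the right-hand side of (\ref{eqf'2}) cancel and $f_t'$ reduces to $C$ on $\C \setminus \supp(\mu)$; lemma \ref{lemAnalExt}(a) then gives $C'(t) < 0$, so $t$ is a simple root of $C = f_t'$. Finally, in (d) and (e), the definitions of $\EE_1$ and $\EE_2$ give multiplicity at least $1$, and theorem \ref{thmf'} again supplies the upper bound of $2$.

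The main obstacle is extracting the sharp upper multiplicity bounds in (a), (b), (d), (e) from theorem \ref{thmf'}: the lower bounds are built into the edge definitions, but the upper bounds depend on the global classification of how real and non-real roots of $f'$ distribute themselves across the support-dependent domain of analyticity. Once theorem \ref{thmf'} has been invoked to rule out a fourth coincidence of roots at $t$ in the $\EE_\mu$ and $\EE_{\l-\mu}$ cases, and a third coincidence at the endpoints $\chi$ or $\chi+\eta-1$ in the $\EE_1$ and $\EE_2$ cases, the entire lemma falls out of the bijections already constructed in theorem \ref{thmEdge} together with the sign information in lemma \ref{lemAnalExt}.
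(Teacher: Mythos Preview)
Your approach matches the paper's: the equivalences come from the bijections in theorem \ref{thmEdge}, the lower multiplicity bounds from the definitions of the edge components, the upper bounds from theorem \ref{thmf'}, and part (c) via $f_t' = C$ together with $C'(t) < 0$ from lemma \ref{lemAnalExt}(a). Two small points worth tightening: your reference to a ``conjugate pair of non-real roots'' in (a)--(b) is a red herring, since $\EE$ and $\LL$ are disjoint by corollary \ref{corf'}(c) and hence $f_t'$ has no non-real roots---the bound of $3$ is read directly from parts (1) and (3) of theorem \ref{thmf'}; and for (d)--(e) the paper first invokes corollary \ref{corf'}(b) to secure $\eta > 0$, then splits on whether $S_2$ is empty before citing parts (1), (9) (when $S_2 \neq \emptyset$, so that $t = \chi \in J$) or parts (11), (12) (when $S_2 = \emptyset$) of theorem \ref{thmf'} to obtain the bound of $2$.
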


\begin{proof}
Consider (a). The fact that $(\chi_\EE(t), \eta_\EE(t)) \in \EE_\mu$ if and
only if $t \in R_\mu$ follows from part (1) of theorem \ref{thmEdge}. Also,
fixing $t \in R_\mu$ and defining
$(\chi,\eta) := (\chi_\EE(t), \eta_\EE(t)) \in \EE_\mu$, theorem \ref{thmEdge}
gives $t = W_\EE(\chi,\eta)$. The definitions of $\EE_\mu$ and $W_\EE$ then
imply that $t$ is a repeated root of $f_t'$, i.e., $t$ has multiplicity at
least $2$. Finally, theorem \ref{thmf'} implies that $t$ has multiplicity
at most $3$, as required. Part (b) follows similarly.

Consider (c). The fact that $(\chi_\EE(t), \eta_\EE(t)) \in \EE_0$ if and
only if $t \in R_0$ follows from part (3) of theorem \ref{thmEdge}. Also,
fixing $t \in R_0$ and defining
$(\chi,\eta) := (\chi_\EE(t), \eta_\EE(t)) \in \EE_0$, theorem \ref{thmEdge}
gives $t = W_\EE(\chi,\eta)$. The definitions of $\EE_0$ and $W_\EE$ then
imply that $\eta = 1$ and $t$ is a root of $f_t'$. Also, since $\eta=1$,
equations (\ref{eqf'}) and (\ref{eqCauTrans}) give $f_t'=C$. Finally,
$f_t''(t) = C'(t) = - \int_a^b \frac{\mu[dx]}{(t-x)^2} < 0$,
and so $t$ is a root of multiplicity $1$, as required.

Consider (d). The fact that $(\chi_\EE(t), \eta_\EE(t)) \in \EE_1$ if
and only if $t \in R_1$ follows from part (4) of theorem \ref{thmEdge}.
Also, fixing $t \in R_1$ and defining
$(\chi,\eta) := (\chi_\EE(t), \eta_\EE(t)) \in \EE_1$, theorem \ref{thmEdge}
gives $t = W_\EE(\chi,\eta) = \chi$. The definitions of $\EE_1$ and $W_\EE$
then imply that $t = \chi$ is a root of $f_t'$. It remains to show that
$t = \chi$ has multiplicity at most $2$. Note that part (b) of corollary
\ref{corf'} implies that $\eta>0$. Then, using the notation of theorem \ref{thmf'},
this theorem implies that $t = \chi \in J$ whenever $S_2$ is non-empty.
The result, whenever $S_2$ is non-empty, then follows from parts (1) and
(9) of theorem \ref{thmf'}. Whenever $S_2$ is empty,
the result follows from parts (11) and (12). This covers all possibilities.
Part (e) follows similarly.
\end{proof}

\subsection{Local geometric properties of the edge curve}
\label{seclgpotee}

Recall (see definition \ref{defEdge0}) that $\EE$ is the image of the smooth
curve defined by $t \mapsto (\chi_\EE(t),\eta_\EE(t))$ for all $t \in R$, where
$R = (\R \setminus \supp(\mu)) \cup (\R \setminus \supp(\l-\mu)) \cup R_1 \cup R_2$,
and $R_1,R_2$ are defined in equation (\ref{eqR2}). In this section we investigate
the local geometric properties of the edge curve. We show that the curve behaves
locally either like a parabola with negative curvature, or a cusp of first order,
and this behaviour is completely determined by the multiplicities of the roots
as presented in lemma \ref{lemEdge2}.

We begin by decomposing the image curve into orthogonal components and
performing a Taylor expansion:
\begin{lem}
\label{lemLocGeo}
Fix $t \in R$ and define the (un-normalised) orthogonal vectors,
$\mathbf{x} = \mathbf{x}(t)$ and $\mathbf{y} = \mathbf{y}(t)$ as,
\begin{itemize}
\item
$\mathbf{x} := (1,e^{C(t)}-1)$ and $\mathbf{y} := (e^{C(t)}-1,-1)$ when
$t \in (\R \setminus \supp(\mu)) \cup (\R \setminus \supp(\l-\mu))$.
\item
$\mathbf{x} := (0,1)$ and $\mathbf{y} := (1,0)$ when $t \in R_1$.
\item
$\mathbf{x} := (1,-1)$ and $\mathbf{y} := (1,1)$ when $t \in R_2$.
\end{itemize}
Then,
\begin{equation}
\label{eqrtvt}
(\chi_\EE(s),\eta_\EE(s)) - (\chi_\EE(t),\eta_\EE(t))
= a(s) \mathbf{x} + b(s) \mathbf{y},
\end{equation}
for all $s \in R$ sufficiently close to $t$, where
\begin{align*}
a(s) &= a_1 (s-t) + a_2 (s-t)^2 + O((s-t)^3), \\
b(s) &= b_1 (s-t)^2 + b_2 (s-t)^3 + O((s-t)^4),
\end{align*}
and $a_1 = a_1(t)$, $a_2 = a_2(t)$, $b_1 = b_1(t)$ and $b_2 = b_2(t)$ are defined
in the proof. (Above, when $t \in (\R \setminus \supp(\mu)) \cup (\R \setminus \supp(\l-\mu))$,
$e^{C(t)}$ is defined by the analytic extensions of parts (a) and (b) of lemma
\ref{lemAnalExt}.)
\end{lem}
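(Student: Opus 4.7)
The plan is to (i) verify that the edge curve $(\chi_\EE(s),\eta_\EE(s))$ is smooth in a neighborhood of $t$ in $R$, and (ii) show that its first derivative at $t$ is a scalar multiple of $\mathbf{x}(t)$. Granted these two facts, the decomposition (\ref{eqrtvt}) follows by orthogonal projection onto the basis $\{\mathbf{x}, \mathbf{y}\}$, and the vanishing of the linear term in $b(s)$ is automatic from tangency along $\mathbf{x}$.

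For (i), in the case $t \in (\R \setminus \supp(\mu)) \cup (\R \setminus \supp(\l-\mu))$ smoothness follows at once from lemma \ref{lemBddEdge1} together with parts (a), (b) of lemma \ref{lemAnalExt}. For $t \in R_1$ the formulas of lemma \ref{lemBddEdge1} involve the apparently singular expression $(1 - e^{-C(s)})/C'(s)$. Using part (c) of lemma \ref{lemAnalExt}, namely $e^{-C(w)} = e^{-C_I(w)}(w-t)/(w-t_2)$ and the corresponding expression $C'(w) = C_I'(w) - 1/(w-t) + 1/(w-t_2)$, after multiplying numerator and denominator by $s-t$ this quantity is rewritten as a ratio of two functions analytic at $t$, with denominator equal to $-1$ at $s = t$; smoothness of $\eta_\EE$ then follows from the identity $\eta_\EE(s) - 1 = (e^{C(s)}-1)(\chi_\EE(s)-s)$, since the simple zero of $\chi_\EE(s)-s$ at $t$ cancels the simple pole of $e^{C(s)}$ at $t$. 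An analogous argument using part (d) of lemma \ref{lemAnalExt} handles $t \in R_2$.

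For (ii), in the first case, set $\alpha(s) := e^{C(s)}$ and $h(s) := (\alpha(s)-1)/(\alpha(s) C'(s))$, so that $\chi_\EE(s) = s + h(s)$ and $\eta_\EE(s) = 1 + (\alpha(s)-1) h(s)$; using $\alpha'(s) = \alpha(s) C'(s)$ and the relation $\alpha(t) C'(t) h(t) = \alpha(t) - 1$, a direct computation yields
\begin{equation*}
(\chi_\EE'(t), \eta_\EE'(t)) = (1 + h'(t)) (1, \alpha(t) - 1) = (1 + h'(t))\, \mathbf{x}.
\end{equation*}
For $t \in R_1$, the analytic representation derived in step (i) gives the expansion $\chi_\EE(s) - s = -(s-t) + O((s-t)^2)$, so $\chi_\EE'(t) = 0$ and $(\chi_\EE'(t), \eta_\EE'(t))$ lies along $(0,1) = \mathbf{x}$. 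For $t \in R_2$, differentiating the identity $\eta_\EE(s) = 1 + (e^{C(s)}-1)(\chi_\EE(s)-s)$ at $s = t$, and using $e^{C(t)} = 0$, $\chi_\EE(t) - t = e^{-C_I(t)}(t_1 - t)$, together with $(e^{C})'(t) = e^{C_I(t)}/(t - t_1)$ from part (d) of lemma \ref{lemAnalExt}, yields $\eta_\EE'(t) = -\chi_\EE'(t)$, so the tangent lies along $(1,-1) = \mathbf{x}$.

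Finally, given (i) and (ii), define
\begin{equation*}
a(s) := \frac{[(\chi_\EE(s), \eta_\EE(s)) - (\chi_\EE(t), \eta_\EE(t))] \cdot \mathbf{x}}{|\mathbf{x}|^2},
\qquad
b(s) := \frac{[(\chi_\EE(s), \eta_\EE(s)) - (\chi_\EE(t), \eta_\EE(t))] \cdot \mathbf{y}}{|\mathbf{y}|^2}.
\end{equation*}
Both are smooth with $a(t) = b(t) = 0$, and (ii) forces $b'(t) = 0$; Taylor expansion then gives the expressions in the statement of the lemma with $a_1 := a'(t)$, $a_2 := a''(t)/2$, $b_1 := b''(t)/2$ and $b_2 := b'''(t)/6$. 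The main obstacle is step (i) together with the derivative computation in (ii) for $t \in R_1 \cup R_2$, where the pole-zero cancellations between $e^{\pm C}$ and $C'$ via the factorizations of parts (c), (d) of lemma \ref{lemAnalExt} must be tracked carefully; the remaining work is routine arithmetic.
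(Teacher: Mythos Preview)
Your proposal is correct and follows essentially the same approach as the paper: establish the tangency relation $(\chi_\EE'(t),\eta_\EE'(t)) \parallel \mathbf{x}(t)$ (the paper records this as $\eta_\EE'(t)=(e^{C(t)}-1)\chi_\EE'(t)$ in the first case, and via the explicit representations $\chi_\EE(s)=t+(s-t)^2 h_2(s)$ and $\chi_\EE(s)+\eta_\EE(s)=t+1+(s-t)^2 h_4(s)$ for $R_1,R_2$), then decompose orthogonally and Taylor expand. One caveat: the paper's proof writes out the coefficients $a_1,a_2,b_1,b_2$ explicitly in terms of $\chi_\EE',\chi_\EE'',C,C'$ (and the auxiliary functions $h_1,\dots,h_4$ for $R_1,R_2$), and these explicit expressions are quoted and used in the subsequent lemma \ref{lemLineCusp}; your abstract definitions $a_1=a'(t)$, $b_1=b''(t)/2$, etc.\ are correct but you will need to carry out that ``routine arithmetic'' to feed into the next lemma.
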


\begin{proof}
First suppose that $t \in (\R \setminus \supp(\mu)) \cup (\R \setminus \supp(\l-\mu))$.
Fix  an interval, $I := (t_2,t_1)$, with $t \in I$, $I \subset \R \setminus \supp(\mu)$
whenever $t \in \R \setminus \supp(\mu)$, and $I \subset \R \setminus \supp(\l-\mu)$
whenever $t \in \R \setminus \supp(\l-\mu)$. Then, solving equation (\ref{eqrtvt}) gives,
\begin{align*}
(1 + (e^{C(t)}-1)^2) a(s)
&= (\chi_\EE(s) - \chi_\EE(t)) + (\eta_\EE(s) - \eta_\EE(t)) (e^{C(t)}-1), \\
(1 + (e^{C(t)}-1)^2) b(s)
&= (\chi_\EE(s) - \chi_\EE(t)) (e^{C(t)}-1) - (\eta_\EE(s) - \eta_\EE(t)),
\end{align*}
for all $s \in I$. Also, lemma \ref{lemBddEdge1} and parts (a) and (b) of lemma
\ref{lemAnalExt} give
\begin{equation}
\label{eqchi'eta'}
\chi_\EE'(t) = - \frac{ C''(t) (e^{C(t)}-1) - C'(t)^2 (e^{C(t)}+1)}{e^{C(t)} C'(t)^2}
\hspace{0.3cm} \text{and} \hspace{0.3cm}
\eta_\EE'(t) = (e^{C(t)}-1) \chi_\EE'(t).
\end{equation}
The second part of this equation and Taylor expansions give the required
result with,
\begin{itemize}
\item
$a_1 = \chi_\EE'(t)$.
\item
$2 a_2 = \chi_\EE''(t) + (1 + (e^{C(t)}-1)^2)^{-1} e^{C(t)}
(e^{C(t)}-1) C'(t) \chi_\EE'(t)$.
\item
$2 b_1 = - (1 + (e^{C(t)}-1)^2)^{-1} e^{C(t)} C'(t) \chi_\EE'(t)$.
\item
$6 b_2 = - (1 + (e^{C(t)}-1)^2)^{-1} e^{C(t)}
[2 C'(t) \chi_\EE''(t) + (C'(t)^2 + C''(t)) \chi_\EE'(t)]$.
\end{itemize}

Next suppose that $t \in R_1$. Fix an interval, $I = (t_2,t_1)$, as in
equation (\ref{eqR2}), i.e., with $t \in I$,
$(t,t_1) \subset \R \setminus \supp(\mu)$ and
$(t_2,t) \subset \R \setminus \supp(\l-\mu)$. Then, solving equation
(\ref{eqrtvt}),
\begin{equation*}
a(s) = \eta_\EE(s) - \eta_\EE(t)
\hspace{0.5cm} \text{and} \hspace{0.5cm}
b(s) = \chi_\EE(s) - \chi_\EE(t),
\end{equation*}
for all $s \in I$. Also, part (c) of lemma \ref{lemAnalExt} gives,
\begin{equation*}
e^{-C(s)} = e^{-C_I(s)} \left( \frac{s-t}{s-t_2} \right),
\end{equation*}
for all $s \in I$, where
$C_I(s) := \int_{[a,b] \setminus I} \frac{\mu[dx]}{s-x}$.
Lemma \ref{lemBddEdge1} then gives,
\begin{equation*}
\eta_\EE(s) = 1 - (t-t_2) e^{C_I(s)} + (s-t) h_1(s)
\hspace{0.5cm} \text{and} \hspace{0.5cm}
\chi_\EE(s) = t + (s-t)^2 h_2(s),
\end{equation*}
for all $s \in I$, where
\begin{align*}
h_1(s) &:= \frac{(t-t_2) (s-t_2) e^{2C_I(s)} C_I'(s) +
(s+t-2t_2) e^{2C_I(s)} - 2 (s-t_2) e^{C_I(s)} + (s-t)}
{-(t-t_2) e^{C_I(s)} + (s-t) (s-t_2) e^{C_I(s)} C_I'(s)}, \\
h_2(s) &:= \frac{(s-t_2) e^{C_I(s)} C_I'(s) + e^{C_I(s)} - 1}
{-(t-t_2) e^{C_I(s)} + (s-t) (s-t_2) e^{C_I(s)} C_I'(s)}.
\end{align*}
Taylor expansions then give the required result with,
\begin{itemize}
\item
$a_1 = - (t-t_2) e^{C_I(t)} C_I'(t) + h_1(t)$.
\item
$2 a_2 = - (t-t_2) e^{C_I(t)} C_I'(t)^2 - (t-t_2) e^{C_I(t)} C_I''(t) + 2 h_1'(t)$.
\item
$b_1 = h_2(t)$.
\item
$b_2 = h_2'(t)$.
\end{itemize}

Next suppose that $t \in R_2$. Fix an interval, $I = (t_2,t_1)$, as in
equation (\ref{eqR2}), i.e., with $t \in I$,
$(t,t_1) \subset \R \setminus \supp(\l-\mu)$ and
$(t_2,t) \subset \R \setminus \supp(\mu)$.
Then, solving equation (\ref{eqrtvt}),
\begin{align*}
2 a(s) &= (\chi_\EE(s) - \eta_\EE(s)) - (\chi_\EE(t) - \eta_\EE(t)), \\
2 b(s) &= (\chi_\EE(s) + \eta_\EE(s)) - (\chi_\EE(t) + \eta_\EE(t)),
\end{align*}
for all $s \in I$. Also part (d) of lemma \ref{lemAnalExt} gives,
\begin{equation*}
e^{C(s)} = e^{C_I(s)} \left( \frac{s-t}{s-t_1} \right),
\end{equation*}
for all $s \in I$. Lemma \ref{lemBddEdge1} then gives,
\begin{align*}
\chi_\EE(s) - \eta_\EE(s) &= t - 1 - 2 (t-t_1) e^{-C_I(s)} + (s-t) h_3(s), \\
\chi_\EE(s) + \eta_\EE(s) &= t + 1 + (s-t)^2 h_4(s),
\end{align*}
for all $s \in I$, where
\begin{align*}
h_3(s) &:= 1 + \frac{ 2 (t-t_1) (s-t_1) e^{-C_I(s)} C_I'(s) - 2 (s+t-2t_1) e^{-C_I(s)} + 3 (s-t_1)
- (s-t) e^{C_I(s)}}{t-t_1 + (s-t)(s-t_1) C_I'(s)}, \\
h_4(s) &:= \frac{(s-t_1) C_I'(s) + e^{C_I(s)} - 1}{t-t_1 + (s-t)(s-t_1) C_I'(s)}.
\end{align*}
Taylor expansions then give the required result with,
\begin{itemize}
\item
$2 a_1 = 2 (t-t_1) e^{-C_I(t)} C_I'(t) + h_3(t)$.
\item
$2 a_2 = - (t-t_1) e^{-C_I(t)} C_I'(t)^2 + (t-t_1) e^{-C_I(t)} C_I''(t) + h_3'(t)$.
\item
$2 b_1 = h_4(t)$.
\item
$2 b_2 = h_4'(t)$.
\end{itemize}
\end{proof}

The local geometric behaviour of the edge curve, as examined in lemma
\ref{lemLocGeo}, depends on the parameters $a_1, a_2, b_1, b_2$.
The next lemma further clarifies this by classifying the situations in
which these terms are zero or non-zero. We show that these situations are
completely determined by the multiplicities of the roots as presented in
lemma \ref{lemEdge2}:

\begin{lem}
\label{lemLineCusp}
Fix $t \in R = R_\mu \cup R_{\l-\mu} \cup R_0 \cup R_1 \cup R_2$, and define
$f_t'$ as in lemma \ref{lemEdge2}. Then the following 9 cases exhaust
all possibilities:
\begin{enumerate}
\item
$(\chi_\EE(t),\eta_\EE(t)) \in \EE_\mu$, where $t \in R_\mu$ is a root of $f_t'$
of multiplicity $2$.
\item
$(\chi_\EE(t),\eta_\EE(t)) \in \EE_\mu$, where $t \in R_\mu$ is a root of $f_t'$
of multiplicity $3$.
\item
$(\chi_\EE(t),\eta_\EE(t)) \in \EE_{\l-\mu}$, where $t \in R_{\l-\mu}$ is a root
of $f_t'$ of multiplicity $2$.
\item
$(\chi_\EE(t),\eta_\EE(t)) \in \EE_{\l-\mu}$, where $t \in R_{\l-\mu}$ is a root
of $f_t'$ of multiplicity $3$.
\item
$(\chi_\EE(t),\eta_\EE(t)) \in \EE_0$, where $t \in R_0$ is a root of $f_t'$
of multiplicity $1$.
\item
$(\chi_\EE(t),\eta_\EE(t)) \in \EE_1$, where $t \in R_1$ is a root of $f_t'$
of multiplicity $1$.
\item
$(\chi_\EE(t),\eta_\EE(t)) \in \EE_1$, where $t \in R_1$ is a root of $f_t'$
of multiplicity $2$.
\item
$(\chi_\EE(t),\eta_\EE(t)) \in \EE_2$, where $t \in R_2$ is a root of $f_t'$
of multiplicity $1$.
\item
$(\chi_\EE(t),\eta_\EE(t)) \in \EE_2$, where $t \in R_2$ is a root of $f_t'$
of multiplicity $2$.
\end{enumerate}
Moreover, defining $a_1 = a_1(t)$, $a_2 = a_2(t)$,
$b_1 = b_1(t)$ and $b_2 = b_2(t)$ as in lemma \ref{lemLocGeo},
\begin{itemize}
\item
$a_1 \neq 0$ and $b_1 \neq 0$ in cases (1, 3, 5, 6, 8).
\item
$a_1 = b_1 = 0$, $a_2 \neq 0$ and $b_2 \neq 0$ in cases (2, 4, 7, 9).
\end{itemize}
\end{lem}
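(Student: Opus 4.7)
The plan is to show, in each of the 9 regions, that each of $a_1, b_1$ (and, when needed, $a_2, b_2$) equals --- up to a known non-vanishing prefactor --- a specific derivative of $f_t'$ at $t$, so that (non)-vanishing can be read off directly from the multiplicity of $t$ as a root of $f_t'$. The 9-case enumeration itself is immediate from lemma \ref{lemEdge2}, which lists the allowed multiplicities in each region.

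First I would treat cases (1--4). On $\EE_\mu$ or $\EE_{\l-\mu}$, the defining equations $f_t'(t) = 0$ and $f_t''(t) = 0$ together give
\begin{equation*}
t - \chi \;=\; \frac{1 - e^{C(t)}}{e^{C(t)} C'(t)}, \qquad t - \chi - \eta + 1 \;=\; \frac{1 - e^{C(t)}}{C'(t)}.
\end{equation*}
Substituting these into $f_t'''(t) = C''(t) - (t - \chi)^{-2} + (t - \chi - \eta + 1)^{-2}$ and comparing with the expression for $\chi_\EE'(t)$ in equation (\ref{eqchi'eta'}) should yield the key identity
\begin{equation*}
\chi_\EE'(t) \;=\; \frac{1 - e^{C(t)}}{e^{C(t)} C'(t)^2} \, f_t'''(t),
\end{equation*}
whose prefactor never vanishes by parts (a) and (b) of lemma \ref{lemAnalExt}. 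Cases (1) and (3) (multiplicity $2$): $f_t'''(t) \neq 0$, so $\chi_\EE'(t) \neq 0$; the formulas of lemma \ref{lemLocGeo} then give $a_1 = \chi_\EE'(t)$ and $b_1$ a non-zero multiple of $\chi_\EE'(t)$, both non-zero. Cases (2) and (4) (multiplicity $3$): $f_t'''(t) = 0$, so $a_1 = b_1 = 0$; differentiating the identity once in $t$, the chain rule together with $\chi_\EE'(t) = \eta_\EE'(t) = 0$ would give $\chi_\EE''(t) = \frac{1 - e^{C(t)}}{e^{C(t)} C'(t)^2} f_t''''(t)$, which is non-zero since $f_t''''(t) \neq 0$ at a triple root. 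The expressions for $a_2, b_2$ in lemma \ref{lemLocGeo} then simplify to non-zero multiples of $\chi_\EE''(t)$. Case (5) is immediate: $C(t) = 0$ gives $e^{C(t)} = 1$, and direct substitution in equation (\ref{eqchi'eta'}) yields $\chi_\EE'(t) = 2$, whence $a_1 = 2$ and $b_1 = -C'(t) \neq 0$.

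For cases (6) and (7) on $\EE_1$ (and symmetrically (8), (9) on $\EE_2$) I would exploit parts (c) and (d) of lemma \ref{lemAnalExt}. For $t \in R_1$,
\begin{equation*}
f_t''(t) \;=\; C_I'(t) + \frac{1 - e^{-C_I(t)}}{t - t_2},
\end{equation*}
and algebraically reducing the explicit expressions for $h_1(t), h_2(t)$ from the proof of lemma \ref{lemLocGeo} would give the identities $b_1 = -f_t''(t)$ and $a_1 = -2(t - t_2) e^{C_I(t)} f_t''(t)$. Case (6) (multiplicity $1$): $f_t''(t) \neq 0$, and so $a_1, b_1 \neq 0$. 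Case (7) (multiplicity $2$): $a_1 = b_1 = 0$. In case (7), differentiating and using the consequence $1 + (t - t_2) C_I'(t) = e^{-C_I(t)}$ of $f_t''(t) = 0$, I would verify $b_2 = h_2'(t) = -f_t'''(t)$ and $a_2 = -(t - t_2) e^{C_I(t)} f_t'''(t)$, both non-zero since $f_t'''(t) \neq 0$ at a double root.

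The main obstacle will be the algebraic bookkeeping required to extract the three pivotal identities --- especially $\chi_\EE'(t) = \frac{1 - e^{C(t)}}{e^{C(t)} C'(t)^2} f_t'''(t)$ on $\EE_\mu \cup \EE_{\l-\mu}$ --- by simplifying rational expressions with the help of the defining equations $f_t'(t) = f_t''(t) = 0$. Once these identities are in hand, all nine cases collapse to a routine reading of the multiplicity of $t$, and the lemma follows.
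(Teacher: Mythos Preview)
Your proposal is correct and follows essentially the same strategy as the paper: reduce each of $a_1,b_1$ (and $a_2,b_2$ where needed) to a nonvanishing prefactor times the appropriate derivative $f_t''(t)$, $f_t'''(t)$, or $f_t^{(4)}(t)$, then read off (non)vanishing from the multiplicity. Your chain-rule shortcut for $\chi_\EE''(t)$ in cases (2,4)---differentiating the identity $\chi_\EE'(s)=\tfrac{1-e^{C(s)}}{e^{C(s)}C'(s)^2}f_s'''(s)$ and using $\chi_\EE'(t)=\eta_\EE'(t)=0$ to kill the parameter dependence---is a clean alternative to the paper's direct computation via the $C''(t)$ substitution; note one small slip: in case (7) the constant in $a_2$ should be $-\tfrac32(t-t_2)e^{C_I(t)}f_t'''(t)$ rather than $-(t-t_2)e^{C_I(t)}f_t'''(t)$, though this does not affect the nonvanishing conclusion.
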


\begin{proof}
The fact that cases (1-9) exhaust all possibilities follows trivially
from lemma \ref{lemEdge2}. Next, using the various analytic extensions of
lemma \ref{lemAnalExt}, we will show:
\begin{enumerate}
\item[(a)]
In cases (1, 2, 3, 4), i.e., when $t \in R_\mu \cup R_{\l-\mu}$ is a root of
multiplicity $2$ or $3$,
\begin{equation*}
a_1 = - \left( \frac{e^{C(t)}-1}{e^{C(t)} C'(t)^2} \right) f_t'''(t) \\
\hspace{0.5cm} \text{and} \hspace{0.5cm}
b_1 = \frac12 \left( \frac{e^{C(t)}-1}{C'(t)(1 + (e^{C(t)}-1)^2)} \right) f_t'''(t).
\end{equation*}
\item[(b)]
In cases (2, 4), i.e., when $t \in R_\mu \cup R_{\l-\mu}$ is a root of multiplicity $3$,
\begin{equation*}
a_2 = - \frac12 \left( \frac{e^{C(t)}-1}{e^{C(t)} C'(t)^2} \right) f_t^{(4)} (t) \\
\hspace{0.5cm} \text{and} \hspace{0.5cm}
b_2 = \frac13 \left( \frac{e^{C(t)}-1}{C'(t)(1 + (e^{C(t)}-1)^2)} \right) f_t^{(4)} (t).
\end{equation*}
\item[(c)]
In case (5), i.e., when $t \in R_0$ is a root of multiplicity $1$,
\begin{equation*}
a_1 = 2
\hspace{0.5cm} \text{and} \hspace{0.5cm}
b_1 = - C'(t) = - f_t''(t) = \int_a^b \frac{\mu[dx]}{(t-x)^2}.
\end{equation*}
\item[(d)]
In cases (6, 7), i.e., when $t \in R_1$ is a root of multiplicity $1$ or $2$,
\begin{equation*}
a_1 = - 2 (t-t_2) e^{C_I(t)} f_t''(t)
\hspace{0.5cm} \text{and} \hspace{0.5cm}
b_1 = - f_t''(t).
\end{equation*}
\item[(e)]
In case (7), i.e., when $t \in R_1$ is a root of multiplicity $2$,
\begin{equation*}
a_2 = - \frac32 (t-t_2) e^{C_I(t)} f_t'''(t)
\hspace{0.5cm} \text{and} \hspace{0.5cm}
b_2 = - f_t'''(t).
\end{equation*}
\item[(f)]
In cases (8, 9), i.e., when $t \in R_2$ is a root of multiplicity $1$ or $2$,
\begin{equation*}
a_1 = 2 (t-t_1) e^{-C_I(t)} f_t''(t)
\hspace{0.5cm} \text{and} \hspace{0.5cm}
b_1 = \frac12 f_t''(t).
\end{equation*}
\item[(g)]
In case (9), i.e., when $t \in R_2$ is a root of multiplicity $2$,
\begin{equation*}
a_2 = \frac32 (t-t_1) e^{-C_I(t)} f_t'''(t)
\hspace{0.5cm} \text{and} \hspace{0.5cm}
b_2 = \frac12 f_t'''(t).
\end{equation*}
\end{enumerate}
The result trivially follows from (a-g). For example, in
case (1), (a) implies that $a_1 \neq 0$ and $a_2 \neq 0$. Also, in case
(2), (a) and (b) imply that $a_1 = b_1 = 0$, $a_2 \neq 0$ and $b_2 \neq 0$,
etc.

For simplicity of notation, set $(\chi,\eta) := (\chi_\EE(t),\eta_\EE(t))$
for the remainder of this lemma. Consider (a) when $t \in R_\mu$.
Following the proof of part (1) of theorem \ref{thmEdge}, $\eta<1$,
$t \in (\R \setminus [\chi+\eta-1,\chi]) \setminus \supp(\mu)$, and
equation (\ref{eqf'Rmu}) holds for all
$w \in (\C \setminus [\chi+\eta-1,\chi]) \setminus \supp(\mu)$. Also equation
(\ref{eqlemEdge1}) holds where $C(t)$ and $C'(t)$ are defined by the
analytic extensions of part (a) of lemma \ref{lemAnalExt}. Differentiate
equation (\ref{eqf'Rmu}) twice, take $w=t$, and substitute $t-\chi$ and
$t-\chi-\eta+1$ from equation (\ref{eqlemEdge1}) to get,
\begin{equation}
\label{eqlemLineCusp2}
f_t'''(t) = C''(t) - C'(t)^2 \frac{e^{C(t)}+1}{e^{C(t)}-1},
\end{equation}
when $t \in R_\mu$ (note $e^{C(t)}-1 \neq 0$, since $C(t) \neq 0$ by
definition). Equation (\ref{eqchi'eta'}) then gives,
\begin{equation}
\label{eqlemLineCusp3}
\chi_\EE'(t) = - \left( \frac{e^{C(t)}-1}{e^{C(t)} C'(t)^2} \right) f_t'''(t).
\end{equation}
Part (a), when $t \in R_\mu$, then follows from the expressions
of $a_1$ and $b_1$ given in the proof of lemma \ref{lemLocGeo}.

Consider (a) when $t \in R_{\l-\mu}$. Following the proof of part (2) of theorem
\ref{thmEdge}, $\eta<1$ and $t \in (\chi+\eta-1,\chi) \setminus \supp(\l-\mu)$.
Also, fixing $I = (t_2,t_1) \subset (\chi+\eta-1,\chi) \setminus \supp(\l-\mu)$
with $t \in I$, equation (\ref{eqf'Rl-mu}) holds for all
$w \in (\C \setminus \R) \cup I$. Also, equation (\ref{eqlemEdge1}) holds, where
$e^{C(t)}$ and $C'(t)$ are defined by the analytic extensions of part (b)
of lemma (\ref{lemAnalExt}). Differentiate equation (\ref{eqf'Rl-mu})
and use the analytic extension for $C'$ to get,
\begin{equation}
\label{eqlemLineCusp4}
f_t''(w) = C'(w) + \frac1{w - \chi} - \frac1{w-\chi-\eta+1},
\end{equation}
for all $w \in (\C \setminus \R) \cup I$. Differentiate again, take $w=t$,
and substitute $t-\chi$ and $t-\chi-\eta+1$ from equation (\ref{eqlemEdge1})
to show that equation (\ref{eqlemLineCusp2}) holds in this case also.
Finally, we can proceed as above to show part (a) when $t \in R_{\l-\mu}$.

Consider (b). First note that $f_t'''(t) = 0$, since $t$ is a root of $f_t'$ of
multiplicity $3$. Equation (\ref{eqlemLineCusp2}) thus gives,
\begin{equation*}
C''(t) = C'(t)^2 \frac{e^{C(t)}+1}{e^{C(t)}-1}.
\end{equation*}
Next, differentiate the first part of equation (\ref{eqchi'eta'}),
and substitute the above expression of $C''(t)$ to get,
\begin{equation*}
\chi_\EE''(t) = - \frac{e^{C(t)}-1}{e^{C(t)} C'(t)^2}
\left( C'''(t) - 2 C'(t)^3 \; \frac{e^{2C(t)} + e^{C(t)} + 1}{(e^{C(t)}-1)^2} \right).
\end{equation*}
Finally, differentiate equation (\ref{eqf'Rmu}) three times (when $t \in R_\mu$)
or differentiate equation (\ref{eqlemLineCusp4}) twice (when $t \in R_{\l-\mu}$),
take $w=t$, and substitute $t-\chi$ and $t-\chi-\eta+1$ from equation
(\ref{eqlemEdge1}) to get,
\begin{equation*}
f_t^{(4)}(t) = C'''(t) - 2 C'(t)^3 \; \frac{e^{2C(t)} + e^{C(t)} + 1}{(e^{C(t)}-1)^2}.
\end{equation*}
Therefore
\begin{equation*}
\chi_\EE''(t) = - \left( \frac{e^{C(t)}-1}{e^{C(t)} C'(t)^2} \right) f_t^{(4)}(t).
\end{equation*}
Also $\chi_\EE'(t) = 0$ since $f_t'''(t) = 0$ (see equation (\ref{eqlemLineCusp3})).
Part (b) then follows from the expressions of $a_2$ and $b_2$ given in the proof
of lemma \ref{lemLocGeo}.

Consider (c). Recall that $t \in R_0$, i.e., $t \in \R \setminus \supp(\mu)$
and $C(t) = 0$. Following the proof of part (3) of theorem \ref{thmEdge},
$(\chi,\eta) = (t,1)$, and $f_t'(w) = C(w)$ for all
$w \in \C \setminus \supp(\mu)$. Therefore $C(t) = 0$, $f_t'(t) = C(t)$, and
equation (\ref{eqCauTrans}) gives,
\begin{equation*}
f_t''(t) = C'(t) = - \int_a^b \frac{\mu[dx]}{(t-x)^2} < 0.
\end{equation*}
Equation (\ref{eqchi'eta'}) then gives $\chi_\EE'(t) = 2$. Part (c) then
follows from the expressions of $a_1$ and $b_1$ given in the proof of
lemma \ref{lemLocGeo}.

Consider (d). Following the proof of part (4) of theorem \ref{thmEdge},
$\eta<1$ and $t = \chi$. Also, fixing $I = (t_2,t_1)$ with
$t = \chi \in I$, $(t,t_1) \subset \R \setminus \supp(\mu)$ and
$(t_2,t) \subset (\chi+\eta-1,\chi) \setminus \supp(\l-\mu)$, equation
(\ref{eqf'R1}) holds for all $w \in (\C \setminus \R) \cup I$.
Differentiate this equation, take $w = t$, and substitute $\chi = t$ and
$\eta = 1 - e^{C_I(t)} (t-t_2)$ (see lemma \ref{lemBddEdge1}) to get,
\begin{equation*}
f_t''(t) = C_I'(t) + \frac1{t-t_2} - \frac1{(t-t_2) e^{C_I(t)}}.
\end{equation*}
Part (d) then follows from the expressions of $a_1$ and $b_1$ given in
the proof of lemma \ref{lemLocGeo}.

Consider (e). First note that $f_t''(t) = 0$, since $t$ is a root of $f_t'$ of
multiplicity $2$. The above expression of $f_t''(t)$ thus gives,
\begin{equation*}
C_I'(t) = \frac1{(t-t_2) e^{C_I(t)}} - \frac1{t-t_2}.
\end{equation*}
Next, substitute the above expression of $C_I'(t)$ into the expressions
of $a_2$ and $b_2$ given in the proof of lemma \ref{lemLocGeo} to get,
\begin{align*}
a_2
&= - \frac32 (t-t_2) e^{C_I(t)}
\left( C_I''(t) - \frac1{(t-t_2)^2} + \frac1{(t-t_2)^2 e^{2 C_I(t)}} \right), \\
b_2
&= - \left( C_I''(t) - \frac1{(t-t_2)^2} + \frac1{(t-t_2)^2 e^{2 C_I(t)}} \right).
\end{align*}
Finally, differentiate equation (\ref{eqf'R1}) twice, take $w = t$, and substitute
$\chi = t$ and $\eta = 1 - e^{C_I(t)} (t-t_2)$ to get,
\begin{equation*}
f_t'''(t) = C_I''(t) - \frac1{(t-t_2)^2} + \frac1{(t-t_2)^2 e^{2 C_I(t)}}.
\end{equation*}
Part (e) easily follows.

Consider (f). Following the proof of part (5) of theorem \ref{thmEdge}, $\eta<1$
and $t = \chi+\eta-1$. Also, fixing $I = (t_2,t_1)$ with $t = \chi+\eta-1 \in I$,
$(t,t_1) \subset (\chi+\eta-1,\chi) \setminus \supp(\l-\mu)$ and
$(t_2,t) \subset \R \setminus \supp(\mu)$, equation (\ref{eqf'R2}) holds for all
$w \in (\C \setminus \R) \cup I$. Differentiate this equation, take $w = t$, and
substitute $\chi = t - e^{-C_I(t)} (t-t_1)$ and $\eta = 1 + e^{-C_I(t)} (t-t_1)$
(see lemma \ref{lemBddEdge1}) to get,
\begin{equation*}
f_t''(t) = C_I'(t) + \frac1{(t-t_1) e^{-C_I(t)}} - \frac1{t-t_1}.
\end{equation*}
Part (f) then follows from the expressions of $a_1$ and $b_1$ given in
the proof of lemma \ref{lemLocGeo}.

Consider (g). First note that $f_t''(t) = 0$, since $t$ is a root of $f_t'$ of
multiplicity $2$. The above expression of $f_t''(t)$ thus gives,
\begin{equation*}
C_I'(t) = \frac1{t-t_1} - \frac1{(t-t_1) e^{-C_I(t)}}.
\end{equation*}
Next, substitute the above expression of $C_I'(t)$ into the expressions
of $a_2$ and $b_2$ given in the proof of lemma \ref{lemLocGeo} to get,
\begin{align*}
a_2
&= \frac32 (t-t_1) e^{-C_I(t)}
\left( C_I''(t) - \frac1{(t-t_1)^2 e^{- 2 C_I(t)}} + \frac1{(t-t_1)^2} \right), \\
b_2
&= \frac12 \left( C_I''(t) - \frac1{(t-t_1)^2 e^{- 2 C_I(t)}} + \frac1{(t-t_1)^2} \right).
\end{align*}
Finally, differentiate equation (\ref{eqf'R2}) twice, take $w = t$, and substitute
$\chi = t - e^{-C_I(t)} (t-t_1)$ and $\eta = 1 + e^{-C_I(t)} (t-t_1)$ to get,
\begin{equation*}
f_t'''(t) = C_I''(t) - \frac1{(t-t_1)^2 e^{- 2 C_I(t)}} + \frac1{(t-t_1)^2}.
\end{equation*}
Part (g) easily follows.
\end{proof}

We end this section with a result which clarifies and summaries all possible cases:
\begin{lem}
\label{lemGeoObv}
Fix $t \in R = R_\mu \cup R_{\l-\mu} \cup R_0 \cup R_1 \cup R_2$, and define
$f_t'$ as in lemma \ref{lemEdge2}. Also define, as in lemma \ref{lemLocGeo},
$\mathbf{x} = \mathbf{x}(t)$, $\mathbf{y} = \mathbf{y}(t)$, $a_1 = a_1(t)$,
$a_2 = a_2(t)$, $b_1 = b_1(t)$ and $b_2 = b_2(t)$. Consider the exhaustive
cases, (1-9), of lemma \ref{lemLineCusp}:

\begin{flushleft}
{\bf Case (1):} When $t \in R_\mu$ is a root of $f_t'$
of multiplicity $2$,
\end{flushleft}
\begin{itemize}
\item
$(\chi_\EE(t),\eta_\EE(t))$, given by lemma \ref{lemBddEdge1}, is in the
interior of the shape in the middle of figure \ref{figGTAsyShape}.
\item
$(\chi_\EE(s),\eta_\EE(s)) - (\chi_\EE(t),\eta_\EE(t))
= a(s) \mathbf{x} + b(s) \mathbf{y} \; $ for all $s \in R$ close to $t$.
\item
$\mathbf{x} = (1,e^{C(t)}-1)$ and $\mathbf{y} = (e^{C(t)}-1,-1)$.
\item
$a(s) = a_1 (s-t) + O((s-t)^2)$ and $b(s) = b_1 (s-t)^2 + O((s-t)^3)$.
\item
$a_1 \neq 0$ and $b_1 \neq 0$ are given in part (a) of the proof of
lemma \ref{lemLineCusp}.
\item
The edge curve behaves like a parabola with negative curvature
in a neighbourhood of $(\chi_\EE(t),\eta_\EE(t))$, with tangent
vector $\mathbf{x}$ and normal vector $\mathbf{y}$.
\end{itemize}

\begin{flushleft}
{\bf Case (2):} The set of all $t \in R_\mu$ for which $t$ is a
root of $f_t'$ of multiplicity $3$ is a discrete subset of $(a,b)$, i.e.,
it is composed of isolated singletons. Moreover, in this case,
\end{flushleft}
\begin{itemize}
\item
$(\chi_\EE(t),\eta_\EE(t))$, given by lemma \ref{lemBddEdge1}, is in the
interior of the shape in the middle of figure \ref{figGTAsyShape}.
\item
$(\chi_\EE(s),\eta_\EE(s)) - (\chi_\EE(t),\eta_\EE(t))
= a(s) \mathbf{x} + b(s) \mathbf{y} \; $ for all $s \in R$ close to $t$.
\item
$\mathbf{x} = (1,e^{C(t)}-1)$ and $\mathbf{y} = (e^{C(t)}-1,-1)$.
\item
$a(s) = a_2 (s-t)^2 + O((s-t)^3)$ and $b(s) = b_2 (s-t)^3 + O((s-t)^4)$.
\item
$a_2 \neq 0$ and $b_2 \neq 0$ are given in part (b) of the proof of
lemma \ref{lemLineCusp}.
\item
The edge curve behaves like a cusp of first order in a neighbourhood of
$(\chi_\EE(t),\eta_\EE(t))$.
\end{itemize}

\begin{flushleft}
{\bf Case (3):} When $t \in R_{\l-\mu}$ is a root of $f_t'$ of
multiplicity $2$, the edge curve behaves similarly to case (1).
\end{flushleft}

\begin{flushleft}
{\bf Case (4):} The set of all $t \in R_{\l-\mu}$ for which $t$ is a
root of $f_t'$ of multiplicity $3$ is a discrete subset of $(a,b)$.
Moreover, in this case, the edge curve behaves similarly to case (2).
\end{flushleft}

\begin{flushleft}
{\bf Case (5):} $R_0$, the set of all $t \in \R \setminus \supp(\mu)$
with $C(t) = 0$, is a discrete subset of $(a,b)$.
Moreover, in this case,
\end{flushleft}
\begin{itemize}
\item
$(\chi_\EE(t),\eta_\EE(t)) = (t,1)$.
\item
$(\chi_\EE(s),\eta_\EE(s)) - (\chi_\EE(t),\eta_\EE(t))
= a(s) \mathbf{x} + b(s) \mathbf{y} \; $ for all $s \in R$ close to $t$.
\item
$\mathbf{x} = (1,0)$ and $\mathbf{y} = (0,-1)$.
\item
$a(s) = a_1 (s-t) + O((s-t)^2)$ and $b(s) = b_1 (s-t)^2 + O((s-t)^3)$.
\item
$a_1 \neq 0$ and $b_1 \neq 0$ are given in part (c) of the proof of
lemma \ref{lemLineCusp}.
\item
The edge curve behaves like a parabola with negative curvature
in a neighbourhood of $(\chi_\EE(t),\eta_\EE(t))$, with tangent
vector $\mathbf{x}$ and normal vector $\mathbf{y}$.
\end{itemize}

\begin{flushleft}
{\bf Case (6):} $R_1$ is a discrete subset of $[a,b]$. Moreover,
when $t \in R_1$ is a root of $f_t'$ of multiplicity $1$,
\end{flushleft}
\begin{itemize}
\item
$(\chi_\EE(t),\eta_\EE(t))$, given by lemma \ref{lemBddEdge1}, satisfy
$\chi_\EE(t) = t$ and $\eta_\EE(t) \in (0,1)$.
\item
$(\chi_\EE(s),\eta_\EE(s)) - (\chi_\EE(t),\eta_\EE(t))
= a(s) \mathbf{x} + b(s) \mathbf{y} \; $ for all $s \in R$ close to $t$.
\item
$\mathbf{x} = (0,1)$ and $\mathbf{y} = (1,0)$.
\item
$a(s) = a_1 (s-t) + O((s-t)^2)$ and $b(s) = b_1 (s-t)^2 + O((s-t)^3)$.
\item
$a_1 \neq 0$ and $b_1 \neq 0$ are given in part (d) of the proof of
lemma \ref{lemLineCusp}.
\item
The edge curve behaves like a parabola with negative curvature
in a neighbourhood of $(\chi_\EE(t),\eta_\EE(t))$, with tangent
vector $\mathbf{x}$ and normal vector $\mathbf{y}$.
\end{itemize}

\begin{flushleft}
{\bf Case (7):} $R_1$ is a discrete subset of $[a,b]$. Moreover,
when $t \in R_1$ is a root of $f_t'$ of multiplicity $2$,
\end{flushleft}
\begin{itemize}
\item
$(\chi_\EE(t),\eta_\EE(t))$, given by lemma \ref{lemBddEdge1}, satisfy
$\chi_\EE(t) = t$ and $\eta_\EE(t) \in (0,1)$.
\item
$(\chi_\EE(s),\eta_\EE(s)) - (\chi_\EE(t),\eta_\EE(t))
= a(s) \mathbf{x} + b(s) \mathbf{y} \; $ for all $s \in R$ close to $t$.
\item
$\mathbf{x} = (0,1)$ and $\mathbf{y} = (1,0)$.
\item
$a(s) = a_2 (s-t)^2 + O((s-t)^3)$ and $b(s) = b_2 (s-t)^3 + O((s-t)^4)$.
\item
$a_2 \neq 0$ and $b_2 \neq 0$ are given in part (e) of the proof of
lemma \ref{lemLineCusp}.
\item
The edge curve behaves like a cusp of first order in a neighbourhood of
$(\chi_\EE(t),\eta_\EE(t))$.
\end{itemize}

\begin{flushleft}
{\bf Case (8):} $R_2$ is a discrete subset of $[a,b]$. Moreover,
when $t \in R_2$ is a root of $f_t'$ of multiplicity $1$,
\end{flushleft}
\begin{itemize}
\item
$(\chi_\EE(t),\eta_\EE(t))$, given by lemma \ref{lemBddEdge1}, satisfy
$\chi_\EE(t)+\eta_\EE(t)-1 = t$ and $\eta_\EE(t) \in (0,1)$.
\item
$(\chi_\EE(s),\eta_\EE(s)) - (\chi_\EE(t),\eta_\EE(t))
= a(s) \mathbf{x} + b(s) \mathbf{y} \; $ for all $s \in R$ close to $t$.
\item
$\mathbf{x} = (1,-1)$ and $\mathbf{y} = (1,1)$.
\item
$a(s) = a_1 (s-t) + O((s-t)^2)$ and $b(s) = b_1 (s-t)^2 + O((s-t)^3)$.
\item
$a_1 \neq 0$ and $b_1 \neq 0$ are given in part (f) of the proof of
lemma \ref{lemLineCusp}.
\item
The edge curve behaves like a parabola with negative curvature
in a neighbourhood of $(\chi_\EE(t),\eta_\EE(t))$, with tangent
vector $\mathbf{x}$ and normal vector $\mathbf{y}$.
\end{itemize}

\begin{flushleft}
{\bf Case (9):} $R_2$ is a discrete subset of $[a,b]$. Moreover,
when $t \in R_2$ is a root of $f_t'$ of multiplicity $2$,
\end{flushleft}
\begin{itemize}
\item
$(\chi_\EE(t),\eta_\EE(t))$, given by lemma \ref{lemBddEdge1}, satisfy
$\chi_\EE(t)+\eta_\EE(t)-1 = t$ and $\eta_\EE(t) \in (0,1)$.
\item
$(\chi_\EE(s),\eta_\EE(s)) - (\chi_\EE(t),\eta_\EE(t))
= a(s) \mathbf{x} + b(s) \mathbf{y} \; $ for all $s \in R$ close to $t$.
\item
$\mathbf{x} = (1,-1)$ and $\mathbf{y} = (1,1)$.
\item
$a(s) = a_2 (s-t)^2 + O((s-t)^3)$ and $b(s) = b_2 (s-t)^3 + O((s-t)^4)$.
\item
$a_2 \neq 0$ and $b_2 \neq 0$ are given in part (g) of the proof of
lemma \ref{lemLineCusp}.
\item
The edge curve behaves like a cusp of first order in a neighbourhood of
$(\chi_\EE(t),\eta_\EE(t))$.
\end{itemize}
\end{lem}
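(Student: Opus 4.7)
The proof is essentially a compilation of lemmas \ref{lemBddEdge1}, \ref{lemEdge2}, \ref{lemLocGeo} and \ref{lemLineCusp}, together with a handful of discreteness arguments and a geometric interpretation of the Taylor expansions already established. In each of the nine cases, the explicit expression for $(\chi_\EE(t),\eta_\EE(t))$ comes from lemma \ref{lemBddEdge1}, the orthogonal frame $(\mathbf{x},\mathbf{y})$ and the leading-order forms of $a(s)$ and $b(s)$ come from lemma \ref{lemLocGeo}, and the non-vanishing of the relevant coefficients $a_1,b_1$ or $a_2,b_2$ is given by lemma \ref{lemLineCusp}. Hence the only work remaining is to (i) locate $(\chi_\EE(t),\eta_\EE(t))$ in the interior of the shape in cases (1)--(4) and on the specific boundary sets described in cases (5)--(9); (ii) verify the discreteness statements for $R_0$, $\{t\in R_\mu:m_t=3\}$, $\{t\in R_{\l-\mu}:m_t=3\}$, $R_1$ and $R_2$; and (iii) translate the Taylor expansions into the claimed parabolic or cuspidal local shape.

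The position claims in cases (5)--(9) are immediate from the closed-form expressions in lemma \ref{lemBddEdge1}: $(\chi_\EE(t),\eta_\EE(t))=(t,1)$ in case (5), while $\chi_\EE(t)=t$ with $\eta_\EE(t)=1-(t-t_2)e^{C_I(t)}$ in cases (6),(7); positivity of $\eta_\EE(t)$ follows since $(t-t_2)>0$, $e^{C_I(t)}>0$, and the product is bounded by $1$ using that $\mu$ is a probability measure on $[a,b]$. Cases (8),(9) are symmetric. For cases (1)--(4), the proofs of parts (1) and (2) of theorem \ref{thmEdge} already establish $\eta<1$ and $t\notin[\chi+\eta-1,\chi]$, which forces $\chi+\eta-1<\chi$ strictly; together with $(\chi_\EE(t),\eta_\EE(t))\in\partial\LL$ and lemma \ref{lemBddEdge0} this excludes all boundary lines of the shape and gives strict interior location.

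For discreteness, $R_0$ is the zero set of the real-analytic function $C$ on $\R\setminus\supp(\mu)$, and by lemma \ref{lemAnalExt}(a) $C'<0$ on this set, so $C$ is strictly decreasing on every connected component and has at most one zero on each; moreover $C$ has constant sign on each unbounded component, so $R_0\subset(a,b)$. For $\{t\in R_\mu:m_t=3\}$, the condition on top of $f_t'(t)=f_t''(t)=0$ is $f_t'''(t)=0$, which by equation (\ref{eqlemLineCusp2}) is a single real-analytic equation in $t$ on each connected component of $R_\mu$ and is not identically zero (an identical vanishing would force a nontrivial algebraic identity on the Cauchy transform $C$ incompatible with $\mu$ being a compactly supported probability measure), hence has isolated zeros; the argument for $R_{\l-\mu}$ is identical, using instead the analytic extension across $\R\setminus\supp(\l-\mu)$ from lemma \ref{lemAnalExt}(b). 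Finally, each $t\in R_1\cup R_2$ comes by definition with an open interval $(t_2,t_1)\ni t$ on which the density of $\mu$ is identically $0$ on one side of $t$ and identically $1$ on the other, and this pattern manifestly precludes any further point of $R_1\cup R_2$ from lying in $(t_2,t_1)$, so $R_1$ and $R_2$ are discrete subsets of $[a,b]$.

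The geometric interpretation follows from elimination of the parameter. In the parabolic cases (1,3,5,6,8), $a_1\neq 0$ lets us invert $a(s)=a_1(s-t)+O((s-t)^2)$ via the implicit function theorem to get $s-t=a/a_1+O(a^2)$; substituting into $b(s)=b_1(s-t)^2+O((s-t)^3)$ yields $b=(b_1/a_1^2)a^2+O(a^3)$ with $b_1/a_1^2\neq 0$, exhibiting the edge curve as the graph of a nondegenerate parabola in the $(\mathbf{x},\mathbf{y})$-frame. In the cuspidal cases (2,4,7,9), $a_2,b_2\neq 0$ together with $a(s)=a_2(s-t)^2+O((s-t)^3)$ and $b(s)=b_2(s-t)^3+O((s-t)^4)$ imply that near $t$ the quantity $a$ has constant sign (that of $a_2$) while $b$ changes sign as $s$ crosses $t$; eliminating the parameter gives $b^2=(b_2^2/a_2^3)a^3+O(a^{7/2})$, the semicubical parabola $y^2\propto x^3$, which is by definition an algebraic cusp of first order. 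The main obstacle in the whole argument is the discreteness claim in cases (2) and (4): rigorously excluding that the analytic function $t\mapsto f_t'''(t)$ vanishes on an entire subinterval requires genuine use of hypothesis \ref{hypWeakConv} and the structural properties of the Cauchy transform recorded in lemma \ref{lemAnalExt}.
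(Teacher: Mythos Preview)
Your overall strategy is right and matches the paper's: the lemma is largely a compilation, and the real content is (i) locating the edge point, (ii) the discreteness statements, and (iii) reading off the local geometry from the Taylor expansions. However, there are two concrete gaps.

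\textbf{Negative curvature is never addressed.} In cases (1,3,5,6,8) the statement claims the edge curve is a parabola \emph{with negative curvature}. Your parametric elimination $b=(b_1/a_1^2)a^2+O(a^3)$ establishes a nondegenerate parabola, but says nothing about the sign of the signed curvature $k(t)=(\chi_\EE'\eta_\EE''-\eta_\EE'\chi_\EE'')/(\chi_\EE'^2+\eta_\EE'^2)^{3/2}$. The paper computes this explicitly: from the Taylor expansion one gets $\chi_\EE'\eta_\EE''-\eta_\EE'\chi_\EE''=2a_1b_1(x_1y_2-x_2y_1)$, and then substitutes the formulas for $a_1,b_1$ from part (a) of lemma~\ref{lemLineCusp} together with $\mathbf{x}=(1,e^{C(t)}-1)$, $\mathbf{y}=(e^{C(t)}-1,-1)$ to obtain
\[
\chi_\EE'\eta_\EE''-\eta_\EE'\chi_\EE'' \;=\; \frac{(e^{C(t)}-1)^2\,f_t'''(t)^2}{e^{C(t)}\,C'(t)^3},
\]
which is negative since $e^{C(t)}>0$ and $C'(t)<0$ on $R_\mu$ (lemma~\ref{lemAnalExt}(a)). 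This sign computation is genuinely needed and is missing from your argument.

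\textbf{The interior and positivity claims are not established.} For cases (1)--(4) you argue that $\eta<1$ and $t\notin[\chi+\eta-1,\chi]$, then invoke lemma~\ref{lemBddEdge0} to ``exclude all boundary lines''. This does not work: lemma~\ref{lemBddEdge0} only identifies the single point $(\tfrac12+\int x\,\mu[dx],0)$ and says nothing about $\chi=b$, $\chi+\eta-1=a$, or $\eta=0$. The paper instead appeals to corollary~\ref{corf'}(a), which shows $S_1,S_2,S_3$ are all non-empty for $(\chi,\eta)\in\EE_\mu\cup\EE_{\l-\mu}$, whence theorem~\ref{thmf'} gives $b>\chi>\chi+\eta-1>a$. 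Similarly, your argument that $\eta_\EE(t)>0$ in cases (6)--(9) via ``the product is bounded by $1$ using that $\mu$ is a probability measure'' is not a proof; the paper gets $\eta>0$ directly from corollary~\ref{corf'}(b). Finally, you do not verify that the multiplicity-$3$ sets in cases (2) and (4) lie in $(a,b)$; the paper gets this from theorem~\ref{thmf'} (such roots can only occur in $K\subset(a,b)$).

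Your discreteness argument for $R_0$ via strict monotonicity of $C$ is a nice alternative to the paper's bare ``roots of an analytic function are isolated'', and your concern about identical vanishing in cases (2),(4) is legitimate --- the paper glosses over this too --- but it can be resolved cleanly: identical vanishing would force $\chi_\EE'\equiv 0$ on a component of $R_\mu$, contradicting the bijectivity of $W_\EE$ in theorem~\ref{thmEdge}.
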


\begin{proof}
Consider (1). First note, part (a) of corollary \ref{corf'} implies that
$(\chi_\EE(t),\eta_\EE(t))$ is in the interior of the shape in the middle
of figure \ref{figGTAsyShape}, since, following the notation of this
corollary, $S_1,S_2,S_3$ are all non-empty. Moreover, lemma \ref{lemLocGeo}
gives the required Taylor expansion, and the Taylor expansion easily
implies that the edge curve behaves like a parabola. It remains to show that
the curvature, $k(t)$, is negative at $(\chi_\EE(t),\eta_\EE(t))$, where
\begin{equation*}
k(t) := \frac{\chi_\EE'(t) \eta_\EE''(t) - \eta_\EE'(t) \chi_\EE''(t)}
{(\chi_\EE'(t)^2 + \eta_\EE'(t)^2)^\frac32}.
\end{equation*}
Note,
denoting $\mathbf{x} = (x_1,x_2)$ and $\mathbf{y} = (y_1,y_2)$, the
Taylor expansion gives,
\begin{equation*}
\chi_\EE'(t) \eta_\EE''(t) - \eta_\EE'(t) \chi_\EE''(t)
= 2 a_1 b_1 (x_1 y_2 - x_2 y_1).
\end{equation*}
Substitute the expressions for $a_1$ and $b_1$ from part (a) of lemma
\ref{lemLineCusp}, and substitute $\mathbf{x} = (1,e^{C(t)}-1)$ and
$\mathbf{y} = (e^{C(t)}-1,-1)$ to get,
\begin{equation*}
\chi_\EE'(t) \eta_\EE''(t) - \eta_\EE'(t) \chi_\EE''(t)
= \frac{(e^{C(t)}-1)^2 f_t'''(t)^2}{e^{C(t)} C'(t)^3}.
\end{equation*}
Part (a) of lemma \ref{lemAnalExt} gives $e^{C(t)} > 0$ and
$C'(t) < 0$. Also $e^{C(t)}-1 \neq 0$ since $t \in R_\mu$, and
$f_t'''(t) \neq 0$ since $t$ is a root of $f_t'$ of multiplicity $2$. Therefore
$k(t) < 0$, as required. Similarly for case (3).

Consider (2). Note, given $t \in R_\mu$ for which
$t$ is a root of $f_t'$ of multiplicity $3$, equation
(\ref{eqlemLineCusp2}) implies that $t$ is a root of the function
$w \mapsto C''(w) (e^{C(w)}-1) - C'(w)^2 (e^{C(w)}+1)$. Also lemma
\ref{lemAnalExt} implies that this function is well-defined and
analytic for $w \in (\R \setminus \supp(\mu)) \cup
(\R \setminus \supp(\l-\mu))$. Thus, since $R_\mu
\subset (\R \setminus \supp(\mu)) \cup (\R \setminus \supp(\l-\mu))$,
and since the roots of an analytic function are isolated, the set of
all $t \in R_\mu$ for which $t$ is a root of $f_t'$
of multiplicity $3$ is a discrete set. Also, theorem \ref{thmf'} implies
that this set is contained in $(a,b)$, since, using the notation of
this theorem, roots of multiplicity $3$ are only possible in $K$,
and $K \subset (a,b)$. The remainder of the result for case (2)
follows using similar methods to those used for case (1).
Similarly for case (4).

Consider (5). Part (a) of lemma \ref{lemAnalExt} implies that $C$ has
a unique analytic extension to $\C \setminus \supp(\mu)$. Recall that the
roots of an analytic function are isolated. It thus follows that $R_0$,
the set of all $t \in \R \setminus \supp(\mu)$ with $C(t) = 0$, is a
discrete set. Also, $R_0 \subset (a,b)$, since $\{a,b\} \subset \supp(\mu)$
(see hypothesis \ref{hypWeakConv}), and since $C(t) > 0$ for all $t > b$
and $C(t) < 0$ for all $t < a$ (see part (a) of lemma \ref{lemAnalExt}).
Finally, since $C(t) = 0$ for all $t \in R_0$, lemma \ref{lemBddEdge1}
gives $(\chi_\EE(t),\eta_\EE(t)) = (t,1)$. The remainder of the result
for case (5) follows using similar methods to those used for case (1).

Consider (6). The fact that $R_1$ is a discrete subset of $[a,b]$
follows from remark \ref{remmu} and the definition of $R_1$ given
in equation (\ref{eqR2}). Also, lemma \ref{lemBddEdge1} implies that
$\chi_\EE(t) = t$ and $\eta_\EE(t) < 1$ for all $t \in R_1$. Finally,
part (b) of corollary \ref{corf'} implies that $\eta_\EE(t) > 0$. The
remainder of the result for case (6) follows using similar methods to
those used for case (1). Similarly for cases (7,8,9).
\end{proof}

\subsection{Examples}
\label{secEx}

In this section we consider some examples of the measure, $\mu$, of
hypothesis \ref{hypWeakConv}, and apply the results of the previous
sections to study $\LL$ and $\EE$ in each case. We give expressions
for the Cauchy transform (obtained from equation (\ref{eqCauTrans})),
the set $R = R_\mu \cup R_{\l-\mu} \cup R_0 \cup R_1 \cup R_2$
(obtained from the definitions of the sets given in parts (1-5) of
theorem \ref{thmEdge}, and the edge curve (obtained from lemma
\ref{lemBddEdge1}). 

\begin{rem}
\label{remNotation}
Notation: In this section, $\log$ always denotes principal
value, and $\varphi : [a,b] \to [0,1]$ denotes the density of
$\mu$. Also, whenever we say that a point $(\chi,\eta) \in \EE$
corresponds to a root of multiplicity $k$, we mean that
$t := W_\EE(\chi,\eta)$ is a root of $f_t'$ of multiplicity $k$
(see lemma \ref{lemEdge2}). Finally, when describing $\partial \LL$,
we will often say:
\begin{itemize}
\item
$p_1 \to^\text{i} p_2$
\item
$p_1 \to^\text{e} p_2$
\end{itemize}
In both cases $p_1$ and $p_2$ are distinct points of $\partial \LL$. In the
first case we mean that section of $\partial \LL$ with end-points $p_1$ and
$p_2$, traversed clockwise, including the end-points. In the second case we
mean that section of $\partial \LL$ with end-points $p_1$ and $p_2$,
traversed clockwise, excluding the end-points.
\end{rem}

\subsubsection{\bf Example 1:}
\label{secEx1}

Take $\varphi(x) = \frac12$ for all $x \in [-1,1]$. Then,
\begin{align*}
2 C(w) &= \log(w+1) - \log(w-1)
\text{ for all } w \in \C \setminus \R, \\
R &= \R \setminus [-1,1], \\
R_\mu &= R,
\; R_{\l-\mu} = \emptyset,
\; R_0 = \emptyset,
\; R_1 = \emptyset,
\; R_2 = \emptyset, \\
\chi_\EE(t) &= t - \sqrt{|t+1|} \; |t-1| \;
( \sqrt{|t+1|} - \sqrt{|t-1|} )
\text{ for all } t \in R = \R \setminus [-1,1], \\
\eta_\EE(t) &= 1 - \sqrt{|t+1| \; |t-1|} \;
( \sqrt{|t+1|} - \sqrt{|t-1|} )^2
\text{ for all } t \in R = \R \setminus [-1,1].
\end{align*}

Note that each $t \in [-1,1]$ satisfies one of the cases of
lemma \ref{lemBddEdge2}. Part (2) of lemma \ref{lemBddEdge3} thus
implies that lemmas \ref{lemBddEdge0}, \ref{lemBddEdge1}, and
\ref{lemBddEdge2} give a complete description of $\partial \LL$.
Plotting this gives figure \ref{figex1}. The dashed lines represent the
shape in the middle of figure \ref{figGTAsyShape}, and the solid lines
represent $\partial \LL$. In words, following $\partial \LL$ in a
clockwise direction from the bottom, and using the notation described
in remark \ref{remNotation}:
\begin{itemize}
\item
$p_0 = (\frac12,0) = (\frac12 + \int_a^b x \varphi(x) dx, 0)$.
\item
$p_0 \to^\text{e} (-1,1)$ is in $\EE_\mu$ and given by
$t \mapsto (\chi_\EE(t),\eta_\EE(t))$ for all
$t \in (-\infty,-1) \subset R_\mu$.
\item
$(-1,1) \to^\text{i} (1,1)$ is given by $t \mapsto (t,1)$ for all
$t \in [-1,1]$.
\item
$(1,1) \to^\text{e} p_0$ is in $\EE_\mu$ and given by
$t \mapsto (\chi_\EE(t),\eta_\EE(t))$ for all
$t \in (1,+\infty) \subset R_\mu$.
\end{itemize}
Above, $p_0$ follows from lemma \ref{lemBddEdge0}, $p_0 \to^\text{e} (-1,1)$
and $(1,1) \to^\text{e} p_0$ follow from lemma \ref{lemBddEdge1}, and
$(-1,1) \to^\text{i} (1,1)$ follows from lemma \ref{lemBddEdge2}.

\begin{figure}
\centering
\mbox{\includegraphics{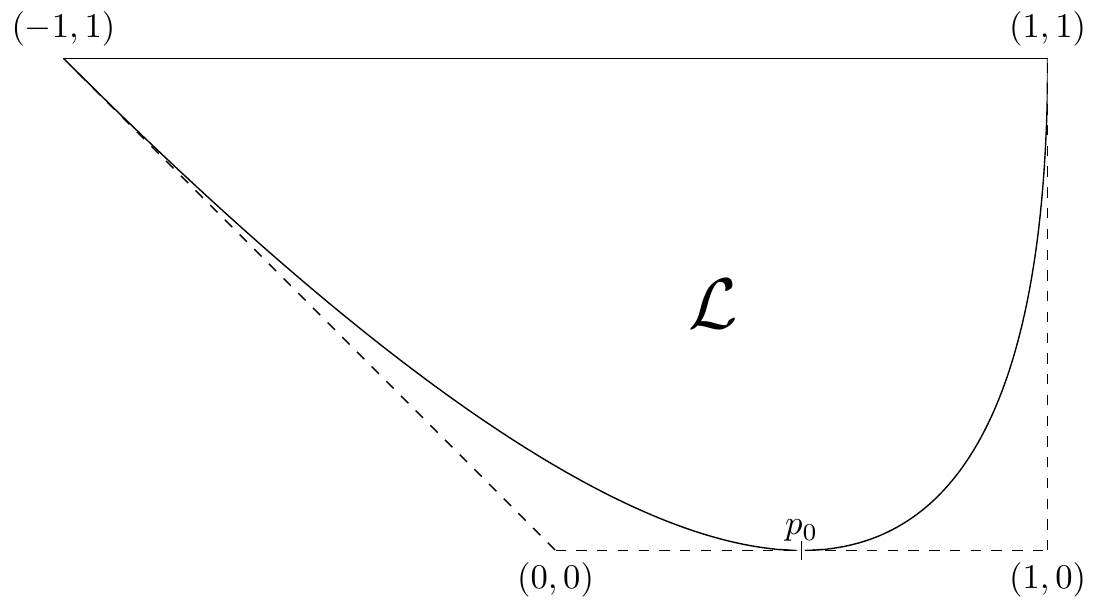}}
\caption{$\LL$ when $\varphi(x) = \frac12$ for all $x \in [-1,1]$.}
\label{figex1}
\end{figure}

Note that part (1) of theorem \ref{thmf'} implies that each
$t \in R = R_\mu = \R \setminus [-1,1]$ corresponds to a root of
multiplicity $2$. Case (1) of lemma \ref{lemGeoObv} then implies
that the edge curve behaves locally like a parabola with negative
curvature in a neighbourhood of any edge point. This can clearly
be seen in figure \ref{figex1}.

\subsubsection{\bf Example 2:}
\label{secEx2}

Take $\varphi(x) = \frac12$ for all $x \in [0,1] \cup [2,3]$. Then,
\begin{align*}
2 C(w) &= \log(w) - \log(w-1) + \log(w-2) - \log(w-3)
\text{ for all } w \in \C \setminus \R, \\
R &= \R \setminus ([0,1] \cup [2,3])
= (-\infty,0) \cup (1,2) \cup (3,+\infty), \\
R_\mu &= R \setminus \{3/2\}
= (-\infty,0) \cup (1,3/2) \cup (3/2,2) \cup (3,+\infty), \\
\; R_{\l-\mu} &= \emptyset,
\; R_0 = \{ 3/2 \},
\; R_1 = \emptyset,
\; R_2 = \emptyset, \\
\chi_\EE(t) &= t - 2 \sqrt{|t|} \; |t-1| \; \sqrt{|t-2|} \; |t-3|
\frac{\sqrt{|t| \; |t-2|} - \sqrt{|t-1| \; |t-3|}}
{|t-2| \; |t-3| + |t| \; |t-1|}
\text{ for all } t \in R, \\
\eta_\EE(t) &= 1 - 2 \sqrt{|t| \; |t-1| \; |t-2| \; |t-3|}
\frac{(\sqrt{|t| \; |t-2|} - \sqrt{|t-1| \; |t-3|})^2}
{|t-2| \; |t-3| + |t| \; |t-1|}
\text{ for all } t \in R.
\end{align*}

As in example 1, lemmas \ref{lemBddEdge0}, \ref{lemBddEdge1}, and
\ref{lemBddEdge2} give a complete description of $\partial \LL$. Plotting
$\partial \LL$ gives figure \ref{figex2}. Following $\partial \LL$ in a
clockwise direction from the bottom:
\begin{itemize}
\item
$p_0 = (2,0)  = (\frac12 + \int_a^b x \varphi(x) dx, 0)$.
\item
$p_0 \to^\text{e} (0,1)$ is in $\EE_\mu$ and given by
$t \mapsto (\chi_\EE(t),\eta_\EE(t))$ for all
$t \in (-\infty,0) \subset R_\mu$.
\item
$(0,1) \to^\text{i} (1,1)$ is given by $t \mapsto (t,1)$
for all $t \in [0,1]$.
\item
$(1,1) \to^\text{e} p_1$ is in $\EE_\mu$ and given by
$t \mapsto (\chi_\EE(t),\eta_\EE(t))$ for all
$t \in (1,\frac32) \subset R_\mu$.
\item
$p_1 = (\frac32,1)$ is in $\EE_0$ and given by
$p_1 = (\chi_\EE(\frac32),\eta_\EE(\frac32))$ where $\frac32 \in R_0$.
\item
$p_1 \to^\text{e} (2,1)$ is in $\EE_\mu$ and given by
$t \mapsto (\chi_\EE(t),\eta_\EE(t))$ for all
$t \in (\frac32,2) \subset R_\mu$.
\item
$(2,1) \to^\text{i} (3,1)$ is given by $t \mapsto (t,1)$ for all
$t \in [2,3]$.
\item
$(3,1) \to^\text{e} p_0$ is in $\EE_\mu$ and given by
$t \mapsto (\chi_\EE(t),\eta_\EE(t))$ for all
$t \in (3,+\infty) \subset R_\mu$.
\end{itemize}
Above, $p_0$ follows from lemma \ref{lemBddEdge0}. Also, $p_0 \to^\text{e} (0,1)$,
$(1,1) \to^\text{e} p_1$, $p_1$, $p_1 \to^\text{e} (2,1)$ and
$(3,1) \to^\text{e} p_0$ follow from lemma \ref{lemBddEdge1}. Finally,
$(0,1) \to^\text{i} (1,1)$ and $(2,1) \to^\text{i} (3,1)$ follow from
lemma \ref{lemBddEdge2}.

\begin{figure}
\centering
\mbox{\includegraphics{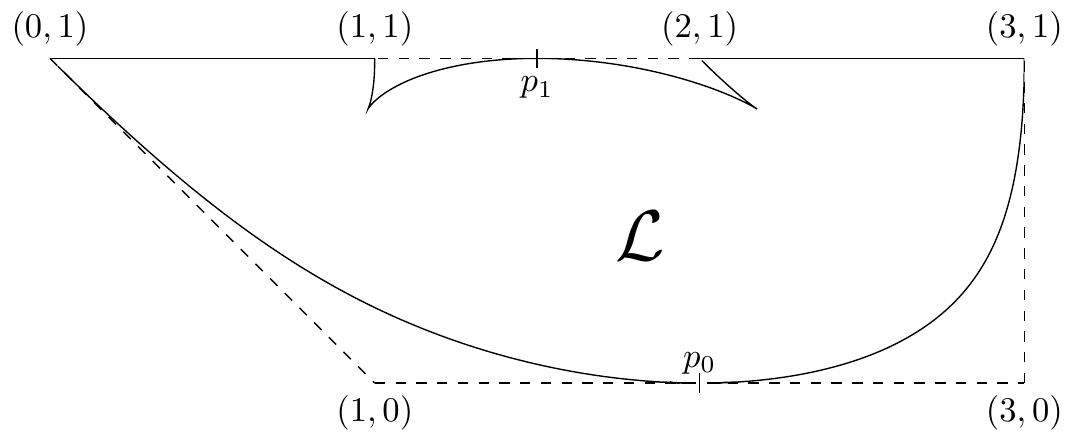}}
\caption{$\LL$ when $\varphi(x) = \frac12$ for all
$x \in [0,1] \cup [2,3]$.}
\label{figex2}
\end{figure}

Above we state that $p_1 = (\frac32,1)$ is in $\EE_0$ and given by
$p_1 = (\chi_\EE(\frac32),\eta_\EE(\frac32))$ where $\frac32 \in R_0$.
Case (5) of lemma \ref{lemGeoObv} thus implies that the edge curve
behaves locally like a parabola at
$(\frac32,1) = (\chi_\EE(\frac32),\eta_\EE(\frac32))$, and is
tangent to the upper boundary at this point. This can clearly be
seen in figure \ref{figex2}. We also state that
$(1,1) \to^\text{e} p_1$ is in $\EE_\mu$. Note from the figure,
though we do not attempt to prove this rigorously, that 
this section contains a cusp. As detailed in cases (1) and (2) of
lemma \ref{lemGeoObv}, the cusp is necessarily of first order and
must correspond to a root of multiplicity $3$ in $R_\mu$, and all other
points of $(1,1) \to^\text{e} p_1$ correspond to roots of multiplicity
$2$. Similarly for $p_1 \to^\text{e} (2,1)$. Similar cusps in the edge
correspond to a root of multiplicity $3$ in $R_{\l-\mu}$, as
detailed in case (4) of lemma \ref{lemGeoObv},
but we do not give an example here.

\subsubsection{\bf Example 3:}
\label{secEx3}

Take $\varphi(x) = 1$ for all $x \in [0,\frac12] \cup [1,\frac32]$. Then,
\begin{align*}
C(w) &= \log(w) - \log(w-1/2) + \log(w-1) - \log(w-3/2)
\text{ for all } w \in \C \setminus \R, \\
R &= \R, \; R_\mu = (-\infty,0) \cup (1/2,3/4) \cup (3/4,1) \cup (3/2,+\infty), \\
R_{\l-\mu} &= (0,1/2) \cup (1,3/2),
\; R_0 = \{3/4\},
\; R_1 = \{1/2,3/2\},
\; R_2 = \{0,1\}, \\
\chi_\EE(t) &= t - 2 (t-1/2) (t-3/2)
\frac{t (t-1) - (t-1/2) (t-3/2)}{(t-1) (t-3/2) + t (t-1/2)}
\text{ for all } t \in R = \R, \\
\eta_\EE(t) &= 1 - 2 \frac{(t (t-1) - (t-1/2) (t-3/2))^2}{(t-1) (t-3/2) + t (t-1/2)}
\text{ for all } t \in R = \R.
\end{align*}

Part (1) of lemma \ref{lemBddEdge3} implies that lemmas \ref{lemBddEdge0}
and \ref{lemBddEdge1} give a complete description of $\partial \LL$.
Plotting $\partial \LL$ gives figure \ref{figex3}. Following $\partial \LL$
in a clockwise direction from the bottom:
\begin{itemize}
\item
$p_0 = (\frac54,0)  = (\frac12 + \int_a^b x \varphi(x) dx, 0)$.
\item
$p_0 \to^\text{e} p_1$ is in $\EE_\mu$ and
given by $t \mapsto (\chi_\EE(t),\eta_\EE(t))$ for all
$t \in (-\infty,0) \subset R_\mu$.
\item
$p_1 = (\frac34,\frac14)$ is in $\EE_2$ and given by
$p_1 = (\chi_\EE(0),\eta_\EE(0))$ where $0 \in R_2$.
\item
$p_1 \to^\text{e} p_2$ is in $\EE_{\l-\mu}$
and given by $t \mapsto (\chi_\EE(t),\eta_\EE(t))$ for all
$t \in (0,\frac12) \subset R_{\l-\mu}$.
\item
$p_2 = (\frac12,\frac34)$ is in $\EE_1$ and given by
$p_2 = (\chi_\EE(\frac12),\eta_\EE(\frac12))$ where $\frac12 \in R_1$.
\item
$p_2 \to^\text{e} p_3$ is in $\EE_\mu$ and
given by $t \mapsto (\chi_\EE(t),\eta_\EE(t))$ for all
$t \in (\frac12,\frac34) \subset R_\mu$.
\item
$p_3 = (\frac34,1)$ is in $\EE_0$ and given by
$p_3 = (\chi_\EE(\frac34),\eta_\EE(\frac34))$ where $\frac34 \in R_0$.
\item
$p_3 \to^\text{e} p_4$ is in $\EE_\mu$ and
given by $t \mapsto (\chi_\EE(t),\eta_\EE(t))$ for all
$t \in (\frac34,1) \subset R_\mu$.
\item
$p_4 = (\frac54,\frac34)$ is in $\EE_2$ and given by
$p_4 = (\chi_\EE(1),\eta_\EE(1))$ where $1 \in R_2$.
\item
$p_4 \to^\text{e} p_5$ is in $\EE_{\l-\mu}$
and given by $t \mapsto (\chi_\EE(t),\eta_\EE(t))$ for all
$t \in (1,\frac32) \subset R_{\l-\mu}$.
\item
$p_5 = (\frac32,\frac14)$ is in $\EE_1$ and given by
$p_5 = (\chi_\EE(\frac32),\eta_\EE(\frac32))$ where $\frac32 \in R_1$.
\item
$p_5 \to^\text{e} p_0$ is in $\EE_\mu$ and
given by $t \mapsto (\chi_\EE(t),\eta_\EE(t))$ for all
$t \in (\frac32,+\infty) \subset R_\mu$.
\end{itemize}
Above, $p_0$ follows from lemma \ref{lemBddEdge0}, and the
remainder follows from lemma \ref{lemBddEdge1}.

\begin{figure}
\centering
\mbox{\includegraphics{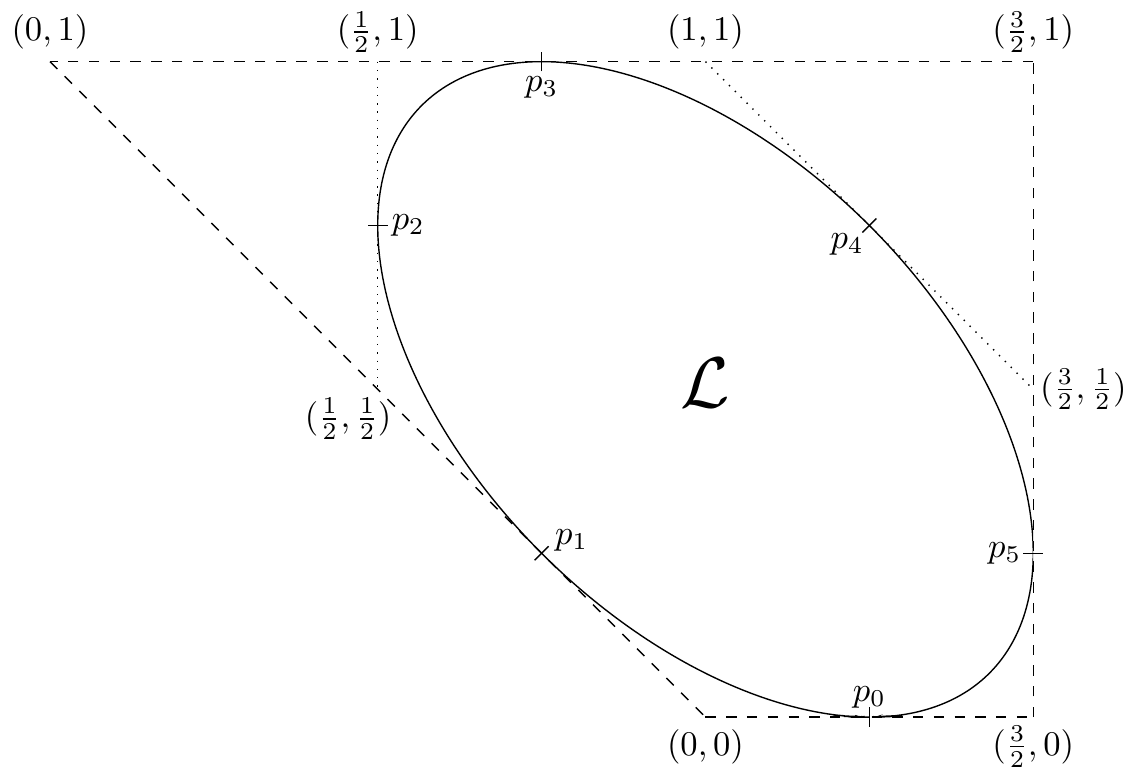}}
\caption{$\LL$ when $\varphi(x) = 1$ for all
$x \in [0,\frac12] \cup [1,\frac32]$.}
\label{figex3}
\end{figure}

Above we state that $p_5 = (\frac32,\frac14)$ is in $\EE_1$ and given by
$p_5 = (\chi_\EE(\frac32),\eta_\EE(\frac32))$ where $\frac32 \in R_1$.
It is not difficult to show that $t = \frac32 \in R_1$ corresponds
to a root of multiplicity $1$. Thus case (6) of lemma \ref{lemGeoObv}
implies that the edge curve behaves locally like a parabola at
$p_5$ with tangent vector $(0,1)$. This can
clearly be seen in the figure. Similarly for
$p_2 = (\frac12,\frac34) = (\chi_\EE(\frac12),\eta_\EE(\frac12)) \in \EE_1$,
where now we have drawn a dotted line to represent the tangent for
clarity. Similarly for
$p_4 = (\frac54,\frac34) = (\chi_\EE(1),\eta_\EE(1))$ and
$p_1 = (\frac34,\frac14) = (\chi_\EE(0),\eta_\EE(0))$ which are in $\EE_2$,
except now the tangent vector is $(1,-1)$ (see case (8) of lemma
\ref{lemGeoObv}).

Finally, recall that the asymptotic shape of the frozen boundary of the
rescaled regular hexagon is the {\em Arctic circle}, as shown on the right
of figure \ref{figArctic}. Recall that we consider tilings of the regular
hexagon as tilings of the half-hexagon by adding deterministic
lozenges/particles, as shown on the right of figure \ref{figEquivIntPart}.
Also, recall that by shifting these tilings, as described at the
beginning of section \ref{sectdsodgtp}, equivalent Gelfand-Tsetlin patterns
are obtained. The shifted asymptotic shape of the frozen boundary of the
regular hexagon is shown on the right of figure \ref{figGTAsyShape}. This
is identical to figure \ref{figex3}. We therefore have recovered the Arctic
circle boundary theorem for the regular hexagon.

\subsubsection{\bf Example 4:}
\label{secEx4}

Take $\varphi (x) := 1$ for all
$x \in [0,\frac13] \cup [1,\frac43] \cup [c,c+\frac13]$,
where $c := \frac1{12} (23 + \sqrt{217}) > \frac43$. Define
$c_1 := (\frac12 + \frac{c}3) +
[(\frac12 + \frac{c}3)^2 - \frac{4}9 (c + \frac13)]^{1/2})$
and $c_2 := (\frac12 + \frac{c}3) -
[(\frac12 + \frac{c}3)^2 - \frac{4}9 (c + \frac13)]^{1/2})$.
Then $c_1 \in (\frac43,c)$, $c_2 \in (\frac13,1)$, and,
\begin{align*}
C(w) &= \log(w) - \log(w-1/3) + \log(w-1) - \log(w-4/3) \\
&\;\;\; + \log(w-c) - \log(w-c-1/3)
\text{ for all } w \in \C \setminus \R, \\
R &= \R, \; R_\mu = (-\infty,0) \cup (1/3,c_2) \cup (c_2,1)
\cup (4/3,c_1) \cup (c_1,c) \cup (c+1/3,+\infty), \\
R_{\l-\mu} &= (0,1/3) \cup (1,4/3) \cup (c,c+1/3), \\
R_0 &= \{c_1,c_2\},
\; R_1 = \{1/3,4/3,c+1/3\},
\; R_2 = \{0,1,c\}, \\
\chi_\EE(t) &= t - 3 (t-1/3) (t-4/3) (t-c-1/3) \frac{A(t)}{B(t)},
\hspace{0.5cm}
\eta_\EE(t) = 1 - 3 \frac{A(t)^2}{B(t)}, \\
A(t) &:= t (t-1) (t-c) - (t-1/3) (t-4/3) (t-c-1/3), \\
B(t) &:= (t-1) (t-4/3) (t-c) (t-c-1/3) + t (t-1/3) (t-c) (t-c-1/3) \\
&\;\;\; + t (t-1/3) (t-1) (t-4/3).
\end{align*}

Part (1) of lemma \ref{lemBddEdge3} implies that lemmas \ref{lemBddEdge0}
and \ref{lemBddEdge1} give a complete description of $\partial \LL$.
Plotting $\partial \LL$ gives figure \ref{figex4}. Following $\partial \LL$
in a clockwise direction from the bottom:
\begin{itemize}
\item
$p_0 = (1 + \frac13 c,0)  = (\frac12 + \int_a^b x \varphi(x) dx, 0)$.
\item
$p_0 \to^\text{e} p_1$ is in $\EE_\mu$ and
given by $t \mapsto (\chi_\EE(t),\eta_\EE(t))$ for all
$t \in (-\infty,0) \subset R_\mu$.
\item
$p_1 = (\frac49 + \frac4{27c}, \frac59 - \frac4{27c})$ is in $\EE_2$ and given by
$p_1 = (\chi_\EE(0),\eta_\EE(0))$ where $0 \in R_2$.
\item
$p_1 \to^\text{e} p_2$ is in $\EE_{\l-\mu}$
and given by $t \mapsto (\chi_\EE(t),\eta_\EE(t))$ for all
$t \in (0,\frac13) \subset R_{\l-\mu}$.
\item
$p_2 = (\frac13, \frac79 + \frac2{27c})$ is in $\EE_1$ and given by
$p_2 = (\chi_\EE(\frac13),\eta_\EE(\frac13))$ where $\frac13 \in R_1$.
\item
$p_2 \to^\text{e} p_3$ is in $\EE_\mu$ and
given by $t \mapsto (\chi_\EE(t),\eta_\EE(t))$ for all
$t \in (\frac13,c_2) \subset R_\mu$.
\item
$p_3 = (c_2,1)$ is in $\EE_0$ and given by
$p_3 = (\chi_\EE(c_2),\eta_\EE(c_2))$ where $c_2 \in R_0$.
\item
$p_3 \to^\text{e} p_4$ is in $\EE_\mu$ and
given by $t \mapsto (\chi_\EE(t),\eta_\EE(t))$ for all
$t \in (c_2,1) \subset R_\mu$.
\item
$p_4 = (\frac{11}9 + \frac2{27(c-1)},\frac79 - \frac2{27(c-1)})$ is in $\EE_2$,
given by $p_4 = (\chi_\EE(1),\eta_\EE(1))$ where $1 \in R_2$.
\item
$p_4 \to^\text{e} p_5$ is in $\EE_{\l-\mu}$
and given by $t \mapsto (\chi_\EE(t),\eta_\EE(t))$ for all
$t \in (1,\frac43) \subset R_{\l-\mu}$.
\item
$p_5 = (\frac43,\frac59 + \frac4{27(c-1)})$ is in $\EE_1$ and given by
$p_5 = (\chi_\EE(\frac43),\eta_\EE(\frac43))$ where $\frac43 \in R_1$.
\item
$p_5 \to^\text{e} p_6$ is in $\EE_\mu$ and
given by $t \mapsto (\chi_\EE(t),\eta_\EE(t))$ for all
$t \in (\frac43,c_1) \subset R_\mu$.
\item
$p_6 = (c_1,1)$ is in $\EE_0$ and given by
$p_6 = (\chi_\EE(c_1),\eta_\EE(c_1))$ where $c_1 \in R_0$.
\item
$p_6 \to^\text{e} p_7$ is in $\EE_\mu$ and
given by $t \mapsto (\chi_\EE(t),\eta_\EE(t))$ for all
$t \in (c_1,c) \subset R_\mu$.
\item
$p_7 = (c + \frac{(c - \frac13) (c - \frac43)}{3 c (c-1)},
1 - \frac{(c - \frac13) (c - \frac43)}{3 c (c-1)}) \in \EE_2$, given by
$p_7 = (\chi_\EE(c),\eta_\EE(c))$ for $c \in R_2$.
\item
$p_7 \to^\text{e} p_8$ is in $\EE_{\l-\mu}$ and
given by $t \mapsto (\chi_\EE(t),\eta_\EE(t))$ for all
$t \in (c,c+\frac13) \subset R_{\l-\mu}$.
\item
$p_8 = (c + \frac13, 1 - \frac{(c + \frac13) (c - \frac23)}{3 c (c-1)}) \in \EE_1$,
given by $p_7 = (\chi_\EE(c+\frac13),\eta_\EE(c+\frac13))$ for $c+\frac13 \in R_1$.
\item
$p_8 \to^\text{e} p_0$ is in $\EE_\mu$ and
given by $t \mapsto (\chi_\EE(t),\eta_\EE(t))$ for all
$t \in (c+\frac13,+\infty) \subset R_\mu$.
\end{itemize}
Above, $p_0$ follows from lemma \ref{lemBddEdge0}, and the
remainder follows from lemma \ref{lemBddEdge1}.

\begin{figure}
\centering
\mbox{\includegraphics{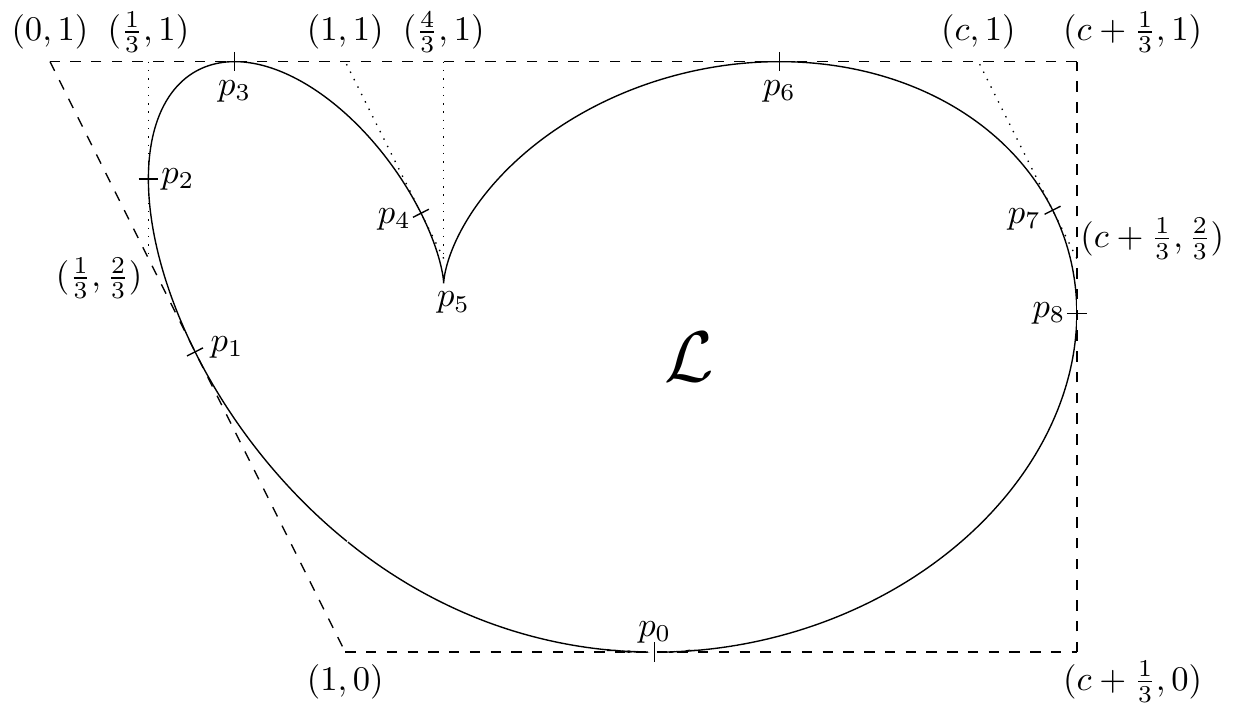}}
\caption{$\LL$ when $\varphi (x) := 1$ for all
$x \in [0,\frac13] \cup [1,\frac43] \cup [c,c+\frac13]$,
where $c := \frac1{12} (23 + \sqrt{217})$. The vertical
direction has been scaled by $2$ for clarity.}
\label{figex4}
\end{figure}

Above we state that $p_5 = (\frac43,\frac59 + \frac4{27(c-1)})$ is in
$\EE_1$ and given by $p_5 = (\chi_\EE(\frac43),\eta_\EE(\frac43))$
where $\frac43 \in R_1$. It is not difficult to show that
$t = \frac43 \in R_1$ corresponds to a root of multiplicity $2$. (Here
we use the fact that $c$, as defined above, is a root of $6 c^2 - 23 c + 13$.)
Thus case (7) of lemma \ref{lemGeoObv} implies that the edge curve
has a cusp of first order at $p_5$ with orientation $\mathbf{x}(t) = (0,1)$.
This can clearly be seen in the figure. Cusps in the edge with
orientation $\mathbf{x}(t) = (1,-1)$ correspond to roots, $t \in R_2$,
of multiplicity $2$, as detailed in case (9) of lemma \ref{lemGeoObv},
but we do not give an example here.

\subsubsection{\bf Example 5:}
\label{secEx5}

Take $\varphi (x) := 1-x$ for all $x \in [0,1]$, and $\varphi (x) := 1+x$ for all
$x \in [-1,0]$. Then,
\begin{align*}
C(w) &= (w+1) \log(w+1) - 2w \log(w) + (w-1) \log(w-1)
\text{ for all } w \in \C \setminus \R, \\
R &= \R \setminus [-1,1],
\; R_\mu = R,
\; R_{\l-\mu} = \emptyset,
\; R_0 = \emptyset,
\; R_1 = \emptyset,
\; R_2 = \emptyset, \\
\chi_\EE(t) &= t + \frac{|t+1|^{t+1} |t|^{-2t} |t-1|^{t-1} - 1}
{|t+1|^{t+1} |t|^{-2t} |t-1|^{t-1} (\log|t+1| - 2\log|t| + \log|t-1|)}
\text{ for all } t \in R, \\
\eta_\EE(t) &= 1 + \frac{(|t+1|^{t+1} |t|^{-2t} |t-1|^{t-1} - 1)^2}
{|t+1|^{t+1} |t|^{-2t} |t-1|^{t-1} (\log|t+1| - 2\log|t| + \log|t-1|)}
\text{ for all } t \in R.
\end{align*}

Note that each $t \in (-1,0) \cup (0,1)$ satisfies case (1) of
lemma \ref{lemBddEdge2}, and so $(t,1) \in \partial \LL$ for all
such $t$. However, the points $\{-1,0,1\}$ do not satisfy any of
the cases of lemma \ref{lemBddEdge2}. Thus part (2) of lemma
\ref{lemBddEdge3} does not apply, and so lemmas \ref{lemBddEdge0},
\ref{lemBddEdge1}, and \ref{lemBddEdge2} only give a partial
description of $\partial \LL$.

In order to get a complete description, following the proof of part
(2) of lemma \ref{lemBddEdge3}, we must consider all possible limits
of sequences of the form $\{(\chi_\LL(w_n),\eta_\LL(w_n))\}_{n\ge1}$ as
$n \to \infty$, where $\{w_n\}_{n\ge1} \subset \mathbb{H}$ satisfies
$w_n \to -1$, $0$ or $1$. Denote, as in lemma \ref{lemBddEdge3},
$u_n := \text{Re}(w_n)$, $v_n := \text{Im}(w_n)$,
$R_n := \text{Re}(C(w_n))$, and $I_n := - \text{Im}(C(w_n))$. Then,
using the above expression for $C$, it is not difficult to see that
$R_n \sim O(1)$. Also, for all $n$ sufficiently large,
$\sin(I_n) > - \frac18 v_n \log((u_n+1)^2 + v_n^2)$ when $w_n \to -1$,
$\sin(I_n) > - \frac18 v_n \log(u_n^2 + v_n^2)$ when $w_n \to 0$,
and $\sin(I_n) > - \frac18 v_n \log((u_n-1)^2 + v_n^2)$ when $w_n \to 1$.
Equations (\ref{eqlemBddEdge31}) and (\ref{eqlemBddEdge32}) finally
give $(\chi_\LL(w_n),\eta_\LL(w_n)) \to (-1,1)$ when $w_n \to -1$,
$(\chi_\LL(w_n),\eta_\LL(w_n)) \to (0,1)$ when $w_n \to 0$,
and $(\chi_\LL(w_n),\eta_\LL(w_n)) \to (1,1)$ when $w_n \to 1$.
These limits, combined with lemmas \ref{lemBddEdge0}, \ref{lemBddEdge1}
and \ref{lemBddEdge2} give a complete description. Plotting this we
get figure \ref{figex5}:
\begin{itemize}
\item
$p_0 = (\frac12,0) = (\frac12 + \int_a^b x \varphi(x) dx, 0)$.
\item
$p_0 \to^\text{e} (-1,1)$ is in $\EE_\mu$ and given by
$t \mapsto (\chi_\EE(t),\eta_\EE(t))$ for all
$t \in (-\infty,-1) \subset R_\mu$.
\item
$(-1,1) \to^\text{i} (1,1)$ is given by $t \mapsto (t,1)$ for all
$t \in [-1,1]$.
\item
$(1,1) \to^\text{e} p_0$ is in $\EE_\mu$ and given by
$t \mapsto (\chi_\EE(t),\eta_\EE(t))$ for all
$t \in (1,+\infty) \subset R_\mu$.
\end{itemize}
Above, $p_0$ follows from lemma \ref{lemBddEdge0}, and $p_0 \to^\text{e} (-1,1)$
and $(1,1) \to^\text{e} p_0$ follow from lemma \ref{lemBddEdge1}. Also,
$(-1,1) \to^\text{e} (0,1)$ and $(0,1) \to^\text{e} (1,1)$ follow from
lemma \ref{lemBddEdge2}, and the points $\{(-1,1), (0,1), (1,1)\}$ follow
from the direct calculations given above.

\begin{figure}
\centering
\mbox{\includegraphics{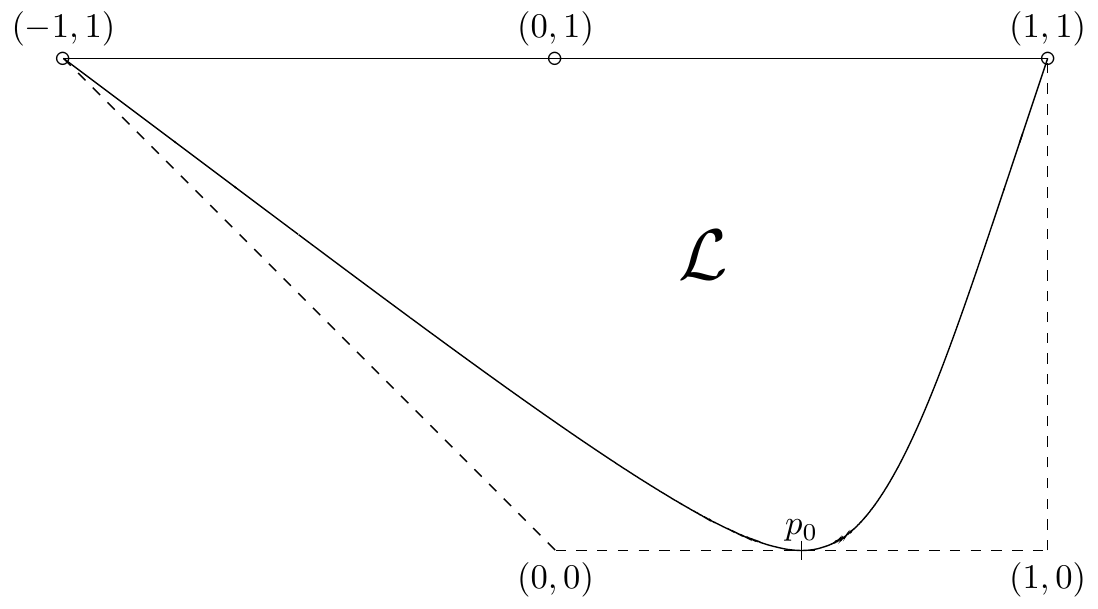}}
\caption{$\LL$ when $\varphi (x) := 1-x$ for all $x \in [0,1]$, and
$\varphi (x) := 1+x$ for all $x \in [-1,0]$.}
\label{figex5}
\end{figure}

\subsubsection{\bf Example 6:}
\label{secEx6}

Take $\varphi (x) = \frac{15}{16} (x-1)^2 (x+1)^2$ for all $x \in [-1,1]$.
Then,
\begin{align*}
C(w) &= \frac{15}{16} \left( \frac{10}3 w - 2 w^3
+ (w^2-1)^2 (\log(w+1) - \log(w-1)) \right)
\text{ for all } w \in \C \setminus [-1,1], \\
C'(w) &= \frac{15}{16} \left( \frac{16}3 - 8 w^2
+ 4 w (w^2-1) (\log(w+1) - \log(w-1)) \right)
\text{ for all } w \in \C \setminus [-1,1], \\
R &= \R \setminus [-1,1],
\; R_\mu = R,
\; R_{\l-\mu} = \emptyset,
\; R_0 = \emptyset,
\; R_1 = \emptyset,
\; R_2 = \emptyset, \\
\chi_\EE(t) &= t + \frac{e^{C(t)}-1}{e^{C(t)} C'(t)}
\;\;\; \text{and} \;\;\;
\eta_\EE(t) = 1 + \frac{(e^{C(t)}-1)^2}{e^{C(t)} C'(t)}
\text{ for all } t \in R = \R \setminus [-1,1].
\end{align*}

Note that each $t \in (-1,1)$ satisfies case (1) of lemma
\ref{lemBddEdge2}, and so $(t,1) \in \partial \LL$ for all
such $t$. However, the points $\{-1,1\}$ do not satisfy any of
the cases of lemma \ref{lemBddEdge2}. Thus part (2) of lemma
\ref{lemBddEdge3} does not apply, and so lemmas \ref{lemBddEdge0},
\ref{lemBddEdge1}, and \ref{lemBddEdge2} only give a partial
description of $\partial \LL$. This can clearly be seen in figure
\ref{figex6}, where we have plotted those parts of the
boundary that we obtain from lemmas \ref{lemBddEdge0},
\ref{lemBddEdge1}, and \ref{lemBddEdge2}:
\begin{itemize}
\item
$p_0 = (\frac12,0) = (\frac12 + \int_a^b x \varphi(x) dx, 0)$.
\item
$p_0 \to^\text{e} p_1$ is in $\EE_\mu$ and given by
$t \mapsto (\chi_\EE(t),\eta_\EE(t))$ for all
$t \in (-\infty,-1) \subset R_\mu$.
\item
$(-1,1) \to^\text{e} (1,1)$ is given by $t \mapsto (t,1)$ for all
$t \in (-1,1)$.
\item
$p_2 \to^\text{e} p_0$ is in $\EE_\mu$ and given by
$t \mapsto (\chi_\EE(t),\eta_\EE(t))$ for all
$t \in (1,+\infty) \subset R_\mu$.
\end{itemize}
Above $p_1 := \lim_{t \uparrow -1} (\chi_\EE(t),\eta_\EE(t)) \sim (-0.004,0.290)$
and $p_2 := \lim_{t \downarrow 1} (\chi_\EE(t),\eta_\EE(t)) \sim (0.714,0.290)$.
Also, $p_0$ follows from lemma \ref{lemBddEdge0}, $p_0 \to^\text{e} p_1$
and $p_2 \to^\text{e} p_0$ follow from lemma \ref{lemBddEdge1}, and
$(-1,1) \to^\text{e} (1,1)$ follows from lemma \ref{lemBddEdge2}.

\begin{figure}
\centering
\mbox{\includegraphics{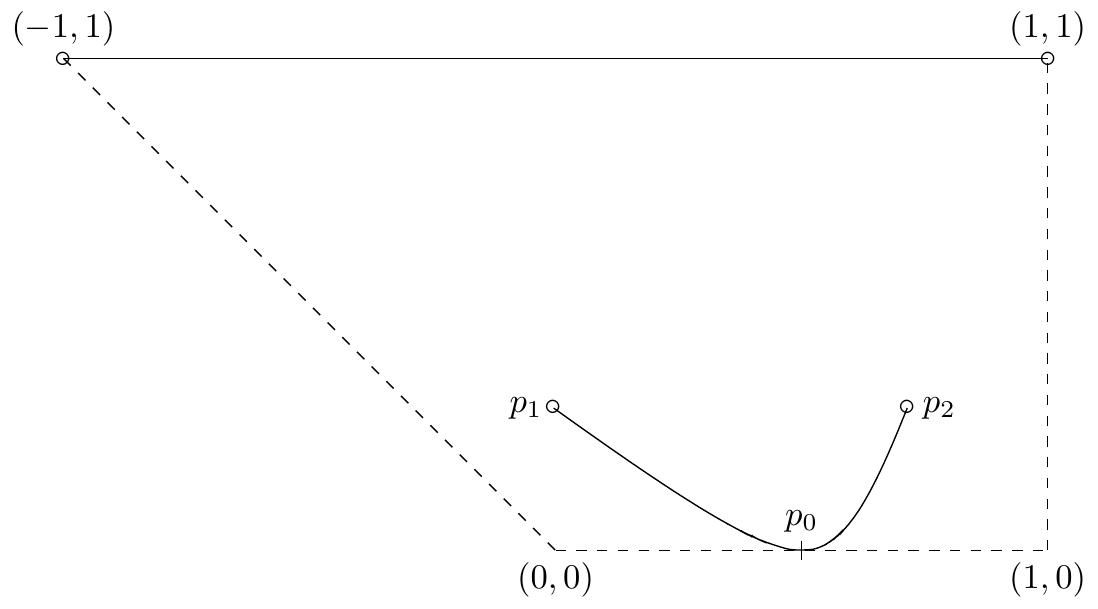}}
\caption{Parts of $\partial \LL$ when $\varphi (x) = \frac{15}{16} (x-1)^2 (x+1)^2$
for all $x \in [-1,1]$.}
\label{figex6}
\end{figure}

In order to get a complete description, following the proof of part
(2) of lemma \ref{lemBddEdge3}, we must consider all possible limits
of sequences of the form $\{(\chi_\LL(w_n),\eta_\LL(w_n))\}_{n\ge1}$ as
$n \to \infty$, where $\{w_n\}_{n\ge1} \subset \mathbb{H}$ satisfies
$w_n \to -1$ or $1$. This calculation is beyond the scope of this
paper, however, since the technicalities involved in extending
lemma \ref{lemBddEdge2} to cover this situation are highly non-trivial.
In the paper \cite{Duse15b}, we make heavy use of the theory of
singular integrals to examine this and other, surprisingly subtle,
situations. We obtain a complete description of $\partial \LL$
for a broad class of measures.

\section{The behaviour of the roots of \texorpdfstring{$f'$}{Lg}}
\label{sectrofn'}

In this section we examine the behaviour of the roots of the function
$f'$ given in equation (\ref{eqf'}). More generally, it is advantageous to
examine the behaviour of the roots of Cauchy transforms of
signed-measures of a particular type: Let $A := A_1 \cup \cdots \cup A_p$ ($p\ge1$)
and $B := B_1 \cup \cdots \cup B_q$ ($q\ge0$) be unions of disjoint closed intervals.
Assume that each $A_i$ and $B_j$ are disjoint, except possibly at their end-points.
Let $\nu^+$ and $\nu^-$ be non-negative measures with $\supp(\nu^+) \subset A$ and
$\supp(\nu^-) \subset B$. Finally, assume that the end-points of each $A_i$ are
contained in $\supp(\nu^+)$, and $0 < \nu^+[A_i] < \infty$. Similarly for $\nu^-$
and each $B_j$. Therefore $\nu^+$ is a positive measure, since $p\ge1$.
Also, $\nu^-$ is positive when $q\ge1$, and is $0$ when $q=0$. The Cauchy
transform of the signed-measure, $\nu := \nu^+ - \nu^-$, is defined by,
\begin{equation}
\label{eqf'Gen}
g(w) := \int_A \frac{\nu^+[dx]}{w-x} - \int_B \frac{\nu^-[dx]}{w-x},
\end{equation}
for all $w \in \C \setminus (A \cup B)$. The main result of this section examines
the behaviour of the roots of this Cauchy transform:
\begin{lem}
\label{lemRootsg}
Let $I \subset \R \setminus (A \cup B)$ be an open interval which contains at least
$k\ge0$ roots of $g$, counting multiplicities. First assume that $\nu^+[A] > \nu^-[B]$.
Then, counting multiplicities,
\begin{enumerate}
\item 
$g$ has at most $p+q-1$ roots in $\C \setminus (A \cup B)$.
\item
$g$ has at most $p+q-2$ roots in $I$ whenever $p+q-1$ is even and either
$\{\inf I,\sup I\} \subset A$ or $\{\inf I,\sup I\} \subset B \cup \{\pm \infty\}$.
\item
$g$ has at most $p+q-2$ roots in $I$ whenever $p+q-1$ is odd and one of
$\{\inf I,\sup I\}$ is in $A$, with the other in $B \cup \{\pm \infty\}$.
\item
$g$ has at most $p+q-k-2$ roots in $\C \setminus (A \cup B \cup I)$ whenever $k$
is even and either $\{\inf I,\sup I\} \subset A$ or
$\{\inf I,\sup I\} \subset B \cup \{\pm \infty\}$.
\item
$g$ has at most $p+q-k-2$ roots in $\C \setminus (A \cup B \cup I)$ whenever $k$
is odd and one of $\{\inf I,\sup I\}$ is in $A$, with the other in
$B \cup \{\pm \infty\}$.
\end{enumerate}
Next assume that $\nu^+[A] = \nu^-[B]$. Then, counting multiplicities,
\begin{enumerate}
\setcounter{enumi}{5}
\item 
$g$ has at most $p+q-2$ roots in $\C \setminus (A \cup B)$.
\item
$g$ has at most $p+q-3$ roots in $I$ whenever $p+q-2$ is even and either
$\{\inf I,\sup I\} \subset A$ or $\{\inf I,\sup I\} \subset B$.
\item
$g$ has at most $p+q-3$ roots in $I$ whenever $p+q-2$ is odd and one of
$\{\inf I,\sup I\}$ is in $A$, with the other in $B$.
\end{enumerate}
\end{lem}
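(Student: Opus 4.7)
The plan is to prove the total bounds in (1) and (6) by approximating $\nu^\pm$ by atomic measures and invoking Hurwitz's theorem, and to deduce cases (2)--(5) and (7)--(8) from these via a parity analysis of $g$ on the real line.

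For the proof of (1) and (6), I would approximate $\nu^+$ weakly by $\nu^+_N = \sum_{k=1}^N w_k \delta_{x_k}$ with $\sum_k w_k = \nu^+[A]$, all $x_k \in A$ distinct, and at least one atom placed in each component $A_i$; analogously for $\nu^-_M$ on $B$, with the additional constraint $\sum_k w_k = \sum_j w'_j$ in case (6). Clearing denominators, the associated Cauchy transform $g_{N,M}$ is a rational function whose numerator has degree exactly $N+M-1$ in case (1) (leading coefficient $\nu^+[A]-\nu^-[B]>0$) and degree at most $N+M-2$ in case (6). On each $A_i$ with atoms $x_{i,1} < \cdots < x_{i,N_i}$, $g_{N,M}$ is continuous on $(x_{i,k},x_{i,k+1}) \subset A_i$ (no other atoms lie in this gap, by disjointness of the components) and passes from $+\infty$ to $-\infty$, so contributes at least one real zero in $A_i$; summing over $i,k$ yields at least $N-p$ zeros in $A$ and at least $M-q$ in $B$, leaving at most $p+q-1$ (resp.\ $p+q-2$) zeros of $g_{N,M}$ in $\C \setminus (A \cup B)$. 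Weak convergence together with uniform boundedness of $x \mapsto (w-x)^{-1}$ on $A \cup B$ for $w$ in compact subsets of $\C \setminus (A \cup B)$ gives $g_{N,M} \to g$ uniformly on such compacts, and Hurwitz's theorem transfers the bound to $g$.

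The remaining cases follow from a parity argument. Fix a component $I$ of $\R \setminus (A \cup B)$: direct computation from (\ref{eqf'Gen}) shows $g(t) \to +\infty$ as $t$ approaches an $A$-endpoint of $I$ from within, $g(t) \to -\infty$ at a $B$-endpoint, and $g(t) \to 0^\pm$ with a definite sign as $t \to \pm\infty$ when $\nu^+[A] > \nu^-[B]$. Since a real zero of multiplicity $m$ produces a sign change of $g$ if and only if $m$ is odd, the parity of the total number of zeros of $g$ in $I$ (counted with multiplicity) equals the parity of the number of sign changes of $g$ on $I$, which is entirely determined by the endpoint behavior. Cases (2), (4), (7) correspond to forced odd parity and (3), (5), (8) to forced even parity. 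Combining with the total bound from (1) or (6), which has the ``wrong'' parity in each of (2)--(3) and (7)--(8), yields the improved interval bounds. For (4)--(5), the hypothesis that $I$ contains at least $k$ roots combined with the wrong-parity constraint forces $I$ to contain at least $k+1$ roots, so $\C \setminus (A \cup B \cup I)$ contains at most $(p+q-1)-(k+1) = p+q-k-2$ roots.

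The principal technical obstacle is the discretization: the $N+M$ atoms must be chosen so that all are distinct and so that no interlacing zero coincides with an atom, ensuring that the rational numerator and denominator share no common root and hence that the zero count of $g_{N,M}$ equals the degree of its numerator. A generic small perturbation of atom positions suffices, and the interlacing zeros lie strictly between consecutive atoms of a given $A_i$, hence within $A_i$ since $A_i$ is closed.
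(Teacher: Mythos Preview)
Your approach to parts (1) and (6) is essentially identical to the paper's: reduce to atomic measures, count the ``interlacing'' zeros between consecutive atoms in each $A_i$ and $B_j$, and pass to the limit via Hurwitz/Rouch\'e. That part is fine.

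The gap is in your treatment of (2)--(5) and (7)--(8). You apply the parity argument directly to the general Cauchy transform $g$, asserting that ``direct computation shows $g(t)\to+\infty$ as $t$ approaches an $A$-endpoint of $I$ from within.'' This is false under the stated hypotheses. The assumption is only that the endpoints of each $A_i$ lie in $\supp(\nu^+)$, which permits the density of $\nu^+$ to vanish there. For example, take $A_1=[0,1]$ with $d\nu^+=\sqrt{x}\,dx$: then $0\in\supp(\nu^+)$, but
\[
\int_0^1 \frac{\sqrt{x}}{t-x}\,dx \;\longrightarrow\; -2 \qquad (t\uparrow 0),
\]
which is finite. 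Worse, if one adds further $A$-components to the left carrying enough mass, the limit $g(0^-)$ can be made positive, so the sign at the endpoint is not even determined in general. Hence your parity count of roots in $I$ does not go through for the limiting $g$.

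The paper avoids this by carrying out \emph{all} of (1)--(8) at the atomic level first, where the blow-up $g_n(t)\to\pm\infty$ at each atom is trivially true, and only then invoking Rouch\'e to transfer each bound to $g$. The transfer for the interval statements (2)--(5), (7)--(8) requires a small extra observation (complex roots of $g_n$ occurring in conjugate pairs, so the parity of the real roots in a symmetric neighbourhood of $I$ is preserved), but it is routine once the parity is established for $g_n$. The fix to your argument is therefore simply to move the parity step inside the approximation rather than applying it to $g$ directly.
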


\begin{proof}
We will first show:
\begin{enumerate}
\item[(a)]
The required results hold when $\nu$ is of the form,
\begin{equation*}
\nu^+ := \sum_{i=1}^M \nu_i^+ \; \d_{a_i}
\hspace{0.5cm} \text{and} \hspace{0.5cm}
\nu^- := \sum_{j=1}^N \nu_j^- \; \d_{b_j},
\end{equation*}
where $M \ge 1$, $N \ge 0$, $\{a_1,\ldots,a_M,b_1,\ldots,b_N\} \subset \R$ are
distinct, $\nu_i^+ > 0$ and $\nu_j^- > 0$ for all $i,j$, and each of
$\{ \{a_i \in A_1\},\ldots,\{a_i \in A_p\},\{b_j \in B_1\},\ldots,\{b_j \in B_q\} \}$
are non-empty.
\end{enumerate}

Next we consider the general case. For all $n\ge1$, define the measures,
\begin{equation*}
\nu_n^+ := \sum_{i=1}^{M_n} \nu_{i,n}^+ \; \d_{a_{i,n}}
\hspace{0.5cm} \text{and} \hspace{0.5cm}
\nu_n^- := \sum_{j=1}^{N_n} \nu_{j,n}^- \; \d_{b_{j,n}},
\end{equation*}
where $\{a_{1,n},\ldots,a_{M_n,n}\} \subset A$ and
$\{b_{1,n},\ldots,b_{N_n,n}\} \subset B$,
$\{a_{1,n},\ldots,a_{M_n,n},b_{1,n},\ldots,b_{N_n,n}\}$ are distinct, and
$\nu_{i,n}^+ > 0$ and $\nu_{j,n}^- > 0$ for all $i,j$. These are constructed
such that $\nu_n^+ \to \nu^+$ and $\nu_n^- \to \nu^-$ as $n \to \infty$, in the
sense of weak convergence. Also define,
\begin{equation}
\label{eqfn'Gen}
g_n(w) := \int_A \frac{\nu_n^+[dx]}{w-x} - \int_B \frac{\nu_n^-[dx]}{w-x},
\end{equation}
for all $n \ge 1$ and $w \in \C \setminus (A \cup B)$. We will show that:
\begin{enumerate}
\item[(b)]
Let $w_c \in \C \setminus (A \cup B)$ be a root of
$g$ of multiplicity $k\ge1$, and choose $\e>0$ such that
$B(w_c,2\e) \subset \C \setminus (A \cup B)$ and $w_c$ is the unique root
of $g$ in $B(w_c,2\e)$. (This is always possible since the roots of an
analytic function are isolated.) Then, for all $n$ sufficiently large,
$g_n$ has $k$ roots in $B(w_c,\e)$, counting multiplicities.
\end{enumerate}
Finally note, part (a) implies that the required results hold for each $g_n$.
Part (b) then easily gives the required results for $g$.

Consider (a). Note,
\begin{equation*}
g(w) = \sum_{i=1}^M \frac{\nu_i^+}{w-a_i} - \sum_{j=1}^N \frac{\nu_j^-}{w-b_j}
= \left( \prod_{k=1}^M \frac1{w-a_k} \right)
\left( \prod_{l=1}^N \frac1{w-b_l} \right) P(w),
\end{equation*}
where $P$ is the polynomial,
\begin{equation*}
P(w) = \sum_{i=1}^M \nu_i^+ \left( \prod_{k \neq i} (w - a_k) \right)
\left( \prod_l (w - b_l) \right)
- \sum_{j=1}^N \nu_j^- \left( \prod_k (w - a_k) \right)
\left( \prod_{l \neq j} (w - b_l) \right).
\end{equation*}
Note that $\nu^+[A] = \nu_1^+ + \cdots + \nu_M^+$ and
$\nu^-[B] = \nu_1^- + \cdots + \nu_N^-$. Thus $P$ has degree $M+N-1$ whenever
$\nu^+[A] > \nu^-[B]$, and degree at most $M+N-2$ whenever $\nu^+[A] = \nu^-[B]$.
Also note that, since $\{a_1,\ldots,a_M,b_1,\ldots,b_N\}$ are distinct, $P$
has no roots in $\{a_1,\ldots,a_M,b_1,\ldots,b_N\}$. Therefore the roots of
$P$ and $g$ coincide, and so $g$ has $M+N-1$ roots in
$\C \setminus \{a_1,\ldots,a_M,b_1,\ldots,b_N\}$ whenever $\nu^+[A] > \nu^-[B]$,
counting multiplicities, and at most $M+N-2$ roots whenever $\nu^+[A] = \nu^-[B]$.

Alternatively note that we can write,
\begin{equation*}
g(w) = \sum_{k=1}^p \sum_{a_i \in A_k} \frac{\nu_i^+}{w-a_i}
- \sum_{l=1}^q \sum_{b_j \in B_l} \frac{\nu_j^-}{w-b_j}.
\end{equation*}
Note that each of $\{ \{a_i \in A_1\},\ldots,\{a_i \in A_p\} \}$ contains at
least $2$ points since the end-points of each interval of
$A = A_1 \cup \cdots \cup A_p$ are contained in
$\supp(\nu^+) = \{a_1,\ldots,a_M\}$. Also note that, since the interiors of
each $A_i$ and $B_j$ are disjoint and $\{a_1,\ldots,a_M,b_1,\ldots,b_N\}$ are
distinct, there are, for example, no elements from $\{b_1,\ldots,b_N\}$ between
elements of $\{a_i \in A_1\}$. Also, letting $a_m > a_l$ denote any two
neighbouring elements of $\{a_i \in A_1\}$,
\begin{equation}
\label{eqlemRootsg1}
\lim_{w \in \R, w \uparrow a_m} g(w) = - \infty
\hspace{0.5cm} \text{and} \hspace{0.5cm}
\lim_{w \in \R, w \downarrow a_l} g(w) = + \infty.
\end{equation}
Thus $g$ has at least $1$ root in $(a_l,a_m)$, counting multiplicities, at
least $|\{a_i \in A_1\}|-1$ roots in $A_1 \setminus \{a_1,\ldots,a_M\}$, and
at least $M-p$ roots in $A \setminus \{a_1,\ldots,a_M\}$ (recall
$\{a_1,\ldots,a_M\} = \supp(\nu^+) \subset A = A_1 \cup \cdots \cup A_p$).
Similarly $g$ has at least $N-q$ roots in $B \setminus \{b_1,\ldots,b_N\}$.
Thus $g$ has at most $M+N-1-(M-p+N-q) = p+q-1$ roots in $\C \setminus (A \cup B)$
whenever $\nu^+[A] > \nu^-[B]$, and at most $p+q-2$ roots whenever
$\nu^+[A] = \nu^-[B]$. We have thus shown (1) and (6) for these types of measures.

Now let $I \subset \R \setminus (A \cup B)$ be an open interval which contains
at least $k\ge0$ roots of $g$, counting multiplicities, with
$\{\inf I,\sup I\} \subset A$. First note that $I = (a_l,a_m)$ for some $a_l > a_m$,
since the end-points of each interval of $A = A_1 \cup \cdots \cup A_p$ are
contained in $\supp(\nu^+) = \{a_1,\ldots,a_M\}$. Equation (\ref{eqlemRootsg1})
thus implies that $g$ has an odd number of roots in $I$, counting multiplicities.
It thus follows from (1) and (6) that $g$ has at most $p+q-2$ roots in $I$ whenever
$\nu^+[A] > \nu^-[B]$ and $p+q-1$ is even, and $g$ has at most $p+q-3$ roots in
$I$ whenever $\nu^+[A] = \nu^-[B]$ and $p+q-2$ is even. Also, whenever $k$ is
even and $\nu^+[A] > \nu^-[B]$, (1) implies that $g$ has at most
$p+q-1-(k+1) = p+q-k-2$ roots in $\C \setminus (A \cup B \cup I)$. The other
possibilities of (2,3,4,5,7,8) follow in similar way. This gives part (a).

Consider (b). This follows from Rouch\'{e}'s Theorem if we can show that
$|g(w)| > |g(w) - g_n(w)|$ for all $n$ sufficiently large and
$w \in \partial B(w_c,\e)$, the boundary of $B(w_c,\e)$. We shall show:
\begin{equation*}
\inf_{w \in \partial B(w_c,\e)} |g(w)| > 0
\hspace{0.5cm} \mbox{and} \hspace{0.5cm}
\lim_{n \to \infty} \sup_{w \in \bar{B}(w_c,\e)} | g(w) - g_n(w) | = 0.
\end{equation*}
The first part follows from the extreme value theorem, since $g$ is
analytic in $B(w_c,2\e)$. Suppose the second part does not hold. Then
there exists an $\xi > 0$ for which, for all $n\ge1$, there exists
some $p_n \ge n$ and $z_n \in \bar{B}(w_c,\e)$ with
$\xi < | g(z_n) - g_{p_n}(z_n) |$. Choosing $\{z_n\}_{n\ge1}$ to be
convergent, and denoting the limit by $z_c$,
\begin{equation*}
\xi < | g(z_n) - g(z_c) | + | g(z_c) - g_{p_n}(z_c) | + | g_{p_n}(z_c) - g_{p_n}(z_n) |,
\end{equation*}
for all $n$. Then, since $B(w_c,2\e) \subset \C \setminus (A \cup B)$
and $z_n,z_c \in \bar{B}(w_c,\e)$,
equation (\ref{eqfn'Gen}) gives
\begin{equation*}
\xi < | g(z_n) - g(z_c) | + | g(z_c) - g_{p_n}(z_c) |
+ \frac1{\e^2} (\nu_{p_n}^+ [A] + \nu_{p_n}^- [B]) |z_n - z_c|,
\end{equation*}
for all $n$. Recall that $g$ is analytic in $B(w_c,2\e)$ and
$z_n \to z_c \in \bar{B}(w_c,\e)$ as $n \to \infty$. Thus the first
term on the right hand side converges to $0$ as $n \to \infty$. Also,
recall that $\nu_n^+ \to \nu^+$ and $\nu_n^- \to \nu^-$ as $n \to \infty$,
in the sense of weak convergence. Thus the third term on the right
hand side converges to $0$ as $n \to \infty$. Finally, weak convergence
and equations (\ref{eqf'Gen}) and (\ref{eqfn'Gen}) imply that the second
term on the right hand side converges to $0$ as $n \to \infty$. The
inequality thus gives a contradiction, and so the second part must hold,
as required.
\end{proof}

We now consider the behaviour of the roots of $f'$. Recall (see equation
(\ref{eqf'})) that $(\chi,\eta) \in [a,b] \times [0,1]$,
$b \ge \chi \ge \chi+\eta-1 \ge a$, and
\begin{equation*}
f'(w)
= \int_\chi^b \frac{\mu[dx]}{w-x}
- \int_{\chi+\eta-1}^\chi \frac{(\l-\mu)[dx]}{w-x}
+ \int_a^{\chi+\eta-1} \frac{\mu[dx]}{w-x},
\end{equation*}
for all $w \in \C \setminus S$, where $S := S_1 \cup S_2 \cup S_3$ and
\begin{equation}
\label{eqSupp}
S_1 := \supp(\mu |_{[\chi,b]}),
\hspace{0.5cm}
S_2 := \supp((\l-\mu) |_{[\chi+\eta-1,\chi]}),
\hspace{0.5cm}
S_3 := \supp(\mu |_{[a,\chi+\eta-1]}).
\end{equation}
Note, hypotheses \ref{hypWeakConv} and \ref{hypunrnsnvn} imply that the
measures are non-negative. Also $\{a,b\} \subset \supp(\mu)$ and $b-a>1$. Thus
either $S_1$ or $S_3$ is non-empty, or both. Also $S_2$ is non-empty
whenever $\eta=0$. This gives
5 possibilities:
\begin{itemize}
\item
$\eta > 0$ and $S_1,S_2,S_3$ are non-empty.
\item
$\eta = 0$ and $S_1,S_2,S_3$ are non-empty.
\item
$\eta > 0$, $S_2$ is non-empty and either $S_1$ or $S_3$ is empty.
\item
$\eta = 0$, $S_2$ is non-empty and either $S_1$ or $S_3$ is empty.
\item
$\eta > 0$ and $S_2$ is empty.
\end{itemize}
Next write the domain of $f'$ as the disjoint union,
\begin{equation}
\label{eqf'domain2}
\C \setminus S = (\C \setminus \R) \cup J \cup K,
\end{equation}
where $J := \cup_{j=1}^4 J_j$, $K := \R \setminus (S \cup J)$, and
\begin{itemize}
\item
$J_1 := (\sup S, + \infty)$.
\item
$J_2 := (-\infty, \inf S)$.
\item
$J_3 := (\sup S_2,\inf S_1)$ whenever $S_1,S_2$ are non-empty and $\inf S_1 > \sup S_2$.
Otherwise $J_3 := \emptyset$.
\item
$J_4 := (\sup S_3,\inf S_2)$ whenever $S_2,S_3$ are non-empty and $\inf S_2 > \sup S_3$.
Otherwise $J_4 := \emptyset$.
\end{itemize}
Note that $K \subset \R$ is open, and so it can be partitioned as
$K = \cup_{k=1}^\infty K_k$, where $\{K_1, K_2,\ldots\}$ is a set of
pairwise disjoint open intervals. This partition is unique up to order, and is
either empty, finite, or countable. An example domain of $f'$, with the
above intervals clearly labelled, is given in figure \ref{figSupports}.

The behaviour of the roots of $f'$ for each of the 5 possibilities is then:
\begin{thm}
\label{thmf'}
Assume $S_1,S_2,S_3$ are non-empty. Then $b > \chi > \chi+\eta-1 > a$,
$J_1 = (b,+\infty)$, $J_2 = (-\infty,a)$, $\chi \in J_3$ whenever
$\chi \in \C \setminus S$, and $\chi+\eta-1 \in J_4$ whenever
$\chi+\eta-1 \in \C \setminus S$. Moreover whenever $\eta>0$, $f'$ has,
counting multiplicities,
\begin{enumerate}
\item
at most $2$ roots in $(\C \setminus \R) \cup J$.
\item
a root in at most one of $\{\C \setminus \R, J_1, J_2, J_3, J_4\}$.
\item
at most $3$ roots in any of $\{K_1,K_2,\ldots\}$.
\item
at least $2$ roots in at most one of $\{K_1,K_2,\ldots\}$.
\item
$0$ roots in $(\C \setminus \R) \cup J$ whenever
$f'$ has at least $2$ roots in one of $\{K_1,K_2,\ldots\}$.
\end{enumerate}
Also whenever $\eta = 0$, $f'$ has, counting multiplicities,
\begin{enumerate}
\setcounter{enumi}{5}
\item
at most $1$ root in $(\C \setminus \R) \cup J_1 \cup J_2$.
\item
$0$ roots in $J_3 \cup J_4$.
\item
at most $1$ root in each of $\{K_1,K_2,\ldots\}$.
\end{enumerate}

Now assume that $S_2,S_3$ are non-empty and $S_1$ is empty (similar considerations
apply when $S_1,S_2$ are non-empty and $S_3$ is empty). Then $b = \chi > \chi+\eta-1 > a$,
$J_1 = (\sup S_2,+\infty)$, $J_2 := (-\infty,a)$, $\chi = b \in J_1$, $J_3 = \emptyset$, and
$\chi+\eta-1 \in J_4$ whenever $\chi+\eta-1 \in \C \setminus S$. Moreover whenever
$\eta>0$, $f'$ has, counting multiplicities,
\begin{enumerate}
\setcounter{enumi}{8}
\item
at most $1$ root in $(\C \setminus \R) \cup J$.
\item
at most $1$ root in each of $\{K_1,K_2,\ldots\}$.
\end{enumerate}
Also whenever $\eta = 0$, $f'$ has, counting multiplicities,
\begin{enumerate}
\setcounter{enumi}{10}
\item
$0$ roots in $(\C \setminus \R) \cup J$.
\item
at most $1$ root in each of $\{K_1,K_2,\ldots\}$.
\end{enumerate}

Finally, assume that $\eta>0$ and $S_2$ empty. Then $J_3 = J_4 = \emptyset$. Also, whenever
$\chi \in \C \setminus S$ and $\chi+\eta-1 \in \C \setminus S$, $\chi$ and $\chi+\eta-1$
are both contained in the same element of $\{J_1,J_2,K_1,K_2,\ldots\}$. Also,
(11) and (12) hold. The case $\eta=0$ and $S_2$ empty never occurs.
\end{thm}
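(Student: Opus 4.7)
The plan is to derive every clause of the theorem from Lemma \ref{lemRootsg}, applied to $f'$ viewed as the Cauchy transform of the signed measure $\nu := \nu^+ - \nu^-$ with $\nu^+ := \mu|_{[\chi,b]} + \mu|_{[a,\chi+\eta-1]}$ supported on $A := S_1 \cup S_3$ and $\nu^- := (\l-\mu)|_{[\chi+\eta-1,\chi]}$ supported on $B := S_2$. First I would compute $\nu^+[A]-\nu^-[B]$: using $\mu[a,b]=1$ this collapses to $\eta$, so parts (1)--(5) of Lemma \ref{lemRootsg} apply when $\eta>0$ and parts (6)--(8) apply when $\eta=0$. Writing $p=p_1+p_3$ and $q=q_2$ for the numbers of intervals in $A$ and $B$, I would note that the internal complementary intervals of $A\cup B$ are exactly the $K_j$'s together with whichever of $J_3,J_4$ are non-empty.

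Next I would pair each Lemma \ref{lemRootsg} upper bound with a matching lower bound coming from boundary sign analysis. The standard local asymptotic of a Cauchy transform gives $f'(w)\to-\infty$ as $w$ approaches a point of $A$ from below and $+\infty$ from above, with signs reversed at points of $B$. Consequently each internal $K_j$ (with both endpoints in $A$, or both in $B$) has $f'$ passing from $\pm\infty$ to $\mp\infty$ and therefore contains an odd, hence $\geq 1$, number of roots; each of $J_3$ and $J_4$ (one endpoint in $A$, one in $B$) has matching infinite limits and therefore an even number; and when $\eta>0$ the asymptotic $f'(w)\sim\eta/w$ at $\pm\infty$ makes the count in $J_1,J_2$ even as well. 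When $S_1$ (or symmetrically $S_3$) is empty one of $J_1,J_2$ instead acquires mismatched limits and is forced to contain an odd (and $\geq 1$) number of roots.

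Granted these two ingredients each numerical claim is a short arithmetic deduction. For instance, in the first regime ($\eta>0$, all $S_i$ non-empty) the $p+q-3$ intervals $K_j$ absorb at least $p+q-3$ of the $\leq p+q-1$ total roots, leaving at most $2$ roots in $(\C\setminus\R)\cup J$, which is (1); the parity argument then concentrates those two in a single region of $\{\C\setminus\R,J_1,J_2,J_3,J_4\}$, giving (2); and the same counting rules out $\geq 4$ roots in one $K_j$ or $\geq 2$ in two distinct $K_j$'s, giving (3)--(5). When $\eta=0$ the tighter bound $p+q-2$ of Lemma \ref{lemRootsg}(6) is already saturated by the lower bound in $K$, forcing zero roots in $\C\setminus\R$ and in $J_3\cup J_4$ by parity, at most one elsewhere, and at most one per $K_j$; this gives (6)--(8). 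The cases where some $S_i$ is empty reduce to the same template with appropriate modifications of $p,q$ and of the set of $J$-intervals, giving (9)--(12).

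The main obstacle is organisational: five regimes must be tracked in parallel. The subtlest assertion is the part of the final regime asserting that when $\eta > 0$ and $S_2$ is empty the points $\chi$ and $\chi+\eta-1$ lie in the same component of $\{J_1,J_2,K_1,K_2,\ldots\}$ whenever both are in $\C\setminus S$. This holds because $S_2=\emptyset$ forces $\mu$ to have density $1$ almost everywhere on $(\chi+\eta-1,\chi)$, placing that open interval entirely in $\supp(\mu)$ but outside $S=S_1\cup S_3$; adjoining the neighbourhoods of $\chi$ and $\chi+\eta-1$ on which $\mu$ has zero density (which exist by the hypothesis $\chi,\chi+\eta-1\in\C\setminus S$) then produces a single component of $\R\setminus S$ containing both points. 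Finally, the impossibility of ``$\eta=0$ with $S_2$ empty'' follows immediately from Hypothesis \ref{hypWeakConv}: it would force $\mu$ to have density $1$ on an interval of length exactly $1$, incompatible with $b-a>1$ combined with $\{a,b\}\subset\supp(\mu)$.
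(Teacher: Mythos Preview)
Your overall strategy---view $f'$ as the Cauchy transform of $\nu^+-\nu^-$ and invoke Lemma~\ref{lemRootsg}---is the same as the paper's, but the way you apply the lemma differs in an important respect, and your version has a gap.

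The paper does \emph{not} fix a single decomposition $A=S_1\cup S_3$, $B=S_2$ and then supplement the lemma with a separate sign analysis. Instead it applies Lemma~\ref{lemRootsg} repeatedly with \emph{different coarsenings} of $A$ and $B$ (illustrated in figure~\ref{figSupports}): for claim~(1) it takes $A$ to be two intervals (the convex hulls of $S_1$ and of $S_3$) and $B$ one interval, so $p=2$, $q=1$ and $\C\setminus(A\cup B)=(\C\setminus\R)\cup J$ exactly; for claim~(3) applied to a particular $K_j$ it refines just enough to expose that one $K_j$, getting $p=3$, $q=1$; and so on. Every application uses a small, finite $p$ and $q$, and every conclusion is read off directly from the lemma with no additional root-counting.

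Your route instead takes the finest decomposition and then appeals to a boundary sign argument (``$f'(w)\to\mp\infty$ as $w$ approaches a point of $A$ from below/above'') to force at least one root into each $K_j$. That claim is false in general: the Cauchy transform of a measure need not diverge at a boundary point of its support. For instance if $\mu$ has density vanishing like $(x-c)^\alpha$, $\alpha>0$, at an endpoint $c$ of a component of $S_1$, then $f'$ stays bounded as $w\to c$ from outside, and no sign change is forced. The proof of Lemma~\ref{lemRootsg} gets around exactly this by doing the sign analysis only for discrete approximating measures (where atoms genuinely produce poles) and then passing to the limit via Rouch\'e; you cannot recycle that argument at the level of $f'$ itself. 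A secondary issue is that $S_1,S_2,S_3$ can have infinitely many components, so your $p=p_1+p_3$ and $q=q_2$ need not be finite, and the lemma as stated does not apply.

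Both problems disappear if you adopt the paper's device of varying the coarsening: for each claim choose $A,B$ to be finite unions of intervals containing the supports, with just enough components to isolate the region you care about, and read the bound straight off Lemma~\ref{lemRootsg}. Your treatment of the final paragraph (the $S_2=\emptyset$ case and the impossibility of $\eta=0$ there) is essentially correct, though the phrase ``neighbourhoods of $\chi$ and $\chi+\eta-1$ on which $\mu$ has zero density'' is inaccurate---$\mu$ has density $1$ immediately to the left of $\chi$; what you actually need (and what suffices) is simply that the open interval $(\chi+\eta-1,\chi)$ is disjoint from $S_1\cup S_3$ by construction, so once the endpoints are also outside $S$ the closed interval lies in a single component of $\R\setminus S$.
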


\begin{proof}
First suppose that $S_1,S_2,S_3$ are non-empty. Then, since $\{a,b\} \in \supp(\mu)$,
equation (\ref{eqSupp}) gives $b > \chi > \chi+\eta-1 > a$. Also equation
(\ref{eqf'domain2}) gives $J_1 = (b,+\infty)$, $J_2 := (-\infty,a)$,
$\chi \in J_3$ whenever $\chi \in \C \setminus S$, and $\chi+\eta-1 \in J_4$ whenever
$\chi+\eta-1 \in \C \setminus S$.

Consider the situation depicted on the top of figure \ref{figSupports}. Taking
$\nu^+ := \mu |_{[\chi,b]} + \mu |_{[a,\chi+\eta-1]}$ and
$\nu^- := (\l-\mu) |_{[\chi+\eta-1,\chi]}$, $f'$ satisfies the requirements of
equation (\ref{eqf'Gen}) for any choice of $A = \cup_i A_i$ and $B = \cup_j B_j$
shown in figure \ref{figSupports}. Finally note that $\nu^+[A] > \nu^-[B]$
whenever $\eta > 0$ and $\nu^+[A] = \nu^-[B]$ whenever $\eta = 0$.

Take $\eta > 0$ and consider (1). Taking the first choice of $A$ and $B$ in figure
\ref{figSupports}, part (1) of lemma \ref{lemRootsg} implies that $f'$ has at most
$2$ roots in  $\C \setminus (A \cup B) = (\C \setminus \R) \cup J$ (here $p=2$
and $q=1$), as required. Consider (3). Taking the second choice of $A$ and $B$,
part (1) of lemma \ref{lemRootsg} implies that $f'$ has at most $3$ roots in
$\C \setminus (A \cup B) = (\C \setminus \R) \cup J \cup K_1$ (here
$p=3$ and $q=1$). Thus $f'$ has at most $3$ roots in $K_1$, and similarly for $K_2$,
as required.

Consider (2). First note that non-real roots occur in complex conjugate pairs. Thus,
whenever $f'$ has a root in $\C \setminus \R$, part (1) implies that $f'$ has exactly
$2$ roots in $\C \setminus \R$, and $0$ roots in $J$. Alternatively suppose that $f'$
has a root in $J_1 = (b,+\infty)$. Then, taking $I := J_1$ and the first choice of
$A$ and $B$ in figure \ref{figSupports}, part (5) of lemma \ref{lemRootsg} implies
that $f'$ has $0$ roots in
$\C \setminus (A \cup B \cup I) = (\C \setminus \R) \cup J_2 \cup J_3 \cup J_4$
(here $\inf I = b \in A$, $\sup I = + \infty$, $p=2, q=1$ and $k=1$). The other
possibilities of part (2) follow in a similar way.

Consider (4). Suppose that $f'$ has at least $2$ roots in $K_1$. Taking
$I=K_1$ and third choice of $A$ and $B$, part (4) of lemma \ref{lemRootsg} implies
that $f'$ has at most $1$ root in $\C \setminus (A \cup B \cup I)
= (\C \setminus \R) \cup J \cup K_2$ (here
$\{\inf I,\sup I\} \subset A$, $p=4$, $q=1$ and $k=2$). Thus $f'$ has at most $1$
root in $K_2$ whenever $f'$ has at least $2$ roots in $K_1$, and vice-versa, as
required.

Consider (5). Suppose that $f'$ has at least $2$ roots in $K_1$. Taking
$I=K_1$ and the second choice of $A$ and $B$, part (4) of lemma \ref{lemRootsg}
implies that $f'$ has $0$ roots in $\C \setminus (A \cup B \cup I) =
(\C \setminus \R) \cup J$ (here $\{\inf I,\sup I\} \subset A$,
$p=3$, $q=1$ and $k=2$). Similarly $f'$ has $0$ roots in
$(\C \setminus \R) \cup J$ whenever $K_2$ contains at least $2$
roots, as required.

Now take $\eta = 0$, i.e. $\nu^+[A] = \nu^-[B]$, and consider (6). Taking the
first choice of $A$ and $B$, part (6) of lemma \ref{lemRootsg} implies that
$f'$ has at most $1$ root in $\C \setminus (A \cup B) = (\C \setminus \R) \cup J$
(here $p=2$ and $q=1$), as required. Consider (7). Taking $I = J_3$ and the
first choice of $A$ and $B$, part (8) of lemma \ref{lemRootsg} implies that
$f'$ has $0$ roots in $I = J_3$ (here $\inf I \subset B$, $\sup I \subset A$,
$p=2$ and $q=1$). Similarly $f'$ has $0$ roots in $J_4$, as required.
Finally, consider (8). Taking $I = K_1$ and the
second choice of $A$ and $B$, part (7) of lemma \ref{lemRootsg} implies that
$f'$ has at most $1$ root in $I = K_1$ (here $\{\inf I, \sup I\} \subset A$,
$p=3$ and $q=1$). Similarly for $K_2$, as required.

We have thus shown the required results for the example depicted at the top of
figure \ref{figSupports}. The result for the general situation when $S_1,S_2,S_3$
are non-empty follows using similar constructions of $A = \cup_i A_i$ and
$B = \cup_j B_j$ to those used above. The other cases of the lemma follow
from similar considerations.
\end{proof}

\begin{figure}[t]
\centering
\begin{tikzpicture}

\draw [dotted] (0,0) --++(0,-4.5);
\draw [dotted] (1.5,0) --++(0,-4.5);
\draw [dotted] (2.5,0) --++(0,-4.5);
\draw [dotted] (4,0) --++(0,-4.5);
\draw [dotted] (5,0) --++(0,-4.5);
\draw [dotted] (7,0) --++(0,-4.5);
\draw [dotted] (8,0) --++(0,-4.5);
\draw [dotted] (9.5,0) --++(0,-4.5);
\draw [dotted] (10.5,0) --++(0,-4.5);
\draw [dotted] (12,0) --++(0,-4.5);

\draw [dotted] (-1,0) --++(14,0);
\draw (0,0) --++(1.5,0);
\draw (2.5,0) --++(1.5,0);
\draw (5,0) --++(2,0);
\draw (8,0) --++(1.5,0);
\draw (10.5,0) --++(1.5,0);

\draw (-.75,-.3) node {$J_2$};
\draw (0,.2) node {$a$};
\draw (.75,-.3) node {$S_3$};
\draw (2,-.3) node {$K_2$};
\draw (3.25,-.3) node {$S_3$};
\draw (4.5,-.3) node {$J_4$};
\draw (6,-.3) node {$S_2$};
\draw (7.5,-.3) node {$J_3$};
\draw (8.75,-.3) node {$S_1$};
\draw (10,-.3) node {$K_1$};
\draw (11.25,-.3) node {$S_1$};
\draw (12,.2) node {$b$};
\draw (12.75,-.3) node {$J_1$};

\draw [dotted] (-1,-1.5) --++(14,0);
\draw (0,-1.5) --++(4,0);
\draw (5,-1.5) --++(2,0);
\draw (8,-1.5) --++(4,0);

\draw (2,-1.8) node {$A_2$};
\draw (6,-1.8) node {$B_1$};
\draw (10,-1.8) node {$A_1$};

\draw [dotted] (-1,-3) --++(14,0);
\draw (0,-3) --++(4,0);
\draw (5,-3) --++(2,0);
\draw (8,-3) --++(1.5,0);
\draw (10.5,-3) --++(1.5,0);

\draw (2,-3.3) node {$A_3$};
\draw (6,-3.3) node {$B_1$};
\draw (8.75,-3.3) node {$A_2$};
\draw (11.25,-3.3) node {$A_1$};

\draw [dotted] (-1,-4.5) --++(14,0);
\draw (0,-4.5) --++(1.5,0);
\draw (2.5,-4.5) --++(1.5,0);
\draw (5,-4.5) --++(2,0);
\draw (8,-4.5) --++(1.5,0);
\draw (10.5,-4.5) --++(1.5,0);

\draw (.75,-4.8) node {$A_4$};
\draw (3.25,-4.8) node {$A_3$};
\draw (6,-4.8) node {$B_1$};
\draw (8.75,-4.8) node {$A_2$};
\draw (11.25,-4.8) node {$A_1$};

\end{tikzpicture}
\caption{A test case of $S = S_1 \cup S_2 \cup S_3$, with the intervals
given in equation (\ref{eqf'domain2}), and various
definitions of the sets $A := \cup_i A_i$ and $B := \cup_j B_j$.
Solid intervals are closed, and dotted intervals are open.}
\label{figSupports}
\end{figure}
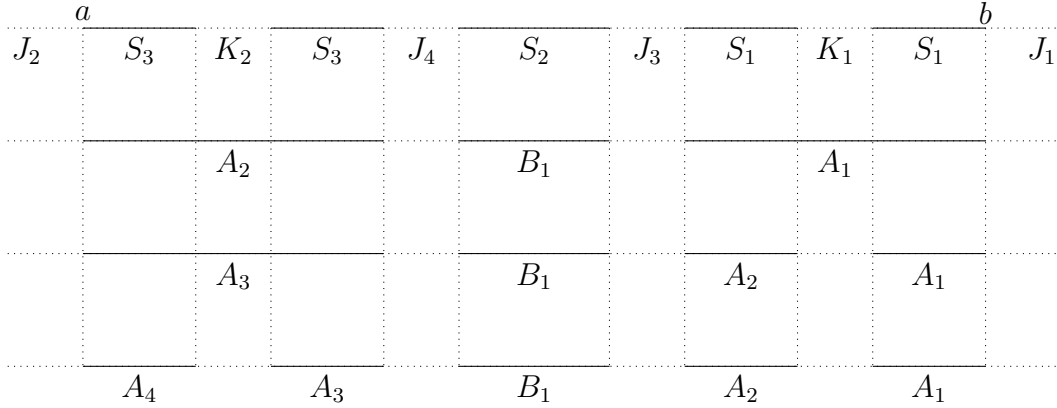

Recall definitions \ref{defLiq} and \ref{defEdge} of $\LL$ and
$\EE = \EE_\mu \cup \EE_{\l-\mu} \cup \EE_0 \cup \EE_1 \cup \EE_2$:
\begin{itemize}
\item
$\LL$ is the set of all $(\chi,\eta) \in [a,b] \times [0,1]$
for which $b \ge \chi \ge \chi+\eta-1 \ge a$ and $f'$ has non-real roots.
\item
$\EE_\mu$ is the set of all $(\chi,\eta)$
for which $f'$ has a repeated root in $\R \setminus [\chi+\eta-1,\chi]$.
\item
$\EE_{\l-\mu}$ is the set of all $(\chi,\eta)$ for which $f'$ has a
repeated root in $(\chi+\eta-1,\chi)$.
\item
$\EE_0$ is the set of all $(\chi,\eta)$ for which $\eta=1$ and $f'$ has a root
at $\chi \; (=\chi+\eta-1)$.
\item
$\EE_1$ is the set of all $(\chi,\eta)$ for which $\eta<1$ and $f'$ has a root at $\chi$.
\item
$\EE_2$ is the set of all $(\chi,\eta)$ for which $\eta<1$ and $f'$ has a root
at $\chi+\eta-1$.
\end{itemize}
We end this section by using theorem \ref{thmf'} to show the following:

\begin{cor}
\label{corf'}
Using the notation of theorem \ref{thmf'},
\begin{enumerate}
\item[(a)]
$S_1$, $S_2$ and $S_3$ are all non-empty whenever
$(\chi,\eta) \in \LL \cup \EE_\mu \cup \EE_{\l-\mu}$.
\item[(b)]
$\eta>0$ whenever $(\chi,\eta) \in \LL \cup \EE$.
\item[(c)]
$\{\LL, \EE_\mu, \EE_{\l-\mu}, \EE_0, \EE_1, \EE_2 \}$
is pairwise disjoint.
\end{enumerate}
\end{cor}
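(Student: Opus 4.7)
The proof is a case analysis built on theorem \ref{thmf'}. I partition cases by which of $S_1, S_2, S_3$ are non-empty and by whether $\eta > 0$ or $\eta = 0$; the enumerated clauses (1)--(12) of the theorem then rule out every incompatible configuration.

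For part (a), the essence is that $\LL$ requires a complex conjugate pair of non-real roots, while $\EE_\mu$ and $\EE_{\l-\mu}$ require a real root of multiplicity at least $2$. Clauses (6), (9), (11) each bound the number of roots in $(\C \setminus \R) \cup J_1 \cup J_2$ by $1$, forbidding a conjugate pair; clauses (8), (10), (12) cap the multiplicity in each $K_i$ at $1$; and clause (7) forbids roots in $J_3 \cup J_4$ in the $\eta = 0$ case. Combined, these rule out $\LL, \EE_\mu, \EE_{\l-\mu}$ in every scenario other than $S_1, S_2, S_3$ all non-empty with $\eta > 0$, and this simultaneously yields the $\LL \cup \EE_\mu \cup \EE_{\l-\mu}$ portion of (b).

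Part (b) is completed as follows. The case $(\chi, \eta) \in \EE_0$ is immediate since $\eta = 1 > 0$ by definition. For $(\chi, \eta) \in \EE_1$ with $\eta = 0$, the final assertion of theorem \ref{thmf'} forces $S_2$ non-empty; if $S_1, S_2, S_3$ are all non-empty then $\chi \in J_3$ and clause (7) forbids a root there, while if $S_1$ or $S_3$ is empty, clause (11) forbids roots in the element of $J$ containing $\chi$. The same dichotomy handles $\EE_2$ via $\chi + \eta - 1 \in J_4$.

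Part (c), the pairwise disjointness, is the technical heart and requires arguing each of the fifteen pairs separately, though many collapse to a single observation. The disjointness of $\LL$ from each $\EE_\bullet$ exploits clauses (2) and (5): a conjugate pair saturates (1) and by (2) forbids any other root in $J$, while (5) caps multiplicities in each $K_i$ at $1$ whenever non-real roots are present, eliminating a simultaneous double root or a root at $\chi \in J_3$ or $\chi + \eta - 1 \in J_4$. The disjointness of $\EE_0$ from the others reduces to $\eta = 1$, which empties $S_2$ and places us in clause (11). The pair $\EE_1 \cap \EE_2 = \emptyset$ follows because simultaneous roots at $\chi \in J_3$ and $\chi + \eta - 1 \in J_4$ violate clause (2). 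The most delicate sub-case is $\EE_\mu \cap \EE_{\l-\mu} = \emptyset$: it requires two distinct double roots, one in $\R \setminus [\chi+\eta-1,\chi]$ and one in $(\chi+\eta-1,\chi)$, and I would walk through the three sub-configurations (both in $J$; one in $J$ and one in a $K_i$; both in $K_i$'s) and derive contradictions from clauses (1), (4) and (5), noting in the last sub-configuration that (4) forces both roots into the same $K_i$ while the two containing intervals are separated by $\{\chi, \chi+\eta-1\}$. I expect this sub-case to be the main obstacle, since it is the only one whose failure requires combining multiple counting bounds rather than invoking a single clause.
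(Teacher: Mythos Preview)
Your approach is essentially the paper's own: a case analysis driven entirely by the clauses of theorem~\ref{thmf'}. Parts (a) and (b) are handled just as in the paper, and most of part (c) is as well. Two points deserve comment.

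First, your treatment of $\EE_1 \cap \EE_2$ has a gap. You write that simultaneous roots at $\chi \in J_3$ and $\chi+\eta-1 \in J_4$ violate clause (2), but the assertion $\chi \in J_3$, $\chi+\eta-1 \in J_4$ presupposes that $S_1, S_2, S_3$ are all non-empty. Part (a) does \emph{not} cover $\EE_1$ or $\EE_2$, so you cannot assume this. You must also treat the configurations where $S_2$ is empty (then $\chi$ and $\chi+\eta-1$ lie in the same element of $\{J_1, J_2, K_1, K_2, \ldots\}$, and two distinct roots there contradict (11) or (12)) and where exactly one of $S_1, S_3$ is empty (then both roots lie in $J$, contradicting (9)). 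The paper walks through each of these sub-cases explicitly.

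Second, you overestimate the difficulty of $\EE_\mu \cap \EE_{\l-\mu}$. Once part (a) places you in clauses (1)--(5), the paper dispatches this in a single line: clauses (1), (3), (4), (5) together imply that $f'$ has at most one real repeated root (two repeated roots would require either four roots in $(\C\setminus\R)\cup J$, contradicting (1); or four in one $K_i$, contradicting (3); or two $K_i$'s each with at least two roots, contradicting (4); or a repeated root in $J$ together with one in some $K_i$, contradicting (5)). Your geometric observation that the two double roots are separated by $\{\chi, \chi+\eta-1\}$ is correct but unnecessary --- clause (3) already blocks the same-$K_i$ sub-case directly. So $\EE_1 \cap \EE_2$, not $\EE_\mu \cap \EE_{\l-\mu}$, is in fact the case requiring the most bookkeeping.
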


\begin{proof}
Consider (a). First suppose that $(\chi,\eta) \in \LL$. Then $f'$ has
non-real roots, by definition. Therefore $f'$ has at least $2$ roots in
$\C \setminus \R$, counting multiplicities, since non-real roots occur
in complex conjugate pairs. This is only possible in properties (1-5)
of theorem \ref{thmf'}, and so $S_1,S_2,S_3$ are non-empty. Next suppose
that $(\chi,\eta) \in \EE_\mu \cup \EE_{\l-\mu}$. Then $f'$ has a
real-valued repeated root, by definition. Again, this is only possible
in properties (1-5), and so $S_1,S_2,S_3$ are non-empty.

Consider (b). Fix $\eta=0$. Then either properties (6-8) or (11-12) of
theorem \ref{thmf'} are satisfied. First suppose that $(\chi,\eta) \in \LL$.
Recall that $f'$ has at least $2$ roots in $\C \setminus \R$, counting
multiplicities. This leads to a contradiction,
since it violates either (6) or (11). Next suppose that
$(\chi,\eta) \in \EE_\mu \cup \EE_{\l-\mu}$. Then $f'$ has a repeated
root, which violates either (6-8) or (11-12). Next suppose that
$(\chi,\eta) \in \EE_0$. This gives a trivial contradiction, since
$\eta=1$ for all $(\chi,\eta) \in \EE_0$.

Finally, suppose that $(\chi,\eta) \in \EE_1 \cup \EE_2$. Then $f'$ has a
root in $\{\chi, \chi+\eta-1\}$. Recall that properties (6-8) are
satisfied whenever $S_1,S_2,S_3$ are non-empty. In this case
$b > \chi > \chi+\eta-1 > a$, $J_1 = (b,+\infty)$, $J_2 = (-\infty,a)$,
$\chi \in J_3$ whenever $\chi \in \C \setminus S$, and $\chi+\eta-1 \in J_4$
whenever $\chi+\eta-1 \in \C \setminus S$. Thus $f'$ has a root in
$\{\chi, \chi+\eta-1\} \subset J_3 \cup J_4$, which violates (7).
Also recall that properties (11-12) are satisfied when $S_2,S_3$
are non-empty and $S_1$ is empty, or alternatively, when $S_1,S_2$
are non-empty and $S_3$ is empty. In the first case $b = \chi > \chi+\eta-1 > a$,
$J_1 = (\sup S_2,+\infty)$, $J_2 = (-\infty,a)$, $\chi = b \in J_1$,
$J_3 = \emptyset$, and $\chi+\eta-1 \in J_4$ whenever
$\chi+\eta-1 \in \C \setminus S$. In the second case
$b > \chi > \chi+\eta-1 = a$, $J_1 = (b,+\infty)$, $J_2 = (-\infty,\inf S_2)$,
$\chi+\eta-1 = a \in J_2$, $\chi \in J_3$ whenever $\chi \in \C \setminus S$,
and $J_4 = \emptyset$. In either case $f'$ has a root in
$\{\chi,\chi+\eta-1\} \subset J$, which violates (11). (b) then
follows by contradiction.

Consider (c). Suppose first that $(\chi,\eta) \in \LL$. Recall that $f'$
has at least $2$ roots in $\C \setminus \R$, counting multiplicities, and
this is only possible when $\eta>0$ and $S_1,S_2,S_3$ are
non-empty, i.e., properties (1-5) of theorem \ref{thmf'}. Thus
$b > \chi > \chi+\eta-1 > a$, $J_1 = (b,+\infty)$, $J_2 = (-\infty,a)$,
$\chi \in J_3$ whenever $\chi \in \C \setminus S$, and $\chi+\eta-1 \in J_4$ whenever
$\chi+\eta-1 \in \C \setminus S$. Thus $\eta < 1$, and so $(\chi,\eta) \not\in \EE_0$.
Also, (1) and (5) show that $f'$ has exactly $2$ roots in $\C \setminus \R$,
counting multiplicities, $0$ roots in $J$, and at most $1$ root in each of
$\{K_1,K_2,\ldots\}$. Thus $f'$ has no real-valued repeated roots, and no roots in
$\{\chi,\chi+\eta-1\} \subset J$, and so
$(\chi,\eta) \not\in \EE_\mu \cup \EE_{\l-\mu} \cup \EE_1 \cup \EE_2$.

Next suppose that $(\chi,\eta) \in \EE_\mu$, i.e., $f'$ has a repeated root in
$\R \setminus [\chi+\eta-1,\chi]$. There are $2$ possibilities (see equation
(\ref{eqf'domain2})):
\begin{itemize}
\item
The repeated root is contained in $J \setminus [\chi+\eta-1,\chi]$.
\item
The repeated root is contained in $K \setminus [\chi+\eta-1,\chi]$.
\end{itemize}
Either case is possible only when $\eta>0$ and $S_1,S_2,S_3$ are non-empty, i.e.,
properties (1-5) of theorem \ref{thmf'}. Thus $b > \chi > \chi+\eta-1 > a$,
$J_1 = (b,+\infty)$, $J_2 = (-\infty,a)$,
$\chi \in J_3$ whenever $\chi \in \C \setminus S$, and $\chi+\eta-1 \in J_4$ whenever
$\chi+\eta-1 \in \C \setminus S$. Thus $\eta < 1$, and so $(\chi,\eta) \not\in \EE_0$.
Also, (1), (4) and (5) show that $f'$ has at most one real-valued repeated root, and so
$(\chi,\eta) \not\in \EE_{\l-\mu}$. Moreover, (5) shows that $f'$ has $0$ roots in
$J$ whenever the repeated root is contained in $K \setminus [\chi+\eta-1,\chi]$, and
(1) shows that $f'$ has no other roots in $J$ whenever the repeated root
is contained in $J \setminus [\chi+\eta-1,\chi]$. Thus $f'$ has $0$ roots in
$\{\chi,\chi+\eta-1\} \subset J$, and so $(\chi,\eta) \not\in \EE_1 \cup \EE_2$.

Next suppose that $(\chi,\eta) \in \EE_{\l-\mu}$, i.e., $f'$ has a repeated root in
$(\chi+\eta-1,\chi)$. Similar arguments to those used above (in the case $(\chi,\eta) \in \EE_\mu$)
then give $(\chi,\eta) \not\in \EE_0 \cup \EE_1 \cup \EE_2$. Next suppose that
$(\chi,\eta) \in \EE_0$. Then $\eta=1$, and so $(\chi,\eta) \not\in \EE_1 \cup \EE_2$.

Finally suppose that $(\chi,\eta) \in \EE_1 \cap \EE_2$, i.e., $\eta < 1$ and $f'$ has
a root at $\chi$ and at $\chi+\eta-1$. We show that this contradicts theorem \ref{thmf'}.
First recall, whenever $S_1,S_2,S_3$ are non-empty, that $b > \chi > \chi+\eta-1 > a$,
$J_1 = (b,+\infty)$, $J_2 = (-\infty,a)$, $\chi \in J_3$ and $\chi+\eta-1 \in J_4$.
Thus $f'$ has a root in $J_3$ and a root in $J_4$, which violates either
(2) (when $\eta>0$) or (7) (when $\eta=0$). Also recall, whenever $S_2,S_3$
are non-empty and $S_1$ is empty, that $b = \chi > \chi+\eta-1 > a$,
$J_1 = (\sup S_2,+\infty)$, $J_2 = (-\infty,a)$, $\chi = b \in J_1$,
$J_3 = \emptyset$, and $\chi+\eta-1 \in J_4$. Thus $f'$ has a root in $J_1$ and a
root in $J_4$, which violates either (9) (when $\eta > 0$) or
(11) (when $\eta=0$). Similarly whenever $S_1,S_2$ are non-empty and $S_3$ is empty.
Finally recall, whenever $\eta>0$ and $S_2$ is empty, that $J_3 = J_4 = \emptyset$,
and that $\chi$ and $\chi+\eta-1$ are both contained in the same element of
$\{J_1,J_2,K_1,K_2,\ldots\}$. Thus $J = J_1 \cup J_2$, and $f'$ has at least $2$ roots
in an element of $\{J_1,J_2,K_1,K_2,\ldots\}$. This violates (11)
and (12). Thus we have a contradiction, and so $\EE_1 \cap \EE_2 = \emptyset$.
\end{proof}

\section{Appendix}
\label{secApp}

\subsection{Derivation of the correlation kernel}
\label{secdofckfit}

In section \ref{sectdsodgtp} we construct a determinantal random point
process on configurations of particles in $\Z \times \{1,\ldots,n\}$.
We now use the Eynard-Mehta theorem (see, for example, proposition
2.13 of Johansson, \cite{Joh06b}) to show that this process has the
correlation kernel given in equation (\ref{eqKnrusvFixTopLine}).
This is a generalisation of Defosseux, \cite{Def08}, and Metcalfe,
\cite{Met13}, which consider a similar process on configurations in
$\R \times \{1,\ldots,n\}$. The kernel in \cite{Def08} and \cite{Met13}
is recovered from the kernel in equation (\ref{eqKnrusvFixTopLine})
using asymptotic arguments. The kernel in equation (\ref{eqKnrusvFixTopLine})
was also independently obtained by Petrov, \cite{Pet14}. Our proof,
based on the methods used in \cite{Def08} and \cite{Met13}, is more
elementary than that of Petrov. In particular, we note that Petrov also
uses the Eynard-Mehta theorem. However, he first considers the more
complicated $q^{-\text{vol}}$ measure, which is unnecessary. Moreover, we note
that the main technical difficulty of the Eynard-Mehta method is to deal
with a certain matrix inverse (see equation (\ref{eqprCnCnDet1})). We
use the finite difference operator to rewrite the expression as a ratio
of determinants (see equation (\ref{eqprCnCnDet2})), whereas Petrov
constructs a diagonal matrix.

We begin by briefly recalling the model. See section
\ref{sectdsodgtp} for more details. Consider all $n$-tuples,
$(y^{(1)},y^{(2)},\ldots,y^{(n)}) \in  \Z \times \Z^2 \times \cdots \times \Z^n$,
which satisfy
\begin{equation*}
y_1^{(r+1)} \; \ge \; y_1^{(r)} \; > \; y_2^{(r+1)} \; \ge \; y_2^{(r)}
\; > \cdots \ge \; y_r^{(r)} \; > \; y_{r+1}^{(r+1)},
\end{equation*}
for all $r$, denoted $y^{(r+1)} \succ y^{(r)}$.
Fix $n\ge1$ and $x \in \Z^n$ with $x_1 > x_2 > \cdots > x_n$,
and define the following probability measure on the set of all such $n$-tuples:
\begin{equation*}
\nu[(y^{(1)},\ldots,y^{(n)})]
:= \frac1Z \cdot \left\{
\begin{array}{rcl}
1 & ; &
\text{when} \; x = y^{(n)} \succ y^{(n-1)} \succ \cdots \succ y^{(1)}, \\
0 & ; & \text{otherwise},
\end{array}
\right.
\end{equation*}
where $Z > 0$ is a normalisation constant.

We now construct a related probability space, the determinantal structure of which
is more convenient to examine. Consider all tuples,
$(z^{(1)},\ldots,z^{(n-1)}) \in  \Z^n \times \cdots \times \Z^n$ with 
\begin{equation*}
z_1^{(r+1)} \; \ge \; z_1^{(r)} \; > \; z_2^{(r+1)} \; \ge \; z_2^{(r)}
\; > \cdots > \; z_n^{(r+1)} \; \ge \; z_n^{(r)},
\end{equation*}
for all $r$, also denoted $z^{(r+1)} \succ z^{(r)}$. Fix
$z^{(0)} := (x_n+n-1,\ldots,x_n+1,x_n)$ and define the following probability
measure on the set of all such $(n-1)$-tuples:
\begin{equation}
\label{eqnu'}
\nu'[(z^{(1)},\ldots,z^{(n-1)})]
:= \frac1{Z'} \cdot \left\{
\begin{array}{rcl}
1 & ; &
\text{when} \; x \succ z^{(n-1)} \succ \cdots \succ z^{(1)} \succ z^{(0)}, \\
0 & ; & \text{otherwise},
\end{array}
\right.
\end{equation}
where $Z' > 0$ is a normalisation constant.

Consider the relationship between the spaces. First note that,
whenever $x = y^{(n)} \succ y^{(n-1)} \succ \cdots \succ y^{(1)}$ for some
$(y^{(1)},y^{(2)},\ldots,y^{(n)}) \in  \Z \times \Z^2 \times \cdots \times \Z^n$,
then
\begin{equation*}
x_1 \ge y_1^{(r)} > \cdots > y_r^{(r)} > x_n+n-r-1,
\end{equation*}
for all $r \le n$. Whenever $x \succ z^{(n-1)} \succ \cdots \succ z^{(1)} \succ z^{(0)}$
for some $(z^{(1)},\ldots,z^{(n-1)}) \in \Z^n \times \Z^n \times \cdots \times \Z^n$,
\begin{alignat}{8}
\label{eqfreepart}
x_1 \ge z_1^{(r)} > \cdots > z_r^{(r)} > & \; z_{r+1}^{(r)}
\; & \; > \; & \; z_{r+2}^{(r)} \; & \; > \; & \cdots & \; > \; & \; z_n^{(r)} \\
\nonumber
& {}_{x_n+n-r-1}^{\;\;\;\;\;\rotatebox{90}{\,=}}
& & {}_{x_n+n-r-2}^{\;\;\;\;\;\rotatebox{90}{\,=}}
& & \ldots & & {}_{\;\;x_n}^{\;\;\;\rotatebox{90}{\,=}}
\end{alignat}
for all $r \le n-1$. We refer to $z_1^{(r)},\ldots,z_r^{(r)}$ as the
{\em free particles} of $z^{(r)}$, and $z_{r+1}^{(r)},\ldots,z_n^{(r)}$ as the
{\em deterministic particles}. Note the natural bijection between
$\{(y^{(1)},\ldots,y^{(n)}) \in \Z \times \Z^2 \times \cdots \times \Z^n
: x = y^{(n)} \succ y^{(n-1)} \succ \cdots \succ y^{(1)}\}$ and
$\{(z^{(1)},\ldots,z^{(n-1)}) \in \Z^n \times \Z^n \times \cdots \times \Z^n
: x \succ z^{(n-1)} \succ \cdots \succ z^{(1)} \succ z^{(0)} \}$:
Remove $y^{(n)}$ from each $n$-tuple $(y^{(1)},\ldots,y^{(n)})$ and
map the remaining components, $y^{(r)} = (y_1^{(r)},\ldots,y_r^{(r)})$ for each
$r \le n-1$, individually as,
\begin{equation*}
y^{(r)} \mapsto (y_1^{(r)},\ldots,y_r^{(r)},x_n+n-r-1,x_n+n-r-2,\ldots,x_n).
\end{equation*}
The measure $\nu'$ is induced by the measure $\nu$ under this bijective map.
The probabilistic structure of particles in the first space (measure $\nu$)
is therefore identical to the probabilistic structure of the free particles in
the second space (measure $\nu'$). From now on we restrict to the second space.

A more convenient expression for $\nu'$ can be obtained from the work of
Warren, \cite{W06}:
\begin{equation*}
\det \left[ 1_{z_j^{(r+1)} \ge z_i^{(r)}} \right]_{i,j=1}^n
= \left\{ \begin{array}{ll} 
1 & ; \text{ when} \; z^{(r+1)} \succ z^{(r)}, \\
0 & ; \text{ otherwise,}
\end{array} \right.
\end{equation*}
for all $r$.
Equation (\ref{eqnu'}) thus gives,
\begin{equation}
\label{eqnu'2}
\nu'[(z^{(1)},\ldots,z^{(n-1)})]
= \frac1{Z'} \prod_{r=0}^{n-1} \det \left[ \phi_{r,r+1}(z_i^{(r)},z_j^{(r+1)}) \right]_{i,j=1}^n,
\end{equation}
where $z^{(n)} := x$, and
\begin{equation*}
\phi_{r,r+1} (u,v) := 1_{v \ge u},
\end{equation*}
for all $r$ and $u,v \in \Z$.

Note, each $(z^{(1)},\ldots,z^{(n-1)}) \in \Z^n \times \Z^n \times \cdots \times \Z^n$
can be equivalently considered as a configuration of
particles in $\Z \times \{1,\ldots,n-1\}$ by placing a particle at position
$(u,r) \in \Z \times \{1,\ldots,n-1\}$ whenever $u$ is an element of $z^{(r)}$.
The measure $\nu'$ in equation (\ref{eqnu'2}) therefore defines a random point
process on configurations of  particles in $\Z \times \{1,\ldots,n-1\}$.
Proposition 2.13 of Johansson, \cite{Joh06b}, proves
that this process is determinantal with correlation kernel,
\begin{equation}
\label{eqprCnCnDet1}
K_n ((u,r),(v,s)) =  \tilde K_n ((u,r),(v,s)) - \phi_{r,s} (u,v),
\end{equation}
for all $r,s \in \{1,\ldots,n-1\}$ and $u,v \in \Z$, where
\begin{equation*}
\tilde K_n ((u,r),(v,s))
:= \sum_{k,l=1}^n \phi_{r,n} (u,z_k^{(n)}) (A^{-1})_{kl} \phi_{0,s} (z_l^{(0)},v),
\end{equation*}
and
\begin{align*}
\phi_{r,s} (u,v)
&:= 0 \text{ when } s \le r, \\
\phi_{r,s} (u,v)
&:= 1_{v \ge u} \text{ when } s=r+1, \\
\phi_{r,s} (u,v)
&:= \sum_{z_1,\ldots,z_{s-r-1}} \phi_{r,r+1}(u,z_1)
\phi_{r+1,r+2}(z_1,z_2) \cdots \phi_{s-1,s}(z_{s-r-1},v) \text{ when } s>r+1, \\
A \in \C^{n \times n} &\text{ with }
A_{kl} := \phi_{0,n}(z_k^{(0)},z_l^{(n)}) \text{ for all } k,l.
\end{align*}

Note that, for all $r,s \in \{1,\ldots,n\}$ and $u,v \in \Z$,
\begin{align}
\nonumber
\phi_{r,s} (u,v)
&= \left\{
\begin{array}{ll} 
0 & ; s \le r, \\
1_{v \ge u} & ; s = r+1, \\
\sum_{z_1,\ldots,z_{s-r-1}} 1_{v \ge z_{s-r-1} \ge \cdots \ge z_2 \ge z_1 \ge u}
& ; s > r+1,
\end{array} \right. \\
\label{eqphirsuvCHSP}
&= \left\{
\begin{array}{ll} 
0 & ; s \le r, \\
1_{v \ge u} & ; s = r+1, \\
1_{v \ge u} h_{s-r-1}((1)^{v-u+1}) & ; s > r+1,
\end{array} \right.
\end{align}
where $h_r((1)^{v})$ denotes the complete homogeneous symmetric
polynomial of degree $r$ in $v$ variables, evaluated at the $v$-tuple
$(1)^{v} := (1,1,\ldots,1)$. Evaluating gives,
\begin{equation}
\label{eqphirsuv0}
\phi_{r,s}(u,v)
= \left\{
\begin{array}{ll} 
0 & ; s \le r, \\
1_{v-u \ge 0} & ; s = r+1, \\
1_{v-u \ge 0} \frac1{(s-r-1)!} \prod_{j=1}^{s-r-1}(v-u+s-r-j)
& ; s > r+1.
\end{array} \right.
\end{equation}
Thus, noting that $\phi_{r,s}(u,v) = 0$ when $s > r+1$ and
$v-u \in \{-1,-2,\ldots,-(s-r-1)\}$,
\begin{equation}
\label{eqphirsuv1}
\phi_{r,s}(u,v)
= 1_{v-u+s-r-1 \ge 0} \left\{
\begin{array}{ll} 
0 & ; s \le r, \\
1 & ; s = r+1, \\
\frac1{(s-r-1)!} \prod_{j=1}^{s-r-1}(v-u+s-r-j)
& ; s > r+1.
\end{array} \right.
\end{equation}
Finally, letting $\Delta_v^{n-s}$ denote $n-s$ iterations of the finite difference
operator in the variable $v$ (i.e., $\Delta_v f(v) := f(v+1) - f(v)$
for any function $f : \Z \to \R$), induction gives
\begin{equation}
\label{eqphirsuvFDO}
\phi_{r,s}(u,v) =
1_{v-u+s-r-1 \ge 0} \; \Delta_v^{n-s} \frac1{(n-r-1)!} \prod_{j=1}^{n-r-1}(v-u+s-r-j),
\end{equation}
for all $r,s \in \{1,\ldots,n\}$ and $u,v \in \Z$.

Recall that a particle is at position $(v,s)$ whenever $v$ is an element of $z^{(s)}$.
Thus, since we are only interested in the determinantal structure of the free particles
(see equation (\ref{eqfreepart}) and the following comments), it is natural to
restrict to $v \ge x_n+n-s$. For all $r,s \in \{1,\ldots,n-1\}$, $u \in \Z$ and
$v \ge x_n+n-s$, recalling that $z_l^{(0)} = x_n+n-l$, 
\begin{equation*}
\phi_{0,s}(z_l^{(0)},v)
= \Delta_v^{n-s} \frac1{(n-1)!} \prod_{j=1}^{n-1} (v-z_l^{(0)}+s-j)
= \Delta_v^{n-s} \phi_{0,n} (z_l^{(0)}, v+s-n),
\end{equation*}
where the first part follows from equation (\ref{eqphirsuvFDO}),
and the second part from equation (\ref{eqphirsuv1}). Equation
(\ref{eqprCnCnDet1}) thus gives,
\begin{equation*} 
\tilde K_n ((u,r),(v,s)) = \Delta_v^{n-s} \sum_{k,l=1}^n
\phi_{r,n} (u,z_k^{(n)}) (A^{-1})_{kl} \phi_{0,n} (z_l^{(0)},v+s-n).
\end{equation*}
Recall that $z_k^{(n)} = x_k$ and apply Cramer's rule to get,
\begin{equation}
\label{eqprCnCnDet2}
\tilde K_n ((u,r),(v,s)) = \Delta_v^{n-s} \sum_{k=1}^n
\phi_{r,n} (u,x_k) \frac{\det[A(k,v)]}{\det[A]},
\end{equation}
where $A(k,v) \in  \C^{n\times n}$ is the matrix $A$ with column $k$ replaced by
\begin{equation*} 
(\phi_{0,n}(z_1^{(0)},v+s-n), \phi_{0,n}(z_2^{(0)},v+s-n), \ldots, \phi_{0,n}(z_n^{(0)},v+s-n))^T.
\end{equation*}

Note, equation (\ref{eqphirsuvCHSP}) can alternatively be written as,
\begin{equation*}
\phi_{r,s} (u,v) = 1_{s>r} h_{v-u} ((1)^{s-r}),
\end{equation*}
for all $r,s \in \{1,\ldots,n\}$ and $u,v \in \Z$ (by convention $h_k := 0$
whenever $k < 0$). Thus, recalling that $z_i^{(0)} = x_n+n-i$ and $z_j^{(n)} = x_j$,
equation (\ref{eqprCnCnDet1}) gives
\begin{equation*}
\det [A] = \det \left[ \phi_{0,n} (z_i^{(0)}, z_j^{(n)}) \right]_{i,j=1}^n
= \det \left[ h_{x_j-x_n-n+i} ((1)^{n}) \right]_{i,j=1}^n,
\end{equation*}
Similarly, for all $k$ and $v$,
\begin{equation*}
\det [A(k,v)]  = \det \left[ h_{x(k,v)_j -x_n-n+i} ((1)^{n}) \right]_{i,j=1}^n,
\end{equation*}
where $x(k,v)_j := x_j$ for all $j \ne k$ and $x(k,v)_k := v+s-n$. Therefore,
\begin{equation*}
\frac{\det [A(k,v)]}{\det[A]}
= \lim_{q \uparrow 1} \frac{\det [ h_{x(k,v)_j-x_n-n+i} (q^{n-1},\ldots,q^1,q^0) ]_{i,j} }
{\det[h_{x_j-x_n-n+i} (q^{n-1},\ldots,q^1,q^0) ]_{i,j} }.
\end{equation*}
The above is written as a limit for computational convenience.
Recall that $x_1 > x_2 > \cdots > x_n$ and that $v \ge x_n+n-s$.
Therefore $x_m - x_n \ge 0$ and $x(k,v)_m - x_n \ge 0$ for all $m$.
Expression (3.7) of MacDonald, \cite{Mac95}, thus gives,
\begin{equation*}
\frac{\det [A(k,v)]}{\det[A]}
= \lim_{q \uparrow 1} \frac{\det [ q^{(x(k,v)_j-x_n)(n-i)}]_{i,j}}
{\det[q^{(x_j-x_n)(n-i)}]_{i,j}}.
\end{equation*}
The numerator and denominator are both Vandermonde determinants, and so
\begin{align*}
\frac{\det [A(k,v)]}{\det[A]}
&= \lim_{q \uparrow 1} \prod_{1 \le i < j \le n}
\left( \frac{q^{x(k,v)_j-x_n} - q^{x(k,v)_i-x_n}}{q^{x_j-x_n} - q^{x_i-x_n}} \right) \\
&= \lim_{q \uparrow 1} \prod_{i=1, i \ne k}^n
\left( \frac{q^{v+s-n-x_n} - q^{x_i-x_n}}{q^{x_k-x_n} - q^{x_i-x_n}} \right)
= \prod_{i=1, i \ne k}^n \left( \frac{v+s-n-x_i}{x_k-x_i} \right).
\end{align*}
Combining with equations (\ref{eqphirsuv0}) and
(\ref{eqprCnCnDet2}) gives,
\begin{align*} 
\lefteqn{\tilde K_n ((u,r),(v,s))} \\ 
&= \Delta_v^{n-s} \sum_{k=1}^n \frac{1_{x_k-u \ge 0}}{(n-r-1)!}
\prod_{j=1}^{n-r-1}(x_k-u+n-r-j) \prod_{i \ne k} \left( \frac{v+s-n-x_i}{x_k-x_i} \right) \\
&= \Delta_v^{n-s} \sum_{k=1}^n 1_{x_k \ge u} \frac1{(n-r-1)!}
\prod_{j=u+r-n+1}^{u-1} (x_k-j) \prod_{i \ne k} \left(\frac{v+s-n-x_i}{x_k-x_i} \right),
\end{align*}
for all $r,s \in \{1,\ldots,n-1\}$, $u \in \Z$ and $v \ge x_n+n-s$.
Finally, for any function $f : \Z \to \R$,
\begin{equation}
\label{eqfindiff}
(\Delta_v^{n-s} f)(v+s-n) = (n-s)! \sum_{l=v-n+s}^v
\frac{f(l)}{\prod_{j=v-n+s,j\ne l}^v (l-j)},
\end{equation}
and so
\begin{equation*} 
\tilde K_n ((u,r),(v,s)) = \frac{(n-s)!}{(n-r-1)!} \sum_{k=1}^n \sum_{l=v-n+s}^v
1_{x_k \ge u} \frac{\prod_{j=u+r-n+1}^{u-1}(x_k-j)}{\prod_{j=v-n+s,j\ne l}^v (l-j)}
\prod_{i \ne k} \left( \frac{l-x_i}{x_k-x_i} \right).
\end{equation*}
This, combined with equations (\ref{eqprCnCnDet1}) and (\ref{eqphirsuv0}) gives
the correlation kernel in equation (\ref{eqKnrusvFixTopLine}), as required.
Residue theory gives a natural contour integral representation of this kernel,
which is amenable to steepest descent analysis, as shown in equations
(\ref{eqKnrnunsnvn1}) and (\ref{eqJnrnunsnvn1}).

We finish this section by finding an alternate useful expression for $\phi_{r,s} (u,v)$,
for all $r,s \in \{1,\ldots,n-1\}$ and $u,v \in \Z$. First note that we can replace
$1_{v-u+s-r-1 \ge 0}$ in equation (\ref{eqphirsuvFDO}) by $1_{v \ge u}$. Next note that
the product in this equation can be interpreted as a polynomial of degree $n-r-1$ in the
variable $v+s-n$. Applying Lagrange interpolation to the polynomial using the $n$ points
$x_1 > x_2 > \cdots > x_n$ gives,
\begin{align*} 
\phi_{r,s}(u,v)
&= 1_{v \ge u} \; \Delta_v^{n-s} \frac1{(n-r-1)!}
\sum_{k=1}^n \prod_{j=1}^{n-r-1} (x_k-u+n-r-j) \prod_{i\neq k}
\left( \frac{v+s-n-x_i}{x_k-x_i} \right) \\
&= 1_{v \ge u} \; \Delta_v^{n-s} \frac1{(n-r-1)!}
\sum_{k=1}^n \prod_{j=u+r-n+1}^{u-1} (x_k-j) \prod_{i\neq k}
\left( \frac{v+s-n-x_i}{x_k-x_i} \right).
\end{align*}
Equation (\ref{eqfindiff}) finally gives,
\begin{equation*}
\phi_{r,s}(u,v) = 1_{v \ge u} \frac{(n-s)!}{(n-r-1)!} \sum_{k=1}^n \sum_{l=v-n+s}^v
\frac{\prod_{j=u+r-n+1}^{u-1}(x_k-j)}{\prod_{j=v-n+s,j\ne l}^v (l-j)}
\prod_{i \ne k} \left( \frac{l-x_i}{x_k-x_i} \right).
\end{equation*}

\subsection{Proof that \texorpdfstring{$W_\LL : \LL \to \mathbb{H}$}{Lg} is
a diffeomorphism}
\label{secDiff}

In theorem \ref{thmwc} we defined the function $W_\LL : \LL \to \mathbb{H}$
and proved that it is a homeomorphism with inverse
$(\chi_\LL(\cdot),\eta_\LL(\cdot)) : \mathbb{H} \to \LL$. In this section
we show the stronger result that $W_\LL$ is a diffeomorphism. More exactly,
we show that $W_\LL \in C^\infty(\LL,\mathbb{H})$ and
$(\chi_\LL(\cdot),\eta_\LL(\cdot)) \in C^\infty(\mathbb{H},\LL)$.
Though we do not need it for this paper, we include the proof out of interest.

We begin by briefly recalling the relevant definitions: $\LL$
is the set of all $(\chi,\eta) \in [a,b] \times [0,1]$ for which
$b \ge \chi \ge \chi + \eta - 1 \ge a$, and for which
\begin{equation}
\label{eqdiff1}
f_{(\chi,\eta)}'(w) = \int_a^b \frac{\mu[dx]}{w-x} + \log(w - \chi) - \log(w-\chi-\eta+1),
\end{equation}
has non-real roots (here $\log$ is principal value). Theorem \ref{thmf'} implies
that there is a unique root in $\mathbb{H}$ whenever $(\chi,\eta) \in \LL$,
$W_\LL : \LL \to \mathbb{H}$ maps to this root, and
$(\chi_\LL(\cdot),\eta_\LL(\cdot)) : \mathbb{H} \to \LL$ is the inverse of $W_\LL$.

Define $U := \{ (\chi,\eta,u,v) \in \R^4 : (\chi,\eta) \in \LL, u \in \R, v>0\}$,
and $\Phi : U \to \R^2$ by,
\begin{equation}
\label{eqdiff2}
\Phi(\chi,\eta,u,v)
= (\phi_1(\chi,\eta,u,v), \phi_2(\chi,\eta,u,v))
:= \left( \text{Re} \; f_{(\chi,\eta)}'(u+iv), \text{Im} \; f_{(\chi,\eta)}'(u+iv) \right).
\end{equation}
Note that $\Phi \in C^\infty (U,\R^2)$. Note also that,
\begin{align}
\label{eqdiff3}
\Phi(\chi,\eta,u,v) = 0
&\Leftrightarrow f_{(\chi,\eta)}'(u + i v) = 0 \\
\nonumber
&\Leftrightarrow W_\LL(\chi,\eta) = u + i v \\
\nonumber
&\Leftrightarrow (\chi_\LL(u + i v),\eta_\LL(u + i v)) = (\chi,\eta).
\end{align}

The Jacobian of $\Phi$ with respect to the variables $u$ and $v$ is given by,
\begin{equation*}
\frac{\partial (\phi_1,\phi_2)}{\partial (u,v)}
= \frac{\partial \phi_1}{\partial u} \frac{\partial \phi_2}{\partial v}
- \frac{\partial \phi_1}{\partial v} \frac{\partial \phi_2}{\partial u}
= \left( \frac{\partial \phi_1}{\partial u} \right)^2
+ \left( \frac{\partial \phi_2}{\partial u} \right)^2
= |f_{(\chi,\eta)}''(u+iv)|^2,
\end{equation*}
for all $(\chi,\eta,u,v) \in U$ (the second step above follows from the
Cauchy-Riemann's equations, since $f_{(\chi,\eta)}' : \C \setminus \R \to \C$
is analytic). Therefore, whenever $\Phi(\chi_0,\eta_0,u_0,v_0) = 0$ for some
fixed $(\chi_0,\eta_0,u_0,v_0) \in U$, equation (\ref{eqdiff3}) gives
$f_{(\chi_0,\eta_0)}'(u_0 + i v_0) = 0$. Finally, theorem \ref{thmf'} implies that
$u_0 + i v_0$ is a root of $f_{(\chi_0,\eta_0)}'$ of multiplicity $1$, and so
\begin{equation*}
\frac{\partial (\phi_1,\phi_2)}{\partial (u,v)} (\chi_0,\eta_0,u_0,v_0) \neq 0.
\end{equation*}
The implicit function theorem (theorem 3.3.1 of \cite{Krantz02}) thus implies
that there exists functions, $f_1$ and $f_2$, defined on a neighbourhood of
$(\chi_0,\eta_0)$, which are contained in $C^\infty$ (recall, $\Phi \in C^\infty (U,\R^2)$)
and which satisfy $\Phi(\chi,\eta,f_1(\chi,\eta),f_2(\chi,\eta)) = 0$ for all
$(\chi,\eta)$ in this neighbourhood. Equation (\ref{eqdiff3}) thus gives
$W_\LL(\chi,\eta) = f_1(\chi,\eta) + i f_2(\chi,\eta)$, for all such $(\chi,\eta)$,
and so $W_\LL \in C^\infty(\LL,\mathbb{H})$.

Also, the Jacobian of $\Phi$ with respect to the variables $\chi$ and $\eta$
is given by (see equations (\ref{eqdiff1}) and (\ref{eqdiff2})),
\begin{equation*}
\frac{\partial (\phi_1,\phi_2)}{\partial (\chi,\eta)}
= \frac{\partial \phi_1}{\partial \chi} \frac{\partial \phi_2}{\partial \eta}
- \frac{\partial \phi_1}{\partial \eta} \frac{\partial \phi_2}{\partial \chi}
= - \frac{(1-\eta) v}{((u-\chi)^2 + v^2) ((u-\chi-\eta+1)^2 + v^2)} < 0.
\end{equation*}
Thus, fixing $(\chi_0,\eta_0,u_0,v_0) \in U$, the implicit function theorem
implies that there exists functions, $g_1$ and $g_2$, defined on a neighbourhood
of $(u_0,v_0)$, which are contained in $C^\infty$ (recall, $\Phi \in C^\infty (U,\R^2)$)
and which satisfy $\Phi(g_1(u,v),g_2(u,v),u,v) = 0$ for all $(u,v)$ in
this neighbourhood. Equation (\ref{eqdiff3}) thus gives
$(\chi_\LL(u + i v),\eta_\LL(u + i v)) = (g_1(u,v),g_2(u,v))$, for all such
$(u,v)$, and so $(\chi_\LL(\cdot),\eta_\LL(\cdot)) \in C^\infty(\mathbb{H},\LL)$,
as required.

\vspace{0.5cm}

\textbf{Acknowledgements:} This research was carried out at the Royal Institute
of Technology (KTH), Stockholm, and was partially supported by grant
KAW 2010.0063 from the Knut and Alice Wallenberg Foundation. Special thanks to
Kurt Johansson for helpful comments and suggestions.

\end{document}